\numberwithin{equation}{section}
\newtheorem*{rep@theorem}{\rep@title}
\newcommand{\newreptheorem}[2]{%
\newenvironment{rep#1}[1]{%
 \def\rep@title{#2 \ref{##1}}%
 \begin{rep@theorem}}%
 {\end{rep@theorem}}}
\theoremstyle{theorem}
\newtheorem{thm}{Theorem}[section]
\newtheorem*{thm*}{Theorem}
\theoremstyle{definition}
\newtheorem{prop}[thm]{Proposition}
\newtheorem*{prop*}{Proposition}
\newtheorem{defn}[thm]{Definition}
\newtheorem{lem}[thm]{Lemma}
\newtheorem{cor}[thm]{Corollary}
\newtheorem*{cor*}{Corollary}
\theoremstyle{remark}
\newtheorem{const}[thm]{Construction}
\newtheorem{rem}[thm]{Remark}
\newtheorem{notat}[thm]{Notation}
\newcounter{tmp}
\title{Analytic torsion for surfaces with cusps II. \\
Regularity, asymptotics and curvature theorem.} 
\author
{Siarhei Finski
}
\date{}
\newcommand{\imun} {\sqrt{-1}}
\newcommand{\vol}{v}
\newcommand{\comp}{\mathbb{C}}
\newcommand{\real}{\mathbb{R}}
\newcommand{\nat}{\mathbb{N}}
\newcommand{\integ}{\mathbb{Z}}
\newcommand{\dd}{\mathbb{D}}
\newcommand{\hh}{\mathbb{H}}
\newcommand{\ccal}{\mathscr{C}}
\newcommand{\dbar}{ \overline{\partial} }
\newcommand{\laplcomp}{\Box}
\newcommand{\rk}[1]{{\rm{rk}} ( #1 )}
\renewcommand{\Re}{\operatorname{Re}}
\renewcommand{\Im}{\operatorname{Im}}
\newcommand{\scal}[2]{\big< #1, #2 \big>}
\newcommand{\modul}{\mathscr{M}}
\newcommand{\modulcomp}{\overline{\mathscr{M}}}
\newcommand{\univcurv}{\mathscr{C}}
\newcommand{\univcurvcomp}{\overline{\mathscr{C}}}
\newcommand{\td}{{\rm{Td}}}
\newcommand{\ch}{{\rm{ch}}}
\newenvironment{sciabstract}{}
\begin{document} 
\maketitle

\begin{sciabstract}
  \textbf{Abstract.} In this article we study the  Quillen norm on the determinant line bundle associated with a family of complex curves with cusps, which admit singular fibers.
  \par More precisely, we fix a family of complex curves $\pi : X \to S$, which admit at most double-point singularities. Let $(\xi, h^{\xi})$ be a holomorphic Hermitian vector bundle over $X$.
  Let $\sigma_1, \ldots, \sigma_m : S \to X$ be disjoint holomorphic sections.
  We denote the divisor $D_{X/S} := \Im(\sigma_1) + \cdots + \Im(\sigma_m)$, and endow the relative canonical line bundle $\omega_{X/S}$ with a Hermitian norm such that its restriction at each fiber of $\pi$ induces Kähler metric with hyperbolic cusps. 
  This Hermitian norm induces the Hermitian norm on the twisted relative canonical line bundle $\omega_{X/S}(D) := \omega_{X/S} \otimes \mathscr{O}_{X}(D_{X/S})$.
  \par 
  The main object of this paper is the determinant line bundle $\lambda(j^*(\xi \otimes \omega_{X/S}(D)^n))$. For $n \leq 0$, we endow it with the Quillen norm by using the analytic torsion from the first paper of this series.
  Then we study the regularity of this Quillen norm and its asymptotics near the locus of singular curves. In particular, our study applies to the family of degenerating pointed hyperbolic surfaces. 
  The singular terms of the asymptotics turn out to be reasonable enough, so that the curvature of $\lambda(j^*(\xi \otimes \omega_{X/S}(D)^n))$ is well-defined as a current over $S$. 
  We derive the explicit formula for this current, which gives a refinement of Riemann-Roch-Grothendieck theorem at the level of currents.
  This generalizes the curvature formulas of Takhtajan-Zograf and Bismut-Bost. 
  \begin{sloppypar}
		 We also explicit some general conditions under which the renormalized Quillen norm becomes continuous. As a consequence, we get some regularity results on the Weil-Petersson form over the moduli space of pointed curves.
		 Those regularity results are enough to conclude the well-known fact, originally due to Wolpert, that the Weil-Petersson volume of the moduli space of pointed curves is a rational multiple of a power of $\pi$.
  \end{sloppypar}
\end{sciabstract}

\pagestyle{fancy}
\lhead{}
\chead{II. Regularity, asymptotics and curvature theorem.}
\rhead{\thepage}
\cfoot{}

%\fancypagestyle{mypagestyle}{%
%  \fancyhf{}% Clear header/footer
%  \fancyhead[OC]{An Author}% Author on Odd page, Centred
%  \fancyhead[EC]{A titlesdfdsfdsfds}% Title on Even page, Centred
%  \fancyfoot[C]{\thepage}%
%  \renewcommand{\headrulewidth}{.4pt}% Header rule of .4pt
%}
%\pagestyle{mypagestyle}

\newcommand{\Addresses}{{% additional braces for segregating \footnotesize
  \bigskip
  \footnotesize
  \noindent \textsc{Siarhei Finski, UFR de Mathématiques, Case 7012, Université Paris Diderot-Paris 7, France.}\par\nopagebreak
  \noindent  \textit{E-mail }: \texttt{siarhei.finski@imj-prg.fr}.
}}

\tableofcontents

\section{Introduction}\label{sect_intro}
	In this article we study the Quillen norm on the determinant line bundle associated with a family of complex curves with cusps, which admit singular fibers.
	\par Let $X$ and $S$ be complex manifolds, and let $\pi : X \to S$ be a proper holomorphic map. The construction of Grothendick-Knudsen-Mumford \cite{Knudsen1976} (cf. also \cite[\S 3]{BGS3}) associates for every holomorphic vector bundle $\xi$ over $X$ the “determinant of the direct image of $\xi$" - the holomorphic line bundle over $S$, which we denote by $\det (R^{\bullet} \pi_* \xi)$. 
	In the special case when the cohomology groups $H^{\bullet}(\pi^{-1}(\cdot), \xi)$ have constant dimension, they form holomorphic vector bundles, and we have a natural isomorphism
	$\det (R^{\bullet} \pi_* \xi) = \otimes_i ( \Lambda^{\max} H^{i}(\pi^{-1}(\cdot), \xi))^{(-1)^i}$.
	If $X$ and $S$ are quasiprojective and $\pi$ is projective, by a theorem of Riemann-Roch-Grothendieck (cf. \cite[\S 7]{BorSerre}), the first Chern class of $\det (R^{\bullet} \pi_* \xi)$ is expressed as a push-forward by $\pi$ of characteristic classes of $\xi$ and the relative tangent bundle $TX/S$.
	\par Assume temporarily that $\pi$ is a Kähler fibration, i.e. it is a submersion such that there exists a closed $(1, 1)$-form on $X$ such that its restriction to the fibers of $\pi$ gives a Kähler form. In this case, Bismut-Gillet-Soulé in \cite{BGS3} gave an analytic construction of the line bundle $\lambda(j^*\xi)$, which they proved in \cite[Theorem 3.14]{BGS3} to coincide  with $(\det (R^{\bullet} \pi_* \xi) )^{-1}$.
	Now, endow the vector bundles $\xi, TX/S$ with Hermitian metrics $h^{\xi}$ and $h^{TX/S}$ over $X$. 
	As the dimension of the cohomology of the fibers might change from point to point, the holomorphic analytic torsion and the $L^2$-metric of the fibers on the line bundle $\lambda(j^* \xi)$ are not necessarily continuous. Nevertheless, Bismut-Gillet-Soulé in \cite[Theorems 1.3, 1.6]{BGS3} showed that the Quillen norm $\norm{\cdot}_Q$ on $\lambda(j^* \xi)$, defined as a product of the holomorphic analytic torsion and the $L^2$-metric of the fiber, is smooth.
	 When $\pi$ is trivial of relative dimension $1$, this metric was previously defined by Quillen in \cite{Quillen}. 
	The curvature theorem of Bismut-Gillet-Soulé \cite[Theorem 1.9]{BGS3} gives a refinement of Riemann-Roch-Grothendieck theorem by expressing the curvature of the Chern connection on $(\lambda(j^* \xi), \norm{\cdot}_Q^{2})$ as an integral over the fibers of $\pi$ of Chern forms associated with $(\xi, h^{\xi})$ and $(TX/S, h^{TX/S})$.
	\par
		Now, let $\pi : X \to S$ be a family of complex curves with ordinary singularities. 
		Let $\sigma_1, \ldots, \sigma_m : S \to X$ be disjoint holomorphic sections of $\pi$, which do not pass through singular points.
		Let the norm $\norm{\cdot}_{X/S}^{\omega}$ on the canonical line bundle $\omega_{X/S}$ (see Section \ref{sect_fnc}) over $X \setminus (\cup_i \Im(\sigma_i))$ be such that its restriction over each nonsingular fiber $X_t := \pi^{-1}(t)$, $t \in S \setminus |\Delta|$ of $\pi$ induces the Kähler metric with cusps at $\sigma_1(t), \ldots, \sigma_m(t)$ (see Definition \ref{defn_fsc}).
		The goal of this article is to study the associated Quillen norm, constructed using the analytic torsion, defined in the first article of this series \cite{FinII1}, and deduce the corresponding curvature formula.
	\par
	More precisely, let's denote by $\norm{\cdot}^W_{X/S}$ the Wolpert norm induced by $\norm{\cdot}_{X/S}^{\omega}$ (see Definition \ref{defn_wolp}) on the line bundle $\otimes_i \sigma_i^{*}(\omega_{X / S})$.
	We denote by $D_{X/S}$ the divisor on $X$ given by 
	\begin{equation}
		D_{X/S} := \Im(\sigma_1) + \cdots + \Im(\sigma_m).
	\end{equation}
	By $|\Delta| \subset S$, we denote the locus of singular fibers, and by $\Sigma_{X/S} \subset X$ the submanifold of singular points of the fibers (see Corollary \ref{cor_sigma}).
	\begin{const}\label{const_norm_div}
		For a complex manifold $Y$ and a divisor $D_0 \subset Y$, let $\norm{\cdot}^{\rm{div}}_{D_0}$ be the singular norm on $\mathscr{O}_Y(D_0)$, defined by
		\begin{equation}\label{defn_norm_D}
			\| s_{D_0} \|^{\rm{div}}_{D_0}(x) = 1,
		\end{equation}
		where $s_{D_0}$, ${\rm{div}} (s_{D_0}) = D_0$, is the canonical section of the divisor $D_0$, and $x \in Y \setminus |D_0|$. 
		\par We endow the twisted canonical line bundle 
		\begin{equation}\label{defn_tw_can}
			\omega_{X/S}(D) := \omega_{X/S} \otimes \mathscr{O}_X(D_{X/S})
		\end{equation}
		with the canonical Hermitian norm $\, \norm{\cdot}_{X/S}$ over $\pi^{-1}(S \setminus |\Delta|) \setminus |D_{X/S}|$, induced by $\norm{\cdot}^{\omega}_{X/S}$, $\norm{\cdot}^{\rm{div}}_{D_{X/S}}$.
	\end{const}
	Let $\xi$ be a holomorphic line bundle over $X$, and let $h^{\xi}$ be a Hermitian metric over $\pi^{-1}(S \setminus |\Delta|)$.
	For $n \leq 0$, we endow the line bundle $\det (R^{\bullet} \pi_* (\xi  \otimes \omega_{X/S}(D)^n))^{-1}$ with the Quillen norm 
	\begin{equation}\label{rel_quil_norm_intro}
		\norm{\cdot}_{Q} (g^{TX_t}, h^{\xi} \otimes \, \norm{\cdot}_{X/S}^{2n}),
	\end{equation}
	over $S \setminus |\Delta|$, $t \in S \setminus |\Delta|$, defined as the product of the $L^2$-norm and the analytic torsion of the fiber (see Definition \ref{defn_quil_norm}). 
	Now, we denote the norm 
		\begin{equation}\label{quil_wol_norm}
			\norm{\cdot}^{\mathscr{L}_n} := \big( \norm{\cdot}_Q (g^{TX_t}, h^{\xi} \otimes \, \norm{\cdot}_{X/S}^{2n}) \big)^{12} \otimes \big( \norm{\cdot}^W_{X/S} \big)^{-\rk{\xi}} \otimes \big( \norm{\cdot}^{\rm{div}}_{\Delta} \big)^{\rk{\xi}}
		\end{equation}				
		on the line bundle
		\begin{equation}\label{det_wol_prod}
			\mathscr{L}_n :=  \det \big( R^{\bullet} \pi_* (\xi  \otimes \omega_{X/S}(D)^n) \big)^{-12} \otimes \big(\otimes_i \sigma_i^{*}\omega_{X/S} \big)^{-\rk{\xi}} \otimes \mathscr{O}_S(\Delta)^{\rk{\xi}}.
		\end{equation}				
		Our first goal is to study - under various assumptions on the data - the regularity of $\, \norm{\cdot}^{\mathscr{L}_n}$ over $S \setminus |\Delta|$ and its singularities near $\Delta$. We show that the singularities of $\norm{\cdot}^{\mathscr{L}_n}$ are reasonable enough to define the first Chern form of $(\mathscr{L}_n, (\, \norm{\cdot}^{\mathscr{L}_n})^{2})$ as a current over $S$. 
		Then we compute this current explicitly, which gives us a refinement of Riemann-Roch-Grothendieck theorem on the level of currents. 
		In particular, when $\pi : X \to S$ is a family of hyperbolic surfaces without singular fibers and $(\xi, h^{\xi})$ is trivial, we get Takhatajan-Zograf formula \cite[Theorem 1]{TakZog}; when the metrics $h^{\xi}$, $\norm{\cdot}_{X/S}$ are smooth, i.e. there are no cusps and no degeneration of the metric near singular fibers, we get a formula of Bismut-Bost \cite[Théorème 2.2]{BisBost}.
	\par
	Now let's state precisely the different assumptions on the data, which we consider in this article.
	\par \textbf{Assumption S1}.\label{cond_sing}  The Hermitian metric $h^{\xi}$ extends smoothly over $X$; the Hermitian norm $\norm{\cdot}_{X/S}$ extends smoothly over $X \setminus |D_{X/S}|$ and it is pre-log-log of infinite order, with singularities along $D_{X/S}$ (cf. Definition \ref{defn_pre_loglog}).
	\par \textbf{Assumption S2}. We suppose that $\Delta$ has normal crossings. The Hermitian metric $h^{\xi}$  is pre-log-log with singularities along $\pi^{-1}(\Delta)$ (cf. Definition \ref{defn_pre_loglog}); the Hermitian norm $\norm{\cdot}_{X/S}$ is pre-log-log with singularities along $\pi^{-1}(\Delta) \cup D_{X/S}$.
	\par \textbf{Assumption S3}. We suppose that $\Delta$ has normal crossings. The Hermitian metric $h^{\xi}$ extends smoothly over $X$; the Hermitian norm $\norm{\cdot}_{X/S}$ is continuous over $X \setminus (\Sigma_{X/S} \cup |D_{X/S}|)$, has log-log growth with singularities along $\Sigma_{X/S} \cup |D_{X/S}|$ (cf. Definitions \ref{defn_loglog_gr_lin}), is good in the sense of Mumford on $X \setminus |D_{X/S}|$ with singularities along $\pi^{-1}(\Delta)$ (cf. Definition \ref{defn_pre_loglog}), and the coupling of $c_1(\omega_{X/S}(D), \norm{\cdot}_{X/S}^{2})$ with two smooth vertical vector fields over $X \setminus ( \Sigma_{X/S} \cup |D_{X/S}|)$ is continuous over $X \setminus ( \Sigma_{X/S} \cup |D_{X/S}|)$ and has log-log growth with singularities along $|D_{X/S}|$ (cf. Definition \ref{defn_loglog_gr}b)).
	\begin{rem}\label{rem_norm_cross_s1}
		If $\Delta$ has normal crossings, then, trivially, \textbf{S1} implies both \textbf{S2} and \textbf{S3}.
	\end{rem}
	\par Let's motivate those assumptions. Assumption \textbf{S1} is more of a laboratory example to show the strongest regularity result for $\norm{\cdot}^{\mathscr{L}_n}$ we could achieve. 
	Also it generalizes to the non-compact case the hypothesises from Bismut-Bost \cite{BisBost} (cf. Bismut \cite{BisDegQuil}).
	Assumption \textbf{S2} and \textbf{S3} are interesting because the main example of degenerating hyperbolic surfaces (see Construction \ref{const_univ_family}) satisfies them (see Proposition \ref{prop_hyp_metr_assump}). Assumption \textbf{S2} is well-adapted to the curvature theorem, Theorem \ref{thm_curv}, and Assumption \textbf{S3} - to the continuity theorem, Theorem \ref{thm_cont}. 
	\par Now let's argue why instead of more well-known notion of good vector bundles due to Mumford \cite{MumHirz} (see Definition \ref{defn_pre_loglog}), we use the notion of pre-log-log vector bundles due to Burgos Gil-Kramer-K\"uhn \cite{BurKrKun} (see Definition \ref{defn_pre_loglog}).
	By Proposition \ref{prop_hyp_metr_assump}, for our main example of hyperbolic surfaces from Construction \ref{const_univ_family}, the metric $\norm{\cdot}_{X/S}^{{\rm{hyp}}}$ is good. This is stronger than being pre-log-log (see for example \cite[Lemma 4.26]{BurKrKun}). 
	By \cite[\S 4.5]{BurKrKun}, and it follows easily from formulas for degree 0 Bott-Chern forms, the Bott-Chern form associated with good metrics is not necessarily of Poincaré growth (see Definition \ref{defn_loglog_gr}d)).
	 Nevertheless, as it was proved by Burgos Gil-Kramer-K\"uhn \cite[Theorem 4.55]{BurKrKun} (cf. Theorem \ref{thm_bc_preloglog}) in its equivalence class one can choose a representative which is pre-log-log. 
	 Now, the Bott-Chern forms enter naturally in the anomaly formula and the definition of Deligne norms \cite[(6.3.1)]{DeligDet}.
	 Thus, Deligne norm associated with good Hermitian vector bundles is not good in general.
	 Pre-log-log condition is a natural condition to form a class of metrics, such that the associated Deligne norms (see Freixas \cite[Theorem 5.1.3]{FreixTh}) and renormalized Quillen norms (see Theorem \ref{thm_cont}2) almost\footnote{
	 As we say in Theorem \ref{thm_cont}2, the renormalized Quillen norm associated with a pre-log-log Hermitian vector bundle is nice in the sense of Definition \ref{defn_nice}. The notion of niceness is slightly less stronger than the notion of pre-log-log Hermitian vector bundle. Hovewer, as it follows from elliptic regularity (see also the proof of Corollary \ref{corrol_smooth}), a nice vector bundle is pre-log-log if and only if its curvature is smooth over $S \setminus |\Delta|$.
	 } stay in this class.
	This will be used extensively in our forthcoming paper \cite{FinII4} on Deligne-Mumford isometry.
	\par 
	\begin{defn}\label{defn_nice}
		Let $Y$ be a complex manifold and let $D_0$ be a divisor in $Y$.
		\par a) Suppose $D_0$ has normal crossings and the function $f : Y \setminus |D_0| \to \real$ is continuous and has log-log growth along $D_0$ (see Definition \ref{defn_loglog_gr}a)).
		We denote by $[f]_{L^1}$ the current over $Y$, given by the $L^1$-extension of $f$ over $Y$.
		We say that $f$ is \textit{nice with singularities along $D_0$} if the currents $\partial [f]_{L^1}$, $\dbar [f]_{L^1}$, $\partial \dbar [f]_{L^1}$ are defined by the integration against continuous forms over $Y \setminus |D_0|$, which have log-log growth along $D_0$ (see Definition \ref{defn_loglog_gr}b)).
		\par 	b)
		Let $x \in D_0$ and let $U \subset Y$, $x \in U$ be an open subset. Let $h_1, \ldots, h_k$, $k \in \nat$ be local holomorphic functions such that $dh_i(x) \neq 0$, $i = 1, \ldots, k$ and  $n_1, \ldots, n_k \in \nat$ are such that $D_0$ is defined over $U$ by $\{ h_1^{n_1} h_2^{n_2} \cdots h_k^{n_k} = 0 \}$. 
		We say that a smooth function $f : Y \setminus |D_0| \to \real$ is \textit{very nice with singularities along $D_0$} if for any $x \in D_0$ there are smooth functions $f_0, \ldots, f_k : U \to \comp$:
		\begin{equation}
			f = f_0 + \sum_1^{k} f_i |h_i|^2 \ln |h_i|.
 		\end{equation}
 		\par c) Let $L$ be a holomorphic line bundle over $Y$, and let $h^L$ be a continuous Hermitian metric on $L$ over $Y \setminus |D_0|$. For $x \in Y$, fix a local holomorphic frame $\upsilon$ of $L$ in a neighbourhood $U$ of $x$. We say that $h^L$ is \textit{very nice} (resp. \textit{nice})  \textit{with singularities along $D_0$} if for any $x$ and $\upsilon$, the function $\ln h^L(\upsilon, \upsilon)$ is \textit{very nice} (resp. \textit{nice}) \textit{with singularities along $D_0$}.
	\end{defn}
	\begin{rem}\label{rem_nice_chern}
		a) Trivially, for $D_0$ with normal crossings, every \textit{very nice} Hermitian metric with singularities along $D_0$ is \textit{nice} Hermitian metric with singularities along $D_0$. 
		\par b) For a Hermitian metric $h^L$, which is either nice or very nice, we define the first Chern class as a current over $Y$ by
	\begin{equation}\label{eq_c_1_log}
		c_1(L, h^L) := \frac{\partial \dbar [\ln h^L(\upsilon, \upsilon)]_{L^1}}{2 \pi \imun}.
	\end{equation}
	\end{rem}
	
	Our first result describes the singularities of the norm (\ref{rel_quil_norm_intro}) near $|\Delta|$.
	\begingroup
	\setcounter{tmp}{\value{thm}}% store current value of theorem counter
	\setcounter{thm}{2} %assign desired value to theorem counter
	\renewcommand\thethm{\Alph{thm}}% locally redefine the representation of the theorem counter
	\begin{sloppypar}
	\begin{thm}[Continuity theorem]\label{thm_cont}
		We consider the line bundle $\mathscr{L}_n$, and the norm $\norm{\cdot}^{\mathscr{L}_n}$ over it.
		\\ \hspace*{0.5cm} \textit{1)} Under Assumption \textbf{S1}, this norm is very nice with singularities along $\Delta$.
		\\ \hspace*{0.5cm} \textit{2)} Under Assumption \textbf{S2}, this norm is nice with singularities along $\Delta$.
		\\ \hspace*{0.5cm} \textit{3)} Under Assumption \textbf{S3}, this norm is continuous over $S$.
	\end{thm}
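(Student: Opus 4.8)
The plan is to deduce all three statements from a single local analysis near $|\Delta|$, the only difference between them being the strength of the input regularity that is propagated through the asymptotic expansion of the Quillen norm. First I would observe that continuity, niceness and very-niceness with singularities along $\Delta$ are all local on $S$, and that the assertions are immediate away from $|\Delta|$: there $\pi$ is a submersion with fibers of locally constant cohomology, so $\norm{\cdot}_Q (g^{TX_t}, h^{\xi} \otimes \norm{\cdot}_{X/S}^{2n})$ is smooth by the regularity of the analytic torsion of cusped curves from \cite{FinII1}, while $(\norm{\cdot}^W_{X/S})^{-\rk{\xi}}$ and $(\norm{\cdot}^{\rm{div}}_{\Delta})^{\rk{\xi}}$ are smooth and nowhere vanishing there; hence $\norm{\cdot}^{\mathscr{L}_n}$ is smooth over $S \setminus |\Delta|$. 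It therefore suffices to fix $s_0 \in |\Delta|$, choose local coordinates $(t_1, \dots, t_k, \dots)$ in which $\Delta = \{ t_1 \cdots t_k = 0 \}$, and describe $\ln (\norm{\cdot}^{\mathscr{L}_n})^2$ as $t_1, \dots, t_k \to 0$.

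The core step is the asymptotic expansion of $\ln \norm{\cdot}_Q^2$ through the nodal degeneration. Over a branch $\{ t_j = 0 \}$ a fiber acquires a double point; locally the family is the plumbing model $\{ z w = t_j \}$ glued to a fixed piece, and as $t_j \to 0$ the cusped hyperbolic metric degenerates in the manner of Wolpert's pinching, with a pair of cusps emerging at the node. Combining the Mayer--Vietoris / surgery formula for $\zeta$-regularized determinants with the heat-kernel estimates of \cite{FinII1}, I would obtain an expansion $\ln \norm{\cdot}_Q^2 = \sum_{j=1}^{k} a_j \ln |t_j| + R$, where the leading coefficient $a_j$ is universal and proportional to $\rk{\xi}$ (the twist $\omega_{X/S}(D)^n$ being of rank one near the node, the coefficient sees only $\rk{\xi}$ and not $n \le 0$ nor $h^{\xi}$), while $R$ collects the $L^2$-metric contribution of the cusps together with terms of log-log and bounded type. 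The twelfth power in (\ref{quil_wol_norm}) is chosen precisely so that $12 a_j$ equals the integer weight carried by $\mathscr{O}_S(\Delta)^{\rk{\xi}}$.

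The final step is the cancellation together with a regularity bootstrap. In a local frame one has $\ln (\norm{\cdot}^{\rm{div}}_{\Delta})^2 = - 2 \sum_j \ln |t_j|$, so adding its $\rk{\xi}$-th power cancels the leading $\ln |t_j|$ singularities of $(\norm{\cdot}_Q)^{12}$; the cusp part of $R$ and the behaviour of the $L^2$-metric at the sections $\sigma_i$ are in turn absorbed by $(\norm{\cdot}^W_{X/S})^{-\rk{\xi}}$, exactly as in the smooth case of Takhtajan--Zograf \cite{TakZog}. What survives is then governed by the assumed regularity of $h^{\xi}$ and $\norm{\cdot}_{X/S}$. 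Under Assumption \textbf{S3} the residue has a finite limit and is continuous, giving part \textit{3)}. Under Assumption \textbf{S2}, the surviving function together with its $\partial$, $\dbar$ and $\partial \dbar$ is represented by continuous forms of log-log growth: here one invokes that Bott--Chern and first Chern forms of pre-log-log metrics are again pre-log-log (Theorem \ref{thm_bc_preloglog}), so that the residue is nice in the sense of Definition \ref{defn_nice}a), giving part \textit{2)}. Under Assumption \textbf{S1}, the pre-log-log-of-infinite-order hypothesis, fed through the curvature equation and elliptic regularity, upgrades the residue to the explicit shape $f_0 + \sum_i f_i |t_i|^2 \ln |t_i|$ of Definition \ref{defn_nice}b), giving part \textit{1)}.

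The main obstacle is the asymptotic expansion described above. One must follow the $\zeta$-determinant of the cusped Laplacian through the simultaneous pinching of several nodes, isolate the exact coefficient $a_j$ of $\ln |t_j|$, and --- crucially for parts \textit{1)} and \textit{2)} --- show that the remainder $R$ is not merely bounded but has log-log growth together with sufficiently many derivatives. It is this log-log control of $R$, rather than the mere boundedness needed for \textit{3)}, that forces the use of the pre-log-log framework of \cite{BurKrKun} and the fine spectral estimates of \cite{FinII1}.
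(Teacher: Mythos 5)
Your proposal takes a genuinely different route from the paper, and its central step is a genuine gap rather than a proof. You propose to derive, by a Mayer--Vietoris/surgery argument for $\zeta$-regularized determinants combined with the heat-kernel estimates of \cite{FinII1}, an expansion $\ln\norm{\cdot}_Q^2=\sum_j a_j\ln|t_j|+R$ with $R$ controlled in the log-log sense together with its $\partial$, $\dbar$, $\partial\dbar$ derivatives. But \cite{FinII1} contains no analysis of degenerating families: its two results used here (the compact perturbation theorem, Theorem \ref{thm_comp_appr}, and the anomaly formula, Theorem \ref{thm_anomaly_cusp}) compare Quillen norms of different metrics on a \emph{fixed} surface with cusps. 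Extracting the coefficient $a_j$ and --- crucially for parts \textit{1)} and \textit{2)} --- the log-log regularity of $R$ \emph{and of its derivatives} through a simultaneous pinching of several nodes is precisely the hard content of the theorem; it is essentially Bismut--Bost \cite[Théorème 2.2]{BisBost} (plus its cusped refinement), and your proposal offers no mechanism for it beyond naming it as ``the main obstacle.'' Likewise, the claim that the cusp contributions are ``absorbed by the Wolpert norm exactly as in Takhtajan--Zograf'' is unsupported: Takhtajan--Zograf work over $\modul_{g,m}$ with no singular fibers, and this absorption is not automatic --- in the paper it is exactly the content of the anomaly formula, which produces the correction term $-\tfrac{\rk{\xi}}{6}\ln\big(\norm{\cdot}^W/\norm{\cdot}^{W,0}\big)$, together with a separate argument that the resulting fiber integrals have the claimed regularity near $\Delta$. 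Finally, for part \textit{1)} you invoke ``the curvature equation and elliptic regularity'' to produce the very nice structure $f_0+\sum_i f_i|t_i|^2\ln|t_i|$; in the paper the curvature theorem (Theorem \ref{thm_curv}) is proved \emph{after} and \emph{from} Theorem \ref{thm_cont}, and elliptic regularity alone cannot generate this specific structure --- it comes from Igusa's theorem on pushforwards and from Bismut--Bost.

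For contrast, the paper never touches the spectral theory of the degeneration. It makes a three-step reduction, local over $S$: Step 1 uses the anomaly formula (Theorem \ref{thm_anomaly_cusp}) to replace $\norm{\cdot}_{X/S}$ by a metric with trivialized Poincaré-compatible coordinates near the sections, which makes the Wolpert norm trivial; Step 2 uses the compact perturbation theorem (Theorem \ref{thm_comp_appr}) to replace the cusped metric by a flattened one, removing the cusps entirely; Step 3 uses the Bismut--Gillet--Soulé anomaly formula to pass to metrics smooth over all of $X$, where the statement is exactly Theorem \ref{thm_BBost_continuous}. Each step changes $\ln\norm{\cdot}^{\mathscr{L}_n}$ by a fiber integral of Bott--Chern type forms, and the entire analytic content of the proof is Proposition \ref{prop_int_loglog} (together with Proposition \ref{prop_bc_current_sat_ass}): pushforwards along a f.s.o.\ of forms with the relevant singularities are very nice, nice, or continuous under \textbf{S1}, \textbf{S2}, \textbf{S3} respectively. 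To salvage your direct approach you would have to prove the pinching expansion of the $\zeta$-determinant with derivative control --- a substantial independent project, not a consequence of the references you cite.
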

	\end{sloppypar}
	\endgroup
	\setcounter{thm}{\thetmp}
	\begin{rem}\label{rem_cont}
		a) When $m=0$, Theorem \ref{thm_cont}1 gives the result of Bismut-Bost \cite[Théorème 2.2]{BisBost}. 
		However, we note that the proof presented here relies on  \cite[Théorème 2.2]{BisBost}. In \cite{FinII4}, we give another proof of Theorem \ref{thm_cont}, which relies on the extension of Deligne-Mumford isomorphism and regularity results for Deligne metrics. This would give us, in particular, an alternative proof of \cite[Théorème 2.2]{BisBost}.
		\par b) 
		In the forthcoming paper \cite{FinII3}, under Assumption \textbf{S3}, we exhibit the relation between the restriction of $\mathscr{L}_n$ over the singular fibers and the Quillen norm of the normalisation of the singular fibers.
	\end{rem}
	Theorem \ref{thm_cont} also gives us a possibility to deduce the characterization through the Quillen metric of Grothendick-Knudsen-Mumford determinant line bundle $\det (R^{\bullet} \pi_* (\xi  \otimes \omega_{X/S}(D)^n))^{-1}$ as an extension of Bismut-Gillet-Soulé determinant line bundle $\lambda (j^*(\xi  \otimes \omega_{X/S}(D)^n))$. 
	\begin{sloppypar}
	\begin{cor}\label{cor_ext_character}
		Suppose Assumption \textbf{S2} (resp. \textbf{S3}) hold. Then the line bundle $\mathscr{L}_n$ over $S$, is the only extension of the line bundle
		\begin{equation}\label{eq_line_bundle_bgs}
			\lambda \big(j^*(\xi  \otimes \omega_{X/S}(D)^n) \big)^{12} \otimes \big(\otimes_i \sigma_i^{*}\omega_{X/S} \big)^{-\rk{\xi}} \otimes  \mathscr{O}_S(\Delta)^{\rk{\xi}},
		\end{equation}				
		over $S \setminus |\Delta|$, for which the norm $\norm{\cdot}^{\mathscr{L}_n}$ over $S \setminus |\Delta|$ is \textit{nice} with singularities along $\Delta$ (resp. \textit{continuous} over $S$).
	\end{cor}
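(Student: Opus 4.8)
The plan is to separate the statement into existence and uniqueness of the extension. For existence, I would invoke the isomorphism of Bismut-Gillet-Soulé \cite[Theorem 3.14]{BGS3}: over the K\"ahler fibration locus $S \setminus |\Delta|$ there is a canonical identification $\lambda(j^*(\xi \otimes \omega_{X/S}(D)^n)) \cong \det(R^{\bullet}\pi_*(\xi \otimes \omega_{X/S}(D)^n))^{-1}$. Tensoring with the factors $(\otimes_i \sigma_i^*\omega_{X/S})^{-\rk{\xi}}$ and $\mathscr{O}_S(\Delta)^{\rk{\xi}}$, which are defined over all of $S$, identifies the restriction of $\mathscr{L}_n$ (see (\ref{det_wol_prod})) over $S \setminus |\Delta|$ with the line bundle (\ref{eq_line_bundle_bgs}). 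Thus $\mathscr{L}_n$ \emph{is} an extension, and Theorem \ref{thm_cont}2) (resp. Theorem \ref{thm_cont}3)) shows that $\norm{\cdot}^{\mathscr{L}_n}$ is nice (resp. continuous) with singularities along $\Delta$. This settles existence, so the whole content lies in uniqueness.

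For uniqueness, I would let $\mathscr{L}'$ be any extension over $S$ for which $\norm{\cdot}^{\mathscr{L}_n}$ is nice (resp. continuous), write $h := (\norm{\cdot}^{\mathscr{L}_n})^2$, and compare $\mathscr{L}'$ with $\mathscr{L}_n$ through the line bundle $N := \mathscr{L}_n \otimes (\mathscr{L}')^{-1}$. Since both restrict to the line bundle (\ref{eq_line_bundle_bgs}) over $S \setminus |\Delta|$, the identity map defines a canonical nonvanishing holomorphic section $\tau$ of $N$ over $S \setminus |\Delta|$; the goal is to show that $\tau$ extends to a nonvanishing holomorphic section of $N$ over all of $S$, which provides the desired isomorphism $\mathscr{L}_n \cong \mathscr{L}'$ of extensions. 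Fix $x \in |\Delta|$, a neighbourhood $U$, and local holomorphic frames $\upsilon$ of $\mathscr{L}_n$ and $\upsilon'$ of $\mathscr{L}'$ over $U$. Over $U \setminus |\Delta|$ I write $\upsilon = g \upsilon'$ with $g$ holomorphic and nonvanishing, so that $\tau = g^{-1}\cdot (\upsilon \otimes (\upsilon')^*)$, where $\upsilon \otimes (\upsilon')^*$ is a holomorphic frame of $N$ over $U$. Because the single metric $h$ on (\ref{eq_line_bundle_bgs}) is computed indifferently through either frame, the identification yields $\ln |g|^2 = \ln h(\upsilon, \upsilon) - \ln h(\upsilon', \upsilon')$ on $U \setminus |\Delta|$.

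The crux is then a local growth-rigidity statement for $g$. Under Assumption \textbf{S2}, niceness (Definition \ref{defn_nice}c)) forces both $\ln h(\upsilon, \upsilon)$ and $\ln h(\upsilon', \upsilon')$ to be nice, hence in particular to have log-log growth along $\Delta$ (Definition \ref{defn_nice}a) and Definition \ref{defn_loglog_gr}); their difference $\ln|g|^2$ then has log-log growth as well, which is immediate from the definition. Under Assumption \textbf{S3}, continuity of $h$ with respect to both $\mathscr{L}_n$ and $\mathscr{L}'$ makes both terms bounded near $|\Delta|$, so $\ln|g|^2$ is bounded. In either case $\ln|g|^2$ grows like $o(\ln|z_i|)$ near each component $\{z_i = 0\}$ of $\Delta$. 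I would then argue that a holomorphic $g$, nonvanishing on $U \setminus |\Delta|$, with $\ln|g|^2 = o(\ln|z_i|)$ can acquire neither a zero nor a pole along any component of $\Delta$: a vanishing or pole order $a_i \neq 0$ would force $\ln|g|^2 \sim 2 a_i \ln|z_i|$, contradicting the growth bound. Consequently $g$ and $g^{-1}$ are holomorphic with subpolynomial growth near $|\Delta|$, hence extend holomorphically across it by Riemann's removable singularity theorem, so $g$ is holomorphic and nonvanishing on all of $U$. Therefore $\tau$ extends to a nonvanishing holomorphic section of $N$ over $U$; gluing over a cover of $|\Delta|$, together with the fact that $\tau$ is already a nonvanishing frame over $S \setminus |\Delta|$, yields $N \cong \mathscr{O}_S$ compatibly with the trivialization over $S \setminus |\Delta|$, i.e. $\mathscr{L}_n \cong \mathscr{L}'$ as extensions.

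I expect the main obstacle to be precisely this last step: converting the regularity of the norm into the statement that the holomorphic comparison function $g$ has no zeros or poles along $\Delta$. The delicate point is that niceness, unlike smoothness, tolerates logarithmic singularities, so one must use that log-log growth is \emph{strictly} smaller than $\ln|z_i|$ to exclude any nonzero vanishing order along a component; in the continuous case boundedness plays the same role and makes the exclusion immediate. I would also verify at the outset that the difference of two nice functions retains log-log growth, so that the growth bound on $\ln|g|^2$ is all that the extension argument requires and no further regularity of $g$ is needed.
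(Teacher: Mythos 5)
Your proposal is correct, and its skeleton is the same as the paper's: existence of the extension with the stated regularity is exactly Theorem \ref{thm_cont}2 (resp.\ Theorem \ref{thm_cont}3) applied to $\mathscr{L}_n$, and all the remaining content is uniqueness. Where you diverge is in how uniqueness is established. The paper simply invokes Proposition \ref{prop_only_ext_nice}, a Mumford-style statement whose proof is declared to repeat \cite[Proposition 1.3]{MumHirz}: one characterizes the local frames of any admissible extension intrinsically, as those sections $s$ over $U \setminus |D_0|$ for which $\ln h(s,s)$ has log-log singularities along $D_0$, which pins down the sheaf of sections and hence the extension. You instead compare two putative extensions directly through the transition function $g$ between local frames, observe that the two-sided log-log bound (resp.\ boundedness, under \textbf{S3}) on $\ln |g|^2$ forces $|g|^{\pm 1}$ to have subpolynomial growth near $|\Delta|$, and conclude by Riemann's removable singularity theorem applied to both $g$ and $g^{-1}$ that $g$ extends holomorphically and without zeros across $\Delta$. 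This is the same rigidity phenomenon, unpacked by hand in the rank-one case; the paper's formulation buys a reusable statement in the style of Mumford's theory (and the frame characterization itself), while yours is self-contained and elementary, which suffices here since $\mathscr{L}_n$ is a line bundle.

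Two points in your uniqueness argument should be tightened, though neither is a real gap. First, speaking of ``a vanishing or pole order $a_i \neq 0$'' presupposes that $g$ is meromorphic along $\Delta$, whereas a priori $g$ could have an essential singularity there (e.g.\ $e^{1/z_i}$ is holomorphic and nonvanishing off the divisor). This intermediate step should simply be dropped: the two-sided bound $|\ln |g|^2| \leq C (\ln |\ln |z_i||)^p + C$ already gives $|z_i|^{\epsilon} |g|^{\pm 2} \to 0$ for every $\epsilon > 0$, and removability applied to $g$ and $g^{-1}$ rules out essential singularities at the same time as poles and zeros. Second, the slice-by-slice removability argument treats smooth points of $|\Delta|$; at the crossings (which are allowed under \textbf{S2}, \textbf{S3}) one should either note that the constants in the log-log bound are locally uniform on adapted charts, or first extend $g$ across the smooth locus of $|\Delta|$ and then across the remaining codimension-two set by Hartogs, after which nonvanishing of the extension follows because its zero divisor would be contained in a codimension-two set.
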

	\end{sloppypar}
	We recall that the Chern and Todd forms of the Hermitian vector bundle $(\xi, h^{\xi})$ are defined as
	\begin{equation}\label{eq_ch_td_def}
	\begin{aligned}
		&\ch(\xi, h^{\xi}) = \rk{\xi} + c_1(\xi, h^{\xi}) + \tfrac{1}{2} c_1(\xi, h^{\xi})^2  - c_2(\xi, h^{\xi}) + \cdots,
		\\
		&\td(\xi, h^{\xi}) = \rk{\xi} + \tfrac{1}{2}c_1(\xi, h^{\xi}) + \tfrac{1}{12} \big( c_1(\xi, h^{\xi})^2  + c_2(\xi, h^{\xi}) \big) + \cdots,
	\end{aligned}
	\end{equation}
	where the dots mean higher degree terms.
	\begingroup
	\setcounter{tmp}{\value{thm}}% store current value of theorem counter
	\setcounter{thm}{3} %assign desired value to theorem counter
	\renewcommand\thethm{\Alph{thm}}% locally redefine the representation of the theorem counter
	\begin{thm}[Curvature theorem]\label{thm_curv}
		Under Assumption \textbf{S1} (resp. \textbf{S2}), the current 
		\begin{equation}\label{eq_rhs_rrh}
			\pi_*\Big[\td(\omega_{X/S}(D)^{-1}, \norm{\cdot}^{-2}_{X/S}) \ch (\xi, h^{\xi})  \ch(\omega_{X/S}(D)^{n}, \norm{\cdot}_{X/S}^{2n} ) \Big]^{[4]}
		\end{equation}
	is $L^1_{loc}(S)$ (resp. has log-log growth along $\Delta$ in the sense of Definition \ref{defn_loglog_gr}b), so in particular, it is also $L^1_{loc}(S)$). 
	We denote by the same symbol the trivial $L^1$-extension of this current over $S$.  This extension is closed.
	We do the same for the currents $\sigma_i^{*} c_1(\xi, h^{\xi})$, $i = 1,\ldots, m$.
	Then
		\begin{multline}\label{eq_thm_curv}
			\textstyle c_1 \Big( \mathscr{L}_n, (\, \norm{\cdot}^{\mathscr{L}_n} )^2 \Big) 
			=
			 - 12 \pi_*\Big[
			 \td \big(\omega_{X/S}(D)^{-1}, \norm{\cdot}^{-2}_{X/S} \big) 
			 \ch \big( \xi, h^{\xi} \big)  
			 \ch \big( \omega_{X/S}(D)^{n}, \norm{\cdot}_{X/S}^{2n} \big) 
			 \Big]^{[4]}
			\\
			 -
			6
			\sum
			\sigma_i^{*}
			c_1(\xi, h^{\xi}).
		\end{multline}
	\end{thm}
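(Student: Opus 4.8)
The plan is to prove \eqref{eq_thm_curv} first as an identity of smooth forms over $S \setminus |\Delta|$, and then to extend it across $\Delta$ as an identity of currents, the extension being governed entirely by the regularity furnished by Theorem \ref{thm_cont}. Write $\lambda := \det(R^{\bullet}\pi_*(\xi\otimes\omega_{X/S}(D)^n))^{-1}$, so that $\norm{\cdot}_Q$ is a metric on $\lambda$ and $\mathscr{L}_n = \lambda^{12}\otimes(\otimes_i\sigma_i^*\omega_{X/S})^{-\rk{\xi}}\otimes\mathscr{O}_S(\Delta)^{\rk{\xi}}$. Over $S\setminus|\Delta|$ the canonical section $s_\Delta$ trivialises $\mathscr{O}_S(\Delta)$ with $\norm{s_\Delta}^{\rm{div}}_{\Delta}=1$ by \eqref{defn_norm_D}, so the divisor factor contributes no curvature there and, by \eqref{quil_wol_norm},
\begin{equation*}
c_1\big(\mathscr{L}_n,(\norm{\cdot}^{\mathscr{L}_n})^2\big)
= 12\, c_1(\lambda,\norm{\cdot}_Q^2) - \rk{\xi}\, c_1\big(\textstyle\otimes_i\sigma_i^*\omega_{X/S},(\norm{\cdot}^W_{X/S})^2\big)
\qquad\text{on } S\setminus|\Delta|.
\end{equation*}
The first step is therefore to insert into this equality the curvature theorem for the Quillen norm with cusps from the first paper \cite{FinII1}: over the smooth base $S\setminus|\Delta|$ it computes $c_1(\lambda,\norm{\cdot}_Q^2)$ as the fibre integral $-\pi_*[\td(\omega_{X/S}(D)^{-1},\norm{\cdot}^{-2}_{X/S})\ch(\xi,h^\xi)\ch(\omega_{X/S}(D)^n,\norm{\cdot}^{2n}_{X/S})]^{[4]}$ plus the cusp corrections carried by the Wolpert norm and the forms $\sigma_i^*c_1(\xi,h^\xi)$. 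Collecting these terms turns the display into \eqref{eq_thm_curv} as an identity of smooth forms over $S\setminus|\Delta|$; this is where the Takhtajan--Zograf \cite{TakZog} and Bismut--Bost \cite{BisBost} formulas are recovered as special cases.

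Second, I would control the right-hand side of \eqref{eq_thm_curv} near $\Delta$. Under Assumption \textbf{S1} (resp. \textbf{S2}) the Hermitian data are pre-log-log, so by the pre-log-log calculus of Burgos Gil--Kramer--K\"uhn \cite{BurKrKun} together with the asymptotic analysis of the metrics near the singular fibres carried out earlier in this paper, the integrand is a closed pre-log-log form on $X$ and its fibre integral is $L^1_{loc}(S)$ (resp. has log-log growth along $\Delta$, hence is again $L^1_{loc}(S)$); the same estimates apply to $\sigma_i^*c_1(\xi,h^\xi)$. This justifies the trivial $L^1$-extension over $S$. To see that this extension is closed, I would run Stokes' theorem on $S$ minus a shrinking tubular neighbourhood of $\Delta$: the form being $d$-closed on $S\setminus|\Delta|$, the only possible contribution to the differential of the extension is a boundary integral over the boundary of that neighbourhood, and the log-log growth forces this boundary term to vanish in the limit. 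The same Stokes argument handles the fibrewise boundary terms coming from the cusps and from $\Sigma_{X/S}$, which is what makes the fibre-integrated form closed over $S\setminus|\Delta|$ in the first place.

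Finally, I would identify the two sides as currents over $S$. By Theorem \ref{thm_cont} the norm $\norm{\cdot}^{\mathscr{L}_n}$ is very nice (under \textbf{S1}) or nice (under \textbf{S2}) with singularities along $\Delta$; hence, by Definition \ref{defn_nice} and \eqref{eq_c_1_log}, the current $c_1(\mathscr{L}_n,(\norm{\cdot}^{\mathscr{L}_n})^2)$ is given by integration against a continuous form of log-log growth over $S\setminus|\Delta|$ and carries \emph{no} component supported on $\Delta$. Over $S\setminus|\Delta|$ that continuous form coincides with the classical Chern form, which by the first step equals the right-hand side of \eqref{eq_thm_curv}; as both sides are therefore the trivial $L^1$-extension of one and the same form on $S\setminus|\Delta|$, the identity \eqref{eq_thm_curv} holds as currents over $S$.

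I expect the main obstacle to be precisely the vanishing of any residual current supported on $\Delta$ in the last step, i.e. that the combination $\norm{\cdot}_Q^{12}\otimes(\norm{\cdot}^W_{X/S})^{-\rk{\xi}}$ develops a logarithmic singularity along $\Delta$ whose Poincar\'e--Lelong current of integration along $\Delta$ is exactly cancelled by the factor $\mathscr{O}_S(\Delta)^{\rk{\xi}}$ (whose curvature contributes $\rk{\xi}$ times that current), leaving a nice metric. This cancellation is the whole reason for the particular choice of $\mathscr{L}_n$ in \eqref{det_wol_prod}, and its proof is the substance of the continuity/niceness Theorem \ref{thm_cont}, which itself rests on the fine asymptotics of the analytic torsion of the degenerating cusped fibres. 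Once Theorem \ref{thm_cont} is granted, the curvature theorem becomes an exercise in the calculus of $L^1$-currents, the only remaining care being the integrability and closedness under the weaker Assumption \textbf{S2}, which are handled by the pre-log-log Stokes estimates above.
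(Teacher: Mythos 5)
Your overall architecture --- prove \eqref{eq_thm_curv} as an identity of smooth forms over $S\setminus|\Delta|$, then extend across $\Delta$ using the regularity of Theorem \ref{thm_cont} --- is indeed the paper's architecture, and your extension step is essentially sound: the paper handles integrability and log-log growth by Proposition \ref{prop_int_loglog}, closedness of the trivial extension by Lemma \ref{lem_cur_ext} (via a decomposition into positive currents and Skoda--El Mir, rather than your shrinking-tube Stokes argument, though the latter is workable given estimates like \eqref{eq_int_vol_222}), and the absence of residual mass on $\Delta$ by Theorem \ref{thm_cont}2 combined with the potential theory of Proposition \ref{prop_pot_th}. The gap is in your first step, which is the actual content of the theorem and which you do not prove: you cite ``the curvature theorem for the Quillen norm with cusps from the first paper \cite{FinII1}''. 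No such theorem exists. The first paper contains only the compact perturbation theorem (Theorem \ref{thm_comp_appr}) and the anomaly formula for surfaces with cusps (Theorem \ref{thm_anomaly_cusp}); the curvature formula for the cusped Quillen metric over $S\setminus|\Delta|$ is precisely the Takhtajan--Zograf-type statement that Theorem \ref{thm_curv} asserts, so invoking it is assuming what is to be proven. The paper instead derives it by re-running the reduction of Section \ref{sect_pf_A}: modify the norms near $|D_{X/S}|$ by the anomaly formula (giving \eqref{eq_curv_red1} from \eqref{eq_thmc_aux_anomal1}), remove the cusps by the compact perturbation theorem (giving \eqref{eq_curv_red2} from \eqref{eq_thmc_aux_anomal22}), and only then apply the smooth-case curvature theorem of Bismut--Bost/Bismut--Gillet--Soul\'e (Theorem \ref{thm_bgs_curv}); the term $-6\sum\sigma_i^*c_1(\xi,h^{\xi})$ is not a ready-made ``cusp correction'' but is produced at the last step by the Poincar\'e--Lelong identity \eqref{eq_poinc_lel}, which converts $\td(\omega_{X/S}^{-1},\cdot)$ into $\td(\omega_{X/S}(D)^{-1},\cdot)$ at the cost of $\delta_{D_{X/S}}/2$, whose push-forward against $\ch(\xi,h^{\xi})$ gives exactly those terms.

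A second, related defect: your opening display splits $c_1\big(\mathscr{L}_n,(\norm{\cdot}^{\mathscr{L}_n})^2\big)$ into $12\,c_1(\lambda,\norm{\cdot}_Q^2)-\rk{\xi}\,c_1\big(\otimes_i\sigma_i^*\omega_{X/S},(\norm{\cdot}^W_{X/S})^2\big)$ on $S\setminus|\Delta|$. This presupposes that the Wolpert norm is smooth there, which the paper explicitly declines to claim (see the sentence following Definition \ref{defn_wolp}; smoothness is known only for the hyperbolic family, Remark \ref{rem_wol_cont}, by Wolpert's theorem). Neither term of your splitting is separately known to be a well-defined smooth form; only the combination $\norm{\cdot}_Q^{12}\otimes(\norm{\cdot}^W_{X/S})^{-\rk{\xi}}$ is controlled, because the anomaly formula \eqref{eq_anomaly_cusp} carries the Wolpert ratio $\ln(\norm{\cdot}^W/\norm{\cdot}^{W,0})$ on its right-hand side, and after Step 1 of the reduction the new Wolpert norm is constant and disappears. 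So the argument must be run on the combined norm throughout, as the paper does, rather than on $c_1(\lambda,\norm{\cdot}_Q^2)$ alone.
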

	\endgroup
	\setcounter{thm}{\thetmp}
	\begin{rem}\label{rem_curv}
		a) Assume \textbf{S1} holds and $m = 0$, then Theorem \ref{thm_curv} is exactly the curvature formula of Bismut-Bost \cite[Théorème 2.1]{BisBost}. We note, however, that the proof of Theorem \ref{thm_curv} in this case relies on\cite[Théorème 2.1]{BisBost}. 
		 \par 
		 b) Assume \textbf{S2} holds, then our proof relies on the curvature theorem of Bismut-Gillet-Soulé \cite[Theorem 1.9]{BGS3}, on the proof of Theorem \ref{thm_cont} and on the potential theory of log-log currents, which we develop in Section \ref{sect_pot_th}. In \cite{FinII4}, we give an alternative proof, which uses the extension of Deligne-Mumford isomorphism.
	\end{rem}
	For a complete proof of the following two corollaries, see Section \ref{sect_pf_curv}.
	\begin{cor}\label{cor_cont_pot}
		Assume \textbf{S2} and \textbf{S3} hold. Then the trivial $L^1$-extension of the current (\ref{eq_rhs_rrh}) from Theorem \ref{thm_curv} has a local continuous potential over $S$, which can be written explicitly through a product of the $\norm{\cdot}^{\mathscr{L}_n}$-norm of a holomorphic frame of $\mathscr{L}_n$ and the restriction of $h^{\xi}$-norm of a holomorphic frame of $\xi$ over the images of $\sigma_1, \ldots, \sigma_m$.
	\end{cor}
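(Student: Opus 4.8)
The plan is to invert the current identity of Theorem \ref{thm_curv}, reading (\ref{eq_rhs_rrh}) as the $\partial\dbar$ of an explicit function and then checking that this function is continuous. Assuming \textbf{S2}, Theorem \ref{thm_curv} gives the identity of $(1,1)$-currents over $S$
\begin{equation}
	\pi_*\Big[\td(\omega_{X/S}(D)^{-1}, \norm{\cdot}^{-2}_{X/S}) \ch(\xi, h^{\xi}) \ch(\omega_{X/S}(D)^{n}, \norm{\cdot}_{X/S}^{2n})\Big]^{[4]} = -\frac{1}{12}\Big( c_1(\mathscr{L}_n, (\norm{\cdot}^{\mathscr{L}_n})^2) + 6\sum_i \sigma_i^{*} c_1(\xi, h^{\xi}) \Big),
\end{equation}
so it suffices to produce a continuous local potential for each term on the right-hand side.

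First I would fix $s_0 \in S$, choose a local holomorphic frame $\upsilon$ of $\mathscr{L}_n$ near $s_0$, and local holomorphic frames $e_i$ of $\xi$ near the points $\sigma_i(s_0)$. Under \textbf{S2} the norm $\norm{\cdot}^{\mathscr{L}_n}$ is nice with singularities along $\Delta$ (Theorem \ref{thm_cont}2), so Remark \ref{rem_nice_chern}b) identifies $\ln (\norm{\cdot}^{\mathscr{L}_n})^2(\upsilon, \upsilon)$ as a local potential for $c_1(\mathscr{L}_n, (\norm{\cdot}^{\mathscr{L}_n})^2)$; since $c_1$ commutes with $\sigma_i^{*}$, the function $\sigma_i^{*}(\ln h^{\xi}(e_i, e_i))$ is a local potential for $\sigma_i^{*} c_1(\xi, h^{\xi})$. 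Collecting these, the candidate potential is
\begin{equation}
	\phi = -\frac{1}{12} \ln\Big[ (\norm{\cdot}^{\mathscr{L}_n})^2(\upsilon, \upsilon) \cdot \prod_{i=1}^m \big(\sigma_i^{*}(h^{\xi}(e_i, e_i))\big)^6 \Big],
\end{equation}
which is visibly a logarithm of the product of the $\norm{\cdot}^{\mathscr{L}_n}$-norm of the frame $\upsilon$ and the $h^{\xi}$-norms of the frames $e_i$ restricted along the sections $\sigma_1, \ldots, \sigma_m$, exactly the shape asserted in the statement.

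The continuity is where \textbf{S3} enters: Theorem \ref{thm_cont}3 gives that $\norm{\cdot}^{\mathscr{L}_n}$ is continuous over all of $S$, so the first summand of $\phi$ is continuous, while \textbf{S3} forces $h^{\xi}$ to extend smoothly over $X$, whence each $\sigma_i^{*}(h^{\xi}(e_i, e_i))$ is smooth; hence $\phi$ is continuous. I expect the only genuinely delicate point to be the simultaneous use of \textbf{S2} and \textbf{S3}: \textbf{S2} is needed for Theorem \ref{thm_curv} to apply and to make sense of (\ref{eq_rhs_rrh}) via $\partial\dbar$ of the $L^1$-extension $[\ln (\norm{\cdot}^{\mathscr{L}_n})^2(\upsilon, \upsilon)]_{L^1}$, whereas \textbf{S3} upgrades this $L^1$ function to a continuous one. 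Under both hypotheses the $L^1$-extension coincides with the continuous function itself, so the two descriptions of the same current agree and $\phi$ is the sought local continuous potential.
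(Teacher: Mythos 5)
Your proposal is correct and takes essentially the same route as the paper: the paper likewise fixes frames, sets $G = \big(\norm{\upsilon}^{\mathscr{L}_n}\big)^2 \cdot \prod_i h^{\det \xi}(\upsilon_i, \upsilon_i)^{6}$, obtains $\frac{\partial \dbar \ln G}{2\pi \imun} = -12\, \pi_*[\cdots]$ from Theorem \ref{thm_curv}, Theorem \ref{thm_cont}2 and (\ref{eq_c_1_log}), and then invokes Theorem \ref{thm_cont}3 (plus the smoothness of $h^{\xi}$ under \textbf{S3}) for continuity of $G$. The only cosmetic difference is that for $\rk{\xi} > 1$ the potential along the sections should use the induced determinant metric $h^{\det \xi}$, i.e. $\ln \det\big(h^{\xi}(e_i, e_j)\big)$, rather than $h^{\xi}(e_i, e_i)$ for a single section, exactly as the paper writes it.
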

	\begin{cor}\label{corrol_smooth}
		Assume \textbf{S2} holds, and assume that the current (\ref{eq_rhs_rrh}) is smooth over $S$. Then the Hermitian norm $\norm{\cdot}^{\mathscr{L}_n}$ on the line bundle $\mathscr{L}_n$ is smooth over $S$.
	\end{cor}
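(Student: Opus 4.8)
The plan is to deduce smoothness from the curvature identity by a scalar elliptic-regularity argument, which is the mechanism anticipated in the footnote preceding Definition \ref{defn_nice}. Smoothness of $\norm{\cdot}^{\mathscr{L}_n}$ over $S$ is a local question, so I fix a point of $S$, a coordinate chart $U$, and a local holomorphic frame $\upsilon$ of $\mathscr{L}_n$ over $U$, and set $f := \ln (\norm{\upsilon}^{\mathscr{L}_n})^2 : U \setminus |\Delta| \to \real$; it suffices to show that $f$ is smooth on $U$. First I would invoke Theorem \ref{thm_cont}2: under Assumption \textbf{S2} the norm $\norm{\cdot}^{\mathscr{L}_n}$ is nice with singularities along $\Delta$, so by Definition \ref{defn_nice}a) the function $f$ is continuous with log-log growth along $\Delta$, the $L^1$-extension $[f]_{L^1}$ is a well-defined current on $U$, and $\partial \dbar [f]_{L^1}$ is represented by integration against a continuous $(1,1)$-form of log-log growth on $U \setminus |\Delta|$. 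In particular the Chern current $c_1(\mathscr{L}_n, (\norm{\cdot}^{\mathscr{L}_n})^2) = (2\pi \imun)^{-1} \partial \dbar [f]_{L^1}$ of Remark \ref{rem_nice_chern}b) is a genuine current over all of $S$, not merely over $S \setminus |\Delta|$.

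Next I would bring in the curvature theorem. By Theorem \ref{thm_curv}, under \textbf{S2} this Chern current equals the right-hand side of (\ref{eq_thm_curv}), namely $-12$ times the $L^1$-extension of (\ref{eq_rhs_rrh}) together with the marked-point terms $-6\sum_i \sigma_i^* c_1(\xi, h^{\xi})$. The hypothesis that (\ref{eq_rhs_rrh}) is smooth over $S$, once combined with the fact that the marked-point contributions enter (\ref{eq_thm_curv}) in such a way that the total curvature current is smooth across $\Delta$, upgrades the continuous form of the first step to a genuinely smooth $(1,1)$-form $\theta$ on $U$. Thus I obtain the distributional identity $\partial \dbar [f]_{L^1} = 2 \pi \imun\, \theta$ on all of $U$, valid across $\Delta$, with $\theta$ smooth.

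The last step is elliptic regularity. Fixing any Hermitian form $\omega_U$ on $U$ and contracting, $\Lambda_{\omega_U} \theta$ is a smooth function, while $f \mapsto \Lambda_{\omega_U}(\imun \partial \dbar f)$ is a second-order operator with smooth coefficients whose principal symbol is that of the Laplace operator, hence elliptic. Therefore $[f]_{L^1}$ is a distribution on $U$ whose image under this elliptic operator is smooth, and hypoellipticity of the Laplacian (Weyl's lemma) forces $f$ to coincide, as a distribution, with a smooth function on $U$. Since the chart $U$ and the frame $\upsilon$ were arbitrary, $\norm{\cdot}^{\mathscr{L}_n}$ is smooth over $S$.

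I expect the only genuine obstacle to lie at $\Delta$. The argument hinges on the identity $\partial \dbar [f]_{L^1} = 2\pi\imun\, \theta$ holding as an equality of currents over $S$, and not merely over $S \setminus |\Delta|$: this is exactly what the niceness supplied by Theorem \ref{thm_cont}2 guarantees, and it is what rules out a spurious current supported on $\Delta$ that would otherwise obstruct the regularity bootstrap. One must also take care that elliptic regularity is being applied to the merely $L^1$ distribution $[f]_{L^1}$ rather than to a classical function, and, before that, verify that the full right-hand side of (\ref{eq_thm_curv}) — including the marked-point currents $\sigma_i^* c_1(\xi, h^{\xi})$ — is smooth across $\Delta$ under the running hypotheses, so that $\theta$ really is smooth. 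Once these two points are secured, the passage from $L^1$ to $C^\infty$ is standard.
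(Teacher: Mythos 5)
Your overall mechanism is the right one and is the same as the paper's: niceness from Theorem \ref{thm_cont}2 makes the curvature identity of Theorem \ref{thm_curv} valid as an identity of currents \emph{across} $\Delta$, and then scalar elliptic regularity (Weyl's lemma; the paper cites \cite[Corollary 8.11]{GilTrudBook}) upgrades the $L^1$ potential to a smooth function. However, there is a genuine gap at the step where you produce the smooth form $\theta$. Your argument needs the \emph{full} right-hand side of (\ref{eq_thm_curv}) to be smooth, i.e.\ also the marked-point currents $-6\sum \sigma_i^{*}c_1(\xi,h^{\xi})$, and you invoke ``the fact that the marked-point contributions enter (\ref{eq_thm_curv}) in such a way that the total curvature current is smooth across $\Delta$''. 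This is not a fact, and it does not follow from the hypotheses: under Assumption \textbf{S2} the metric $h^{\xi}$ is only pre-log-log with singularities along $\pi^{-1}(\Delta)$, the sections $\sigma_i$ do meet $\pi^{-1}(\Delta)$ (since $\pi\circ\sigma_i$ is the identity), and the hypothesis of the corollary concerns only the fibre integral (\ref{eq_rhs_rrh}), which says nothing about the restriction of $c_1(\xi,h^{\xi})$ to the images of the $\sigma_i$. So $\sigma_i^{*}c_1(\xi,h^{\xi})$ may be genuinely singular along $\Delta$, and with $f=\ln(\norm{\upsilon}^{\mathscr{L}_n})^2$ your $\theta$ need not be smooth. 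You flag this at the end as something ``to verify'', but no verification is possible from the stated hypotheses.

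The paper resolves exactly this point by changing the potential before applying elliptic regularity: instead of $\ln (\norm{\upsilon}^{\mathscr{L}_n})^{2}$ it takes $\ln G$ with $G$ as in (\ref{defn_G}), i.e.\ the squared frame norm multiplied by $h^{\det \xi}(\upsilon_1,\upsilon_1)^{6}\cdots h^{\det \xi}(\upsilon_m,\upsilon_m)^{6}$. Since each $\sigma_i^{*}c_1(\xi,h^{\xi})$ is $\frac{1}{2\pi\imun}\partial\dbar$ of the $L^1$-extension of $\ln \sigma_i^{*}h^{\det\xi}(\upsilon_i,\upsilon_i)$ (cf.\ (\ref{eq_c_1_log}) and Proposition \ref{prop_bur_kr_kuhn}, using that these pulled-back metrics are pre-log-log), the marked-point currents are absorbed into the left-hand side, giving (\ref{eq_cont_pot_curr}): $\frac{1}{2\pi\imun}\partial\dbar[\ln G]_{L^1}$ equals $-12$ times (\ref{eq_rhs_rrh}), with no residual terms. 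Now the smoothness hypothesis applies verbatim, and elliptic regularity yields that $\ln G$ is smooth over $S$. Note that your concern does resurface when one passes from smoothness of $G$ back to smoothness of $\norm{\cdot}^{\mathscr{L}_n}$ itself (one needs the factors $h^{\det\xi}(\upsilon_i,\upsilon_i)$ to be smooth, which is automatic in the situations where the corollary is applied, e.g.\ trivial $(\xi,h^{\xi})$ over the moduli space, or under \textbf{S1}/\textbf{S3} where $h^{\xi}$ extends smoothly); but the $G$-device is what makes the elliptic-regularity step itself go through under the hypotheses as stated, and it is the key move your proposal is missing.
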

	\begin{rem}\label{rem_cor_smooth}
	\begin{sloppypar}
		Quite easily, if $h^{\xi}$ and $\norm{\cdot}_{X/S}$ satisfy \textbf{S1}, and the forms $c_1(\xi, h^{\xi})$, $c_2(\xi, h^{\xi})$, $c_1(\omega_{X/S}(D), \norm{\cdot}_{X/S})$ vanish in the neighbourhood of $\Sigma_{X/S}$, then since $\pi|_{X \setminus \Sigma_{X/S}}$ is a submersion, the current (\ref{eq_rhs_rrh}) is smooth over $S$, so the hypothesises of Corollary \ref{corrol_smooth}a) are satisfied.
	\end{sloppypar}
	\end{rem}
	\par Now let's describe some of the applications of those results to the moduli space $\modul_{g, m}$ of $m$-pointed stable curves of genus $g$, $2g - 2 + m > 0$. We denote by $\modulcomp_{g, m}$ the \textit{Deligne-Mumford compactification} of $\modul_{g, m}$, by $\partial \modul_{g, m} := \modulcomp_{g, m} \setminus \modul_{g, m}$ the \textit{compactifying divisor}, by $\univcurv_{g, m}$ and $\univcurvcomp_{g, m}$ the universal curves over $\modul_{g, m}$ and $\modulcomp_{g, m}$ respectively. 
	We denote by $\Omega : \univcurvcomp_{g, m} \to \modulcomp_{g, m}$ the universal projection. We denote by $D_{g, m}$ the divisor on $\univcurvcomp_{g, m}$, formed by $m$ fixed points. We denote by $\omega_{g, m}$ the relative canonical line bundle of $\Omega$, by $\otimes \sigma_i^{*} \omega_{g, m}$ the determinant of the restriction of $\omega_{g, m}$ to the divisor $D_{g, m}$, and by $\omega_{g, m}(D)$ the twisted relative canonical line bundle:
	\begin{equation}\label{def_rel_can_modul}
		\omega_{g, m}(D) := \omega_{g, m} \otimes \mathscr{O}_{\univcurvcomp_{g, m}}(D_{g, m}).
	\end{equation}
	\par By the uniformization theorem  (cf. \cite[Chapter IV]{FarKra}, \cite[Lemma 6.2]{Auvr}, \cite{AuvrMaArx}), we endow $\omega_{g, m}(D)$ with the Hermitian norm $\norm{\cdot}_{g, m}^{\rm{hyp}}$, such that its restriction over each fiber of $\pi$ induces by Construction \ref{const_norm_div} the canonical hyperbolic metric of constant scalar curvature $-1$. This endows the determinant line bundle $\lambda(j^*(\omega_{g, m}(D)^n))$, $n \leq 0$, which is also sometimes called the \textit{Hodge line bundle}, with the induced Quillen metric $\norm{\cdot}^{Q, n}_{g, m}$.
	Also the line bundle $\otimes \sigma_i^{*} \omega_{g, m}$ is endowed with the associated Wolpert norm $\norm{\cdot}^W_{g, m}$.
	We denote by $\omega_{WP}$ the Weil-Petersson form over $\modul_{g, m}$ (cf. Section \ref{sect_orb_str}).
	The following corollaries will be proved in Section \ref{sect_pinch}.
	\begin{cor}\label{cor_cont_hodge}
		The norm 
		\begin{equation}\label{eq_renorm_hodge_norm}
			\norm{\cdot}^{H, n}_{g, m} :=
			(\, \norm{\cdot}^{Q, n}_{g, m})^{12} 
			\otimes
			(\, \norm{\cdot}^W_{g, m})^{-1} 
			\otimes 
			\, \norm{\cdot}_{\partial \modul_{g,m}}^{\rm{div}}
		\end{equation}
		on the line bundle
		\begin{equation}\label{eq_renorm_hodge}
			\lambda_{g, m}^{H, n} :=
			\lambda(j^*(\omega_{g, m}(D)^n))^{12} 
			\otimes 
			(\otimes \sigma_i^{*} \omega_{g, m})^{-1} 
			\otimes 
			\mathscr{O}_{\modulcomp_{g,m}}(\partial \modul_{g,m})
		\end{equation}
		is nice with singularities along $\partial \modul_{g, m}$. Moreover, it extends continuously over $\modulcomp_{g, m}$ and it is smooth over $\modul_{g, m}$.
	\end{cor}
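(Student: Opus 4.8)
The plan is to recognize Corollary \ref{cor_cont_hodge} as the specialization of Theorem \ref{thm_cont} and Corollary \ref{corrol_smooth} to the universal family over the moduli space. Concretely, I would apply the general construction to $X = \univcurvcomp_{g, m}$, $S = \modulcomp_{g, m}$, $\pi = \Omega$, with the trivial line bundle $\xi = \mathscr{O}$ (so $\rk{\xi} = 1$ and $h^{\xi}$ smooth with $c_1(\xi, h^{\xi}) = 0$), the norm $\norm{\cdot}_{X/S} = \norm{\cdot}^{\rm{hyp}}_{g, m}$, the divisor $D_{X/S} = D_{g, m}$, and $\Delta = \partial\modul_{g, m}$. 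First I would check that under these identifications the general objects of (\ref{det_wol_prod}) and (\ref{quil_wol_norm}) become exactly (\ref{eq_renorm_hodge}) and (\ref{eq_renorm_hodge_norm}): setting $\rk{\xi} = 1$ matches the Wolpert and $\mathscr{O}_S(\Delta)$ factors termwise, while the Bismut--Gillet--Soul\'e isomorphism $\det(R^{\bullet}\pi_* \, \cdot \,)^{-1} \cong \lambda(j^*\, \cdot \,)$ recalled in the introduction identifies $\det(R^{\bullet}\Omega_*(\omega_{g,m}(D)^n))^{-12}$ with $\lambda(j^*(\omega_{g,m}(D)^n))^{12}$. Hence $\mathscr{L}_n = \lambda_{g,m}^{H,n}$ and $\norm{\cdot}^{\mathscr{L}_n} = \norm{\cdot}^{H,n}_{g,m}$, and it remains only to invoke the theorems.

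Since $\modulcomp_{g, m}$ carries only an orbifold (Deligne--Mumford stack) structure while the main theorems are stated for complex manifolds, the next step is to reduce to the manifold case. I would pass to a finite manifold cover $\modulcomp_{g, m}[\ell]$ obtained by adding a level-$\ell$ structure with $\ell \geq 3$ (equivalently, work in the local uniformizing charts of Section \ref{sect_orb_str}), over which $\Omega$ is a genuine family of complex curves with at worst nodal singular fibers, the boundary $\partial\modul_{g,m}$ pulls back to a normal crossing divisor, and the marked points give disjoint holomorphic sections avoiding the nodes. Because niceness with singularities along a divisor, continuity, and smoothness are all local conditions invariant under the deck transformations, the assertions descend from the cover to $\modulcomp_{g,m}$.

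On the cover I would verify Assumptions \textbf{S2} and \textbf{S3}. The boundary is normal crossing by Deligne--Mumford, $h^{\xi}$ is trivially smooth, and by Proposition \ref{prop_hyp_metr_assump} the hyperbolic norm $\norm{\cdot}^{\rm{hyp}}_{g,m}$ is good in the sense of Mumford---hence pre-log-log---with singularities along $\pi^{-1}(\Delta) \cup D_{g,m}$, and its curvature coupling has the continuity and log-log growth required in \textbf{S3}. Thus both \textbf{S2} and \textbf{S3} hold, so Theorem \ref{thm_cont}2 gives that $\norm{\cdot}^{H,n}_{g,m}$ is nice with singularities along $\partial\modul_{g,m}$, while Theorem \ref{thm_cont}3 gives its continuity over $\modulcomp_{g,m}$.

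Finally, for smoothness over the open part $\modul_{g,m}$, I would restrict to the complement of $\partial\modul_{g,m}$, where $\Omega$ is a submersion with no singular fibers and the fibers are complete hyperbolic surfaces whose only degeneration---the cusps along $D_{g,m}$---occurs in the fiber directions. Consequently the characteristic current (\ref{eq_rhs_rrh}) is smooth over $\modul_{g,m}$, and Corollary \ref{corrol_smooth} (applied over $\modul_{g,m}$) yields smoothness of $\norm{\cdot}^{H,n}_{g,m}$ there. The main obstacle is precisely this smoothness across the cusps: unlike the compact Bismut--Gillet--Soul\'e setting \cite{BGS3}, the fibers are non-compact, so the $L^2$-metric and the analytic torsion entering the Quillen norm must be controlled uniformly near the cusps; I would lean on the regularity and smoothness results for families of cusped surfaces without singular fibers established in the first paper of the series \cite{FinII1}, which is exactly what underwrites Corollary \ref{corrol_smooth} in this case.
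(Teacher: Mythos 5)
Your reduction to the general theorems is exactly the paper's route: the identification $\mathscr{L}_n = \lambda^{H,n}_{g,m}$, $\norm{\cdot}^{\mathscr{L}_n} = \norm{\cdot}^{H,n}_{g,m}$ with $\xi$ trivial, the remark that all statements are local so they apply in orbifold charts (Proposition \ref{prop_univ_family}), and the deduction of niceness and continuity from Theorem \ref{thm_cont}2, \ref{thm_cont}3 via Proposition \ref{prop_hyp_metr_assump} all match the paper's proof. The gap is in your last step, smoothness over $\modul_{g,m}$. You assert that the current (\ref{eq_rhs_rrh}) is smooth over $\modul_{g,m}$ because $\Omega$ is there a submersion and the only degeneration (the cusps along $D_{g,m}$) ``occurs in the fiber directions'', and you then invoke Corollary \ref{corrol_smooth}. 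But smoothness of a fiber integral requires control of the \emph{horizontal derivatives of all orders} of the integrand near $D_{g,m}$: this is precisely the difference between Assumption \textbf{S1} (pre-log-log of infinite order, which feeds Proposition \ref{prop_int_loglog}a) and produces a smooth/very nice push-forward) and Assumptions \textbf{S2}, \textbf{S3} (order $0$). Proposition \ref{prop_hyp_metr_assump} establishes only \textbf{S2} and \textbf{S3} for the hyperbolic metric; goodness or pre-log-log of order $0$ yields that $\partial$, $\dbar$, $\partial\dbar$ of the push-forward are forms of log-log growth (i.e.\ niceness), not that the push-forward is $\ccal^{\infty}$. So ``submersion plus fiberwise cusps $\Rightarrow$ smooth push-forward'' does not follow from what is proved, and no infinite-order estimate for the family hyperbolic metric along $D_{g,m}$ is available in the paper.

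The paper closes this gap with Wolpert's identity (Theorem \ref{thm_wolp_identity}): for trivial $\xi$ the degree-$4$ part of the integrand in (\ref{eq_rhs_rrh}) equals $\frac{6n^2-6n+1}{12}\, c_1\big(\omega_{g,m}(D), \norm{\cdot}_{g,m}^{\rm{hyp}}\big)^2$, so the push-forward equals $\frac{6n^2-6n+1}{12\pi^2}\,\omega_{WP}$, and Wolpert's theorem asserts exactly that this is a \emph{smooth} form over $\modul_{g,m}$. With that input, the elliptic-regularity argument of Corollary \ref{corrol_smooth} (applied over $\modul_{g,m}$, where $\Delta$ is empty, to the continuous potential furnished by Theorem \ref{thm_cont}3) gives smoothness of $\norm{\cdot}^{H,n}_{g,m}$; this is what the paper means by citing Theorems \ref{thm_cont}, \ref{thm_curv} and Proposition \ref{prop_hyp_metr_assump} together. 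A secondary inaccuracy: Corollary \ref{corrol_smooth} is not ``underwritten'' by the results of \cite{FinII1}; its proof rests on elliptic regularity \cite[Corollary 8.11]{GilTrudBook}. Your argument becomes complete once you replace the heuristic smoothness claim by the appeal to Theorem \ref{thm_wolp_identity}.
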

	By Corollary \ref{cor_cont_hodge} and Remark \ref{rem_nice_chern}, we see that the first Chern form of $(\lambda_{g, m}^{H, n}, (\, \norm{\cdot}^{H, n}_{g, m})^{2})$ is well-defined as a current over $\modulcomp_{g, m}$.
	Let's state the curvature theorem in this context.
	\begin{cor}\label{cor_form_TZ_type}
		The form $\omega_{WP}$ has log-log growth along the boundary $\partial \mathscr{M}_{g, m}$ of the moduli space of curves. We denote by $[\omega_{WP}]_{L^1}$ its $L^1$-extension to $\modulcomp_{g, m}$.
		It is closed and the following identity holds for currents over $\modulcomp_{g, m}$
		\begin{equation}
			c_1 
			\Big(
				\lambda_{g, m}^{H, n}
				,
				(\, \norm{\cdot}^{H, n}_{g, m})^2
			\Big)
			=
			- \pi^{-2} \Big( 6n^2 - 6n + 1\Big) \big[\omega_{WP}\big]_{L^1}.
		\end{equation}
	\end{cor}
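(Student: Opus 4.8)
The plan is to specialize the curvature theorem (Theorem \ref{thm_curv}) to the universal family $\Omega : \univcurvcomp_{g,m} \to \modulcomp_{g,m}$, with trivial line bundle $(\xi, h^{\xi})$ and the hyperbolic norm $\norm{\cdot}^{\rm{hyp}}_{g,m}$ on $\omega_{g,m}(D)$, and then to identify the resulting pushforward current with the Weil-Petersson form. First I would work on an atlas of the Deligne-Mumford stack, or pass to a finite level cover, as in Section \ref{sect_orb_str}, so that $\Omega$ becomes an honest family of curves with at most double-point degenerations and $\partial \modul_{g,m}$ becomes a normal crossings divisor. By Proposition \ref{prop_hyp_metr_assump} the norm $\norm{\cdot}^{\rm{hyp}}_{g,m}$ is good, hence pre-log-log, so Assumption \textbf{S2} holds and Theorem \ref{thm_curv} applies. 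Since $(\xi, h^{\xi})$ is trivial we have $\ch(\xi, h^{\xi}) = 1$ and $\sigma_i^{*} c_1(\xi, h^{\xi}) = 0$, so the second sum in \eqref{eq_thm_curv} drops out; using the identifications $\mathscr{L}_n = \lambda_{g,m}^{H,n}$ and $\norm{\cdot}^{\mathscr{L}_n} = \norm{\cdot}^{H,n}_{g,m}$ coming from Construction \ref{const_norm_div} and \eqref{quil_wol_norm}--\eqref{eq_renorm_hodge_norm}, Theorem \ref{thm_curv} yields
\[
c_1\big(\lambda_{g,m}^{H,n}, (\norm{\cdot}^{H,n}_{g,m})^2\big) = -12\,\pi_*\big[\td(\omega_{g,m}(D)^{-1}, (\norm{\cdot}^{\rm{hyp}}_{g,m})^{-2})\,\ch(\omega_{g,m}(D)^n, (\norm{\cdot}^{\rm{hyp}}_{g,m})^{2n})\big]^{[4]}.
\]

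Writing $c := c_1(\omega_{g,m}(D), (\norm{\cdot}^{\rm{hyp}}_{g,m})^2)$, so that the first Chern forms of $\omega_{g,m}(D)^{-1}$ and $\omega_{g,m}(D)^n$ are $-c$ and $nc$, a direct expansion of $\td(x) = 1 + x/2 + x^2/12 + \cdots$ and $\ch(x) = e^{x}$ gives, in degree $4$,
\[
\big[\td(\cdots)\,\ch(\cdots)\big]^{[4]} = \Big(\tfrac{1}{12} - \tfrac{n}{2} + \tfrac{n^2}{2}\Big) c^2 = \tfrac{6n^2 - 6n + 1}{12}\, c^2,
\]
so the right-hand side collapses to $-(6n^2 - 6n + 1)\,\pi_*[c^2]$. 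It therefore remains to prove $\pi_*[c^2] = \pi^{-2}[\omega_{WP}]_{L^1}$, to establish the regularity of $\omega_{WP}$, and to promote the equality to an identity of currents over the compactification.

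Over the open locus $\modul_{g,m}$ the norm is smooth and the identity $\pi_*[c^2] = \pi^{-2}\,\omega_{WP}$ is Wolpert's theorem, its pointed version being the content of Takhtajan-Zograf \cite{TakZog}. I would re-derive it from two inputs. First, Gauss-Bonnet for the cusped hyperbolic metric of curvature $-1$ gives $\int_{\rm{fiber}} c = 2g - 2 + m$ and, pointwise on each fiber, $c|_{\rm{fiber}} = \tfrac{1}{2\pi}\omega_{\rm{hyp}}$. Second, the horizontal and mixed parts of $c$ encode the variation of the hyperbolic metric in moduli directions, so that integrating the mixed vertical-horizontal component of $c^2$ over the fiber reproduces exactly the $L^2$-pairing of harmonic Beltrami differentials, that is $\omega_{WP}$, the normalization fixing the constant $\pi^{-2}$. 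Here one must check that the cusps contribute no residual boundary term to the fiber integral; this is guaranteed by the precise decay of $\norm{\cdot}^{\rm{hyp}}_{g,m}$ near $D_{g,m}$ coming from the regularity along $D_{g,m}$ in Assumption \textbf{S2}.

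Finally I would pass to currents. By Theorem \ref{thm_curv}, applied under \textbf{S2}, the form $\pi_*[c^2]$ (a nonzero multiple of the current \eqref{eq_rhs_rrh}, since $6n^2-6n+1 \neq 0$ for integer $n$) has log-log growth along $\partial \modul_{g,m}$ and its trivial $L^1$-extension is closed. Transporting the open-locus identity of the previous paragraph, $\omega_{WP} = \pi^2\,\pi_*[c^2]$ on $\modul_{g,m}$ inherits log-log growth, its trivial extension $[\omega_{WP}]_{L^1} = \pi^2\,(\pi_*[c^2])_{L^1}$ is closed, and since two $L^1$ forms agreeing off the measure-zero set $\partial \modul_{g,m}$ define the same current, the identity $\pi_*[c^2] = \pi^{-2}[\omega_{WP}]_{L^1}$ holds over $\modulcomp_{g,m}$. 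Combined with the characteristic-class computation this gives the corollary. The main obstacle is the fiber-integral identification of the previous paragraph: pinning down Wolpert's constant in the cusped setting and verifying that the marked points produce no contribution to $\pi_*[c^2]$, for which the fine asymptotics of the hyperbolic norm near $D_{g,m}$ established earlier are essential.
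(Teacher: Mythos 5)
Your proposal follows essentially the same route as the paper: apply Theorem \ref{thm_curv} under Assumption \textbf{S2} (supplied by Proposition \ref{prop_hyp_metr_assump}) to the universal family with trivial $(\xi, h^{\xi})$, so that $\ch(\xi,h^{\xi})=1$ and the $\sigma_i^{*}c_1(\xi,h^{\xi})$ terms vanish, expand the degree-four class to extract the factor $(6n^2-6n+1)/12$ (your computation is correct), and pass to trivial $L^1$-extensions using the closedness and log-log growth furnished by Theorem \ref{thm_curv}. The one place you diverge is the key identification $\Omega_*\big[c_1(\omega_{g,m}(D), \norm{\cdot}_{g,m}^{\rm{hyp}})^2\big]^{[4]} = \pi^{-2}\omega_{WP}$ over $\modul_{g,m}$: you propose to re-derive it from Gauss--Bonnet plus a computation with harmonic Beltrami differentials, but what you write there (``integrating the mixed vertical-horizontal component of $c^2$ over the fiber reproduces exactly the $L^2$-pairing of harmonic Beltrami differentials, the normalization fixing the constant'') is a restatement of the desired identity, not a derivation of it --- this is precisely the content of Wolpert's theorem, and pinning down the constant in the cusped, pointed setting is the genuinely hard analytic input. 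The paper sidesteps this entirely by quoting the identity as Theorem \ref{thm_wolp_identity} (Wolpert \cite{WolpChForm86}, cf. \cite{FreixTh}), already stated in Section \ref{sect_orb_str}. If you replace your re-derivation sketch by a citation of Theorem \ref{thm_wolp_identity}, your argument is complete and coincides with the paper's proof; as written, that middle paragraph is the only real gap.
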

	\begin{rem}
		a) 	
		By the results of Wolpert \cite[Theorem 5]{Wol07}, \cite[Corollary 5.11]{WolpChForm86}, Theorem \ref{thm_curv} gives the extension of the curvature theorem of Takhatajan-Zograf \cite[Theorem 1]{TakZog} from $\modul_{g, m}$ to $\modulcomp_{g, m}$. Our methods are very different from the methods of Takhatajan-Zograf, as we don't use the  variational approach with Beltrami differentials. 
		\\ \hspace*{0.5cm} b) 
		The fact that the Weil-Petersson form has log-log growth along the boundary $\partial \mathscr{M}_{g, m}$ also follows from an old result of Masur \cite[Theorem 1]{MasurWP}. See also the recent article of Melrose-Zhu \cite{MelZhu} for related results.
	\end{rem}
	Let's state the following corollaries, regarding the Weil-Petersson form. 
	\begin{cor}\label{cor_wp_cont_pot}
		The Weil-Petersson form $\omega_{WP}$ has a local continuous potential.
	\end{cor}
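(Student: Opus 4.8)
The plan is to realise $\omega_{WP}$, up to a nonzero multiplicative constant, as the first Chern current of the metrized Hodge line bundle $(\lambda_{g,m}^{H,n}, (\norm{\cdot}^{H,n}_{g,m})^2)$, and then to read off a continuous local potential from the continuity of this norm. First I would apply the machinery developed above to the universal family $\Omega : \univcurvcomp_{g, m} \to \modulcomp_{g, m}$, equipped with the hyperbolic norm $\norm{\cdot}_{g, m}^{\rm{hyp}}$ and with \emph{trivial} coefficient bundle $(\xi, h^{\xi})$. By Proposition \ref{prop_hyp_metr_assump} this data satisfies both Assumption \textbf{S2} and Assumption \textbf{S3}, so Theorem \ref{thm_curv}, Corollary \ref{cor_cont_pot}, Corollary \ref{cor_form_TZ_type} and Corollary \ref{cor_cont_hodge} are all at our disposal in this setting.

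Since $\xi$ is trivial we have $c_1(\xi, h^{\xi}) = 0$, so the boundary terms $\sum_i \sigma_i^{*} c_1(\xi, h^{\xi})$ in Theorem \ref{thm_curv} vanish, and the current (\ref{eq_rhs_rrh}) coincides, after multiplication by $-12$, with $c_1\big(\lambda_{g,m}^{H,n}, (\norm{\cdot}^{H,n}_{g,m})^2\big)$. By Corollary \ref{cor_form_TZ_type} the latter equals $-\pi^{-2}(6n^2 - 6n + 1)\,[\omega_{WP}]_{L^1}$. The quadratic $6n^2 - 6n + 1$ has roots $\tfrac{1}{2} \pm \tfrac{1}{2\sqrt{3}} \in (0,1)$, hence it is strictly positive for every integer $n \leq 0$; fixing any such $n$ (for instance $n = 0$, where the factor is $1$) I may therefore write $[\omega_{WP}]_{L^1} = -\tfrac{\pi^2}{6n^2 - 6n + 1}\, c_1\big(\lambda_{g,m}^{H,n}, (\norm{\cdot}^{H,n}_{g,m})^2\big)$ as currents over $\modulcomp_{g,m}$.

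It then remains to exhibit a continuous local potential for the right-hand side, which is exactly the content of Corollary \ref{cor_cont_pot}, whose hypotheses \textbf{S2} and \textbf{S3} have just been verified. Concretely, around any point of $\modulcomp_{g,m}$ one fixes a local holomorphic frame $\upsilon$ of $\lambda_{g,m}^{H,n}$, and by Remark \ref{rem_nice_chern}b) the logarithm of the $\norm{\cdot}^{H,n}_{g,m}$-norm of $\upsilon$ (divided by $2\pi\imun$, and squared inside the logarithm) is a local potential for this Chern current. By Corollary \ref{cor_cont_hodge} the norm $\norm{\cdot}^{H,n}_{g,m}$ extends continuously over $\modulcomp_{g,m}$, so this potential is continuous up to and across the boundary $\partial \modul_{g, m}$. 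Rescaling by the constant of the previous paragraph produces a continuous local potential for $[\omega_{WP}]_{L^1}$, and hence for $\omega_{WP}$, as claimed.

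The analytic heart of the matter is already packaged into the earlier results, so the only points needing care here are, first, that the hyperbolic metric genuinely satisfies \textbf{S2} and \textbf{S3} \emph{simultaneously} (this is what makes Corollary \ref{cor_cont_pot} applicable), and second, that the potential furnished by that corollary is continuous across $\partial \modul_{g,m}$ rather than merely $L^1$ --- this continuity being precisely the statement of Corollary \ref{cor_cont_hodge}. The only genuinely new input is the elementary observation that $6n^2 - 6n + 1$ never vanishes at integers, which allows me to divide through and transport the continuous potential from the Chern current of $\lambda_{g,m}^{H,n}$ to $\omega_{WP}$.
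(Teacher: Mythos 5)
Your proposal is correct and follows essentially the same route as the paper: the paper's proof likewise verifies Assumptions \textbf{S2}, \textbf{S3} via Proposition \ref{prop_hyp_metr_assump}, identifies $\omega_{WP}$ (up to the constant involving $6n^2-6n+1$, via Wolpert's identity, Theorem \ref{thm_wolp_identity}) with the Chern current of the renormalized determinant line bundle, and reads off the continuous local potential from Corollary \ref{cor_cont_pot}. Your detour through Corollary \ref{cor_form_TZ_type} and Corollary \ref{cor_cont_hodge} rather than citing Theorem \ref{thm_wolp_identity} directly is only a difference in bookkeeping, since those corollaries package exactly the same ingredients.
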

	\begin{rem}
		Corollary \ref{cor_wp_cont_pot} was originally proved by Wolpert in \cite[\S 2]{WolpPos85}.
		He later used it to give a complex-analytic proof of the ampleness of Weil-Petersson form.  
		Our method of the proof is constructive, and doesn't use $\partial \overline{\partial}$-lemma, thus, it is very different from the non-constructive proof in \cite[\S 2]{WolpPos85}. 
	\end{rem}
	\begin{cor}\label{cor_wp_vol}
		We can decompose the Weil-Petersson form $\omega_{WP}$ as 
		\begin{equation}\label{eq_wolp_decomp}
			\omega_{WP} = - \pi^2 \alpha + d \beta,
		\end{equation}
		for some smooth forms $\alpha, \beta$ over $\modul_{g, m}$.
		Moreover, there is a smooth Hermitian metric $h_{sm}$ on $\lambda_{g, m}^{H, 0}$ over $\modulcomp_{g, m}$ such that
		\begin{equation}
			\alpha = c_1(\lambda_{g, m}^{H, 0}, h_{sm}),
		\end{equation}
		and $\beta$, $d \beta$ have log-log growth along $\partial \modulcomp_{g, m}$.
		In particular, we have
		\begin{equation}\label{eq_wolp_volume}
			\int_{\modul_{g, m}}\omega_{WP}^{\wedge(3g - 3 + m)} 
			= 
			(-\pi^2)^{3g - 3 + m} \int_{\modulcomp_{g, m}} c_1(\lambda_{g, m}^{H, 0})^{\wedge(3g - 3 + m)} .
		\end{equation}
		As a consequence, we see that the left-hand side of (\ref{eq_wolp_volume}) is a rational multiple of a power of $\pi$.
	\end{cor}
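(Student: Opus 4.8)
The plan is to compare the singular Quillen-type metric $\norm{\cdot}^{H, 0}_{g, m}$ on the Hodge bundle $\lambda_{g, m}^{H, 0}$ with an arbitrary smooth one and to feed the resulting identity of currents into a Stokes argument controlled by the log-log estimates of Corollary \ref{cor_cont_hodge}. First I would fix any smooth Hermitian metric $h_{sm}$ on the holomorphic line bundle $\lambda_{g, m}^{H, 0}$ over the compact orbifold $\modulcomp_{g, m}$ (such a metric exists by an invariant partition-of-unity argument) and set $\alpha := c_1(\lambda_{g, m}^{H, 0}, h_{sm})$, a smooth closed real $(1,1)$-form representing $c_1(\lambda_{g, m}^{H, 0})$. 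The globally defined log-ratio $\phi := \ln\big((\norm{\cdot}^{H, 0}_{g, m})^2 / h_{sm}\big)$ is, by Corollary \ref{cor_cont_hodge}, continuous on $\modulcomp_{g, m}$, smooth on $\modul_{g, m}$, and nice with singularities along $\partial \modul_{g, m}$ in the sense of Definition \ref{defn_nice}. Specializing the curvature formula of Corollary \ref{cor_form_TZ_type} to $n = 0$ (where $6n^2 - 6n + 1 = 1$) gives $c_1(\lambda_{g, m}^{H, 0}, (\norm{\cdot}^{H, 0}_{g, m})^2) = - \pi^{-2} [\omega_{WP}]_{L^1}$, and since $c_1(\lambda_{g, m}^{H, 0}, (\norm{\cdot}^{H, 0}_{g, m})^2) - \alpha = \frac{1}{2 \pi \imun} \partial \dbar \phi$ by the definition of the Chern current, over $\modul_{g, m}$ we obtain
\[
	\omega_{WP} = - \pi^2 \alpha - \frac{\pi}{2 \imun} \partial \dbar \phi .
\]
Using the identity $\partial \dbar \phi = \tfrac{1}{2} d\big( (\dbar - \partial)\phi \big)$, I would set $\beta := - \frac{\pi}{4 \imun} (\dbar - \partial) \phi$, a real form on $\modul_{g, m}$, so that $\omega_{WP} = - \pi^2 \alpha + d \beta$; it is smooth over $\modul_{g, m}$ because $\phi$ is. As $\phi$ is nice, the forms $\partial \phi$, $\dbar \phi$, $\partial \dbar \phi$ have log-log growth along $\partial \modul_{g, m}$, hence so do $\beta$ and $d \beta$. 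This proves the first two assertions.

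For the volume identity, write $d := 3g - 3 + m = \dim_{\comp} \modul_{g, m}$ and expand $\omega_{WP}^{\wedge d} = (- \pi^2 \alpha + d\beta)^{\wedge d}$. Using $d \alpha = 0$ and $d(d\beta) = 0$, every term containing a factor $d\beta$ is exact, of the form $d \gamma_k$ with $\gamma_k$ a wedge of the smooth form $\alpha$ with $\beta$ and $d\beta$; collecting them gives $\omega_{WP}^{\wedge d} = (-\pi^2)^d \alpha^{\wedge d} + d \gamma$, where both $\gamma$ and $d\gamma = \omega_{WP}^{\wedge d} - (-\pi^2)^d \alpha^{\wedge d}$ have log-log growth (wedge products of forms of log-log growth again have log-log growth, and $\omega_{WP}$ has log-log growth by Corollary \ref{cor_form_TZ_type}). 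The decisive step is then $\int_{\modul_{g, m}} d \gamma = 0$: since $\gamma$ and $d\gamma$ have log-log growth, the potential theory of Section \ref{sect_pot_th} shows that the trivial $L^1$-extensions satisfy $d [\gamma]_{L^1} = [d \gamma]_{L^1}$ with no residual boundary current, and Stokes for currents on the compact $\modulcomp_{g, m}$ forces $\int_{\modulcomp_{g, m}} [d\gamma]_{L^1} = 0$. Since $\alpha$ is smooth on $\modulcomp_{g, m}$ and represents $c_1(\lambda_{g, m}^{H, 0})$, this yields
\[
	\int_{\modul_{g, m}} \omega_{WP}^{\wedge d} = (-\pi^2)^d \int_{\modulcomp_{g, m}} \alpha^{\wedge d} = (-\pi^2)^d \int_{\modulcomp_{g, m}} c_1(\lambda_{g, m}^{H, 0})^{\wedge d} .
\]
Finally, $\int_{\modulcomp_{g, m}} c_1(\lambda_{g, m}^{H, 0})^{\wedge d}$ is the top self-intersection number of a line bundle on the projective orbifold $\modulcomp_{g, m}$, hence a rational number, so the left-hand side is a rational multiple of $\pi^{2d}$, a power of $\pi$.

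The main obstacle is the Stokes step $\int_{\modul_{g, m}} d \gamma = 0$: a priori the $L^1$-extension of an exact form with mild singularities can acquire a boundary residue under $d$ (as for $\partial \dbar \ln \abs{z}^2$), and ruling this out is exactly where the pre-log-log calculus enters. Concretely, one must verify that the niceness of $\phi$ granted by Corollary \ref{cor_cont_hodge} propagates through the wedge products defining $\gamma$, so that $[\gamma]_{L^1}$ carries no residue, and that the integrability afforded by log-log growth makes the contributions of the shrinking boundary tubes around $\partial \modul_{g, m}$ tend to zero; this is precisely the content of the potential theory developed in Section \ref{sect_pot_th}.
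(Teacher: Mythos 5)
Your proposal is correct in substance and follows essentially the same route as the paper: the decomposition (\ref{eq_wolp_decomp}) is obtained exactly as in the paper's proof (compare the nice metric of Corollary \ref{cor_cont_hodge} with an arbitrary smooth metric $h_{sm}$, apply Corollary \ref{cor_form_TZ_type} at $n=0$, and take $\beta$ proportional to $(\dbar-\partial)\phi$), and the volume identity is reduced, for $N:=3g-3+m$, to the vanishing of $\int_{\modul_{g,m}}\alpha^{N-i}(d\beta)^{i}$ for $i\geq 1$ via Stokes. The one repair needed concerns where you locate the no-residue step. Section \ref{sect_pot_th} deals with closed $(1,1)$-currents and $\partial\dbar$-potentials of functions (Lemma \ref{lem_cur_ext}, Proposition \ref{prop_pot_th}); it contains no statement of the form $d[\gamma]_{L^1}=[d\gamma]_{L^1}$ for the degree-$(2N-1)$ forms $\gamma=\beta\wedge\alpha^{N-i}\wedge(d\beta)^{i-1}$ that you need. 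Inside the paper, that role is played by Proposition \ref{prop_bur_kr_kuhn}b) (Burgos Gil--Kramer--K\"uhn), whose hypothesis is that $\gamma$ be \emph{pre-log-log}, not merely that $\gamma$ and $d\gamma$ have log-log growth; the stronger property does hold here, because $\alpha$ and $d\beta$ are $\partial$- and $\dbar$-closed and $\partial\beta$, $\dbar\beta$ are constant multiples of $\partial\dbar\phi$ (which has log-log growth by niceness of $\phi$), while $\partial\dbar\gamma=0$ for degree reasons --- but this must be said, since log-log growth of $\gamma$ and $d\gamma$ alone does not exclude a boundary residue. The paper itself bypasses this general machinery: it proves the vanishing by Stokes on the complement of an $\epsilon$-tube around $\partial\modul_{g,m}$, together with the explicit Mumford-type estimate (\ref{eq_int_vol_222}) showing that the boundary integrals $\int_{\partial B(\partial\modul_{g,m},\epsilon)}\beta\,\alpha^{N-i}(d\beta)^{i-1}$ tend to zero --- which is precisely the shrinking-tube argument you sketch in your final sentence. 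With the citation corrected (or replaced by that explicit estimate), your proof is complete.
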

	\begin{rem}
		a) The decomposition (\ref{eq_wolp_decomp}) can be also deduced from studying the singularities of the Deligne metric on the Deligne-Weil product, as it was done implicitly by Freixas in his PhD thesis \cite[Theorem 5.1.3]{FreixTh}.
		\par b) The identity (\ref{eq_wolp_volume}) was originally proved by Wolpert. In \cite{WolpPos85} he showed that $\omega_{WP}$ extends smoothly to the boundary $\partial \modul_{g, m}$ with respect to the Fenchel-Nielsen coordinates, and in \cite{WolpVolCoh} he showed that the De Rham cohomology class $[\pi^{-2} \omega_{WP}]_{DR}$ of $\pi^{-2} \omega_{WP}$ lies in $H^2(\modulcomp_{g, m}, \mathbb{Q})$, and it is equal to $c_1(\lambda_{g, m}^{H, 0})$. Since $\omega_{WP}$ is originally defined in Bers coordinates, defining the complex structure on $\modulcomp_{g, m}$, our proof of (\ref{eq_wolp_volume}) is different.
	\end{rem}
	Finally, let's describe our last application. For this, let's recall that Deligne in \cite[\S 7]{DeligDet} defined a holomorphic line bundle $\langle \omega_{g, m}(D), \omega_{g, m}(D) \rangle$ over $\modulcomp_{g,m}$, which is now called the Deligne-Weil product. For $m = 0$, in \cite[\S 8]{DeligDet}, he endowed it with the Hermitian norm $\norm{\cdot}^{\rm{Del}}_{g, m}$, which is now called the Deligne norm. Later, Freixas in his PhD thesis \cite[Theorem 5.1.3]{FreixTh} generalized the construction of this norm for $m \in \nat$. The natural isomorphism
	\begin{equation}\label{eq_del_isom}
		\lambda_{g, m}^{H, n} \to \big \langle \omega_{g, m}(D), \omega_{g, m}(D) \big \rangle^{-(6 n^2 - 6n + 1)}
	\end{equation}
	was constructed by Deligne in \cite[Théorème 9.9]{DeligDet} for $m = 0$. Then Freixas in \cite[Theorem 3.10]{FreixasARR} extended it for $m \in \nat$. Those isomorphisms are canonical and can be characterized uniquely up to a multiplication by $-1$ as the morphisms which respect $\integ$-structure of the corresponding line bundles, see \cite{Knudsen1976}, \cite{Knud2}, \cite{DeligDet}, \cite{FreixasARR}.
	\begin{cor}\label{cor_del_iso}
		The isomorphism (\ref{eq_del_isom}) is an isometry up to a constant when the left-hand-side is endowed with the norm (\ref{eq_renorm_hodge_norm}) and the right-hand side is endowed with the norm induced by $\norm{\cdot}^{\rm{Del}}_{g, m}$.
	\end{cor}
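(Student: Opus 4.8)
The plan is to show that, under the canonical isomorphism (\ref{eq_del_isom}), the two Hermitian norms have the same first Chern current over $\modulcomp_{g,m}$, and then to deduce that their quotient is a pluriharmonic function on the compact space $\modulcomp_{g,m}$, hence constant. Concretely, denote by $\Phi$ the isomorphism (\ref{eq_del_isom}) and let $\phi > 0$ be the function over $\modul_{g,m}$ defined by $\Phi^* \big( \norm{\cdot}^{\rm{Del}}_{g,m} \big)^{-(6n^2 - 6n + 1)} = \phi \cdot \norm{\cdot}^{H,n}_{g,m}$. Both norms are smooth over $\modul_{g,m}$ (for the left-hand side this is Corollary \ref{cor_cont_hodge}; for the Deligne norm it follows from Deligne's construction \cite[\S 8]{DeligDet} and its extension by Freixas \cite[Theorem 5.1.3]{FreixTh}), so $\phi$ is smooth and positive there, and the assertion is equivalent to $\phi$ being constant.

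First I would identify the two curvature currents over $\modulcomp_{g,m}$. For the left-hand side, Corollary \ref{cor_form_TZ_type} gives directly $c_1 \big(\lambda_{g,m}^{H,n}, (\norm{\cdot}^{H,n}_{g,m})^2 \big) = -\pi^{-2}(6n^2 - 6n + 1)[\wpform]_{L^1}$. For the right-hand side, the curvature of the Deligne norm on the Deligne--Weil product is computed by Deligne's pairing formula \cite[\S 8]{DeligDet}, extended to $m > 0$ by Freixas \cite[Theorem 5.1.3]{FreixTh}, as the fibre integral $\Omega_* \big[ c_1(\omega_{g,m}(D), (\norm{\cdot}_{g,m}^{\rm{hyp}})^2)^2 \big]$. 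Since the restriction of $\norm{\cdot}_{g,m}^{\rm{hyp}}$ to each fibre is the hyperbolic metric of constant curvature $-1$, the identification of this fibre integral with the Weil--Petersson form, due to Wolpert \cite[Theorem 5]{Wol07}, \cite[Corollary 5.11]{WolpChForm86}, shows that it equals $\pi^{-2}[\wpform]_{L^1}$. Raising to the power $-(6n^2 - 6n + 1)$ yields that the right-hand side of (\ref{eq_del_isom}) carries the same first Chern current $-\pi^{-2}(6n^2 - 6n + 1)[\wpform]_{L^1}$ over $\modulcomp_{g,m}$ as the left-hand side.

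Equality of these two currents means, in view of the definition (\ref{eq_c_1_log}), that $\partial \dbar [\ln \phi]_{L^1} = 0$ as currents over $\modulcomp_{g,m}$; in particular $\ln \phi$ is pluriharmonic over $\modul_{g,m}$. The main point, and the step I expect to be the main obstacle, is to upgrade this to a global statement across $\partial \modul_{g,m}$. Both norms are controlled near the boundary: $\norm{\cdot}^{H,n}_{g,m}$ is nice with singularities along $\partial \modul_{g,m}$ by Corollary \ref{cor_cont_hodge}, and the Deligne norm enjoys an analogous regularity by Freixas \cite[Theorem 5.1.3]{FreixTh}; hence $\ln \phi$ has at most log-log growth along $\partial \modul_{g,m}$ and lies in $L^1$. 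Here I would invoke the potential theory of log-log currents from Section \ref{sect_pot_th} to guarantee that the identity $\partial \dbar [\ln \phi]_{L^1} = 0$ carries no residual mass on the normal crossings divisor $\partial \modul_{g,m}$, so that $\ln \phi$ is a genuinely pluriharmonic $L^1$-function on all of $\modulcomp_{g,m}$; elliptic regularity then makes it smooth.

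Finally, $\modulcomp_{g,m}$ is compact, so the global pluriharmonic function $\ln \phi$ attains its maximum, and the maximum principle forces it to be constant. On the orbifold $\modulcomp_{g,m}$ one runs this argument on a finite manifold cover, or in local uniformizing charts, the function being invariant. Therefore $\phi$ is a positive constant, which is precisely the statement that (\ref{eq_del_isom}) is an isometry up to a constant. By contrast with this last removable-singularity step, the curvature computation itself is a routine combination of Corollary \ref{cor_form_TZ_type}, Deligne's pairing formula and Wolpert's identification of the fibre integral with the Weil--Petersson form.
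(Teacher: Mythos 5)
Your proposal is correct and follows essentially the same route as the paper: the curvature of the Hodge side via Corollary \ref{cor_form_TZ_type}, the curvature formula and boundary niceness of the Deligne norm via Deligne and Freixas, the log-log potential theory of Section \ref{sect_pot_th} to rule out residual mass on $\partial \modul_{g,m}$, and pluriharmonicity plus compactness of $\modulcomp_{g,m}$ to conclude constancy. The only cosmetic difference is that the paper applies Proposition \ref{prop_pot_th} to the Deligne norm itself, identifying its first Chern current over $\modulcomp_{g,m}$ with $\pi^{-2}\big[\omega_{WP}\big]_{L^1}$ before comparing with Corollary \ref{cor_form_TZ_type}, whereas you apply it directly to the quotient $\ln \phi$; the two are equivalent.
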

	\begin{rem}
		For $m = 0$, this was proved by Deligne \cite[Théorème 11.4]{DeligDet} and Gillet-Soulé in \cite[Proposition 1.5.2]{GilSoulTodd}, \cite{GilSoul92}. For $m \in \nat$, $n = 0$, this was proved by Freixas in \cite[Theorem 6.1]{FreixasARR}. We note that Gillet-Soulé and Freixas also explicitly computed the constant up to which this isometry holds. In \cite{FinII4}, we extend their result for all families of curves with cusps.
	\end{rem}
	\begin{sloppypar}	
	 	Finally, let's mention that in the related work of Albin-Rochon \cite{AlbRoch}, authors obtain local family index formula for the direct image $R^{\bullet} \pi_* (\omega_{g, m}(D)^n)$, which is a holomorphic vector bundle, over $\mathscr{M}_{g,m}$. 
	 	In our situation, the cohomology of the fiber may vary, and similarly to \cite{BGS1}, \cite{BGS2}, \cite{BGS3}, we work only with the first Chern form.
	\end{sloppypar}	
	\par Now let's describe the plan of this article. 
	In Section 2, we recall the basic definitions of the subject, we recall the definition of the Quillen metric, different notions of singularities of vector bundles and Bott-Chern forms. We also see in Section \ref{sect_sing_metr} how Corollary \ref{cor_ext_character} follows from Theorem \ref{thm_cont}.
	In Section 3, we prove an analytic proposition, which studies the singularities of a push-forward of a differential form in f.s.o. Then we use it to prove Theorem \ref{thm_cont}. 
	In Section 4, we develop potential theory for currents of log-log growth and then we prove Theorem \ref{thm_curv} and Corollaries \ref{cor_cont_pot}, \ref{corrol_smooth}.
	In Section 5, we recall the necessary prerequisites related to the moduli space of pointed stable curves and prove Corollaries \ref{cor_cont_hodge}, \ref{cor_form_TZ_type}, \ref{cor_wp_cont_pot}, \ref{cor_wp_vol}, \ref{cor_del_iso}.
	\par \textbf{Notation.} For $\epsilon > 0$, we denote
\begin{equation}
	D(\epsilon) = \{ z \in \comp : |z| < \epsilon \}, \qquad 	D^*(\epsilon) = \{ z \in \comp : 0 <  |z| < \epsilon \}.
\end{equation}
For a vector space $E$, we denote $\det E := \Lambda^{\max} E$. 
\par For a holomorphic vector bundle $\xi$ over a complex manifold $X$ with a Hermitian metric $h^{\xi}$ over $X$, the pair $(\xi, h^{\xi})$ is called a \textit{Hermitian vector bundle} over $X$. 	
\par For a divisor $D_0 \subset Y$ in a complex manifold $Y$, we denote by $s_{D_0}$ the canonical holomorphic section of $\mathscr{O}_Y(D_0)$, ${\rm{div}}(s_{D_0}) = D_0$, and by $\delta_{D_0}$ the current of integration along $D_0$.
For a current $T$ over $Y \setminus D_0$, which is in $L^1_{loc}(Y)$, we denote by $[T]_{L^1}$ the $L^1$ extension of $T$ over $Y$.
\par Let $\alpha=(\alpha_1, \ldots, \alpha_q) \in \nat^{q}$, $q \in \nat$ be a multi-index. We denote by $|\alpha| = \sum \alpha_i$.
	\par {\bf{Acknowledgements.}} This work is a part of our PhD thesis, which was done at Université Paris Diderot. We would like to express our deep gratitude to our PhD advisor Xiaonan Ma for his teaching, overall guidance, constant support and invaluable comments on the preliminary version of this article.

\section{Families of nodal curves and related notions}
	In this section we recall the relevant notations.
	More precisely, in Section 2.1, we recall the notion of the analytic torsion from the first paper of these series, \cite{FinII1} (for related notions of analytic torsions, see Lundelius \cite{Lun93}, Jorgenson-Lundelius \cite{JorLund}, \cite{JorLundMain}; Albin-Rochon \cite{AlbRoch_gen}, \cite{AlbRoch}; see also \cite[\S 2.2]{FinII1} for a brief summary about the connections between those definitions).
	In Section 2.2, we recall the definition of holomorphic families of Riemann surfaces with ordinary singularities, we define the notion of a family of surfaces with cusps and the Wolpert norm on the restriction of the relative canonical line bundle to the cusps.
	In Section 2.3, we recall the properties of the determinant line bundle due to Grothendick-Knudsen-Mumford \cite{Knudsen1976} and Bismut-Gillet-Soulé \cite{BGS3}. We also recall the definition of the Quillen norm for non-compact surfaces, which was done in \cite{FinII1}.
	In Section 2.4, we recall several notions of singularities of Hermitian metrics on holomorphic line bundles.
	Finally, in Section 2.5, we recall the theory of Bott-Chern currents for singular Hermitian metrics and prove some useful properties of those currents.

\subsection{The analytic torsion}\label{sect_recall_relantors}
	Let $\overline{M}$ be a compact Riemann surface, and let $D_M = \{ P_1^{M}, \ldots, P_m^{M} \}$ be a finite set of distinct points in $\overline{M}$. Let $g^{TM}$ be a Kähler metric on the punctured Riemann surface $M = \overline{M} \setminus D_M$. 
	\par For $\epsilon \in ]0,1[$, $i = 1, \ldots, m$, let $z_i^{M} : \overline{M} \supset V_i^M(\epsilon) \to D(\epsilon) = \{ z \in \comp : |z| \leq \epsilon \}$ be a local holomorphic coordinate around $P_i^{M}$, and 
	\begin{equation}\label{defn_v_i}
		V_i^{M}(\epsilon) := \{x \in M :  |z_i^{M}(x)| < \epsilon \}.
	\end{equation}	 
	We say that $g^{TM}$ is \textit{Poincaré-compatible} with coordinates $z_1^{M}, \ldots, z_m^{M}$ if for any $i = 1, \ldots, m$,  there is $\epsilon > 0$ such that $g^{TM}|_{V_i^{M}(\epsilon)}$ is induced by the Hermitain form
	\begin{equation}\label{reqr_poincare}
		\frac{\imun dz_i^{M} d\overline{z}_i^{M}}{ \big| z_i^{M}  \ln |z_i^{M}| \big|^2}.
	\end{equation}
	We say that $g^{TM}$ is a \textit{metric with cusps} if it is Poincaré-compatible with some holomorphic coordinates of $D_M$.
	A triple $(\overline{M}, D_M, g^{TM})$ of a Riemann surface $\overline{M}$, a set of punctures $D_M$ and a metric with cusps $g^{TM}$ is called a \textit{surface with cusps} (cf. \cite{MullerCusp}). 
	\par From now on, we fix a surface with cusps $(\overline{M}, D_M, g^{TM})$ and a Hermitian vector bundle $(\xi, h^{\xi})$ over it.  
	We denote by $\omega_{M} := T^{*(1,0)}\overline{M}$ the \textit{canonical line bundle} over $\overline{M}$. 
	We denote by $\norm{\cdot}_{M}^{\omega}$ the norm induced on $\omega_{\overline{M}}$ by $g^{TM}$ over $M$. 
	Let $\mathscr{O}_M(D_M)$ be the line bundle associated with the divisor $D_M$. 	
	The \textit{twisted canonical line bundle} is defined as 
	\begin{equation}
			\omega_M(D) :=  \omega_{\overline{M}} \otimes  \mathscr{O}_{\overline{M}}(D_M).
	\end{equation}		
	The metric $g^{TM}$ endows by Construction \ref{const_norm_div} the line bundle $\omega_M(D)$ with the induced Hermitian metric $\norm{\cdot}_{M}$ over $M$.
	\begin{sloppypar}
	We denote by $\laplcomp^{\xi \otimes \omega_M(D)^n}$ the Kodaira Laplacian associated with $g^{TM}$ and $(\xi \otimes \omega_M(D)^n, h^{\xi} \otimes \, \norm{\cdot}_M^{2n})$.
	In this article we only consider the action of $\laplcomp^{\xi \otimes \omega_M(D)^n}$ on the sections of degree $0$.
	\par 
	We recall that for $m=0$, the analytic torsion was defined by Ray-Singer \cite{Ray73} as the regularized determinant of $\laplcomp^{\xi \otimes \omega_M(D)^n}$. 
	More precisely, let $\lambda_i, i \in \nat$ be the non-decreasing sequence of non-zero eigenvalues of $\laplcomp^{\xi \otimes \omega_M(D)^n}$. 
	Classically, the associated zeta-function 
	\begin{equation}\label{defn_zeta_comp}
		\zeta_M(s) := \sum \lambda_i^{s}
	\end{equation}
	 is defined for $\Re (s) > 1$, $s \in \comp$ and it extends meromorphically to the entire $s$-plane. This extension is holomorphic at $0$, and the \textit{analytic torsion} is defined by
	\end{sloppypar}
	\begin{equation}\label{defn_an_t_st}
		T(g^{TM}, h^{\xi} \otimes \norm{\cdot}_M^{2n}) := \exp(- \zeta'_M(0)).
	\end{equation}
	However, for $m > 0$, the heat operator associated with $\laplcomp^{\xi \otimes \omega_M(D)^n}$ is no longer of trace class. Thus, the definition (\ref{defn_an_t_st}) is no longer applicable, but, as it was shown in the previous paper of this series \cite[\S 2.2]{FinII1}, by taking out the diverging part in the definition of the heat trace, we can extend the definition of zeta-function $\zeta_M(s)$ to define the \textit{analytic torsion} $T(g^{TM}, h^{\xi} \otimes \norm{\cdot}_M^{2n})$ for $m \geq 0$ by the same formula (\ref{defn_an_t_st}).

\subsection{Families of nodal curves}\label{sect_fnc}
In this section we recall the definition of a holomorphic family of Riemann surfaces with ordinary singularities (cf. Bismut-Bost \cite{BisBost}) and some of its properties. We introduce its cusped version, which we call a family of surfaces with cusps, and we also define the Wolpert norm.

\begin{defn}
	A \textit{holomorphic family of Riemann surfaces with ordinary singularities} is a holomorphic, proper, surjective map $\pi: X \to S$ of complex manifolds, such that for every $t \in S$, the space $X_t := \pi^{-1}(t)$ is a complex curve whose singularities are at most ordinary double points. As a shortcut, we will call $\pi$ a f.s.o. (from french “famille à singularités ordinaires").
\end{defn}

\begin{prop}[{\cite[Proposition 3.1]{BisBost}}]\label{prop_coord}
	Let $\pi : X \to S$ be a f.s.o. 
	Then for every $x \in X$, there are local holomorphic coordinates $(z_0, \ldots, z_q)$ of $x \in X$ and $(w_1, \ldots, w_q)$ of $\pi(x) \in S$, such that $\pi$ is locally defined by one of the following identities
	\begin{align}
		& w_i = z_i,  &&\text{for} \quad i=1, \ldots, q,  \label{eq_pr_nonsing}
		\\
		& w_1 = z_0 z_1; \quad w_i = z_i,  &&\text{for} \quad i=2, \ldots, q. \label{eq_pr_sing}
	\end{align}
\end{prop}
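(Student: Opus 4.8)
The plan is to work locally on $X$ and split into two cases according to whether $x$ is a smooth or a singular point of the fibre $X_{\pi(x)}$. Since the fibres are curves, $\pi$ has relative dimension $1$, so $\dim X = \dim S + 1$; set $q := \dim S$ and translate coordinates so that $\pi(x) = 0$ and $x = 0$. If $x$ is a smooth point of $X_{\pi(x)}$, then the Zariski tangent space $\ker d\pi_x = T_x X_{\pi(x)}$ is $1$-dimensional, so $d\pi_x$ is surjective and $\pi$ is a holomorphic submersion at $x$. The holomorphic constant–rank theorem then furnishes coordinates $(z_0, \ldots, z_q)$ on $X$ and $(w_1, \ldots, w_q)$ on $S$ in which $w_i = z_i$ for $i = 1, \ldots, q$, which is (\ref{eq_pr_nonsing}).

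Now suppose $x$ is a singular point; by hypothesis it is an ordinary double point of $X_{\pi(x)}$. The first step is to compute the rank of $d\pi_x$. Since an ordinary double point of a curve is analytically $\{uv = 0\}$, its Zariski tangent space is $2$-dimensional; as the scheme-theoretic fibre is reduced at the node and $\ker d\pi_x$ is its Zariski tangent space, one gets $\operatorname{rank}(d\pi_x) = (q+1) - 2 = q - 1$. Consequently the annihilator of $\operatorname{im}(d\pi_x) \subset T_0 S$ is a line, and we may choose holomorphic coordinates $(w_1, \ldots, w_q)$ on $S$ so that $d(w_1 \circ \pi)_x = 0$ while $d(w_2 \circ \pi)_x, \ldots, d(w_q \circ \pi)_x$ are linearly independent. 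The map $(w_2 \circ \pi, \ldots, w_q \circ \pi)$ is then a submersion at $x$, and straightening it gives coordinates $(z_0, z_1, z_2, \ldots, z_q)$ on $X$ with $w_i \circ \pi = z_i$ for $i = 2, \ldots, q$. It remains to analyse the single function $f := w_1 \circ \pi$, which satisfies $f(0) = 0$ and $df_0 = 0$.

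Writing $u = (z_2, \ldots, z_q)$ for the parameters, the central slice $\{u = 0\}$ meets $X_{\pi(x)}$ in the plane curve $\{f(z_0, z_1, 0) = 0\}$, which has an ordinary double point at the origin; hence the Hessian of $f$ in the variables $(z_0, z_1)$ is non-degenerate at $0$. By the holomorphic implicit function theorem applied to $\partial f / \partial z_0 = \partial f / \partial z_1 = 0$, for each $u$ near $0$ there is a unique critical point $(z_0^*(u), z_1^*(u))$, holomorphic in $u$ with $z^*(0) = 0$; translating $z_0, z_1$ by $z_0^*(u), z_1^*(u)$ (a change of coordinates on $X$ fixing $z_2, \ldots, z_q$) we may assume $f$ has a critical point at $z_0 = z_1 = 0$ for every $u$, with critical value $c(u) := f(0,0,u)$, holomorphic in $u$. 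Replacing the base coordinate $w_1$ by $w_1 - c(w_2, \ldots, w_q)$ — an admissible holomorphic coordinate change on $S$ since $w_i = z_i = u_i$ for $i \geq 2$ — we arrange $f(0,0,u) \equiv 0$.

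At this stage $f(z_0, z_1, u)$ vanishes to second order along $\{z_0 = z_1 = 0\}$ with non-degenerate $(z_0, z_1)$-Hessian there. The main step is then the holomorphic Morse lemma with holomorphic parameters: by Hadamard's lemma one writes $f = \sum_{i,j \in \{0,1\}} z_i z_j h_{ij}(z_0, z_1, u)$ with $(h_{ij}(0,0,u))$ the non-degenerate Hessian, and a holomorphic completion of squares in $(z_0, z_1)$ with $u$-dependent coefficients produces new coordinates $\tilde z_0, \tilde z_1$ (holomorphic in $(z_0, z_1, u)$ and fixing $u$) with $f = \tilde z_0 \tilde z_1$, using that a non-degenerate quadratic form in two variables is equivalent to the product $\tilde z_0 \tilde z_1$. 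In the coordinates $(\tilde z_0, \tilde z_1, z_2, \ldots, z_q)$ on $X$ and the modified $(w_1, \ldots, w_q)$ on $S$, the map $\pi$ reads $w_1 = \tilde z_0 \tilde z_1$ and $w_i = z_i$ for $i \geq 2$, which is (\ref{eq_pr_sing}). The main obstacle is precisely this parametrized Morse lemma: one must ensure the diagonalizing change of coordinates is holomorphic jointly in the fibre variables and the parameters and invertible near the origin, which is exactly where non-degeneracy of the Hessian at $0$ and its persistence for small $u$ are used.
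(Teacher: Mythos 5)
Your argument is correct, but note that the paper contains no proof of this proposition to compare against: it is quoted directly from Bismut--Bost \cite[Proposition 3.1]{BisBost}. What you have written is a sound, self-contained version of the standard argument behind that citation: at a smooth point of the fibre, $d\pi_x$ is surjective and (\ref{eq_pr_nonsing}) is the rank theorem; at a node, $\operatorname{rank}(d\pi_x)=q-1$ follows from the two-dimensional Zariski tangent space of the fibre, you straighten the $q-1$ transverse base coordinates, locate the fibrewise critical point of the remaining function $f$ holomorphically in the parameters via the implicit function theorem, absorb the critical value by a coordinate change on $S$ (legitimate precisely because $c$ is a function of $w_2,\dots,w_q$ only), and conclude with the parametrized holomorphic Morse lemma, which does produce $f=\tilde z_0\tilde z_1$ exactly, since over $\comp$ the square roots needed in the completion of squares exist near the origin and every nondegenerate binary quadratic form is linearly equivalent to $z_0z_1$.

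One point deserves to be made explicit rather than invoked midway. Both of your rank computations rest on reading the fibres scheme-theoretically: you need $\pi^{-1}(t)$, with its natural structure of complex analytic subspace, to be reduced with local rings $\comp\{u,v\}/(uv)$ at the nodes, so that $\ker d\pi_x$ (which is always the Zariski tangent space of the scheme-theoretic fibre) has dimension $1$ at smooth points and $2$ at nodes, and so that the slice function $f(z_0,z_1,0)$ itself, not merely its zero set, defines a node. Under a purely set-theoretic reading of the definition of a f.s.o.\ the proposition is false: for $\pi:\mathbb{P}^1\times\comp\to\comp$, $(p,z)\mapsto z^2$, every set-theoretic fibre is a smooth curve, yet $\pi$ admits neither normal form (\ref{eq_pr_nonsing}) nor (\ref{eq_pr_sing}) along the central fibre. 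The scheme-theoretic reading is the one intended in \cite{BisBost}, and it is what legitimizes your assertion that the fibre is reduced at the node; stating this interpretation at the outset would close the only soft spot in the write-up.
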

 
\begin{cor}[{\cite[\S 3(a)]{BisBost}}]\label{cor_sigma}
	Let $\pi : X \to S$ be a f.s.o., and let $\Sigma_{X/S} \subset X$ be the locus of double points of the fibers of $\pi$. Then the following holds
	\\ \hspace*{0.5cm} 	a) $\Sigma_{X/S}$ is a submanifold of $X$ of codimension $2$;
	\\ \hspace*{0.5cm} 	b) the map $\pi|_{\Sigma_{X/S}} : \Sigma_{X/S} \to S$ is a closed immersion;
	\\	 \hspace*{0.5cm}	c) the map $\pi|_{X \setminus \Sigma_{X/S}} : X \setminus \Sigma_{X/S} \to S$ is a submersion.
	\\
	In particular, the direct image $\Delta = \pi_*(\Sigma_{X/S})$ is a divisor in $S$.
\end{cor}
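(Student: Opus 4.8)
The whole statement is local over $X$ and reduces to the normal form of Proposition \ref{prop_coord}, so the plan is to analyze $\pi$ in each of the two models (\ref{eq_pr_nonsing}), (\ref{eq_pr_sing}) and then assemble the local information globally. Before starting, I would record the intrinsic, chart-independent characterization of $\Sigma_{X/S}$ as the locus where the differential $d\pi$ fails to be surjective. This exhibits $\Sigma_{X/S}$ as a closed subset of $X$ and makes the local computations automatically compatible on overlaps, so that the local descriptions glue to a well-defined global object.

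In the nonsingular model (\ref{eq_pr_nonsing}), $\pi$ is the projection $(z_0, z_1, \ldots, z_q) \mapsto (z_1, \ldots, z_q)$; its fibers are smooth (parametrized by $z_0$) and it is visibly a submersion, so $\Sigma_{X/S}$ does not meet such a chart. In the nodal model (\ref{eq_pr_sing}), $\pi$ reads $(z_0, z_1, \ldots, z_q) \mapsto (z_0 z_1, z_2, \ldots, z_q)$. First I would compute the Jacobian and observe that its rank equals $q = \dim S$ precisely when $(z_0, z_1) \neq (0,0)$, while it drops by one exactly on $\{ z_0 = z_1 = 0 \}$. This identifies $\Sigma_{X/S}$ locally with $\{ z_0 = z_1 = 0 \}$, a smooth submanifold of codimension $2$, which proves a); and it simultaneously shows that $\pi$ restricts to a submersion on $X \setminus \Sigma_{X/S}$, which proves c).

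For b), I would restrict (\ref{eq_pr_sing}) to $\Sigma_{X/S} = \{ z_0 = z_1 = 0 \}$, where $\pi$ becomes $(z_2, \ldots, z_q) \mapsto (0, z_2, \ldots, z_q)$, an embedding onto the smooth divisor $\{ w_1 = 0 \}$; hence $\pi|_{\Sigma_{X/S}}$ is an immersion. Since $\pi$ is proper and $\Sigma_{X/S}$ is closed in $X$, the restriction $\pi|_{\Sigma_{X/S}}$ is proper, and a proper immersion is a closed map that is locally an embedding, giving the closed immersion of b). Finally, $\Sigma_{X/S}$ has dimension $\dim X - 2 = \dim S - 1$ and $\pi|_{\Sigma_{X/S}}$ is finite onto its image (proper with discrete fibers), so the direct image cycle $\Delta = \pi_*(\Sigma_{X/S})$ is of pure codimension $1$, i.e. a divisor in $S$.

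The genuinely delicate points, which I expect to be the main obstacle, are the global aspects of b): checking that $\Sigma_{X/S}$ is closed and that the local embeddings organize into a proper map with image a divisor, bearing in mind that a single fiber $X_t$ may carry several nodes, so that $\pi|_{\Sigma_{X/S}}$ need not be globally injective and its image may be self-crossing. The resolution is that the intrinsic description of $\Sigma_{X/S}$ as the non-submersion locus guarantees closedness independently of the charts, while properness together with the local embedding property is exactly what makes $\pi|_{\Sigma_{X/S}}$ behave as a closed immersion and the push-forward a well-defined divisor, counted with the appropriate multiplicities at points of $S$ below several nodes.
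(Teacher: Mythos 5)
Your proof is correct and takes essentially the same route as the paper, which (following Bismut--Bost, \S 3(a)) simply reads the statement off the local normal forms of Proposition \ref{prop_coord}: identify $\Sigma_{X/S}$ with the non-submersion locus, verify a), b), c) in the two models (\ref{eq_pr_nonsing}) and (\ref{eq_pr_sing}), and use properness of $\pi$ for the global conclusions. Your closing remark is also the intended reading: since a fiber may carry several nodes, $\pi|_{\Sigma_{X/S}}$ need not be injective, so ``closed immersion'' is to be understood as a proper map that is locally an embedding, and $\Delta = \pi_*(\Sigma_{X/S})$ as a push-forward cycle counted with multiplicities.
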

\begin{notat}
	We use the notation $\Delta$, $\Sigma_{X/S}$ for the divisor and the submanifold from Corrolary \ref{cor_sigma}.
\end{notat}

For a complex manifold $X$, we denote by $\Omega_X$ the sheaf of holomorphic sections of the vector bundle $T^{*(1,0)}X$, and by $\omega_X$ the line bundle $\det (T^{*(1,0)}X)$. 
\par Let's recall the construction of the \textit{relative canonical} line bundle $\omega_{X/S}$ of a f.s.o. $\pi : X \to S$.  Define the sheaf $\Omega_{X/S}$ by the exact sequence:
\begin{equation}\label{ex_seq_1}
\pi^* \Omega_S \to \Omega_X \to \Omega_{X/S} \to 0.
\end{equation}
By Corollary \ref{cor_sigma}, the exact sequence (\ref{ex_seq_1}) becomes exact to the left when restricted to $X \setminus \Sigma_{X/S}$:
\begin{equation}\label{ex_seq_2}
0 \to \pi^* \Omega_S|_{X \setminus \Sigma_{X/S}} \to \Omega_X|_{X \setminus \Sigma_{X/S}} \to \Omega_{X/S}|_{X \setminus \Sigma_{X/S}} \to 0.
\end{equation}
By taking determinants of (\ref{ex_seq_2}), we deduce the isomorphism
\begin{equation}
	 \Omega_{X/S}|_{X \setminus \Sigma_{X/S}} = (\omega_X \otimes \pi^* \omega_S^{-1})|_{X \setminus \Sigma_{X/S}}.
\end{equation}
We define
\begin{equation}
	\omega_{X/S} := \omega_X \otimes \pi^* \omega_S^{-1}.
\end{equation}
Then $\omega_{X/S}$ is the unique extension of $\Omega_{X/S}|_{X \setminus \Sigma_{X/S}}$ over $X$. This line bundle is called the \textit{relative canonical} line bundle of $\pi : X \to S$.
\par Let $x \in \Sigma_{X/S}$. Take local coordinates $(z_0, \ldots , z_q)$ on an open neighbourhood $V$ of $x \in X$ and local coordinates $(w_1, \ldots, w_q)$ of $\pi(x) \in S$, as in (\ref{eq_pr_sing}).
Then the manifold $\Sigma_{X/S} \cap V$ is given by
\begin{equation}
	\{ z_0 = 0 \text{ and }  z_1 = 0\}.
\end{equation}
Consider the sections $dz_0/z_0$ and $dz_1/z_1$ of $\Omega_X$, defined over the sets $\{ z_0 \neq 0 \}$ and $\{ z_1 \neq 0 \}$ respectively. The images of $dz_0/z_0$ and $-dz_1/z_1$ in $\omega_{X / S}$ coincide over $\{z_0z_1 \neq 0\}$, since
\begin{equation}\label{eq_section_omegarel}
	\frac{dz_0}{z_0} + \frac{dz_1}{z_1} = \pi^* \frac{dw_1}{w_1}.
\end{equation}
Thus, they define a nowhere vanishing section $\sigma$ of $\omega_{X / S}$ over $V \setminus \Sigma_{X/S}$. Since $\Sigma_{X/S}$ is of codimension $2$, the section $\sigma$ extends to a nowhere vanishing section over $V$ of the line bundle $\omega_{X/S}$.

\begin{defn}[Family of surfaces with cusps]\label{defn_fsc}
	A \textit{holomorphic family of Riemann surfaces with ordinary singularities and cusps} is a f.s.o. $\pi : X \to S$, disjoint sections $\sigma_1, \ldots, \sigma_m : S \to X \setminus \Sigma_{X/S}$ and a Hermitian metric $\norm{\cdot}^{\omega}_{X/S}$ on $\omega_{X / S}$ over $\pi^{-1}(S \setminus |\Delta|) \setminus ( \cup_i \Im(\sigma_i) )$, such that for any $t \in S \setminus |\Delta|$, the restriction of $\, \norm{\cdot}^{\omega}_{X/S}$ over $\pi^{-1}(t) \setminus (\cup_i \sigma_i(t))$ induces the Kähler metric $g^{TX_t}$ over $X_t \setminus (\cup_i \sigma_i(t))$ such that the associated triple $(X_t, \{ \sigma_1(t), \ldots, \sigma_m(t) \}, g^{TX_t})$ becomes a surface with cusps. As a short-cut, we call $(\pi; \sigma_1, \ldots, \sigma_m; \norm{\cdot}^{\omega}_{X/S})$ a f.s.c.
\end{defn}

\begin{notat}
	From now on, we denote by $D_{X/S}$ the divisor on $X$, given by $\Im (\sigma_1) + \ldots + \Im( \sigma_m)$. We denote by $D_{X_t}$ its restriction on $X_t := \pi^{-1}(t)$, $t \in S$.
\end{notat}

\begin{defn}[Wolpert norm]\label{defn_wolp}
	Let $(\pi; \sigma_1, \ldots, \sigma_m; \norm{\cdot}^{\omega}_{X/S})$ be a f.s.c.
	Let $t \in S \setminus |\Delta|$, and let $z_i$ be a holomorphic coordinate of $\sigma_i(t) \subset X_t$ such that (see (\ref{reqr_poincare}))
	\begin{equation}
		\norm{dz_i}^{\omega}_{X/S} = |z_i \ln |z_i||.
	\end{equation}
	By the uniformization theorem, such a holomorphic coordinate is uniquely defined up to a multiplication by a  unitary complex constant. We define the norm $\norm{\cdot}^W_{i}$ on $\sigma_i^{*}(\omega_{X / S})$ pointwise by
	\begin{equation}
		\norm{\sigma_i^{*} dz_i}^W_{i}(t) = 1.
	\end{equation}
	The \textit{Wolpert norm} $\norm{\cdot}^W_{X/S}$ is defined as the product norm on $\otimes_i \sigma_i^{*}(\omega_{X / S})$, induced by $\norm{\cdot}^W_{i}$.
\end{defn}
	In general, we make no claims about the smoothness of $\norm{\cdot}_{X/S}^{W}$, but we hope to come back to this question soon.
\begin{rem}\label{rem_wol_cont}
	Let $(\pi; \sigma_1, \ldots, \sigma_m; \norm{\cdot}^{\omega, {\rm{hyp}}}_{X/S})$ be from Construction \ref{const_univ_family}. Wolpert in \cite[Definition 1]{Wol07} defined the norm $\norm{\cdot}^{W, {\rm{hyp}}}_{X/S}$, which coincides with the norm from Definition \ref{defn_wolp} in this particular case. In \cite[Theorem 5]{Wol07} he showed that  $\norm{\cdot}^{W, {\rm{hyp}}}_{X/S}$ is smooth over $S \setminus |\Delta|$.
\end{rem}

\subsection{Determinant line bundles and Quillen norms}
	In this section we recall the notion of the determinant line bundle due to Grothendick-Knudsen-Mumford \cite{Knudsen1976} and then, similarly to Bismut-Gillet-Soulé \cite{BGS3}, but basing on the definition of the analytic torsion from \cite[Definition 2.17]{FinII1}, we introduce the notion of the Quillen norm on the determinant line bundle.
	\par 
	Let $\pi : X \to S$ be a f.s.o., and let $\xi$ be a holomorphic vector bundle over $X$. We denote
	\begin{equation}
		\det (R^{\bullet} \pi_* \xi)_t := \det H^{0}(X_t, \xi) \otimes (\det H^{1}(X_t, \xi))^{-1}, \quad t \in S,
	\end{equation}
	where we identified $\xi$ with its sheaf of holomorphic sections.
	By Grothendick-Knudsen-Mumford \cite{Knudsen1976} (cf. \cite[Proposition 4.1]{BisBost}) the family of complex lines $(\det (R^{\bullet} \pi_* \xi)_t )_{t \in S}$ is endowed with a natural structure of holomorphic line bundle $\det (R^{\bullet} \pi_* \xi)$ over $S$. 
	\par Now, suppose $(\pi; \sigma_1, \ldots, \sigma_m; \norm{\cdot}^{\omega}_{X/S})$ is a f.s.c.
	For $t \in S \setminus |\Delta|$, we denote by $d \vol_{X_t}$ the Riemannian volume form on $X_t \setminus (\cup \sigma_i(t))$, induced by $\norm{\cdot}_{X/S}^{\omega}$ on the fiber $X_t$.
	Endow the twisted canonical line bundle $\omega_{X/S}(D)$ with the norm $\norm{\cdot}_{X/S}$ from Construction \ref{const_norm_div}.	
	Let $\xi$ be endowed with a Hermitian metric $h^{\xi}$ over $\pi^{-1}(S \setminus |\Delta|)$.
	For $n \in \integ, n \leq 0$, we define the $L^2$-scalar product $\langle \cdot, \cdot \rangle_{L^2}$ on $\ccal^{\infty}(X_t, \xi \otimes \omega_{X/S}(D)^n)$ and $\ccal^{\infty}(X_t, \xi \otimes \omega_{X/S}(D)^n \otimes \overline{\omega}_{X/S})$ by
	\begin{equation}\label{defn_l2_scal}
		\scal{s_1}{s_2}_{L^2} = \int_{X_t} \scal{s_1(x)}{s_2(x)}_{h} d \vol_{X_t}(x),
	\end{equation}
	where $s_1, s_2$ are either in 
	$\ccal^{\infty}(X_t, \xi \otimes \omega_{X/S}(D)^n)$ or in 
	$\ccal^{\infty}(X_t, (\xi \otimes \omega_{X/S}(D)^n)^* \otimes \overline{\omega}_{X/S})$,
	and $\langle \cdot, \cdot \rangle_{h}$ is the pointwise Hermitian product induced by $h^{\xi}, \norm{\cdot}^{\omega}_{X/S}$ and $\norm{\cdot}_{X/S}$. 
	As we explained in \cite[Section 2.1]{FinII1}, the right-hand side of (\ref{defn_l2_scal}) is finite, and (\ref{defn_l2_scal}) defines the $L^2$-scalar product on the vector spaces $H^{0}(X_t, \xi \otimes \omega_{X/S}(D)^n)$, $H^{1}(X_t, \xi \otimes \omega_{X/S}(D)^n)$.
	We denote by $\norm{\cdot}_{L^2}(g^{TX_t}, h^{\xi} \otimes \norm{\cdot}_{X/S}^{2n})$ the induced $L^2$-norm on the complex line $(\det (R^{\bullet} \pi_* (\xi  \otimes \omega_{X/S}(D)^n))_t)^{-1}$. 
	\begin{sloppypar}
	\begin{defn}\label{defn_quil_norm}
		The Quillen norm on the line bundle $(\det (R^{\bullet} \pi_* (\xi  \otimes \omega_{X/S}(D)^n)))^{-1}$ over $S \setminus |\Delta|$ is defined for $t \in S \setminus |\Delta|$ by 
		\begin{equation}
			\norm{\cdot}_{Q} \big( g^{TX_t}, h^{\xi} \otimes \, \norm{\cdot}_{X/S}^{2n} \big)
			:= 
			 T \big( g^{TX_t}, h^{\xi} \otimes \, \norm{\cdot}_{X/S}^{2n} \big)^{1/2} \cdot
			\norm{\cdot}_{L^2} \big( g^{TX_t}, h^{\xi} \otimes \norm{\cdot}_{X/S}^{2n} \big).
		\end{equation}
	\end{defn}
	\end{sloppypar}

\subsection{Singular Hermitian vector bundles}\label{sect_sing_metr}
	In this section we recall several notions of singularities for Hermitian vector bundles. Then we show how Theorem \ref{thm_cont} implies Corollary \ref{cor_ext_character}.
	\par
	We work with a complex manifold $Y$ of dimension $q+1$, a \textit{normal crossing divisor} $D_0 \subset Y$ and a submanifold $F \subset Y$.
	\begin{defn}\label{defn_adapt_chart}
		A triple $(U; z_0, \ldots, z_q; l)$ of an open set $U \subset Y$, coordinates $z_0, \ldots, z_q : U \to \comp$ and $l \in \nat$ is called an \textit{adapted chart} for $D_0$ (resp. $F$) at $x \in |D_0|$ (resp. $x \in F$) if $U = \{ (z_0, \ldots, z_q) \in \comp^{q+1} : |z_i| < 1, \text{ for all } i = 0, \ldots, q \}$ and $|D_0| \cap U$ (resp. $F \cap U$) is defined by $\{z_0 \cdots z_l = 0\}$ (resp. $\{z_0 = 0, \ldots, z_l = 0\}$).
	\end{defn}
	\begin{notat}
		Let $(U; z_0, \ldots, z_q; l)$ be an \textit{adapted chart} for $D_0$. We denote
		\begin{equation}
			d \zeta_k = 
			\begin{cases}
				\hfill dz_k/(z_k \ln |z_k|), & \text{if} \quad 0 \leq k \leq l, \\
				\hfill dz_k, & \text{if} \quad l+1 \leq k \leq q.
			\end{cases}
		\end{equation}
	\end{notat}
	\begin{defn}\label{defn_loglog_gr}
		a) \cite[Definition 2.17]{BurKrKun} A differential form over $Y \setminus |D_0|$ (resp. a locally bounded section of the wedge algebra on cotangent space) has \textit{log-log growth of order $k \in \nat$ (resp. weakly log-log growth) on $Y$, with singularities along $D_0$}, if it can be expressed as a linear combination of monomials constructed using $d \zeta_k, \overline{d \zeta_k}$, $k = 0, \ldots, q$ with coefficients $f \in \ccal^{\infty}(Y \setminus |D_0|)$ (resp. $f \in L^{\infty}(Y \setminus |D_0|)$) such that for any $\textbf{k}, \textbf{k}' \in \nat^{q+1}$, $|\textbf{k}| + |\textbf{k}'| \leq K$ (resp. for $\textbf{k}, \textbf{k}' = 0$), for some adapted chart $(U; z_0, \ldots, z_q; l)$ of $D_0$ at $x \in |D_0|$, and for some $C > 0$, $p \in \nat$, we have
		\begin{equation}\label{eq_loglog_grwth_cond}
			\Big|
			\frac{\partial^{|\textbf{k}|}}{\partial z^{\textbf{k}}} 
			\frac{\partial^{|\textbf{k}'|}}{\partial \overline{z}^{\textbf{k}'}}
			f(z_0, \ldots, z_q)
			\Big| 
			\leq 
			C |z|^{- |\textbf{k}_0| - |\textbf{k}'_0|} \prod_{k=0}^{l} \big( \ln | \ln |z_k| |\big)^p   + C,
		\end{equation}
		where $\textbf{k}_0, \textbf{k}'_0 \in \nat^{l+1}$ are the projections of $\textbf{k}, \textbf{k}'$ onto first $l+1$ components, and $\partial^{|\textbf{k}|} / \partial z^{\textbf{k}}$, $\partial^{|\textbf{k}'|} / \partial \overline{z}^{\textbf{k}'}$ are the multinomial notations for the differentiations.
		When we don't precise $k \in \nat$, by our convention, this means $k=0$.
		\par b) \cite[Definition 2.1]{FreixTh} A function $f : Y \setminus F \to \comp$ has \textit{log-log growth on $Y$, with singularities along $F$} (resp. \textit{has logarithmic singularities along $F$}), if for any $x \in Y$, for some adapted chart $(U; z_0, \ldots, z_q; l)$ of $F$ at $x$, and for some $C > 0$, $p \in \nat$, we have
		\begin{equation}
			\begin{aligned}
				& \textstyle |f(z_0, \ldots, z_q)| \leq C \Big( \ln \big| \ln \big( \max_{k = 0}^{l} \{ |z_k| \} \big) \big| \Big)^p  + C,
				\\
				& \text{\Big( resp. } 
					\textstyle |f(z_0, \ldots, z_q)| \leq C \big| \ln \big( \max_{k = 0}^{l} \{ |z_k| \} \big) \big|^p  + C
				\text{\Big).}
			\end{aligned}
		\end{equation}
		\par c) A function $f : Y \setminus |D_0| \to \comp$ has \textit{logarithmic singularities of order $k \in \nat$ along $D_0$}, if $f \in \ccal^{\infty}(Y / |D_0|, \comp)$, and for any $\textbf{k}, \textbf{k}' \in \nat^{q+1}$, for some adapted chart $(U; z_0, \ldots, z_q; l)$ of $D_0$ at $x \in |D_0|$, and some $C > 0$, $p \in \nat$, we have
		\begin{equation}\label{eq_loglog_grwth_cond}
			\Big|
			\frac{\partial^{|\textbf{k}|}}{\partial z^{\textbf{k}}} 
			\frac{\partial^{|\textbf{k'}|}}{\partial \overline{z}^{\textbf{k}'}}
			f(z_0, \ldots, z_q)
			\Big| 
			\leq 
			C |z|^{-|\textbf{k}_0| - |\textbf{k}'_0|} \prod_{k=0}^{l} | \ln |z_k| |^p  + C,
		\end{equation}
		where $\textbf{k}_0, \textbf{k}'_0 \in \nat^{l+1}$ are the projections of $\textbf{k}, \textbf{k}'$ onto first $l+1$ components.
		\par d) \cite[p. 240]{MumHirz} A differential form over $Y \setminus |D_0|$ has \textit{Poincaré growth on $Y$, with singularities along $D_0$}, if it can be expressed as a linear combination of monomials constructed using $d \zeta_k, \overline{d \zeta_k}$, $k = 0, \ldots, q$ with coefficients $f \in \ccal^{\infty}(Y \setminus |D_0|) \cap L^{\infty}(Y \setminus |D_0|)$.
		\par e) A current $T$ over $Y \setminus |D_0|$ has \textit{log-log growth (resp. Poincaré growth) on $Y$ with singularities along $D_0$} if it is represented by integration of a $L^1_{\rm{loc}}$-form and there is a form $\alpha$ with log-log growth (resp. Poincaré growth) on $Y$, with singularities along $D_0$, such that $-\alpha \leq T \leq \alpha$.
		\par f) \cite[Definition 7.1]{BurKrKunChow} A differential form $\alpha$ is \textit{pre-log-log of order $k \in \nat$ on $Y$, with singularities along $D_0$}, if $\alpha$, $\partial \alpha$, $\dbar \alpha$, $\partial \dbar \alpha$ have log-log growth of order $k$ on $Y$, with singularities along $D_0$. Again, when the order is not precised, by our convention, we suppose $k = 0$. 
		\par g) \cite[Definition 2.14]{FreixTh} A smooth function $f: Y \setminus |D_0| \to \comp$ is \textit{P-singular, with singularities along $D_0$}, if $\partial f$, $\dbar f$, $\partial \dbar f$ have Poincaré growth on $Y$, with singularities along $D_0$. 
	\end{defn}
	\begin{prop}[{Burgos Gil-Kramer-K\"uhn \cite[Proposition 7.6]{BurKrKunChow}}]\label{prop_bur_kr_kuhn}
		a) Any differential form $Y \setminus |D_0|$ with log-log growth with singularities along $D_0$ is locally integrable.
		\par b) If $\alpha$ is pre-log-log form on $Y$ with singularities along $D_0$, then
		\begin{equation}
			[d \alpha]_{L^1} = d[\alpha]_{L_1}.
		\end{equation}
	\end{prop}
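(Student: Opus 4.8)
Both assertions are local, so the plan is to fix $x \in |D_0|$, an adapted chart $(U; z_0, \ldots, z_q; l)$ at $x$, and to work with a partition of unity subordinate to such charts: local integrability in a) is a local property, and the test forms in b) may be taken to have compact support in a single chart. For a) I would reduce to a single monomial $f \, d\zeta_{k_1} \wedge \cdots \wedge \overline{d\zeta_{k_r}}$ with $|f| \leq C \prod_{k=0}^{l}(\ln|\ln|z_k||)^p + C$. Rewriting it in the standard frame replaces each $d\zeta_k$ with $k \leq l$ by $dz_k/(z_k \ln|z_k|)$, so the coefficient of the monomial is controlled, up to the log-log factor of $f$, by $\prod_{k \leq l,\, k \in I \cup J} |z_k \ln|z_k||^{-1}$, the worst case $|z_k|^{-2}|\ln|z_k||^{-2}$ occurring when an index appears in both the holomorphic and antiholomorphic parts. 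Integrability against Lebesgue measure then reduces, variable by variable via Tonelli over the polydisc, to the convergence of $\int_{0} (\ln|\ln r|)^{2p}\, r^{-1}(\ln r)^{-2}\, dr$, which after the substitution $u = \ln r$ becomes $\int^{-\infty} u^{-2}(\ln|u|)^{2p}\,du < \infty$; the indices $k \leq l$ absent from the monomial contribute only bounded integrals $\int_{0} (\ln|\ln r|)^{p}\, r\, dr$. This proves a).

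For b), note first that by a) both $\alpha$ and $d\alpha = \partial\alpha + \dbar\alpha$ lie in $L^1_{loc}(Y)$, so $[\alpha]_{L^1}$ and $[d\alpha]_{L^1}$ are well defined, and (with the convention $\langle d[\alpha]_{L^1},\eta\rangle = (-1)^{\deg\alpha+1}\langle[\alpha]_{L^1}, d\eta\rangle$) the claim amounts to showing, for every smooth compactly supported test form $\eta$ of complementary degree, that $\int_Y d\alpha \wedge \eta = (-1)^{\deg\alpha + 1}\int_Y \alpha \wedge d\eta$. The plan is to introduce a cutoff $b_\delta \in \ccal^{\infty}(Y)$ with $b_\delta \equiv 0$ near $D_0$ and $b_\delta \equiv 1$ away from it, so that $b_\delta \alpha$ extends smoothly across $D_0$ by zero and $b_\delta\alpha \wedge \eta$ is a compactly supported smooth form. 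Stokes' theorem gives $\int_Y d(b_\delta\alpha\wedge\eta) = 0$, and expanding $d(b_\delta\alpha) = db_\delta \wedge \alpha + b_\delta\, d\alpha$ together with the Leibniz rule yields
\[
	\int_Y db_\delta \wedge \alpha \wedge \eta + \int_Y b_\delta \, d\alpha \wedge \eta + (-1)^{\deg\alpha}\int_Y b_\delta\, \alpha \wedge d\eta = 0 .
\]
Since $b_\delta \to 1$ pointwise off $D_0$ while $d\alpha \wedge \eta$ and $\alpha \wedge d\eta$ are in $L^1$, dominated convergence disposes of the last two integrals, leaving only $\int_Y db_\delta \wedge \alpha \wedge \eta \to 0$ to be shown.

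This last limit is the heart of the matter and the step I expect to be the main obstacle. The idea is to take the cutoff of log-log type, namely $b_\delta = \prod_{k=0}^{l}\psi\big(\ln\ln(1/|z_k|)/\ln\ln(1/\delta)\big)$ with a fixed $\psi \in \ccal^\infty(\real)$ equal to $1$ on $(-\infty,\tfrac12]$ and to $0$ on $[1,\infty)$, so that $db_\delta$ is supported in thin transition shells and, crucially, the radial derivative of each factor satisfies $|\partial_r b_\delta^{(k)}| = |\psi'| \cdot (r|\ln r|)^{-1}$. Writing $db_\delta = \sum_k \big(\prod_{j\neq k} b_\delta^{(j)}\big) db_\delta^{(k)}$ and wedging with $\alpha \wedge \eta$, in each summand $db_\delta^{(k)}$ supplies one of $dz_k, d\overline{z}_k$ while the complementary differential comes either from a $d\zeta_k$-factor of $\alpha$ (producing an extra $|z_k\ln|z_k||^{-1}$) or from the bounded smooth form $\eta$. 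Estimating the $z_k$-plane integral and substituting $s = \ln\ln(1/|z_k|)$ converts it into an integral of the shape $\int s^p e^{-s}|\psi'(s)|\,ds$ (or smaller) over the shell $s \in [\tfrac12 T, T]$ with $T = \ln\ln(1/\delta)$, which, since $|\psi'| \lesssim 1/T$ there, is bounded by $T^{p-1}e^{-T/2} = (\ln\ln(1/\delta))^{p-1}(\ln(1/\delta))^{-1/2} \to 0$; the remaining variables integrate to finite constants by part a). Letting $\delta \to 0$ gives $\int_Y db_\delta \wedge \alpha \wedge \eta \to 0$, and hence $[d\alpha]_{L^1} = d[\alpha]_{L^1}$, completing b).
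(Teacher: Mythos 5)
Your proposal is correct, but there is nothing in the paper to compare it against: the paper states this proposition as a quotation of Burgos Gil--Kramer--K\"uhn \cite[Proposition 7.6]{BurKrKunChow} and gives no proof of its own. What you wrote is essentially the standard argument from that literature (the cutoff technique goes back to Mumford's treatment of Poincar\'e-growth forms, \cite[Proposition 1.2]{MumHirz}). Part a) --- expanding into monomials, passing to the standard frame, and reducing by Tonelli to the model integral $\int_0 (\ln|\ln r|)^{2p}\, r^{-1}(\ln r)^{-2}\, dr < \infty$ --- is exactly the computation the paper itself records as (\ref{eq_bound_loglog_poinc}) inside the proof of Proposition \ref{prop_int_loglog}, so your reduction is in the paper's own spirit. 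Part b), via Stokes' theorem with a cutoff adapted to the log-log scale and the verification that the gradient term $\int_Y db_\delta \wedge \alpha \wedge \eta$ vanishes in the limit, is sound: the shell estimate $\int_{T/2}^{T} s^p e^{-s}\, ds \lesssim T^p e^{-T/2}$ combined with the $O(1/T)$ size of the rescaled cutoff's derivative gives your bound $T^{p-1}e^{-T/2} \to 0$, and dominated convergence handles the remaining two terms since $\alpha$ and $d\alpha$ are integrable by a) (here you correctly use that pre-log-log means $\partial\alpha$, $\dbar\alpha$ also have log-log growth). The only blemish is notational: writing $|\partial_r b_\delta^{(k)}| = |\psi'|\,(r|\ln r|)^{-1}$ drops the chain-rule factor $1/T$, and the subsequent claim that $|\psi'| \lesssim 1/T$ is false for the fixed function $\psi$; what is true is that the $s$-derivative of the rescaled factor $s \mapsto \psi(s/T)$ is $O(1/T)$, which is precisely what your final estimate uses, so the conclusion is unaffected.
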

	\begin{defn}\label{defn_loglog_gr_lin}
		Let $L$ be a holomorphic line bundle over $Y$ and let $h^L$ be a smooth Hermitian metric on $L$ over $Y \setminus (F \cup |D_0|)$. We say that the metric $h^L$ \textit{has log-log growth with singularities along $F \cup |D_0|$} if for any local holomorphic frame $\upsilon$ of $L$, the function $\ln h(\upsilon, \upsilon)$ , has log-log growth on $Y$, with singularities along $F \cup |D_0|$.
	\end{defn}
	\begin{defn}[{\cite[p. 242]{MumHirz}, {\cite[Definition 4.29]{BurKrKun}} (cf. \cite[Definition 3.1]{FreixTh})}]\label{defn_pre_loglog}
		Let $\xi$ be a holomorphic vector bundle over $Y$ and let $h^{\xi}$ be a Hermitian metric on $\xi$ over $Y \setminus |D_0|$. We say that the metric $h^{\xi}$ \textit{is pre-log-log of order $k \in \nat$ (resp. good) with singularities along $D_0$} if for any local holomorphic frame $e_1, \ldots, e_{\rk{\xi}}$ of $\xi$,
		the functions $h^{\xi}(e_i, e_j)$, $i,j \in 1, \ldots, \rk{\xi}$, $(\det (H))^{-1}$, for $H = (h^{\xi}(e_i, e_j))_{i,j = 1}^{\rk{\xi}}$ have logarithmic singularities of order $k \in \nat$ (resp. order 0), and the entries of the matrix $(\partial H) H^{-1}$, are pre-log-log  of order $k \in \nat$ (resp. P-singular), with singularities along $D_0$.
		Again, when the order is not precised, by our convention, we suppose $k = 0$.
	\end{defn}

	For line bundles, we have the following easy criteria of pre-log-log and good conditions:
	\begin{prop}[{\cite[Proposition 3.2]{FreixTh}}]\label{prop_crit_preloglog}
		Let $L$ be a holomorphic line bundle over $Y$ and let $h^L$ be a smooth Hermitian metric on $L$ over $Y \setminus |D_0|$.
		Then $h^L$ is pre-log-log (resp. good) with singularities along $D_0$ if and only if for every local holomorphic frame $\upsilon$ of $L$ over $U \subset Y$, the function $\ln h^L(\upsilon, \upsilon)$ is pre-log-log (resp. P-singular), with singularities along $D_0$.
	\end{prop}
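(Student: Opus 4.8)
The plan is to reduce the line-bundle case of Definition \ref{defn_pre_loglog} directly to the P-singular (resp. pre-log-log) condition on the single function $\ln h^L(\upsilon,\upsilon)$, exploiting that for a line bundle all the matrices appearing in Definition \ref{defn_pre_loglog} are $1\times 1$. First I would fix a local holomorphic frame $\upsilon$ over $U \subset Y$ and set $\phi := h^L(\upsilon,\upsilon) > 0$, so that $H = (\phi)$, $\det(H) = \phi$, and the relevant matrix $(\partial H)H^{-1}$ collapses to the single $1$-form $\partial\phi/\phi = \partial \ln \phi$. The whole content of the definition therefore becomes: (i) $\phi$ and $\phi^{-1}$ have logarithmic singularities of order $k$ (resp. order $0$), and (ii) $\partial\ln\phi$ is pre-log-log of order $k$ (resp. P-singular) with singularities along $D_0$.

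The key algebraic observation is that the conditions phrased in terms of $\phi$ and $\phi^{-1}$ are equivalent to conditions on $\psi := \ln\phi = \ln h^L(\upsilon,\upsilon)$. In the good/P-singular case this is the cleaner direction: by Definition \ref{defn_loglog_gr}g), P-singularity of $\psi$ means precisely that $\partial\psi$, $\dbar\psi$ and $\partial\dbar\psi$ have Poincaré growth, and since $\partial\psi = \partial\ln\phi = (\partial H)H^{-1}$, this is exactly condition (ii). What remains is to check that P-singularity of $\psi$ forces the order-$0$ logarithmic-singularity (equivalently, boundedness up to $\ln|\ln|z_k||$ factors) of $\phi$ and $\phi^{-1}$, which amounts to showing $\psi = \ln\phi$ is itself controlled — here I would integrate the Poincaré-growth bound on $d\psi = \partial\psi + \dbar\psi$ along paths approaching $D_0$ to conclude that $\psi$ has at worst log-log growth, whence $\phi = e^{\psi}$ and $\phi^{-1} = e^{-\psi}$ have order-$0$ logarithmic singularities. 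Conversely, if $\psi$ is already known to satisfy the bounds defining order-$0$ logarithmic singularities together with P-singularity, then all of (i)--(ii) hold directly.

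For the pre-log-log case of order $k$ the same dictionary is carried out with Definition \ref{defn_loglog_gr}a), c), f) in place of Poincaré growth: pre-log-log of $\psi$ means $\psi$, $\partial\psi$, $\dbar\psi$, $\partial\dbar\psi$ all have log-log growth of order $k$, and I would match $\partial\psi = (\partial H)H^{-1}$ against condition (ii) verbatim, while the log-log growth of $\psi$ itself yields the order-$k$ logarithmic singularities of $\phi, \phi^{-1}$ via the chain rule applied to $\phi = e^{\psi}$ (a derivative of $e^{\psi}$ of multi-order $(\mathbf{k},\mathbf{k}')$ is $e^\psi$ times a universal polynomial in the derivatives $\partial^{\mathbf{a}}\overline\partial^{\mathbf{b}}\psi$ with $|\mathbf{a}|+|\mathbf{b}| \le |\mathbf{k}|+|\mathbf{k}'| \le K$, each of which is bounded by the log-log estimate \eqref{eq_loglog_grwth_cond}). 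The main obstacle, and the only place requiring genuine care, is the combinatorial bookkeeping in this Faà di Bruno step: I must verify that products of the allowed right-hand sides of \eqref{eq_loglog_grwth_cond} stay within the class of logarithmic-singularity bounds of the \emph{same} order $k$ — i.e. that the polynomial power $p$ and the factors $|z|^{-|\mathbf{k}_0|-|\mathbf{k}'_0|}$ combine correctly and that the product of several $\ln|\ln|z_k||$ (resp. $\ln|z_k|$) factors remains of the required form. Since the order bound $|\mathbf{k}|+|\mathbf{k}'| \le K$ is stable under the Leibniz expansion and each individual factor already carries the correct singular profile, this is routine but must be stated to close the equivalence in both directions.
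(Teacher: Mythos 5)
The paper itself gives no proof of Proposition \ref{prop_crit_preloglog}: it is quoted from Freixas's thesis, so your proposal can only be judged on its own terms. Your scalar reduction ($H=(\phi)$, $\det H=\phi$, $(\partial H)H^{-1}=\partial\ln\phi$) and the good/P-singular equivalence are correct. In that case your key step works for a quantitative reason worth making explicit: Poincaré growth means the coefficients of $d\psi$ in the frame $d\zeta_k, \overline{d\zeta_k}$ are \emph{bounded}, so integration along paths gives $|\psi(z)|\le C\sum_k\ln|\ln|z_k||+C$ --- a single power of $\ln|\ln|z_k||$, not merely ``log-log growth'' --- and exponentiating this specific bound gives $e^{\pm\psi}\le C'\prod_k|\ln|z_k||^{C}$, i.e. the order-$0$ logarithmic singularities required of $h^{\pm1}$ in Definition \ref{defn_pre_loglog}. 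Together with $\dbar\psi=\overline{\partial\psi}$ (as $\psi$ is real) and $\partial\dbar\psi=-\dbar\partial\psi$, this closes the good case.

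The pre-log-log case, however, contains a genuine gap, and it is not the Fa\`a di Bruno bookkeeping you single out as the main obstacle. In the direction ``$\psi=\ln h^L(\upsilon,\upsilon)$ pre-log-log $\Rightarrow$ $h^L$ pre-log-log metric'' you must produce logarithmic singularities of $\phi=e^{\psi}$ and $\phi^{-1}$, i.e. bounds of the form $C\prod_k|\ln|z_k||^{p}+C$. But log-log growth of $\psi$ (Definition \ref{defn_loglog_gr}a)) only guarantees $|\psi|\le C\prod_k(\ln|\ln|z_k||)^{p}+C$ for \emph{some} $p$, and for $p\ge 2$ the exponential of such a bound exceeds every fixed power of $|\ln|z_k||$. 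Concretely, a smooth positive function equal to $\psi=(\ln|\ln|z||)^{2}$ near $z=0$ is pre-log-log --- $\psi$, $\partial\psi$, $\dbar\psi$, $\partial\dbar\psi$ all have log-log growth, as a direct computation in the frame $d\zeta=dz/(z\ln|z|)$ shows --- yet $e^{\psi}=|\ln|z||^{\ln|\ln|z||}$ has no logarithmic singularity. So the problematic factor in your chain-rule expansion is the zeroth-order prefactor $e^{\psi}$ itself, not the combinatorics of the derivatives of $\psi$, and no bookkeeping can repair it: the implication ``exponentials of log-log-growth functions have logarithmic growth'' is simply false. This is precisely the asymmetry with the good case, where the exponent-free Poincaré bound saves you, and it is invisible in your ``same dictionary'' treatment. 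Note that the opposite implication --- metric pre-log-log $\Rightarrow$ $\ln h^L(\upsilon,\upsilon)$ pre-log-log, which is the only one the paper actually invokes (e.g. in Proposition \ref{prop_bc_current_sat_ass}) --- is unproblematic, since the logarithm of a function with logarithmic singularities obeys a single-power $\ln|\ln|z_k||$ bound. As written, then, your argument establishes the good case and one half of the pre-log-log case; the remaining half requires either an a priori restriction of the zeroth-order log-log exponent of $\psi$ to $p\le 1$, or an idea going beyond what you (and, taken literally, the definitions) provide.
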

	
	The following proposition explains how nice vector bundles can be used to precise the extension of a given line bundle.
	\begin{prop}\label{prop_only_ext_nice}
		Let $(L', h^{L'})$ be a continuous Hermitian line bundle over $Y \setminus |D_0|$. 
		Then there is at most one holomorphic line bundle $L$ over $Y$ which extends $L'$ in such way that the Hermitian metric $h^{L'}$ becomes nice with singularities along $D_0$ in the sense of Definition \ref{defn_nice}b).
	\end{prop}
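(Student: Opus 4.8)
The plan is to show that any two holomorphic extensions of $(L', h^{L'})$ enjoying the nice property are canonically isomorphic over $Y$. So I would fix two holomorphic line bundles $L_1, L_2$ over $Y$, each restricting to $L'$ over $Y \setminus |D_0|$ and making $h^{L'}$ nice with singularities along $D_0$. The two identifications with $L'$ furnish a tautological isomorphism $\phi \colon L_1|_{Y \setminus |D_0|} \to L_2|_{Y \setminus |D_0|}$, and the whole point is to extend $\phi$ holomorphically and invertibly across $|D_0|$; once this is done the extension is forced to be unique. Since everything is local, I would fix $x \in |D_0|$, an adapted chart $(U; z_0, \ldots, z_q; l)$ for $D_0$ at $x$ (Definition \ref{defn_adapt_chart}) and local holomorphic frames $\upsilon_1, \upsilon_2$ of $L_1, L_2$ over $U$. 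On $U \setminus |D_0|$ both $\upsilon_1, \upsilon_2$ are nowhere-vanishing sections of $L'$, so $\upsilon_1 = g \upsilon_2$ for a nowhere-vanishing holomorphic function $g$ on $U \setminus |D_0|$, and $\phi$ is multiplication by $g$; thus I only need to prove that $g$ extends to a nowhere-vanishing holomorphic function on $U$.

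First I would extract the growth information on $g$. From $h^{L'}(\upsilon_1, \upsilon_1) = |g|^2 h^{L'}(\upsilon_2, \upsilon_2)$ one gets $2 \ln |g| = \ln h^{L'}(\upsilon_1, \upsilon_1) - \ln h^{L'}(\upsilon_2, \upsilon_2)$. By the niceness of $h^{L'}$ with respect to each of $L_1, L_2$ (Definition \ref{defn_nice}, Definition \ref{defn_loglog_gr}a)), both functions on the right-hand side have log-log growth with singularities along $D_0$, hence so does $\ln|g|$; in particular $\ln|g| = o(\ln(1/|z_i|))$ as $z_i \to 0$, for each $i = 0, \ldots, l$.

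The hard part is the local structure of $g$: I must rule out both an essential singularity and a divisorial twist of $g$ along $|D_0|$, and it is precisely the gap between log-log and pure logarithmic growth that makes this possible. Writing $U \setminus |D_0| \cong D^*(1)^{l+1} \times D(1)^{q-l}$, I would let $a_0, \ldots, a_l \in \integ$ be the winding numbers of $g$ around the loops in the punctured directions (well-defined since $\pi_1(U \setminus |D_0|) = \integ^{l+1}$), and set $\tilde g := g \prod_{i \le l} z_i^{-a_i}$, which has trivial winding and thus equals $\exp(h)$ for a single-valued holomorphic $h$ on $U \setminus |D_0|$. The log-log bound on $\ln|g|$ gives $|\Re h| = |\ln|g| - \sum_i a_i \ln|z_i|| \le C \sum_{i \le l} |\ln|z_i|| + C$, so $|\tilde g| = \exp(\Re h)$ is bounded by a fixed power of $\prod_{i \le l}|z_i|^{-1}$; Cauchy estimates on its Laurent expansion then force the coefficients of sufficiently negative multidegree to vanish, so $\tilde g$ extends meromorphically across $|D_0|$. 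As $\tilde g$ is nowhere-vanishing on $U \setminus |D_0|$ with vanishing winding numbers, its divisor on $U$ is zero, so $\tilde g$ is in fact holomorphic and invertible on $U$. Finally $\ln|g| = \sum_{i \le l} a_i \ln|z_i| + \ln|\tilde g|$ with $\ln|\tilde g|$ bounded near $|D_0|$; comparing with the log-log bound on $\ln|g|$ and using that $\ln(1/|z_i|)$ grows strictly faster than any log-log term forces $a_i = 0$ for all $i$. Hence $g = \tilde g$ extends to a nowhere-vanishing holomorphic function on $U$, as required.

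To conclude, since $x$ was arbitrary these local extensions of $\phi$ agree on overlaps and glue to a global holomorphic isomorphism $L_1 \to L_2$ over $Y$ restricting to the identity on $L'$; this is exactly the asserted uniqueness. I expect the structure step of the third paragraph to be the main obstacle, the rest being formal bookkeeping. I would also note that continuity of $h^{L'}$ over $Y \setminus |D_0|$ is used only to ensure that $\ln h^{L'}(\upsilon_j, \upsilon_j)$, and therefore $\ln|g|$, are honest functions on $U \setminus |D_0|$ to which the growth conditions of Definition \ref{defn_loglog_gr} apply.
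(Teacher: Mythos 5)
Your proof is correct, but it takes a different route from the paper's. The paper disposes of Proposition \ref{prop_only_ext_nice} by adapting Mumford's uniqueness argument for good extensions \cite[Proposition 1.3]{MumHirz}: one characterizes the local holomorphic frames of any nice extension $L$ intrinsically in terms of $(L', h^{L'})$ alone, as those holomorphic sections $s$ of $L'$ over $U \setminus |D_0|$ for which $\ln h^{L'}(s,s)$ has log-log singularities along $D_0$ (see (\ref{eq_nice_frame_ch})); this pins down the sheaf of sections of $L$, hence $L$ itself, and the rest repeats Mumford. You instead compare two putative extensions head-on, reducing to the statement that the transition function $g$ between local frames extends to a unit across $|D_0|$, which you establish by meromorphic extension (Laurent series and Cauchy estimates), a winding-number computation identifying the divisor of $\tilde g$, and the gap between log-log and logarithmic growth to kill the twist $\prod_{i \le l} z_i^{a_i}$. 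Both arguments hinge on exactly this gap; yours makes the analytic core explicit and self-contained, while the paper's is shorter and emphasizes that the sheaf of sections of a nice extension is metric-intrinsic. One remark: your third paragraph can be streamlined. Since log-log growth of $\ln|g|$ gives $|g|,\, |g|^{-1} = o\big(\prod_{i \le l} |z_i|^{-\epsilon}\big)$ for every $\epsilon > 0$, Riemann's removable singularity theorem applied to $g$ and $g^{-1}$ in the punctured variables already shows that $g$ extends to a nowhere-vanishing holomorphic function; the winding numbers and the separate verification that each $a_i = 0$ are correct but not needed.
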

	\begin{proof}
		The proof is similar to the proof of the statement about the \textit{good vector bundles} proved by Mumford in \cite[Proposition 1.3]{MumHirz} with only one change: for an open $U \subset Y$, the frames of $L$ are given by
		\begin{equation}\label{eq_nice_frame_ch}
			\Big\{ 
				s \in \ccal^{\infty}(U \setminus |D_0|, L') : \ln h^{L'}(s,s) \text{ has log-log singularities along } D_0
			\Big\}.
		\end{equation}
		From now on, the proof repeats \cite[Proposition 1.3]{MumHirz}, and we leave it to the interested reader.
	\end{proof}
	\begin{proof}[Proof of  Corollary \ref{cor_ext_character} modulo Theorem \ref{thm_cont}.]
		By Theorems \ref{thm_cont}2, \ref{thm_cont}3 we see that the extension $\mathscr{L}_n$ satisfies the required properties. Now, Proposition \ref{prop_only_ext_nice} shows that there is at most one extension, which finishes the proof.
 	\end{proof}
		
	\subsection{Bott-Chern currents and pre-log-log Hermitian vector bundles}\label{sect_bc_curren}
	In this section we recall the theory of Bott-Chern currents associated with pre-log-log Hermitian vector bundles, which was implicit in the paper \cite{BurKrKun} and which extends previous work of Bismut-Gillet-Soulé \cite[Theorem 1.29]{BGS1}.
	We work with a complex manifold $Y$, and a normal crossing divisor $D_0 \subset Y$.
	 \par Now, let's define the vector space 
	 \begin{equation}	
	 	P_{{\rm{pll}}}(Y, D_0) := \oplus_{i} P_{{\rm{pll}}}^{(i,i)}(Y, D_0),
	 \end{equation}
	 where $P_{{\rm{pll}}}^{(i,j)}(Y, D_0)$ are the vector spaces of pre-log-log differential forms on $Y$ of degree $(i,j)$ with singularities along $D_0$. 
	 We denote 
	\begin{equation}
		\begin{aligned}
			& P'_{{\rm{pll}}}{}^{(i,i)}(Y, D_0) = \partial (P_{{\rm{pll}}}^{(i-1,i)}(Y, D_0)) + \dbar (P_{{\rm{pll}}}^{(i,i-1)}(Y, D_0)), \\
			& P'_{{\rm{pll}}}(Y, D_0) := \oplus_{i} P'_{{\rm{pll}}}{}^{(i,i)}(Y, D_0).
		\end{aligned}
	\end{equation}
	 Then, by Definition \ref{defn_loglog_gr}f), we have $P'_{{\rm{pll}}}(Y, D_0)  \subset P_{{\rm{pll}}}(Y, D_0)$.
	 \par Let $\phi: {\rm{M}}_k(\comp) \to \comp$ be a polynomial map, which is invariant under conjugation. 
	 For a Hermitian vector bundle $(\xi, h^{\xi})$ over $Y$, we denote by $\phi(\xi, h^{\xi})$ the differential form, constructed by $\phi(\xi, h^{\xi}) = \phi(-  \frac{1}{2 \pi \imun} \cdot R^{h^{\xi}})$, where $R^{h^{\xi}}$ is the curvature of the Chern connection on $(\xi, h^{\xi})$. 
	\begin{prop}\label{prop_ch_classes_preloglog}
		Let $h^{\xi}$ be a Hermitian metric on $\xi$, which is pre-log-log of order $k \in \nat$ (resp. good) over $Y \setminus D_0$  with singularities along $D_0$. 
		Then the form $\phi(\xi, h^{\xi})$ is pre-log-log of order $k$ (resp. good) on $Y$ with singularities along $D_0$, and the induced current $[\phi(\xi, h^{\xi})]_{L^1}$ represents the cohomology class of $\phi(\xi) \in H^{2k}(Y, \comp)$.
	\end{prop}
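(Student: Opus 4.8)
The plan is to separate the two assertions: first establish the pre-log-log (resp.\ good) regularity of $\phi(\xi, h^{\xi})$, and then identify its cohomology class via a Bott-Chern transgression, the regularity of which will be the crux. For the first assertion I work in an adapted chart $(U; z_0, \ldots, z_q; l)$ of $D_0$ and fix a local holomorphic frame $e_1, \ldots, e_{\rk{\xi}}$ of $\xi$. Writing $H = (h^{\xi}(e_i, e_j))$ for the matrix of the metric, the Chern connection form is $\theta = (\partial H) H^{-1}$, and since the $(2,0)$-part of the curvature of a holomorphic Hermitian bundle vanishes, the curvature matrix is $R^{h^{\xi}} = \dbar \theta$. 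By Definition \ref{defn_pre_loglog}, the entries of $\theta$ are pre-log-log of order $k$ (resp.\ P-singular) with singularities along $D_0$. The decisive structural fact is that the forms which are pre-log-log of order $k$ (resp.\ the good forms) constitute a sheaf of algebras stable under $\partial$ and $\dbar$. Stability under $\dbar$ is immediate from the defining conditions: for pre-log-log $\alpha$ one has $\dbar(\dbar\alpha)=0$ and $\partial\dbar(\dbar\alpha)=0$, while $\dbar\alpha$ and $\partial\dbar\alpha$ have the required log-log growth. Stability under products follows from the Leibniz rule, which distributes the admissible order-$k$ derivatives among the factors without worsening the weight $|z|^{-|\mathbf{k}_0|-|\mathbf{k}_0'|}$. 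Consequently $R^{h^{\xi}} = \dbar\theta$ has pre-log-log (resp.\ good) entries, and since $\phi(\xi, h^{\xi}) = \phi(-\tfrac{1}{2\pi\imun} R^{h^{\xi}})$ is a constant-coefficient polynomial in these entries, it is pre-log-log of order $k$ (resp.\ good) with singularities along $D_0$.

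For the second assertion, the first part together with Proposition \ref{prop_bur_kr_kuhn}a) shows that $\phi(\xi, h^{\xi})$ is locally integrable, so $[\phi(\xi, h^{\xi})]_{L^1}$ is a well-defined current on $Y$. I choose a smooth Hermitian metric $h_0^{\xi}$ on $\xi$ over all of $Y$; by Chern-Weil theory $\phi(\xi, h_0^{\xi})$ is a smooth closed form representing $\phi(\xi) \in H^{2k}(Y, \comp)$. The Bott-Chern transgression attached to the pair $(h^{\xi}, h_0^{\xi})$ (compare \cite[Theorem 1.29]{BGS1}) produces a form $T$ of degree $2k-1$ on $Y \setminus |D_0|$ with
\[
	\frac{\partial \dbar}{2 \pi \imun} T = \phi(\xi, h_0^{\xi}) - \phi(\xi, h^{\xi}) \qquad \text{on } Y \setminus |D_0|.
\]
It therefore suffices to promote this identity to one of currents over all of $Y$, so that $[\phi(\xi, h^{\xi})]_{L^1}$ and $\phi(\xi, h_0^{\xi})$ become cohomologous.

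The remaining, and principal, point is precisely this upgrade, and it hinges on the regularity of $T$. I claim that $T$ may be chosen pre-log-log with singularities along $D_0$: it is a universal polynomial, integrated over a compact interpolation parameter, in the entries of $H$, $H^{-1}$, $(\partial H) H^{-1}$ and $\dot{H} H^{-1}$ along a path joining $h_0^{\xi}$ to $h^{\xi}$, all of which are pre-log-log by the hypothesis on $h^{\xi}$ and the smoothness of $h_0^{\xi}$, whence the algebra stability from the first paragraph forces $T$ to be pre-log-log; this is exactly the representative furnished by Theorem \ref{thm_bc_preloglog}, following \cite{BurKrKun}. Granting this, Proposition \ref{prop_bur_kr_kuhn}b) applies to $T$ and to $\partial T$, giving $\partial\dbar[T]_{L^1} = [\partial\dbar T]_{L^1}$, so that
\[
	\frac{\partial \dbar}{2 \pi \imun} [T]_{L^1} = \phi(\xi, h_0^{\xi}) - [\phi(\xi, h^{\xi})]_{L^1}
\]
holds as currents on $Y$. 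Hence $[\phi(\xi, h^{\xi})]_{L^1}$ is cohomologous to $\phi(\xi, h_0^{\xi})$ and represents $\phi(\xi)$; in the good case the corresponding statement is Mumford's theorem \cite[Theorem 1.4]{MumHirz}. I expect the regularity of $T$ to be the main obstacle: without pre-log-log control, applying $\partial\dbar$ to $[T]_{L^1}$ could leave a residual current supported on $D_0$, and it is Proposition \ref{prop_bur_kr_kuhn}b), valid exactly for pre-log-log forms, that rules out such a defect.
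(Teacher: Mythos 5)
Your argument is correct, but it is considerably more self-contained than the paper's own proof, which is pure citation: Mumford \cite[Theorem 1.4]{MumHirz} for good metrics, plus the remark that the proof "remains identical" in the pre-log-log case, with reference to \cite[Proposition 7.25]{BurKrKunChow}. What you have done is reconstruct that argument from tools stated inside the paper: the regularity of $\phi(\xi,h^{\xi})$ from Definition \ref{defn_pre_loglog} together with stability of pre-log-log forms under $\partial$, $\dbar$ and wedge products, and the cohomological identification via a pre-log-log Bott--Chern transgression (Theorem \ref{thm_bc_preloglog}) combined with $d[\alpha]_{L^1}=[d\alpha]_{L^1}$ (Proposition \ref{prop_bur_kr_kuhn}b), applied to $T$ and $\partial T$ and split by bidegree). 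In substance this is the same argument as in the cited references, and your ordering makes explicit that there is no circularity: Theorem \ref{thm_bc_preloglog} presupposes only the first assertion of the proposition, which you establish beforehand and independently.

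Two imprecisions should be corrected, though neither is fatal. First, your "decisive structural fact" is false in the good case: good forms in Mumford's sense ($\omega$ and $d\omega$ of Poincar\'e growth) are \emph{not} stable under $\partial$ or $\dbar$, because goodness gives no control on second derivatives such as $\dbar\partial\omega$. What saves the conclusion is that $\phi(\xi,h^{\xi})$ is closed, so its goodness reduces to Poincar\'e growth of the form itself, and that does follow from taking wedge products of the Poincar\'e-growth entries of $\dbar\big((\partial H)H^{-1}\big)$; for pre-log-log forms of order $k$ the stability claim is true, as you argue. Second, the transgression $T$ is not a "universal polynomial in the entries of $H$, $H^{-1}$, $(\partial H)H^{-1}$ and $\dot{H}H^{-1}$": already in degree zero one has $\widetilde{\ch}(\xi,h^{\xi},h^{\xi}_{0})^{[0]}=\ln\det(h^{\xi}/h^{\xi}_{0})$ (Proposition \ref{prop_bc_ex_calc}), which involves logarithms of these entries and has log-log growth only because $\det H$ and $(\det H)^{-1}$ have logarithmic singularities. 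Since in the end you invoke Theorem \ref{thm_bc_preloglog} for the existence of a pre-log-log representative, this heuristic can simply be deleted without weakening the proof.
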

	\begin{proof}
		This was proved for good Hermitian metrics by Mumford in \cite[Theorem 1.4]{MumHirz}. The proof for pre-log-log Hermitian metrics remains identical, see \cite[Proposition 7.25]{BurKrKunChow}.
	\end{proof}
	 \par We consider a short exact sequence of vector bundles over $Y$, endowed with Hermitian metrics over $Y \setminus D_0$, which are pre-log-log with singularities along $D_0$. 
	 \begin{equation}\label{eq_sh_exct_seq}
		\begin{CD}
			0 @>>> (E_0, h^{E_0}) @>>> (E_1, h^{E_1}) @>>> (E_2, h^{E_2}) @>>> 0.
		\end{CD}
	\end{equation}
	\begin{thm}\label{thm_bc_preloglog}
		For $\phi$ as above, there is a unique way to attach to every	 exact sequence $(E_{\bullet}, h^{E_{\bullet}})$ as in (\ref{eq_sh_exct_seq}) a class $\tilde{\phi}(E_{\bullet}, h^{E_{\bullet}}) \in P_{{\rm{pll}}}(Y, D_0) / P'_{{\rm{pll}}}(Y, D_0)$ such that
		\par a) $ \frac{1}{2 \pi \imun} \partial \dbar \tilde{\phi}(E_{\bullet}, h^{E_{\bullet}}) = \phi(E_0 \oplus E_2, h^{E_0} \oplus h^{E_2}) - \phi(E_1, h^{E_1})$.
		\par b) If (\ref{eq_sh_exct_seq}) induces an isometry $(E_1, h^{E_1}) = (E_0 \oplus E_2, h^{E_0} \oplus h^{E_2})$, then $\tilde{\phi}(E_{\bullet}, h^{E_{\bullet}}) = 0$.
		\par c) If $Y'$ is another complex manifold, $D'_0$ is a normal crossing divisor in $Y'$, and $f : Y' \to Y$ is a holomorphic map
 such that $f^{-1}(D_0) \subset D'_0$, then $\tilde{\phi}(f^* E_{\bullet}, f^* h^{E_{\bullet}}) = f^* \tilde{\phi}(E_{\bullet}, h^{E_{\bullet}})$.
 		\par d) For a Hermitian exact square 
 		\begin{equation}\label{eq_sh_exct_seq}
		\begin{CD}
			@.  0 @. 0 @. 0 @.
			\\
			@. @VVV @VVV @VVV @.
			\\
			0 @>>> (E_0^{0}, h^{E_0}) @>>> (E_1^{0}, h^{E_1^{0}}) @>>> (E_2^{0}, h^{E_2^{0}}) @>>> 0
			\\
			@. @VVV @VVV @VVV @.
			\\
			0 @>>> (E_0^{1}, h^{E_0^{1}}) @>>> (E_1^{1}, h^{E_1^{1}}) @>>> (E_2^{1}, h^{E_2^{1}}) @>>> 0
			\\
			@. @VVV @VVV @VVV @.
			\\
			0 @>>> (E_0^{2}, h^{E_0^{2}}) @>>> (E_1^{2}, h^{E_1^{2}}) @>>> (E_2^{2}, h^{E_2^{2}}) @>>> 0
			\\
			@. @VVV @VVV @VVV @.
			\\
			@.  0 @. 0 @. 0 @.
		\end{CD}
	\end{equation}
	we have in $P_{{\rm{pll}}}(Y, D_0) / P'_{{\rm{pll}}}(Y, D_0)$:
	\begin{equation}
		\tilde{\phi}(E^1_{\bullet}, h^{E^1_{\bullet}}) - \tilde{\phi}(E^0_{\bullet} \oplus E^2_{\bullet}, h^{E^0_{\bullet}} \oplus h^{E^2_{\bullet}}) - \tilde{\phi}(E_1^{\bullet}, h^{E_1^{\bullet}})  + \tilde{\phi}(E_0^{\bullet} \oplus E_2^{\bullet}, h^{E_0^{\bullet}} \oplus h^{E_2^{\bullet}}) = 0.
	\end{equation}
	\end{thm}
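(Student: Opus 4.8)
The plan is to run the classical double-transgression construction of Bismut--Gillet--Soul\'e \cite[Theorem 1.29]{BGS1} verbatim over $Y \setminus |D_0|$, and then to check that every form produced along the way is pre-log-log with singularities along $D_0$; the only content beyond the smooth case is that $P_{{\rm{pll}}}(Y, D_0)$ is stable under the operations the construction uses --- wedge products, $\partial$, $\dbar$, and fibre integration over a compact parameter space --- and that the secondary class is well defined modulo $P'_{{\rm{pll}}}(Y, D_0)$.

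For existence I would first treat a single holomorphic bundle $E$ carrying two pre-log-log metrics $h_0, h_1$. Along the smooth path $h_u = h_0^{1-u} h_1^u$, $u \in [0,1]$, setting $A_u = h_u^{-1} \partial_u h_u$ and denoting by $R_u$ the Chern curvatures, the Chern--Weil transgression produces, after integrating $\int_0^1 (\cdots)\, du$, a form $\tilde\phi(E; h_0, h_1)$ with $\tfrac{1}{2\pi\imun}\partial\dbar\, \tilde\phi(E; h_0, h_1) = \phi(E, h_1) - \phi(E, h_0)$. The integrand is a fixed universal polynomial in $R_u$ and $A_u$: by Proposition \ref{prop_ch_classes_preloglog} each $R_u$ is a pre-log-log form, and the entries of $A_u$ are governed by the terms $(\partial H) H^{-1}$ that are pre-log-log by Definition \ref{defn_pre_loglog}, so the integrand lies in $P_{{\rm{pll}}}$ with bounds uniform in $u$, and hence so does the integral. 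For a general exact sequence (\ref{eq_sh_exct_seq}) I would fix a $C^{\infty}$ splitting $E_1 \cong E_0 \oplus E_2$, transport $h^{E_1}$ to a metric $g_1$ on $E_0 \oplus E_2$, and set $\tilde\phi(E_{\bullet}, h^{E_{\bullet}}) := \tilde\phi(E_0 \oplus E_2; g_1, h^{E_0} \oplus h^{E_2})$; then a) is precisely the transgression identity, and b) holds because when the sequence induces an isometry the orthogonal splitting gives $g_1 = h^{E_0} \oplus h^{E_2}$, whence the transgression of equal metrics vanishes. Independence of the chosen splitting, together with functoriality c) and additivity d), are identities of forms over $Y \setminus |D_0|$ exactly as in the smooth theory, valid modulo $P'_{{\rm{pll}}}$; in c) the pullback $f^*$ of a pre-log-log form with singularities along $D_0$ is again pre-log-log with singularities along $D_0'$ precisely because $f^{-1}(D_0) \subset D_0'$.

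For uniqueness I would use the deformation-to-the-normal-cone argument of Gillet--Soul\'e and Bismut--Gillet--Soul\'e. Given two assignments obeying a)--d), their difference $\eta$ is $\partial\dbar$-closed by a), vanishes on orthogonally split sequences by b), and is functorial by c). Pulling the sequence back along the canonical family over $Y \times \mathbb{P}^1$ that degenerates $h^{E_1}$ into the split metric, c) and d) express $\eta$ as a fibre integral over $\mathbb{P}^1$ of a class that is split at one boundary point; the $\partial\dbar$-closedness then forces this integral into $P'_{{\rm{pll}}}(Y, D_0)$, so $\eta = 0$ in the quotient.

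The main obstacle is the pre-log-log bookkeeping in the existence step: one must show that the estimates of the form (\ref{eq_loglog_grwth_cond}) hold \emph{uniformly} in the parameter $u$ (or the $\mathbb{P}^1$-coordinate), so that fibre integration preserves log-log growth of all the required derivatives, and that the transgression integrand truly lands in $P_{{\rm{pll}}}$ and not merely among forms of log-log growth. This is exactly where the good-versus-pre-log-log distinction is forced on us: as recalled in the introduction, the Bott-Chern form of a good metric need not have Poincar\'e growth, so $P_{{\rm{pll}}}$ --- rather than the class of good forms --- is the correct ambient space in which the construction closes up. Local integrability of the resulting currents, and the compatibility $[d\alpha]_{L^1} = d[\alpha]_{L^1}$ needed to read a) as an identity of currents, are then supplied by Proposition \ref{prop_bur_kr_kuhn}.
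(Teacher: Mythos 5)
Your overall strategy --- rerun the Bismut--Gillet--Soul\'e double-transgression construction over $Y \setminus |D_0|$ and verify that every form it produces stays pre-log-log --- is, in spirit, a reconstruction of the proof that the paper simply cites: the paper's entire argument is that this statement is \cite[Theorem 4.64]{BurKrKun} for pre-log-log metrics of \emph{infinite} order, together with the observation that the argument there uses estimates on higher derivatives of the metric only to control higher derivatives of the Bott--Chern form, so it applies verbatim to pre-log-log metrics of order $0$. So you are not taking a different route; you are attempting to redo the cited proof, and the question is whether your sketch actually closes it up.

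It does not, and the gaps sit exactly at the point you yourself label ``the main obstacle''. First, the interpolation $h_u = h_0^{1-u} h_1^{u}$ is not defined for $\rk{E} > 1$: for positive Hermitian matrices $H_0, H_1$ the product $H_0^{1-u} H_1^{u}$ is in general not Hermitian, let alone a metric. This is not cosmetic --- the paper itself uses this multiplicative path only for \emph{line} bundles (Proposition \ref{prop_bc_ex_calc}), where $\ln h_u$ is an honest convex combination, and explicitly refrains from claiming it in higher rank. Second, your assertion that the entries of $A_u = h_u^{-1} \partial_u h_u$ ``are governed by the terms $(\partial H) H^{-1}$ that are pre-log-log by Definition \ref{defn_pre_loglog}'' is incorrect: $A_u$ is a derivative in the deformation parameter, not in the $Y$-directions, and it is not among the quantities that Definition \ref{defn_pre_loglog} controls. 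If one repairs the path by linear interpolation $H_u = (1-u) H_0 + u H_1$, then $A_u = H_u^{-1}(H_1 - H_0)$ and $(\partial H_u) H_u^{-1} = (1-u)\big[(\partial H_0) H_0^{-1}\big] H_0 H_u^{-1} + u \big[(\partial H_1) H_1^{-1}\big] H_1 H_u^{-1}$, and the factors $H_i H_u^{-1}$ admit, a priori, only condition-number-type bounds, i.e. bounds by functions with \emph{logarithmic} singularities rather than log-log growth. Hence the transgression integrand, and the curvature $R_u$ of the interpolated metric, are not shown to lie in $P_{{\rm{pll}}}(Y, D_0)$; they are only shown to have the weaker logarithmic growth. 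The same difficulty affects the metric $g_1$ obtained by transporting $h^{E_1}$ through the orthogonal $\mathscr{C}^{\infty}$ splitting $E_1 \cong E_0 \oplus E_2$, since that splitting is itself built from $H^{-1}$-type expressions. Establishing these memberships is the entire content of the theorem beyond the smooth case of \cite[Theorem 1.29]{BGS1}; since you state this verification as an obstacle rather than carry it out, the proposal is an accurate plan plus a correct identification of the crux, but not a proof. (The paper discharges that crux by citing \cite{BurKrKun}, where the construction is arranged precisely so that these estimates close up, and where only the finite-versus-infinite-order issue needs the extra remark.)
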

	\begin{proof}
		This theorem was proved in \cite[Theorem 4.64]{BurKrKun} for the pre-log-log vector bundles of infinite order, but their proof remains valid for pre-log-log vector bundles. This is due to the fact that in \cite[Theorem 4.64]{BurKrKun}, the estimates of the \textit{higher} derivatives of the underlying metric are only used in the estimates of \textit{higher} derivatives of the Bott-Chern form.
	\end{proof}
	\begin{rem}\label{rem_bc_main}
		a) By \cite[Theorem 1.29]{BGS1}, we see that if $h^{E_i}$ are smooth, then $\tilde{\phi}(E_{\bullet}, h^{E_{\bullet}})$ coincides with the Bott-Chern form constructed in \cite{BGS1}.
		\par b) When the exact sequence $(\xi_{\bullet}, h^{\xi_{\bullet}})$ consists of two elements
		\begin{equation}\label{eq_sh_exct_seq2}
		\begin{CD}
			(\xi_{\bullet}, h^{\xi_{\bullet}}) : 0 @>>> (\xi, h^{\xi}) @>>> (\xi, h^{\xi}_0) @>>> 0,
		\end{CD}
	\end{equation}
	we denote for simplicity
	\begin{equation}
		\tilde{\phi}(\xi, h^{\xi}, h^{\xi}_{0}) := \tilde{\phi}(\xi_{\bullet}, h^{\xi_{\bullet}}).
	\end{equation}
	By Theorem \ref{thm_bc_preloglog}a), we have
	\begin{equation}\label{eq_bc_two_elem}
		\frac{\partial \dbar}{2 \pi \imun} \tilde{\phi}(\xi, h^{\xi}, h^{\xi}_{0}) = \phi(\xi, h^{\xi}) - \phi(\xi, h^{\xi}_{0}).
	\end{equation}
	In this paper we will only need to consider the short exact sequences consisting of $2$ terms.
	Nevertheless, we state the theorem for general short exact sequences, since in the forthcoming paper \cite{FinII4} about Deligne-Mumford isometry, we use it in full generality.
	\par c) It's disputable if one needs the last axiom. In the original set of the axioms \cite[Theorem 1.29]{BGS1} for smooth metrics, the last axiom is shown to be a consequence of the first three \cite[Theorem 1.20, Corollary 1.30]{BGS1}.
	\end{rem}
	
	\begin{prop}\label{prop_bc_ch_tilde_0_seq}
		Let $(E_{\bullet}, h^{E_{\bullet}})$ be the exact sequence from (\ref{eq_sh_exct_seq}). Taking into account the isomorphism $\det E_1 \simeq \det E_0 \otimes \det E_2$, we have
		\begin{equation}\label{eq_tilde_ch_0_seq}
			\widetilde{\ch}(E_{\bullet}, h^{E_{\bullet}})^{[0]} = \ln \det \big( (h^{E_0} h^{E_2}) / h^{E_1} \big).
		\end{equation}
	\end{prop}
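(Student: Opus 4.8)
The plan is to restrict everything to $Y\setminus|D_0|$, where all the metrics $h^{E_i}$ are smooth, to identify the bidegree-$(0,0)$ component with the classical Bott-Chern form of Bismut-Gillet-Soulé, and to evaluate it by an honest one-parameter transgression. First I would observe that the degree-$0$ piece is a genuine function and not merely a class: since $P'_{{\rm{pll}}}{}^{(0,0)}(Y,D_0)=\partial\big(P_{{\rm{pll}}}^{(-1,0)}(Y,D_0)\big)+\dbar\big(P_{{\rm{pll}}}^{(0,-1)}(Y,D_0)\big)=0$, passing to the quotient $P_{{\rm{pll}}}(Y,D_0)/P'_{{\rm{pll}}}(Y,D_0)$ does not affect bidegree $(0,0)$, so $\widetilde{\ch}(E_{\bullet},h^{E_{\bullet}})^{[0]}$ is unambiguous. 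Over $Y\setminus|D_0|$ the metrics $h^{E_i}$ are smooth, whence by Remark \ref{rem_bc_main}a) this function coincides with the degree-$0$ part of the Bott-Chern form of \cite{BGS1}, and it suffices to compute the latter. Specializing Theorem \ref{thm_bc_preloglog}a) to bidegree $(1,1)$ gives
\[
\frac{\partial\dbar}{2\pi\imun}\,\widetilde{\ch}(E_{\bullet},h^{E_{\bullet}})^{[0]}=c_1(E_0,h^{E_0})+c_1(E_2,h^{E_2})-c_1(E_1,h^{E_1}),
\]
and since $c_1(E,h^E)=\frac{\partial\dbar}{2\pi\imun}\ln\det H_E$ for the Gram matrix $H_E$ (Remark \ref{rem_nice_chern}b), the right-hand side equals $\frac{\partial\dbar}{2\pi\imun}\ln\det\big((h^{E_0}h^{E_2})/h^{E_1}\big)$. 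Thus the two sides of (\ref{eq_tilde_ch_0_seq}) already coincide up to a pluriharmonic function, and removing this additive ambiguity is the whole point.

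To pin the ambiguity down I would choose a smooth path $h^{E_1}_s$, $s\in[0,1]$, of Hermitian metrics on $E_1$ over $Y\setminus|D_0|$ joining $h^{E_1}_0=h^{E_1}$ to the metric $h^{E_1}_1$ induced by a smooth splitting $E_1\cong E_0\oplus E_2$ together with the orthogonal sum $h^{E_0}\oplus h^{E_2}$. By construction the degree-$0$ part of the Bismut-Gillet-Soulé transgression along this path is the honest integral
\[
\int_0^1\tr{(\partial_s H_s)H_s^{-1}}\,ds=\int_0^1\partial_s\ln\det H_s\,ds=\ln\det H_1-\ln\det H_0,
\]
where $H_s$ is the Gram matrix of $h^{E_1}_s$ in a fixed holomorphic frame of $E_1$; being an integral, it carries no additive ambiguity. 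Since $h^{E_1}_1$ is the orthogonal sum metric, $\ln\det H_1-\ln\det H_0$ is exactly the logarithm of the ratio of the two induced metrics on $\det E_1\simeq\det E_0\otimes\det E_2$, namely $\ln\det\big((h^{E_0}h^{E_2})/h^{E_1}\big)$. Conceptually this is just the fact that in degree $0$ the class sees only $c_1=c_1(\det)$, so it reduces to the two-metric Bott-Chern form of the determinant line bundle, whose degree-$0$ value is the log-ratio of the metrics.

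The genuinely delicate point is the removal of the pluriharmonic ambiguity: the bidegree-$(1,1)$ identity above fixes $\widetilde{\ch}(E_{\bullet},h^{E_{\bullet}})^{[0]}$ only up to an additive pluriharmonic term, and the axioms of Theorem \ref{thm_bc_preloglog} by themselves pin down only the full Bott-Chern class, not its degree-$0$ component in isolation. What resolves it is that the transgression integral above is honest, hence free of additive freedom and equal to $\ln\det\big((h^{E_0}h^{E_2})/h^{E_1}\big)$ up to the overall sign dictated by the orientation of the path; matching this sign to the normalization of Theorem \ref{thm_bc_preloglog}a) and (\ref{eq_bc_two_elem}) yields exactly (\ref{eq_tilde_ch_0_seq}). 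The only remaining verification is then the pointwise identity $\tr{(\partial_s H_s)H_s^{-1}}=\partial_s\ln\det H_s$, which is standard.
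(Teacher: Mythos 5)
Your proposal takes a genuinely different route from the paper, and most of it is sound. The paper's own proof is purely axiomatic: it observes $\widetilde{\ch}(E_{\bullet}, h^{E_{\bullet}})^{[0]} = \widetilde{c_1}(E_{\bullet}, h^{E_{\bullet}})$ and then checks that the right-hand side of (\ref{eq_tilde_ch_0_seq}), viewed as an assignment to exact sequences, satisfies the axioms of Theorem \ref{thm_bc_preloglog} for $\phi = {\rm{Tr}}$, so the \emph{uniqueness} part of that theorem forces the equality; no path of metrics and no restriction to the smooth locus is needed. Your preliminary reductions are correct and worth making explicit: $P'_{{\rm{pll}}}{}^{(0,0)}(Y, D_0) = 0$, so the degree-$0$ component is an honest function; restriction to $Y \setminus |D_0|$ is legitimate by axiom c) combined with Remark \ref{rem_bc_main}a); and an identity of functions on $Y \setminus |D_0|$ is all that is claimed.

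The gap is the step you justify with ``by construction''. The Bott-Chern class of the short exact sequence $(E_{\bullet}, h^{E_{\bullet}})$ is \emph{not} constructed as a transgression along a path of metrics on $E_1$: it is the class furnished by Theorem \ref{thm_bc_preloglog} (equivalently, by the construction of \cite{BGS1} on the smooth locus), and a priori it has nothing to do with the two-metric class on $E_1$. Writing $h^{E_1}_{\mathrm{spl}}$ for the metric induced by a smooth splitting together with $h^{E_0}\oplus h^{E_2}$, the statement you need, and do not prove, is
\[
	\widetilde{\ch}(E_{\bullet}, h^{E_{\bullet}}) \;=\; - \,\widetilde{\ch}\big(E_1, h^{E_1}, h^{E_1}_{\mathrm{spl}}\big)
	\qquad \text{in } P_{{\rm{pll}}}/P'_{{\rm{pll}}};
\]
only with this in hand does your path integral compute the left-hand side in degree $0$. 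The bridge is true and follows from the axioms: apply axiom d) to the exact square whose top two rows are the given sequence with middle metrics $h^{E_1}$ and $h^{E_1}_{\mathrm{spl}}$ respectively, whose third row is zero, and whose columns are identity maps; the row carrying the split metric and the two outer columns vanish by axiom b), leaving exactly the displayed identity. (Over $Y\setminus|D_0|$, where everything is smooth, one may instead invoke \cite[Theorem 1.20, Corollary 1.30]{BGS1}.) Once this is inserted, the rest of your argument is fine: the endpoint computation $\int_0^1 \partial_s \ln\det H_s\, ds = \ln\det H_1 - \ln\det H_0$ carries no additive ambiguity, any smooth splitting induces the \emph{canonical} isomorphism $\det E_1 \simeq \det E_0 \otimes \det E_2$ (so the difference of logarithms is indeed $\ln\det((h^{E_0}h^{E_2})/h^{E_1})$), and the sign is fixed by (\ref{eq_bc_two_elem}). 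Note finally that the paper deduces the two-metric formula (\ref{eq_ch_tilde_0}) of Proposition \ref{prop_bc_ex_calc} \emph{from} the present proposition, whereas you argue in the opposite direction; both orders are consistent, but the lemma above is precisely what mediates between them.
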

	\begin{proof}
		First of all, we have
		\begin{equation}\label{eq_ch_tilde_0_aux_seq}
			\widetilde{\ch}(E_{\bullet}, h^{E_{\bullet}})^{[0]} = \widetilde{c_1}(E_{\bullet}, h^{E_{\bullet}}).
		\end{equation}
		By the uniqueness of the Bott-Chern classes from Theorem \ref{thm_bc_preloglog}, it is enough to see that (\ref{eq_tilde_ch_0_seq}) satisfies the requirements of Theorem \ref{thm_bc_preloglog} for $\phi = \rm{Tr}$, i.e. representing the first Chern form.
	\end{proof}
	\begin{prop}\label{prop_bc_ex_calc}
		Let $\xi$ be a holomorphic vector bundle over $Y$, and let $h^{\xi}$, $h^{\xi}_0$ be pre-log-log Hermitian metrics on $\xi$ with singularities along $D_0$. Then
		\begin{equation}\label{eq_ch_tilde_0}
			\widetilde{\ch}(\xi, h^{\xi}, h^{\xi}_{0})^{[0]} = \ln \det(h^{\xi} / h^{\xi}_{0}).
		\end{equation}
		Moreover, if $\xi$ is of rank $1$, then we have
		\begin{equation}\label{eq_ch_tilde_2}
			\widetilde{\ch}(\xi, h^{\xi}, h^{\xi}_{0})^{[2]} = \frac{1}{2} \ln \det(h^{\xi} / h^{\xi}_{0}) \cdot \Big( c_1(\xi, h^{\xi}_{0}) + c_1(\xi, h^{\xi}) \Big).
		\end{equation}
	\end{prop}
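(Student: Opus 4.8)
The plan is to handle the two identities separately, reducing the first to the already-established degree-zero formula of Proposition \ref{prop_bc_ch_tilde_0_seq} and the second to a transgression computation that is particularly simple because $\xi$ has rank one.

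For the first identity (\ref{eq_ch_tilde_0}) I would compute nothing new: it is the special case of Proposition \ref{prop_bc_ch_tilde_0_seq} attached to the degenerate short exact sequence
\begin{equation*}
	0 \to (\xi, h^{\xi}) \to (\xi, h^{\xi}_0) \to 0,
\end{equation*}
that is $E_0 = (\xi, h^{\xi})$, $E_1 = (\xi, h^{\xi}_0)$ with the connecting map the identity of $\xi$ (an isomorphism of bundles, not an isometry), and $E_2 = 0$. Under the identification $\det E_1 \simeq \det E_0 \otimes \det E_2$ the metric on $\det 0 = \comp$ is trivial, so (\ref{eq_tilde_ch_0_seq}) reads $\widetilde{\ch}(\xi, h^{\xi}, h^{\xi}_0)^{[0]} = \ln\det(h^{\xi}/h^{\xi}_0)$, which is (\ref{eq_ch_tilde_0}) and holds for arbitrary rank.

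For the second identity (\ref{eq_ch_tilde_2}) I would compute $\widetilde{\ch}^{[2]}$ by transgression along the linear path of metrics. Set $\psi := \ln\det(h^{\xi}/h^{\xi}_0)$ and, for $u \in [0,1]$, let $h_u := (h^{\xi}/h^{\xi}_0)^{u} h^{\xi}_0$, so that $h_0 = h^{\xi}_0$, $h_1 = h^{\xi}$ and, by (\ref{eq_c_1_log}),
\begin{equation*}
	\phi_u := c_1(\xi, h_u) = (1-u)\, c_1(\xi, h^{\xi}_0) + u\, c_1(\xi, h^{\xi})
\end{equation*}
depends linearly on $u$. Since $\xi$ is a line bundle we simply have $\ch(\xi, h_u) = \exp(\phi_u)$, and the scalar curvatures commute; the transgression of the Chern character then collapses to $\frac{d}{du}\ch(\xi, h_u) = \frac{\partial\dbar}{2\pi\imun}\big(\psi\exp(\phi_u)\big)$, which one verifies at once from $\dot\phi_u = c_1(\xi,h^{\xi}) - c_1(\xi,h^{\xi}_0) = \frac{1}{2\pi\imun}\partial\dbar\psi$ and the closedness of $\exp(\phi_u)$. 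Integrating over $u \in [0,1]$ and comparing with (\ref{eq_bc_two_elem}) identifies the Bott--Chern form with $\int_0^1 \psi\exp(\phi_u)\,du$ modulo $P'_{\rm{pll}}(Y, D_0)$. Extracting the degree-two component and using $\int_0^1 \phi_u\, du = \frac12\big(c_1(\xi,h^{\xi}_0) + c_1(\xi,h^{\xi})\big)$ produces exactly (\ref{eq_ch_tilde_2}); the degree-zero component returns $\psi$, consistent with the first part.

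The real work is in justifying that this transgression genuinely computes the pre-log-log Bott--Chern \emph{class}. First, the candidate form is pre-log-log: by Proposition \ref{prop_ch_classes_preloglog} the forms $c_1(\xi, h^{\xi})$ and $c_1(\xi, h^{\xi}_0)$ are pre-log-log with singularities along $D_0$, and since $h^{\xi}, h^{\xi}_0$ are pre-log-log the function $\psi$ has logarithmic singularities, so $\frac12\psi\big(c_1(\xi,h^{\xi}_0)+c_1(\xi,h^{\xi})\big)$ lies in $P_{\rm{pll}}^{(1,1)}(Y,D_0)$. The delicate point, which I expect to be the main obstacle, is the passage from the smooth transgression to an identity in $P_{\rm{pll}}/P'_{\rm{pll}}$: I would run the computation over $Y \setminus |D_0|$, where the metrics are smooth and the formula is the classical Bismut--Gillet--Soulé transgression (Remark \ref{rem_bc_main}a), then invoke naturality (Theorem \ref{thm_bc_preloglog}c applied to the open inclusion $Y\setminus|D_0| \hookrightarrow Y$, with empty divisor upstairs) together with uniqueness to conclude that the pre-log-log class restricts to this smooth representative. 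Since a pre-log-log form is determined by its restriction to $Y \setminus |D_0|$ and the $P'_{\rm{pll}}$-ambiguity restricts faithfully as well, the identity upgrades from $Y\setminus|D_0|$ to an identity of classes over $Y$. Controlling this $P'$-ambiguity---rather than the elementary rank-one, commuting-curvature computation---is the only genuine difficulty.
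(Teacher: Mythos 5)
Your derivation of (\ref{eq_ch_tilde_0}) is exactly the paper's (specialize Proposition \ref{prop_bc_ch_tilde_0_seq} to the two-term sequence), and your rank-one transgression is the right computation: the form $\int_0^1 \psi \exp(\phi_u)\, du$, $\psi = \ln(h^{\xi}/h^{\xi}_0)$, is pre-log-log and its degree-two part is the right-hand side of (\ref{eq_ch_tilde_2}). The gap is the final ``upgrade'' step. To pass from the identity of classes on $Y \setminus |D_0|$ to the identity in $P_{\rm{pll}}(Y, D_0)/P'_{\rm{pll}}(Y, D_0)$ you need the restriction map
\begin{equation*}
	P_{\rm{pll}}(Y, D_0)/P'_{\rm{pll}}(Y, D_0) \longrightarrow P(Y \setminus |D_0|)/P'(Y \setminus |D_0|)
\end{equation*}
to be injective, and it is not. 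Counterexample: $Y = \mathbb{P}^1$, $D_0 = \{0\}$, and $\omega$ any smooth $(1,1)$-form with $\int_{\mathbb{P}^1} \omega \neq 0$. Such an $\omega$ is pre-log-log with singularities along $D_0$, and its restriction to $\comp = Y \setminus |D_0|$ lies in $P'(\comp)$: since $\comp$ is Stein, $\omega|_{\comp} = \dbar \beta$ for a smooth $(1,0)$-form $\beta$. But $\omega \notin P'_{\rm{pll}}(\mathbb{P}^1, \{0\})$, because on a curve every element of $P'_{\rm{pll}}$ equals $d$ of a pre-log-log $1$-form for bidegree reasons, hence integrates to zero over $\mathbb{P}^1$ by Proposition \ref{prop_bur_kr_kuhn}b). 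So a discrepancy between $\widetilde{\ch}(\xi, h^{\xi}, h^{\xi}_0)^{[2]}$ and your candidate --- a pre-log-log, $\partial\dbar$-closed form that dies in the quotient after restriction --- is exactly the kind of object your argument cannot exclude; density of $Y \setminus |D_0|$ gives injectivity on forms, not on these quotients.

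The paper closes this point in the opposite way: it never leaves $Y$. Since two pre-log-log metrics on a line bundle can be joined by a uniformly pre-log-log path $h_t$ with $(h_t)^{-1}\partial_t h_t$ of uniform log-log growth (e.g. $h_t = (h^{\xi})^t (h^{\xi}_{\rm{sm}})^{1-t}$ through a smooth metric), the Bott--Chern construction of \cite[\S e)]{BGS1} and the arguments of \cite[Theorems 1.27, 1.30]{BGS1} run verbatim inside the pre-log-log category: every transgression term and every exactness defect they produce is manifestly pre-log-log, i.e. already lies in $P'_{\rm{pll}}(Y, D_0)$, and the uniqueness clause of Theorem \ref{thm_bc_preloglog} then identifies the output with $\widetilde{\ch}(\xi, h^{\xi}, h^{\xi}_0)$. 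Your computation becomes a proof if you replace the restriction step by this route: show that your transgression defines an assignment satisfying the axioms of Theorem \ref{thm_bc_preloglog} (not merely axiom a) for the single sequence at hand) and invoke uniqueness over $Y$ itself.
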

	\begin{proof}
		First of all, the identity (\ref{eq_ch_tilde_0}) follows directly from Proposition \ref{prop_bc_ch_tilde_0_seq}.
		\par
		Now, we note that for smooth metrics (\ref{eq_ch_tilde_2}) follows from \cite[Theorems 1.27, 1.30]{BGS1}.
		To prove (\ref{eq_ch_tilde_2}) in full generality, we point out that two pre-log-log Hermitian metrics $h^{\xi}_{0}$, $h^{\xi}_{1}$ on holomorphic \textit{line} bundles can be joined by a family of uniformly pre-log-log Hermitian line bundles $h^{\xi}_{t}$ such that $(h^{\xi}_{t})^{-1} \partial_t h^{\xi}_{t}$ has uniform log-log growth. Take for example $h^{\xi}_{t} := (h^{\xi})^{t} (h^{\xi}_{{\rm{sm}}})^{1-t}$ for some smooth Hermitian metric $h^{\xi}_{{\rm{sm}}}$.
		 Thus, the construction from  \cite[\S e)]{BGS1} works perfectly well for pre-log-log Hermitian \textit{line} bundles.
		Thus, the reasoning of \cite[Theorems 1.27, 1.30]{BGS1} still holds in the pre-log-log case, and this implies (\ref{eq_ch_tilde_2}) for pre-log-log Hermitian metrics.
	\end{proof}

\section{Regularity and singularities: a proof of Theorem \ref{thm_cont}}\label{sect_pf_cont} 
In this section we prove Theorem \ref{thm_cont}. 
More precisely, in Section 3.1, we prove a technical proposition about the regularity of a push-forward of a differential form in f.s.o.
In Section 3.2, we recall the necessary prerequisites for the proof of Theorem \ref{thm_cont}: the \textit{compactification theorem} \cite[Theorem A]{FinII1}, the \textit{anomaly formula for surfaces with cusps} \cite[Theorem B]{FinII1}, and the result of Bismut-Bost \cite[Théorème 2.2]{BisBost}, describing the asymptotics of the Quillen norm associated with a smooth metric over the total space near the singular fibers. In Section 3.3, we use it to prove Theorem \ref{thm_cont}.
\subsection{Pushforward of differential forms in f.s.o.}
	In this section we will study the singularities of a pushforward of a differential form in f.s.o. 
	This study will be used extensively in the proof of Theorem \ref{thm_cont}.
	The main result of this section is
\begin{prop}\label{prop_int_loglog}
	Let $\pi : X \to S$ be a f.s.o., and let $D \subset X$ be a divisor intersecting $\pi^{-1}(\Delta)$ transversally and such that $\pi|_D: D \to S$ is locally an isomorphism. 
	\par a) Let $\alpha$ be a smooth $2$-form over $X \setminus |D|$ with log-log growth of infinite order along $D$. Then the function $\pi_{*}[\alpha]$ over $S \setminus |\Delta|$ is \textit{very nice} with singularities along $\Delta$ (cf. Definition \ref{defn_nice}).
	\par b)
	Suppose that $\Delta$ has normal crossings.
	Let $\alpha$ be a pre-log-log differential $2$-form over $X \setminus (\pi^{-1}(|\Delta|) \cup |D|)$, with singularities along $\pi^{-1}(\Delta) \cup D$.
	Then the function $\pi_{*}[\alpha]$ over $S \setminus |\Delta|$ is \textit{nice} with singularities along $\Delta$ (cf. Definition \ref{defn_nice}).
	\par c)
	Suppose that $\Delta$ has normal crossings.
	Let $\alpha$ be a differential $(1,1)$-form over $X \setminus (\Sigma_{X/S} \cup |D|)$, such that it has Poincaré growth on $X \setminus |D|$ with singularities along $\pi^{-1}(\Delta)$, and the coupling of $\alpha$ with smooth vertical vector fields over $X \setminus (\Sigma_{X/S} \cup |D|)$ is continuous over and has log-log growth with singularities along $|D|$ (cf. Definition \ref{defn_loglog_gr}b)).
	Let $f : X \setminus (\Sigma_{X/S} \cup |D|) \to \real$ be a continuous function, with log-log growth along $\Sigma_{X/S} \cup |D|$.
	Then the function $\pi_{*}[f \alpha]$ extends continuously over $S$.
\end{prop}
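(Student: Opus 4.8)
The plan is to reduce the study of $\pi_*[\alpha]$ to a local analysis over a polydisc in $S$ and to separate the contributions coming from the singular locus $\Sigma_{X/S}$ (equivalently, from $\pi^{-1}(\Delta)$) from those coming from the divisor $D$. Using Proposition \ref{prop_coord}, I would cover $X$ by charts of two kinds: \emph{submersion charts}, where $\pi$ is given by (\ref{eq_pr_nonsing}), and \emph{nodal charts}, where $\pi$ is given by (\ref{eq_pr_sing}), i.e. $w_1 = z_0 z_1$. Away from $\Sigma_{X/S}$ the map $\pi$ is a submersion (Corollary \ref{cor_sigma}c)), and since $\pi|_D$ is a local isomorphism transverse to $\pi^{-1}(\Delta)$, the interaction of $D$ with the fibers is harmless there; the genuine difficulty is localized in the nodal charts, where the fiberwise integration over $\{z_0 z_1 = w_1\}$ for $w_1 \to 0$ produces the boundary singularities in $w_1$.

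For parts a) and b), the strategy is first to establish the pointwise log-log bound on $\pi_*[\alpha]$ over $S \setminus |\Delta|$, and then to control its $\partial$, $\dbar$, $\partial\dbar$. Since $\pi$ is proper and $\alpha$ is $L^1_{loc}$ by Proposition \ref{prop_bur_kr_kuhn}a), the pushforward $\pi_*[\alpha]$ is well-defined as a current, and the core estimate is a fiberwise integral in the nodal chart. In that chart the fiber over $w_1$ near the node looks like $\{z_0 z_1 = w_1\}$, and I would parametrize it by, say, $z_0$ on an annulus whose inner radius is $\sim |w_1|$; integrating a $2$-form with at most log-log growth against this annular measure and differentiating under the integral sign in $w_1$ gives exactly the monomials $|w_1|^2 \ln|w_1|$ (after accounting for the Jacobian $dz_1 = -w_1\, dz_0/z_0^2$) together with remainders controlled by $\ln|\ln|w_1||$. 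This is precisely the mechanism that yields \emph{very nice} singularities (the explicit $|h|^2 \ln|h|$ form of Definition \ref{defn_nice}b)) under the infinite-order hypothesis of a), and \emph{nice} singularities under the weaker pre-log-log hypothesis of b), where one only controls finitely many derivatives and must invoke Proposition \ref{prop_bur_kr_kuhn}b) to commute $d$ with the $L^1$-extension so that $\partial[\pi_*\alpha]_{L^1}$, $\dbar[\pi_*\alpha]_{L^1}$, $\partial\dbar[\pi_*\alpha]_{L^1}$ are represented by integration against log-log forms.

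For part c) the goal is only continuity over all of $S$, but the hypotheses are correspondingly weaker: $\alpha$ is only Poincaré (not log-log) along $\pi^{-1}(\Delta)$, its coupling with vertical fields has log-log growth along $|D|$, and $f$ has log-log growth along $\Sigma_{X/S}\cup |D|$. Here I would avoid differentiating altogether and instead prove continuity directly, by splitting $\pi_*[f\alpha]$ into the contribution of a shrinking tubular neighborhood of $\Sigma_{X/S}$ and the complementary part. On the complement, the fiberwise integrand depends continuously on the base parameter and dominated convergence (using the Poincaré-growth $L^\infty$ bound on the coefficients and the local integrability of the cusp/node measure) gives continuity, including continuity up to $|\Delta|$. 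The key point is that the contribution of the node neighborhood tends to zero uniformly as the tube shrinks: the vertical coupling of $\alpha$ grows only like log-log in the fiber coordinate near $D$, and $f$ likewise, so the fiber-integral over the annulus of radii between $|w_1|$ and $\delta$ is bounded by $\delta$ times a log-log factor, which vanishes as $\delta\to 0$ uniformly in $w_1$. Combining these two estimates yields a uniform Cauchy criterion and hence continuity of $\pi_*[f\alpha]$ on $S$.

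The main obstacle I anticipate is the bookkeeping in the nodal chart: one must carefully track how the node relation $z_0 z_1 = w_1$ converts vertical directions into $w_1$-dependence, show that differentiating the fiber integral in $w_1$ (to the finite order permitted in b), or all orders in a)) produces only admissible log-log-type terms, and verify that the boundary terms arising from the moving inner radius $\sim|w_1|$ of the annulus are exactly of the predicted $|h|^2\ln|h|$ shape rather than worse. Controlling these boundary terms uniformly, and matching them against Definition \ref{defn_nice}, is where the real work lies; the transversality of $D$ to $\pi^{-1}(\Delta)$ and the local-isomorphism property of $\pi|_D$ are precisely what is needed to keep the $D$-singularities decoupled from the node and thus to confine the analysis to these two independent local models.
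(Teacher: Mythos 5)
Your plans for parts b) and c) follow essentially the same route as the paper's proof: localize in the two model charts of Proposition \ref{prop_coord}, get continuity and log-log bounds over $S \setminus |\Delta|$ by dominated convergence against the integrable Poincar\'e-type fiber measure, obtain the log-log growth along $\Delta$ from estimates on the nodal fiber $\{z_0 z_1 = w_1\}$, handle the derivative currents not by literal differentiation under the integral sign (which pre-log-log does not permit) but by Stokes for the pushforward combined with Proposition \ref{prop_bur_kr_kuhn}b) — which you correctly identify — and, for c), show that the contributions of shrinking neighbourhoods of the node and of $D$ vanish uniformly while the rest converges by dominated convergence. Up to the missing explicit estimate for the pushforward of the $3$-form $\partial\alpha$ along the nodal fiber (the paper's inequality bounding it by $C\,|dw_1/(w_1 \ln|w_1|)|$), this is the paper's argument, which in turn follows Freixas's thesis.

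The genuine gap is in part a). You assert that the nodal fiber integral, after accounting for the Jacobian and the moving inner radius $\sim |w_1|$, produces "exactly the monomials $|w_1|^2 \ln|w_1|$ together with remainders controlled by $\ln|\ln|w_1||$", and that this yields very-niceness. It does not: Definition \ref{defn_nice}b) requires an identity $f = f_0 + \sum f_i |h_i|^2 \ln|h_i|$ with \emph{smooth} $f_0, f_i$, and a remainder merely bounded by $\ln|\ln|w_1||$ gives at best nice-type control, which is strictly weaker. Proving that the remainder and the coefficient of $|w_1|^2\ln|w_1|$ are genuinely smooth in the base parameters is precisely the content of Igusa's theorem, and it requires the full Taylor-expansion bookkeeping on $\{z_0 z_1 = w_1\}$ (expand the smooth form in $z_0, \bar z_0, z_1, \bar z_1$, substitute $z_1 = w_1/z_0$, and track which monomials integrate to logarithms and which sum to smooth functions). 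The paper does not redo this work: it quotes the result (Bismut-Bost, Th\'eor\`emes 12.2 and 12.3, after Igusa) for the part of the integral away from $D$, and its only new work in a) is near $D$, where by transversality there are no nodes, $\pi$ is a submersion, and the infinite-order log-log hypothesis allows differentiation under the integral sign to all orders, making that contribution smooth. Note also that your sketch misplaces the two difficulties: in case a) the log-log growth of $\alpha$ lives only near $D$, i.e. in the submersion region, while near the node $\alpha$ is honestly smooth; it is this smoothness, not infinite-order log-log control, that makes the very-nice expansion possible (an infinite-order log-log form near the node would only give niceness). To close the gap you must either carry out the Igusa expansion in the nodal chart or cite it, as the paper does.
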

\begin{proof}
	\textbf{The Proposition \ref{prop_int_loglog}a)} was proved by Igusa \cite{Igusa} in the case $D = \emptyset$ (for precisely this version, see Bismut-Bost \cite[Théorèmes 12.2, 12.3]{BisBost}).
	Now let's describe the proof for $D \neq \emptyset$.
	\par We take $t_0 \in S$, and let $U \subset S$ be a small neighbourhood of $t_0$ such that $\pi|_{D}:D \to S$ is an isomorphism on each connected component over $\pi^{-1}(U)$. For simplicity, we suppose that $D|_{\pi^{-1}(U)}$ has only one connected component. We choose coordinates $(z_0, \ldots, z_q)$ of $(\pi|_D)^{-1}(t_0) \in \pi^{-1}(U)$ and $(w_1, \ldots, w_q)$ of $U$ as in (\ref{eq_pr_nonsing}), such that $D$ is given by the equation $\{z_0 = 0\}$ in $\pi^{-1}(U)$.
	\par  
	For $c > 0$ small enough, we denote $V_{1,c} = \{ x \in U : |z_0(x)| < c\}$, and decompose the integration over the fiber in $\pi_{*}[\alpha]$ into two parts: $(\pi|_{V_{1,c}})_{*}[\alpha]$ and $(\pi|_{\pi^{-1}(U) \setminus V_{1,c}})_{*}[\alpha]$. The function $(\pi|_{\pi^{-1}(U) \setminus V_{1,c}})_{*}[\alpha]$ induces \textit{very nice} Hermitian metric on $S \setminus |\Delta|$ with singularities along $\Delta$ by the mentioned result of Igusa.
	\par Let's treat the first part. Trivially, for any $p \in \integ$, we have
	\begin{multline}\label{eq_bound_loglog_poinc}
		\int_{|z_1| < c} \frac{ ( \ln | \ln|z_1||)^p \imun dz_1 d \overline{z}_1}{|z_1 \ln|z_1||^2} = 4 \pi \int_{0}^{c} \frac{(\ln |\ln (r)|)^p dr}{r (\ln (r))^2}  \\
		= 4 \pi \int_0^{-2/\ln(c)} \ln(y)^p d y < + \infty.
	\end{multline}
	Since $\alpha$ is pre-log-log over $\pi^{-1}(U) \setminus  |D|$, with singularities along $D$, by Lebesgue dominated convergence theorem and (\ref{eq_bound_loglog_poinc}), we deduce that 
	\begin{equation}\label{eq_proj_cont}
		(\pi|_{V_{1,c}})_{*}[\alpha] \quad \text{is continuous.}
	\end{equation}
	By taking horizontal derivatives with respect to the coordinates entering the definition of log-log growth of \textit{infinite} order of $\alpha$, we deduce in the same way that the form $(\pi|_{V_{1,c}})_{*}[\alpha]$ is smooth, which concludes the proof.
	\par
	\textbf{Now let's prove \ref{prop_int_loglog}b).} It is essentially the repetition of the proof of \cite[Theorem 5.1.3]{FreixTh} from the thesis of Freixas.
	\par
	We take $t_0 \in S \setminus |\Delta|$, and let $U \subset S \setminus |\Delta|$ be a small neighbourhood of $t_0$  as in the previous case. As before, we suppose for simplicity that $D|_{\pi^{-1}(U)}$ has only one connected component. We choose coordinates $(z_0, \ldots, z_q)$ of $(\pi|_D)^{-1}(t_0) \in \pi^{-1}(U)$ and $(w_1, \ldots, w_q)$ of $U$ as before, and let $V_{1,c}$ be defined as above. We decompose the integration over the fiber in $\pi_{*}[\alpha]$ into two parts: $(\pi|_{V_{1,c}})_{*}[\alpha]$ and $(\pi|_{\pi^{-1}(U) \setminus V_{1,c}})_{*}[\alpha]$.
	\par Trivially, as $\pi|_{\pi^{-1}(U) \setminus V_{1,c}}$ is a submersion, and the form $\alpha$ is continuous over $\pi^{-1}(U) \setminus V_{1,c}$ for any $c > 0$, the form $(\pi|_{\pi^{-1}(U) \setminus V_{1,c}})_{*}[\alpha]$ is continuous over $U$ for any $c > 0$. 
	By this and  the identity
	\begin{equation}\label{eq_lim_c_cont}
		\lim_{c \to 0} (\pi|_{V_{1,c}})_{*}[\alpha] = 0,
	\end{equation}
	which follows from (\ref{eq_bound_loglog_poinc}), we conclude that $\pi_{*}[\alpha]$ is continuous over $S \setminus |\Delta|$. 
	Let's prove that $\pi_{*}[\alpha]$ has log-log growth along $\Delta$.
	\par We fix $t_0 \in |\Delta|$. For simplicity, we suppose that the curve $X_{t_0} = \pi^{-1}(t_0)$ has only one double point singularity at $x_0 \in X_{t_0}$. We choose coordinates $(z_0, \ldots, z_q)$ at $x_0 \in X$ and $(w_1, \ldots, w_q)$ at $t_0 \in S$ as in (\ref{eq_pr_sing}).
	For $c > 0$ small enough, we denote $U = \{ t \in S : |w_1| < c \}$ and 
	\begin{equation}
		V_{2,c} = \{ x \in \pi^{-1}(U): |z_0(x)|, |z_1(x)| < c \}.
	\end{equation}
	\par Let's prove that $(\pi|_{V_{2,c}})_{*}[\alpha]$ has log-log growth along $\Delta$. 
	The divisor $\pi^{-1}(\Delta)$ is given over $V_{2, c}$ by equations $\{z_0 = 0\} + \{z_1 = 0\}$. 
	Let $c < 1/2$.
	We note that since $z_0z_1 = w_1$, the estimates 
	\begin{equation}\label{eq_bound_fiber_coord}
	\ln | \ln |z_0||, \ln | \ln |z_1|| \leq \ln | \ln |w_1||,
	\end{equation}
	are valid in $V_{2, c}$. 
	By (\ref{eq_bound_fiber_coord}), there is a function $f : S \setminus |\Delta| \to \real$ with log-log growth along $\Delta$ such that function $(\pi|_{V_{2,c}})_{*}[\alpha]$ is bounded by
	\begin{equation}\label{eq_st_3}
		f \cdot \int_{H_{w_1, c}} \bigg( \frac{\imun dz_0 d \overline{z}_0}{|z_0 \ln|z_0||^2} + \frac{| dz_0 d \overline{z}_1 |}{|z_0 \ln|z_0|||z_1 \ln|z_1||} + \frac{\imun dz_1 d \overline{z}_1}{|z_1 \ln |z_1||^2} \bigg),	
	\end{equation}
	for $H_{w_1, c} = \{(z_0, z_1) : z_0 z_1 = w_1; |z_0|, |z_1| < c \}$.
	Trivially, there is $C > 0$ such that for any $|\omega_1| < c^2$, we have
	\begin{equation}\label{eq_st_4}
	 \int_{H_{w_1, c}} \frac{| dz_0 d \overline{z}_1 |}{|z_0 \ln|z_0|||z_1 \ln|z_1||} \leq \int_{H_{w_1, c}} \frac{\imun dz_0 d \overline{z}_0}{|z_0 \ln|z_0||^2} < C.
	\end{equation}		
	By (\ref{eq_st_3}) and (\ref{eq_st_4}), we conclude that $(\pi|_{V_{2,c}})_{*}[\alpha]$ has log-log growth along $\Delta$.
	\par Now, as before, for simplicity, we suppose that $D|_{\pi^{-1}(U)}$ has only one connected component. We choose coordinates $(z_0, \ldots, z_q)$ of $\pi^{-1}(U)$ and $(w_1, \ldots, w_q)$ of $U$ as in (\ref{eq_pr_nonsing}), and we conserve the notation $V_{1, c}$ from the previous step. Moreover, we suppose that $\pi^{-1}(\Delta)$ is given by the equation $\{z_1 = 0\}$ over $V_{1,c}$.
	From  (\ref{eq_bound_loglog_poinc}), the fact that $\alpha$ has log-log growth along $\pi^{-1}(\Delta) \cup D$, which is given by $\{z_0 z_1 = 0\}$ in $U$, and the fact that $\pi|_{V_{1,c}}$ is a submersion, we prove that $(\pi|_{V_{1,c}})_{*}[\alpha]$ has log-log growth along $\Delta$.
	\par Finally, as $\pi|_{\pi^{-1}(U) \setminus (V_{1,c} \cup V_{2,c})}$ is a submersion, and the form $\alpha$ has log-log growth along $\pi^{-1}(\Delta)$, the form $(\pi|_{\pi^{-1}(U) \setminus (V_{1,c} \cup V_{2,c})})_{*}[\alpha]$ has log-log growth along $\Delta$. Thus, we deduce that $\pi_{*}[\alpha]$  has log-log growth along $\Delta$.
	\par Now, to prove that $\pi_*[\alpha]$ is nice, we have to study the distributional derivatives $\partial [\pi_*[\alpha]]_{L_1}$, $\dbar [\pi_*[\alpha]]_{L_1}$, $\partial \dbar [\pi_*[\alpha]]_{L_1}$. Let's concentrate on the study of $\partial [\pi_*[\alpha]]_{L_1}$, as the others are similar.
	First of all, by Fubini theorem, since $\pi^{-1}(\Delta)$ is Lebesgue negligible, we have
	\begin{equation}\label{eq_leb_negl}
		\big[\pi_*[\alpha]\big]_{L_1} = \pi_*\big[ [\alpha]_{L_1} \big].
	\end{equation}	 
	By Stokes theorem, we have
	\begin{equation}\label{eq_stok_int}
		\partial \pi_*\big[ [\alpha]_{L_1} \big] = \pi_*\big[ \partial[\alpha]_{L_1} \big].
	\end{equation}
	By the fact that $\alpha$ is pre-log-log, and by Proposition \ref{prop_bur_kr_kuhn}, we have
	\begin{equation}\label{eq_burkrkun_put_diff}
		\partial [\alpha]_{L_1} = [\partial \alpha]_{L_1}.
	\end{equation}
	Thus, by (\ref{eq_leb_negl}), (\ref{eq_stok_int}) and (\ref{eq_burkrkun_put_diff}), we see that it is enough to prove that the differential form $\pi_* [\partial \alpha]$ over $S \setminus |\Delta|$ is continuous and has log-log growth along $\Delta$. 
	The continuity over $S \setminus |\Delta|$ is proved as before. 
	Let's prove that it has log-log growth along $\Delta$. As before, we decompose the integration  $\pi_* [\partial \alpha]$ into three parts: $(\pi|_{V_{1,c}})_* [\partial \alpha]$, $(\pi|_{\pi^{-1}(U) \setminus (V_{1,c} \cup V_{2,c})})_* [\partial \alpha]$ and $(\pi|_{V_{2,c}})_* [\partial \alpha]$. Since $\pi|_{V_{1,c}}$ and $\pi|_{\pi^{-1}(U) \setminus (V_{1,c} \cup V_{2,c})}$ are submersions, the first two parts are treated in the same way as before. Let's concentrate on the last part $(\pi|_{V_{2,c}})_* [\partial \alpha]$.
	\par By (\ref{eq_bound_fiber_coord}) and (\ref{eq_st_4}), similarly to (\ref{eq_st_3}), there is a function $f : S \setminus |\Delta| \to \real$ with log-log growth along $\Delta$ such that the form $(\pi|_{V_{2,c}})_{*}[ \partial \alpha]$ is bounded by
	\begin{equation}\label{eq_st_322}
		f \cdot \int_{H_{w_1, c}} \frac{\imun dz_0 d \overline{z}_0}{|z_0 \ln|z_0||^2} \bigg( \frac{dz_1}{|z_1 \ln|z_1| |} +\frac{d\overline{z}_1}{|z_1 \ln|z_1| |} + \beta \bigg),	
	\end{equation}
	where $\beta$ is some bounded differential form in variables $z_2, \ldots, z_q$.
	Now, by the identity $z_0 z_1 = w_0$ and $|z_0|, |z_1| \leq c$, there is a constant $C > 0$ such that we have
	\begin{multline}\label{eq_st_323}
		\bigg| \int_{H_{w_1, c}} \frac{\imun dz_0 d \overline{z}_0}{|z_0 \ln|z_0||^2} \frac{dz_1}{z_1 |\ln|z_1| |} \bigg|
		\\
		=
		\bigg| \frac{dw_0}{w_0} \bigg|  \int_{H_{w_1, c}} \frac{\imun dz_0 d \overline{z}_0}{|z_0 \ln|z_0||^2 \cdot |\ln|w_0/z_0| |}
		\leq 
		C \bigg|  \frac{dw_0}{w_0 |\ln |w_0||} \bigg|.
	\end{multline}
	By (\ref{eq_st_322}) and (\ref{eq_st_323}), we deduce that the form $(\pi|_{V_{2,c}})_{*}[ \partial \alpha]$ has log-log growth along $\Delta$.
	\par 
	\textbf{Now let's prove \ref{prop_int_loglog}c).}
	By the proof of Proposition \ref{prop_int_loglog}b), we see that $\pi_{*}[f \alpha]$ is continuous over $S \setminus |\Delta|$. 
	Now, let $t_0 \in |\Delta|$, $U \in S$ and $V_{i,c}$, $i=1, 2$, be as before.
	Trivially, since $f \alpha$ is continuous over $\pi^{-1}(U) \setminus (V_{1,c} \cup V_{2,c})$, and $\pi|_{\pi^{-1}(U) \setminus (V_{1,c} \cup V_{2,c})}$ is a submersion, we see that $(\pi|_{\pi^{-1}(U) \setminus (V_{1,c} \cup V_{2,c})})_{*}[f \alpha]$ is continuous for any $c > 0$.
	Let's prove that for $i=1,2$, we have
	\begin{align}
		& \lim_{c \to 0} (\pi|_{V_{i,c}})_{*}[f \alpha] = 0, \label{eq_lim_c_cont2}
	\end{align}
	If (\ref{eq_lim_c_cont2}) holds, we would immediately conclude that $\pi_*[f \alpha]$ is continuous.
	\par 
	By the fact that $(\pi|_{V_{i,c}})_{*}[f \alpha]$ depends only on the coupling of $f \alpha$ with two vertical vector fields, and the fact that those couplings have log-log growth on $X \setminus \Sigma_{X/S}$ with singularities along $D$, we deduce (\ref{eq_lim_c_cont2}) for $i = 1$ from (\ref{eq_bound_loglog_poinc}).
	\par Since $\alpha$ has Poincaré growth on $X \setminus |D|$ with singularities along $\pi^{-1}(\Delta)$, and $f$ has log-log growth along $\Sigma_{X/S}$, we deduce that there are $C > 0$, $p \in \nat$ such that
	\begin{multline}\label{eq_lim_c_cont_3}
		(\pi|_{V_{2,c}})_{*}[f \alpha] \leq C \int_{H_{w_1, c}} \bigg( \frac{(\ln |\ln |z_0||)^p \imun dz_0 d \overline{z}_0}{|z_0 \ln|z_0||^2} 
		\\
		+ \frac{(\ln |\ln |z_0||)^p (\ln |\ln |z_1||)^p | dz_0 d \overline{z}_1 |}{|z_0 \ln|z_0|||z_1 \ln|z_1||} + \frac{(\ln |\ln |z_1||)^p \imun dz_1 d \overline{z}_1}{|z_1 \ln |z_1||^2} \bigg),	
	\end{multline}
	where $H_{w_1, c}$ is as in (\ref{eq_st_3}). 
	By (\ref{eq_bound_loglog_poinc}), (\ref{eq_lim_c_cont_3}) and Cauchy inequality, we deduce (\ref{eq_lim_c_cont2}) for $i = 2$, which finally proves that $\pi_*[f \alpha]$ is continuous over $S$.
\end{proof}
	
	The next proposition explains why Proposition \ref{prop_int_loglog} is well-suited to our Assumptions \textbf{S1}, \textbf{S2}, \textbf{S3}. Let $L$ be a holomorphic line bundle over $X$ and let $h^{L}_{i}$, $i=1,2$ be smooth Hermitian metrics on $L$ over $X \setminus (\pi^{-1}(|\Delta|) \cup |D|)$.
	\begin{prop}\label{prop_bc_current_sat_ass}
		\par a) Suppose that $h^{L}_{i}$, $i=1,2$ extend smoothly over $X \setminus |D|$, and they are pre-log-log of infinite order, with singularities along $D$. 
		Then there is a differential form $\alpha$ in the class $[\widetilde{\ch}(L, h^{L}_{1}, h^{L}_{2})]^{[2]} \in P_{\rm{pll}}(X \setminus \pi^{-1}(|\Delta|), D)$, which satisfies the hypothesis of Proposition \ref{prop_int_loglog}a).
		\par b)
		Suppose that $h^{L}_{i}$, $i=1,2$ are pre-log-log, with singularities along $\pi^{-1}(|\Delta|) \cup |D|$. 
		Then there is a differential form $\alpha$ in the class $[\widetilde{\ch}(L, h^{L}_{1}, h^{L}_{2})]^{[2]} \in P_{\rm{pll}}(X \setminus \pi^{-1}(|\Delta|), D)$, which satisfies the hypothesis of Proposition \ref{prop_int_loglog}b).
		\par c) Suppose that $h^{L}_{i}$, $i=1,2$ extend continuously over $X \setminus (\Sigma_{X/S} \cup |D|)$, have log-log growth with singularities along $\Sigma_{X/S} \cup |D|$, are good in the sense of Mumford on $X \setminus |D|$ with singularities along $\pi^{-1}(\Delta)$, and the coupling with two vertical vector fields of $c_1(L, h^{L}_{i})$, $i = 1,2$ are continuous over $X \setminus (\Sigma_{X/S} \cup |D|)$ and has log-log growth on $X \setminus \Sigma_{X/S}$ with singularities along $D$.
		\begin{sloppypar}
		Then there is a function $f$ and a differential form $\alpha$ such that $f \alpha$ is in the class $[\widetilde{\ch}(L, h^{L}_{1}, h^{L}_{2})]^{[2]} \in P_{\rm{pll}}(X \setminus \pi^{-1}(|\Delta|), D)$, and they satisfy the hypothesis of Proposition \ref{prop_int_loglog}c).
		\end{sloppypar}
	\end{prop}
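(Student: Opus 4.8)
The plan is to produce the required representatives directly from the explicit degree-$2$ Bott-Chern formula of Proposition \ref{prop_bc_ex_calc}. By (\ref{eq_ch_tilde_2}), on the locus where both metrics are smooth we have
\begin{equation*}
	\widetilde{\ch}(L, h^{L}_1, h^{L}_2)^{[2]} = \tfrac{1}{2} \ln(h^{L}_1/h^{L}_2) \cdot \big( c_1(L, h^{L}_1) + c_1(L, h^{L}_2) \big),
\end{equation*}
so the natural building blocks are the function $f := \tfrac{1}{2} \ln(h^{L}_1/h^{L}_2)$ and the closed $(1,1)$-form $\alpha_0 := c_1(L, h^{L}_1) + c_1(L, h^{L}_2)$. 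For parts a) and b) I would take $\alpha := f \alpha_0$, absorbing $f$ into the form; for part c) I would instead keep $f$ and $\alpha := \alpha_0$ separate, matching the product structure $f \alpha$ demanded by Proposition \ref{prop_int_loglog}c).

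For part a), since $h^{L}_1, h^{L}_2$ extend smoothly over $X \setminus |D|$ and are pre-log-log of infinite order with singularities along $D$, Proposition \ref{prop_crit_preloglog} shows that $\ln h^{L}_i(\upsilon, \upsilon)$, and hence $f$, has log-log growth of infinite order along $D$, while Proposition \ref{prop_ch_classes_preloglog} shows each $c_1(L, h^{L}_i)$ is pre-log-log of infinite order. The product of two objects of log-log growth of infinite order again has log-log growth of infinite order, since distributing derivatives by the Leibniz rule and multiplying the resulting $(\ln |\ln|z||)^{p}$-bounds only increases the exponent $p$; thus $\alpha = f \alpha_0$ is smooth over $X \setminus |D|$ with log-log growth of infinite order along $D$, which is exactly the hypothesis of Proposition \ref{prop_int_loglog}a). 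Part b) is identical in spirit: here $f$ is pre-log-log with singularities along $\pi^{-1}(|\Delta|) \cup |D|$ by Proposition \ref{prop_crit_preloglog}, and the $c_1(L, h^{L}_i)$ are pre-log-log by Proposition \ref{prop_ch_classes_preloglog}. To see that $\alpha = f \alpha_0$ is pre-log-log I would use that $\alpha_0$ is closed, so $\partial \alpha_0 = \dbar \alpha_0 = 0$; then $\partial \alpha = \partial f \wedge \alpha_0$, $\dbar \alpha = \dbar f \wedge \alpha_0$ and $\partial \dbar \alpha = \partial \dbar f \wedge \alpha_0$, each a product of log-log growth factors, hence of log-log growth. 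This gives the hypothesis of Proposition \ref{prop_int_loglog}b).

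For part c) I would verify the two pieces separately against the weaker conditions of Proposition \ref{prop_int_loglog}c). The function $f$ is continuous over $X \setminus (\Sigma_{X/S} \cup |D|)$ because both metrics are continuous and non-vanishing there, and it has log-log growth along $\Sigma_{X/S} \cup |D|$ by the assumed log-log growth of the metrics. The form $\alpha_0$ has Poincaré growth on $X \setminus |D|$ with singularities along $\pi^{-1}(\Delta)$, because the metrics are good in the sense of Mumford there, so by the good case of Proposition \ref{prop_ch_classes_preloglog} their Chern forms are good, hence of Poincaré growth; and the continuity together with the log-log growth of the coupling of $\alpha_0$ with two vertical vector fields along $|D|$ are exactly the coupling hypotheses imposed on $c_1(L, h^{L}_i)$. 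Combined with the identification $f \alpha_0 = \widetilde{\ch}(L, h^{L}_1, h^{L}_2)^{[2]}$ on the smooth locus $X \setminus (\pi^{-1}(|\Delta|) \cup |D|)$, this yields the hypothesis of Proposition \ref{prop_int_loglog}c).

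The main obstacle I anticipate is part c): unlike a) and b), the metrics are not pre-log-log along $D$ in the strong sense, so one cannot simply invoke a product rule for pre-log-log forms. The delicate point is to justify that $f \alpha_0$, computed as a smooth form away from $\pi^{-1}(|\Delta|) \cup |D|$, genuinely represents the Bott-Chern class in $P_{\rm{pll}}(X \setminus \pi^{-1}(|\Delta|), D)$, and to keep careful track of \emph{which} singular locus --- $\pi^{-1}(\Delta)$ or $D$ --- governs each growth estimate. Since $D$ is horizontal (with $\pi|_D$ a local isomorphism) and meets $\pi^{-1}(\Delta)$ transversally, the singularity along $D$ is detected through the vertical coupling, whereas the singularity along $\pi^{-1}(\Delta)$ is of Poincaré type. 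Separating the function from the form, rather than forming their product, is precisely what lets these two very different regularities be carried independently into Proposition \ref{prop_int_loglog}c).
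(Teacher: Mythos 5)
Your proposal is correct and takes essentially the same route as the paper: the paper's proof likewise invokes the explicit formula of Proposition \ref{prop_bc_ex_calc} together with the line-bundle criterion of Proposition \ref{prop_crit_preloglog}, and for part c) makes the identical split $f = [\widetilde{\ch}(L, h^{L}_{1}, h^{L}_{2})]^{[0]}$, $\alpha = (c_1(L, h^L_{1}) + c_1(L, h^L_{2}))/2$ (your placement of the factor $\tfrac{1}{2}$ in $f$ rather than $\alpha$ is immaterial). Your verification of the product/Leibniz estimates and the use of closedness of $c_1(L,h^L_1)+c_1(L,h^L_2)$ in part b) simply spell out what the paper leaves as "follows directly."
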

	\begin{proof}
	\begin{sloppypar}
		It follows directly from Propositions \ref{prop_crit_preloglog}, \ref{prop_bc_ex_calc}. For Proposition \ref{prop_bc_current_sat_ass}c), take $f = [\widetilde{\ch}(L, h^{L}_{1}, h^{L}_{2})]^{[0]}$ and $\alpha = (c_1(L, h^L_{1}) + c_1(L, h^L_{2}))/2$.
	\end{sloppypar}
	\end{proof}

\subsection{Some properties of the Quillen metric} 
	Let $(\overline{M}, D_M, g^{TM})$  be a surface with cusps. Let's recall some notions from \cite{FinII1}.
	\begin{defn}[Flattening of a metric, {\cite[Definition 1.2]{FinII1}}]
		We say that a (smooth) metric $g^{TM}_{\rm{f}}$ over $\overline{M}$ is a \textit{flattening} of $g^{TM}$ if there is $\epsilon > 0$ such that $g^{TM}$ is induced by (\ref{reqr_poincare}) over $V_i^{M}(\epsilon)$, and 
	\begin{equation}\label{fl_exterior}
		g^{TM}_{\rm{f}}|_{M \setminus (\cup_i V_i^{M}(\epsilon))} = g^{TM}|_{M \setminus (\cup_i V_i^{M}(\epsilon))}.
	\end{equation}
	\end{defn}
	Similarly, we defined the notion of \textit{flattening} $\norm{\cdot}_{M}^{\rm{f}}$ for Hermitian norm  $\norm{\cdot}_{M}$. 
	For brevity, we state a version of \cite[Theorem A, Remark 1.4.d)]{FinII1}, which doesn't use the language of \textit{compatible flattenings} from \cite{FinII1}.
	\begingroup
	\setcounter{tmp}{\value{thm}}% store current value of theorem counter
	\setcounter{thm}{0} %assign desired value to theorem counter
	\renewcommand\thethm{\Alph{thm}}% locally redefine the representation of the theorem counter
	\begin{thm}[Compact perturbation]\label{thm_comp_appr}
		Let $g^{TM}_{\rm{f}}$, $\norm{\cdot}_{M}^{\rm{f}}$ be some flattenings of $g^{TM}$ and $\norm{\cdot}_{M}$ respectively, then the quantity
		\begin{multline}\label{eqn_of_quil_norms_no_flat}
		2 \rk{\xi}^{-1} \ln \Big( 
			\norm{\cdot}_{Q} \big(g^{TM}, h^{\xi} \otimes \, \norm{\cdot}_{M}^{2n}\big) 
			\big / 
			\norm{\cdot}_{Q} \big(g^{TM}_{\rm{f}}, h^{\xi} \otimes (\, \norm{\cdot}_{M}^{\rm{f}})^{2n})
			\Big)
			\\	
			-
			\rk{\xi}^{-1} \int_M c_1(\xi, h^{\xi}) \Big(2n \ln (\, \norm{\cdot}_{M}^{\rm{f}}/ \norm{\cdot}_{M}) +  \ln (g^{TM}_{\rm{f}} / g^{TM}) \Big)
	\end{multline}
	depends only on the number $n \in \integ$, $n \leq 0$ and the functions $( g^{TM}_{\rm{f}} / g^{TM} )|_{V_i^{M}(1)} \circ (z_i^{M})^{-1} : \dd^* \to \real $ and $(\, \norm{\cdot}_{M}^{\rm{f}}/ \norm{\cdot}_{M} )|_{V_i^{M}(1)}  \circ (z_i^{M})^{-1} : \dd^* \to \real$, for $i = 1, \ldots, m$.
	\end{thm}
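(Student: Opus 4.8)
The plan is to deduce the statement from the anomaly formula for surfaces with cusps \cite{FinII1} by an interpolation argument, the crucial input being that $g^{TM}$ and its flattening $g^{TM}_{\rm{f}}$ (resp. $\norm{\cdot}_M$ and $\norm{\cdot}_M^{\rm{f}}$) coincide over $M \setminus \cup_i V_i^{M}(\epsilon)$ and differ only inside the cusp neighbourhoods. First I would join the two data by a smooth family $(g_s, \norm{\cdot}_s)$, $s \in [0,1]$, with $(g_0, \norm{\cdot}_0) = (g^{TM}_{\rm{f}}, \norm{\cdot}_M^{\rm{f}})$ and $(g_1, \norm{\cdot}_1) = (g^{TM}, \norm{\cdot}_M)$, chosen to be independent of $s$ over $M \setminus \cup_i V_i^{M}(\epsilon)$ and to interpolate between the smooth and the Poincaré behaviour inside each $V_i^{M}(\epsilon)$. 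Applying the anomaly formula along this path yields, schematically,
\[
2 \ln \frac{\norm{\cdot}_Q(g_1, h^{\xi}\otimes\norm{\cdot}_1^{2n})}{\norm{\cdot}_Q(g_0, h^{\xi}\otimes\norm{\cdot}_0^{2n})} = \int_M \big[ \widetilde{\td}(TM) \, \ch(\xi\otimes\omega_M(D)^n) + \td(TM) \, \widetilde{\ch}(\xi\otimes\omega_M(D)^n) \big]^{[2]} + R,
\]
where the secondary Bott--Chern forms $\widetilde{\td}$, $\widetilde{\ch}$ are associated with the pair of metrics at the endpoints and $R$ collects the renormalisation terms generated by the divergent part of the heat trace at the cusps.

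The second step is localisation. Since $g_s$ and $\norm{\cdot}_s$ are constant in $s$ over $M \setminus \cup_i V_i^{M}(\epsilon)$, the secondary classes $\widetilde{\td}(TM)$ and $\widetilde{\ch}(\xi \otimes \omega_M(D)^n)$ vanish there, so the anomaly integral reduces to an integral over $\cup_i V_i^{M}(\epsilon)$; the term $R$ is localised at the cusps by its very nature. Next I would split the integrand using $\ch(\xi \otimes \omega_M(D)^n) = \ch(\xi, h^{\xi}) \, \ch(\omega_M(D)^n)$ and the fact that $h^{\xi}$ is held fixed along the path. The terms carrying $c_1(\xi, h^{\xi})$ produce, via the degree-zero Bott--Chern identity of Proposition \ref{prop_bc_ex_calc} (which yields the factor $2n \ln(\norm{\cdot}_M^{\rm{f}}/\norm{\cdot}_M)$) and its $\td$-analogue (which yields $\ln(g^{TM}_{\rm{f}}/g^{TM})$), precisely the quantity $\rk{\xi}^{-1}\int_M c_1(\xi, h^{\xi})\big(2n\ln(\norm{\cdot}_M^{\rm{f}}/\norm{\cdot}_M) + \ln(g^{TM}_{\rm{f}}/g^{TM})\big)$ that is subtracted in the statement; the $c_1(\xi, h^{\xi})$-dependent part of $R$ is absorbed in the same way.

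After this subtraction and division by $\rk{\xi}$, the factor $\ch(\xi, h^{\xi})^{[0]} = \rk{\xi}$ cancels the division, and what remains is an integral, supported on $\cup_i V_i^{M}(\epsilon)$, of a universal polynomial expression in the curvature data of $(g_s, \norm{\cdot}_s)$ and its $s$-derivatives, together with the universal local part of $R$. In the cusp coordinate $z_i^{M}$ these data are determined by the fixed Poincaré model $\imun dz_i^{M} d\overline{z}_i^{M}/\big| z_i^{M}\ln|z_i^{M}| \big|^2$ and by the ratios $g^{TM}_{\rm{f}}/g^{TM}$ and $\norm{\cdot}_M^{\rm{f}}/\norm{\cdot}_M$; since the Poincaré model is the same for every surface with cusps, the whole expression depends only on $n$ and on the functions $(g^{TM}_{\rm{f}}/g^{TM})|_{V_i^{M}(1)}\circ (z_i^{M})^{-1}$ and $(\norm{\cdot}_M^{\rm{f}}/\norm{\cdot}_M)|_{V_i^{M}(1)}\circ(z_i^{M})^{-1}$, which is the assertion. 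The convergence of the subtracted integral is not an issue, since $c_1(\xi, h^{\xi})$ is smooth on $\overline{M}$ while $\ln(g^{TM}_{\rm{f}}/g^{TM})$ and $\ln(\norm{\cdot}_M^{\rm{f}}/\norm{\cdot}_M)$ have only logarithmic growth at the cusps.

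The main obstacle is the analytic justification of the anomaly formula across the interpolation: one must control the $s$-variation of the renormalised zeta-determinant uniformly as the metric degenerates from the smooth flattening to the genuine cusp metric, identify the precise divergent short-time heat-trace terms at a Poincaré cusp, and check that they depend only on the local model and the prescribed ratios. This uniform heat-kernel analysis near the cusps, which is exactly the content provided by \cite{FinII1}, is where the real work lies; the localisation and Chern-character bookkeeping above are then formal.
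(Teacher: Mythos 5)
First, a point of comparison: the present paper does not prove this statement at all — it is quoted, modulo removing the language of compatible flattenings, from the first paper of the series \cite[Theorem A, Remark 1.4.d)]{FinII1}, so the paper's own ``proof'' is a citation. Read as an independent proof, your proposal has a genuine gap. The anomaly formula you invoke (Theorem \ref{thm_anomaly_cusp}, i.e. \cite[Theorem B]{FinII1}) compares the Quillen norms of two metrics \emph{both of which} make $(\overline{M}, D_M)$ a surface with cusps: its hypothesis is that $e^{2\phi}g^{TM}$ is again Poincaré-compatible at every puncture. A flattening $g^{TM}_{\rm{f}}$ is by definition smooth across $D_M$, hence is not a metric with cusps, so the pair $(g^{TM}, g^{TM}_{\rm{f}})$ lies outside the scope of that formula — and so does every intermediate metric $g_s$, $s \in (0,1)$, of your interpolation. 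Worse, for those intermediate metrics no analytic torsion is defined at all: the renormalized torsion of \cite{FinII1} requires Poincaré-compatibility at the cusps (this is exactly what gives the divergent part of the heat trace the universal model form that gets subtracted), while the classical Ray--Singer torsion requires a smooth metric on the compact surface $\overline{M}$. Your $g_s$, interpolating between flat and Poincaré behaviour inside $V_i^M(\epsilon)$, is neither, so ``applying the anomaly formula along this path'' has no meaning within the available framework; giving it one would require constructing a renormalized determinant for a class of partially pinched metrics and proving an anomaly formula for them, which is precisely the kind of uniform heat-kernel analysis the theorem is meant to encapsulate.

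Your closing paragraph in effect concedes this: the control of the renormalized zeta-determinant as the metric passes from the flattening to the cusp metric, which you defer to \cite{FinII1}, is not an auxiliary input — it \emph{is} the compact perturbation theorem. In \cite{FinII1} that theorem is proved first, by a direct localization of the difference of (renormalized) heat traces to the neighbourhoods where the two metrics differ, and the cusp anomaly formula is then \emph{deduced} from it together with the smooth anomaly formula of Bismut--Gillet--Soulé. The complementary roles of the two results are visible in Section \ref{sect_pf_A} of this paper: Theorem \ref{thm_anomaly_cusp} is used to move within the class of cusp metrics (Step 1), while Theorem \ref{thm_comp_appr} is what crosses from cusp metrics to smooth ones (Step 2); neither can substitute for the other. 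Your proposed route therefore runs the actual logical order backwards, and once the deferred input is unwound it reduces the theorem to itself.
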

	\endgroup
	\setcounter{thm}{\thetmp}
	Now let's recall the anomaly formula for surfaces with cusps, which explains how the Quillen norm changes under the conformal change of the metric with cusps. 
	\par Let's recall that by \cite[Theorem 1.27]{BGS1} (cf. Theorem \ref{thm_bc_preloglog}) and (\ref{eq_ch_td_def}), the Bott-Chern forms of a vector bundle $\xi$ with (smooth) Hermitian metrics $h^{\xi}_{1}$,  $h^{\xi}_{2}$ over $\overline{M}$ satisfy (see also (\ref{eq_ch_tilde_0}), (\ref{eq_ch_tilde_2}))
		\begin{equation}
			 \widetilde{\td}(\xi, h^{\xi}_{1},  h^{\xi}_{2})^{[0]} = \widetilde{\ch}(\xi, h^{\xi}_{1},  h^{\xi}_{2})^{[0]}/2,  \label{ch_bc_0}
		\end{equation}
		If, moreover, $\xi := L$ is a line bundle, we have
		\begin{equation}
			\widetilde{\td}(L, h^L_{1},  h^L_{2})^{[2]} = \widetilde{\ch}(L, h^L_{1}, h^L_{2})^{[2]} / 6. \label{ch_bc_2}
		\end{equation}
	\begin{sloppypar}
	\begingroup
	\setcounter{tmp}{\value{thm}}% store current value of theorem counter
	\setcounter{thm}{1} %assign desired value to theorem counter
	\renewcommand\thethm{\Alph{thm}}% locally redefine the representation of the theorem counter
	\begin{thm}[Anomaly formula for surfaces with cusps]\label{thm_anomaly_cusp}
		Let $\phi : M \to \real$ be a smooth function such that
		\begin{equation}
			\text{the triple \quad $(\overline{M}, D_M, g^{TM}_{0} := e^{2 \phi} g^{TM})$ \quad is a surface with cusps.}
		\end{equation}
		We denote by $\norm{\cdot}_M, \norm{\cdot}_{M}^{0}$ the norms induced by $g^{TM}, g^{TM}_{0}$ on $\omega_M(D)$, and by $\norm{\cdot}^W$, $\norm{\cdot}^{W, 0}$ the associated Wolpert norms. 
		Let $ h^{\xi}_0$ be a Hermitian metric on $\xi$ over $\overline{M}$.
		Then the right-hand side of the following equation is finite, and we have
		\begin{equation}\label{eq_anomaly_cusp}
			\begin{aligned}
				2 \ln & \Big(  
				\norm{\cdot}_{Q}  \big(g^{TM}_{0},  h^{\xi}_{0} \otimes (\, \norm{\cdot}_{M}^{0})^{2n} \big) 
				\big/				 
				 \norm{\cdot}_{Q} \big(g^{TM}, h^{\xi} \otimes \, \norm{\cdot}_{M}^{2n} \big) 
				 \Big) 
				\\
				&  = 
		 			  \int_{M} 
		 			\Big( 
	 					\widetilde{\td} \big(\omega_M(D)^{-1}, \, \norm{\cdot}^{-2}_{M}, (\, \norm{\cdot}^{0}_{M})^{-2} \big) \ch \big(\xi, h^{\xi} \big)  \ch \big(\omega_M(D)^n, \norm{\cdot}_M^{2n} \big)  \\
						&  \phantom{= \int_{M} 
		 			\Big[ } +	
		 			\td \big(\omega_M(D)^{-1}, (\, \norm{\cdot}^{0}_{M})^{-2} \big) \widetilde{\ch} \big(\xi, h^{\xi}, h^{\xi}_{0} \big)  \ch \big(\omega_M(D)^n, \norm{\cdot}_M^{2n} \big)  \\
						&  \phantom{= \int_{M} 
		 			\Big[ } +			 		 					
			 			\td \big(\omega_M(D)^{-1}, (\, \norm{\cdot}^{0}_{M})^{-2} \big) \ch \big(\xi, h^{\xi}_{0} \big) \widetilde{\ch} \big(\omega_M(D)^n, \norm{\cdot}_{M}^{2n}, (\, \norm{\cdot}_{M}^{0})^{2n} \big) 									 				
			 		\Big)^{[2]} \\
			 		& \phantom{ = }  - \frac{\rk{\xi}}{6} \ln \Big( \norm{\cdot}^W /  \norm{\cdot}^{W, 0} \Big)	+ \frac{1}{2} \sum \ln \Big(\det (h^{\xi} / h^{\xi}_{0})|_{P_i^{M}} \Big).
			 \end{aligned}
		\end{equation}
	\end{thm}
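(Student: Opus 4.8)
The plan is to reduce the statement to the classical anomaly formula of Bismut-Gillet-Soul\'e \cite{BGS1} on the \emph{compact} surface $\overline{M}$, by replacing the metrics with cusps by smooth flattenings, and then to account for the cusp discrepancy using the compact perturbation Theorem \ref{thm_comp_appr} together with an explicit local analysis near the punctures.

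First I would fix flattenings $g^{TM}_{\rm{f}}, g^{TM}_{0, \rm{f}}$ of $g^{TM}, g^{TM}_0$ and flattenings $\norm{\cdot}_{M}^{\rm{f}}, (\norm{\cdot}_{M}^{0})^{\rm{f}}$ of the induced norms, all smooth over $\overline{M}$ and agreeing with the original data outside $\cup_i V_i^{M}(\epsilon)$. Abbreviating $Q := \norm{\cdot}_Q(g^{TM}, h^{\xi} \otimes \norm{\cdot}_M^{2n})$, $Q_0 := \norm{\cdot}_Q(g^{TM}_0, h^{\xi}_0 \otimes (\norm{\cdot}_M^0)^{2n})$ and writing $Q_{\rm{f}}, Q_{0, \rm{f}}$ for the Quillen norms of the flattened data, I would split
\[
	2 \ln ( Q_0 / Q ) = 2 \ln ( Q_0 / Q_{0, \rm{f}} ) + 2 \ln ( Q_{0, \rm{f}} / Q_{\rm{f}} ) + 2 \ln ( Q_{\rm{f}} / Q ).
\]
The middle term involves only smooth metrics over $\overline{M}$, so it is given by the smooth anomaly formula of \cite{BGS1} as the integral over $\overline{M}$ of a Bott-Chern transgression. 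Denoting by $T, E, F$ the Todd and Chern factors of (\ref{eq_anomaly_cusp}) formed with the original norms and by $T_0, E_0, F_0$ the ones formed with the $0$-norms, the telescoping
\[
	T E F - T_0 E_0 F_0 = (T - T_0) E F + T_0 (E - E_0) F + T_0 E_0 (F - F_0),
\]
together with (\ref{eq_bc_two_elem}), shows that this transgression is exactly the three-term integrand of (\ref{eq_anomaly_cusp}), now built from the flattened norms. The outer two terms are governed by Theorem \ref{thm_comp_appr} applied to the two systems: they express $2 \ln(Q_{\rm{f}}/Q)$ and $2 \ln(Q_0/Q_{0,\rm{f}})$ as explicit integrals of $c_1(\xi, h^{\xi})$ and $c_1(\xi, h^{\xi}_0)$ against the flattening logarithms, plus universal local functionals of the cusp data only.

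The heart of the argument is the local analysis near each cusp $P_i^{M}$. Since the flattenings coincide with the original data outside $\cup_i V_i^M(\epsilon)$, every flattening-dependent quantity is supported in $\cup_i V_i^M(\epsilon)$, so the flattening-\emph{independent} part of $2\ln(Q_0/Q)$ must reduce to the integral over $M$ of (\ref{eq_anomaly_cusp}) with the genuine cusp norms, plus boundary contributions. Computing in the Poincar\'e coordinate $z_i^{M}$ normalized by $\norm{dz_i^{M}}_M^{\omega} = |z_i^{M} \ln|z_i^{M}||$, and using the rank-one Bott-Chern identities (\ref{eq_ch_tilde_0}), (\ref{eq_ch_tilde_2}) together with the Todd-Chern relations (\ref{ch_bc_0}), (\ref{ch_bc_2}), the degree-$0$ transgression $\widetilde{\ch}(\xi, h^{\xi}, h^{\xi}_0)^{[0]} = \ln \det(h^{\xi}/h^{\xi}_0)$, paired against the regularized cusp curvature on $\{ |z_i^M| \geq \delta \}$ and passed to the limit $\delta \to 0$, should produce the boundary term $\tfrac{1}{2} \sum_i \ln(\det(h^{\xi}/h^{\xi}_0)|_{P_i^{M}})$, while the degree-$2$ Todd transgression, weighted by $\ch(\xi)^{[0]} = \rk{\xi}$ and reduced by the factor $1/6$ of (\ref{ch_bc_2}), collects the change of cusp normalization into $-\tfrac{\rk{\xi}}{6} \ln(\norm{\cdot}^W/\norm{\cdot}^{W,0})$; here the Wolpert norms of Definition \ref{defn_wolp} are precisely the flattening-independent invariants recording this local data. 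The universal functionals supplied by Theorem \ref{thm_comp_appr}, being flattening-dependent, must cancel against the remaining boundary pieces; if needed I would pin down their values by evaluating both sides on the model cusp over $D^*(\epsilon)$, where the Quillen norm, the Wolpert norm and all integrals are explicit.

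The finiteness asserted in (\ref{eq_anomaly_cusp}) I would obtain from Proposition \ref{prop_bur_kr_kuhn}a) and the bound (\ref{eq_bound_loglog_poinc}): the transgression integrand has log-log growth along $D_M$ and is hence locally integrable near the cusps. The step I expect to be the main obstacle is precisely this near-cusp analysis. The Bott-Chern integrand is only conditionally convergent at each puncture, so one must regularize on $\{ |z_i^{M}| \geq \delta \}$, integrate by parts, and extract the finite $\delta \to 0$ boundary limits with the correct coefficients; it is here that the exact factors $-\rk{\xi}/6$ and $\tfrac{1}{2}$, and the appearance of the Wolpert norm rather than any other cusp invariant, are forced, while one must simultaneously verify that the weighted boundary integrals converge.
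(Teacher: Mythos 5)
This theorem is not proved in the present paper at all: it is Theorem B of the prequel \cite{FinII1}, and Section 3.2 here only \emph{recalls} it as an input to the proofs of Theorems \ref{thm_cont} and \ref{thm_curv}. So there is no in-paper proof to compare your argument against; what can be judged is whether your proposal would establish the statement from the tools recalled here --- Theorem \ref{thm_comp_appr}, the smooth anomaly formula of Bismut--Gillet--Soul\'e, and the Bott--Chern calculus of Section 2.5. Your overall reduction (flatten both cusp metrics, split $2\ln(Q_0/Q)$ into three ratios, treat the middle ratio by the smooth anomaly formula and the outer ones by Theorem \ref{thm_comp_appr}, with the telescoped transgression giving the three-term integrand) is indeed the natural strategy given these tools.

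The genuine gap is at the decisive step. Theorem \ref{thm_comp_appr} only asserts that the outer ratios, after subtracting the $c_1(\xi,h^{\xi})$-correction integrals, are \emph{universal} functionals of the local flattening profiles; it does not compute them, and your fallback of ``pinning down their values by evaluating both sides on the model cusp over $D^*(\epsilon)$'' is not meaningful: the Quillen norm is a zeta-regularized, global spectral invariant of the whole compact surface with cusps, and it is neither defined nor explicit on a punctured disc. The way to avoid ever computing these functionals --- and the way this paper itself exploits Theorem \ref{thm_comp_appr}, see Step 2 of Section \ref{sect_pf_A}, where the quantity (\ref{eq_thmc_aux_anomal22}) is observed to be \emph{constant} --- is to choose the two flattenings with the \emph{same} profile, each written in the Poincar\'e-compatible coordinate of its own metric ($z_i^M$ for $g^{TM}$, the corresponding coordinate for $g^{TM}_0$); then the two universal functionals are literally equal and cancel in the difference. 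Your proposal instead says the universal functionals ``must cancel against the remaining boundary pieces,'' which is not how the cancellation works. Even after this repair, what remains is a nontrivial purely differential-geometric identity: comparing the Bott--Chern transgression of the flattened data plus the correction integrals with the three-term cusp integrand, and extracting from the mismatch between the two Poincar\'e coordinates exactly the terms $-\tfrac{1}{6}\rk{\xi}\,\ln\big(\norm{\cdot}^W/\norm{\cdot}^{W,0}\big)$ and $\tfrac{1}{2}\sum_i\ln\big(\det(h^{\xi}/h^{\xi}_0)|_{P_i^M}\big)$; note that the Wolpert ratio is precisely the derivative at $P_i^M$ of the change between the two Poincar\'e coordinates, so it can only appear through this comparison. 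Your proposal asserts these terms ``are forced'' but does not perform the computation; since everything else in the formula is classical Bismut--Gillet--Soul\'e, this computation is the actual content of the theorem, and without it the argument does not close.
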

	\endgroup
	\setcounter{thm}{\thetmp}
	\end{sloppypar}
	\par 
Now, let's recall the result of Bismut-Bost \cite[Théorème 2.2]{BisBost} on the asymptotics of the Quillen norm (see also Bismut \cite{BisDegQuil} for its generalization to higher dimension and Ma \cite{MaFormes03} for the family version of \cite{BisDegQuil}). For this, we fix a f.s.o. $\pi : X \to S$ and smooth Hermitian vector bundles $(\omega_{X / S}, \norm{\cdot}_{X/S}^{\omega, {\rm{sm}}})$, $(\xi, h^{\xi}_{{\rm{sm}}})$ over $X$. We denote by $g^{TX_t}_{{\rm{sm}}}$ the metric on $X_t$, $t \in S \setminus |\Delta|$, induced by $(\omega_{X / S}, \norm{\cdot}_{X/S}^{\omega, {\rm{sm}}})$, and by $\norm{\cdot}_Q(g^{TX_t}_{{\rm{sm}}}, h^{\xi}_{{\rm{sm}}})$ the Quillen norm on $\det (R^{\bullet} \pi_* \xi)^{-1}$.
			 \begin{sloppypar}
			 \begin{thm}[Continuity theorem of Bismut-Bost]\label{thm_BBost_continuous}
			 	The norm $\norm{\cdot}_Q(g^{TX_t}_{{\rm{sm}}}, h^{\xi}_{{\rm{sm}}})^{12} \otimes (\, \norm{\cdot}_{\Delta}^{\rm{div}})^{\rk{\xi}}$ on the line bundle $\det (R^{\bullet} \pi_* \xi)^{-12} \otimes \mathscr{O}_S(\Delta)^{\rk{\xi}}$ over $S \setminus |\Delta|$ is very nice on $S$ with singularities along $\Delta$ in the sense of Definition \ref{defn_nice}.
			 \end{thm}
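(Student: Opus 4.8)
The plan is to deduce Theorem~\ref{thm_BBost_continuous} directly from \cite[Théorème 2.2]{BisBost} by transcribing its conclusion into the framework of Definition~\ref{defn_nice}. Since very niceness is a local condition formulated through holomorphic frames, I would fix a point $s_0 \in |\Delta|$ and argue in a small neighbourhood. Applying Proposition~\ref{prop_coord} at each node $x_1, \ldots, x_k$ of the fibre $X_{s_0}$, one obtains standard coordinates in which the branch of $\Delta$ associated with $x_j$ is the smooth divisor $\{ w_j = 0 \}$; after shrinking, $|\Delta|$ is the normal crossing divisor $\{ w_1 \cdots w_k = 0 \}$ with $dw_j(s_0) \neq 0$. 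Let $\sigma$ be a local holomorphic frame of $\det(R^\bullet \pi_* \xi)^{-1}$ and $e$ the frame of $\mathscr{O}_S(\Delta)$ normalised by $s_\Delta = w_1 \cdots w_k \cdot e$; then Construction~\ref{const_norm_div} gives $\ln \norm{e}^{\rm div}_\Delta = - \sum_j \ln|w_j|$, so that for the frame $\tau = \sigma^{\otimes 12} \otimes e^{\otimes \rk{\xi}}$ of the line bundle in question one has
\begin{equation}\label{eq_plan_frame}
	\ln \norm{\tau}^2 = 12 \ln \norm{\sigma}_Q^2 - \rk{\xi} \sum_{j=1}^{k} \ln |w_j|^2 .
\end{equation}

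Next I would invoke the asymptotic expansion of \cite[Théorème 2.2]{BisBost}, which describes $\ln \norm{\sigma}_Q^2$ near $s_0$: its singular part is the pure logarithmic term $\tfrac{\rk{\xi}}{12} \sum_j \ln|w_j|^2$, produced by the local contribution of the nodes to the relative Todd genus, while the remainder has the form $g_0 + \sum_j g_j |w_j|^2 \ln |w_j|$ with $g_0, \ldots, g_k$ smooth. Substituting this into \eqref{eq_plan_frame}, the logarithmic terms cancel identically against the divisor contribution, and one is left with $\ln \norm{\tau}^2 = f_0 + \sum_{j=1}^{k} f_j |w_j|^2 \ln |w_j|$ for smooth $f_0, \ldots, f_k$. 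By Definition~\ref{defn_nice}b), c) this says exactly that $\norm{\cdot}_Q(g^{TX_t}_{\rm sm}, h^\xi_{\rm sm})^{12} \otimes (\norm{\cdot}^{\rm div}_\Delta)^{\rk{\xi}}$ is very nice with singularities along $\Delta$, which is the assertion.

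The principal point to verify --- and the only genuine content beyond bookkeeping --- is that the coefficient of the logarithmic singularity supplied by Bismut-Bost equals $\tfrac{\rk{\xi}}{12}$ per branch, so that the twist by $(\norm{\cdot}^{\rm div}_\Delta)^{\rk{\xi}}$ cancels it with no logarithmic remainder; this is precisely the balance reflected by the factor $12$ and the exponent $\rk{\xi}$ in the statement, and it matches the Mumford-type relation $\det(R^\bullet \pi_* \xi)^{12} \cong (\text{smooth}) \otimes \mathscr{O}_S(\Delta)^{\rk{\xi}}$. The subtler issue is to confirm that the error term genuinely has the sharp form $\sum_j f_j |w_j|^2 \ln|w_j|$ rather than being merely continuous or of Poincaré growth. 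If \cite{BisBost} records only the weaker continuity of the renormalised metric, I would recover the sharp structure by differentiating the heat-kernel expression underlying their proof, or equivalently by feeding the curvature formula of their Théorème 2.1 together with the smoothness of $c_1$ over $S \setminus |\Delta|$ into the potential-theoretic estimates, thereby controlling the second derivatives of $\ln \norm{\tau}^2$ and pinning down the $|w_j|^2 \ln|w_j|$ behaviour.
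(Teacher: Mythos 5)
Your proposal is correct and essentially coincides with what the paper does: Theorem \ref{thm_BBost_continuous} is not proved in the paper at all, but is recalled as a restatement of \cite[Théorème 2.2]{BisBost} in the terminology of Definition \ref{defn_nice}, and your frame computation is just the (correct) elementary translation between the norm on the twisted bundle $\det (R^{\bullet} \pi_* \xi)^{-12} \otimes \mathscr{O}_S(\Delta)^{\rk{\xi}}$ and the untwisted Quillen norm of a frame. The contingency in your final paragraph is unnecessary: the very nice structure (not mere continuity) is what Bismut-Bost's argument yields, resting on Igusa's expansion for push-forwards of smooth forms in a f.s.o. (their Théorèmes 12.2--12.3, quoted in this paper inside the proof of Proposition \ref{prop_int_loglog}a) for the case $D = \emptyset$), which is precisely why the paper can cite the result without further argument.
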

			 \end{sloppypar}
			 
\subsection{Proof of Theorem \ref{thm_cont}}\label{sect_pf_A}
\begin{sloppypar}
We use the notation from Theorem \ref{thm_cont}.
	Since all the statements are local, it suffices to prove them in a neighbourhood $U$ of $t_0 \in S$.
	We prove them all at the same time in three steps: in \textit{Step 1} we see that by Theorem \ref{thm_anomaly_cusp}, we can trivialize the Poincaré-compatible coordinates associated to $g^{TX_t}$. 
	In \textit{Step 2}, by Theorem \ref{thm_comp_appr}, we reduce the problem to the problem without cusps.
	Finally, in \textit{Step 3}, by the anomaly formula of Bismut-Gillet-Soulé (cf. Theorem \ref{thm_anomaly_cusp}), we reduce the problem to the problem with smooth metrics, which is exactly Theorem \ref{thm_BBost_continuous}. For the proof of Theorem \ref{thm_cont}1, this step is unnecessary since the metrics, which are obtained after Step 2 are already smooth.
	In the first two steps the reduction is done by modifying norms $\norm{\cdot}_{X/S}^{\omega}$, $\norm{\cdot}_{X/S}$ only in the neighbourhood of $|D_{X/S}|$.
\end{sloppypar}
	\par 
	\textbf{Step 1.} 
	Let $V_{i, c}$, $i=1, \ldots, m$, $c > 0$ (resp. $U$) be a neighbourhood of $\sigma_i(t_0)$ (resp. $t_0$) such that for some local coordinates $(z_0, \ldots, z_q)$ of $\sigma_i(t_0)$ and $(w_1, \ldots, w_q)$ of $t_0 \in S$, satisfying (\ref{eq_pr_nonsing}), we have $V_{i, c} = \{ x \in \pi^{-1}(U) : |z_0| < c \}$ and $\{z_0(x) = 0\} = \{ \sigma_i(t): t \in U \}$.
	For simplicity, we note $V_i := V_{i, 1}$.
	Let 
	$\nu : \real_+ \to [0,1]$ be a smooth function satisfying
	\begin{equation}\label{defn_nu}
		\nu(u) = 
		\begin{cases}
			\hfill 1, & \text{if} \quad u < 1/2, \\
			\hfill 0, & \text{if} \quad u > 1. 
		\end{cases}
	\end{equation}
	\par 
	We denote by $\norm{\cdot}^{\omega, 0}_{X/S}$ the norm on $\omega_{X/S}$ over $\pi^{-1}(U \setminus |\Delta|) \setminus |D_{X/S}|$ such that $\norm{\cdot}^{\omega, 0}_{X/S}$ coincides with $\norm{\cdot}^{\omega}_{X/S}$ away from $\cup_i V_i$, and over $(\cup_i V_i) \setminus ( \pi^{-1}(|\Delta|) \cup |D_{X/S}| )$, we have
	\begin{equation}\label{defn_om_0}
		\norm{dz_0}^{\omega, 0}_{X/S} = \big| z_0 \ln |z_0| \big|^{\nu(|z_0|)} \cdot \big( \norm{dz_0}^{\omega}_{X/S} \big)^{1-\nu(|z_0|)}.
	\end{equation}
	Let $\norm{\cdot}^{0}_{X/S}$ be the induced norm on $\omega_{X/S}(D)$ as in Construction \ref{const_norm_div}, and let $g^{TX_t}_{0}$, $t \in S$ be the induced metric with cusps on $X_t$. 
	Then by Construction \ref{const_norm_div} and (\ref{defn_om_0}), we see that if $h^{\xi}$, $\norm{\cdot}_{X/S}$ satisfy Assumptions \textbf{S1} or \textbf{S2} or \textbf{S3}, then $h^{\xi}$, $\norm{\cdot}_{X/S}^{0}$ satisfy Assumptions \textbf{S1} or \textbf{S2} or \textbf{S3} correspondingly.
	We denote by $\norm{\cdot}^{W, 0}_{X/S}$ the Wolpert norm (see Definition \ref{defn_wolp}) on $\otimes_i \sigma_i^{*} \omega_{X/S}$ induced by $g^{TX_t}_{0}$.  
	By Theorem \ref{thm_anomaly_cusp}, for $t \in U \setminus |\Delta|$, we have
	\begin{equation}\label{eq_thmc_aux_anomal1}
	\begin{aligned}
		&\frac{1}{6} \ln \Big(  
			\norm{\cdot}_{Q}  \big(g^{TX_t}_{0},  h^{\xi} \otimes (\, \norm{\cdot}_{X/S}^{0})^{2n}\big)^{12} \otimes 
			\big( \norm{\cdot}_{X/S}^{W, 0} \big)^{-\rk{\xi}}
		\Big)
		\\
		& \qquad \qquad \qquad \qquad  \qquad
		- 
		\frac{1}{6} \ln \Big(  
			\norm{\cdot}_{Q}  \big(g^{TX_t},  h^{\xi} \otimes (\, \norm{\cdot}_{X/S})^{2n}\big)^{12} \otimes 
			\big( \norm{\cdot}_{X/S}^{W} \big)^{-\rk{\xi}}
		\Big)
				\\
				&  = 
		 			\pi_*
		 			\Big[ 
	 					\widetilde{\td} \big(\omega_{X/S}(D)^{-1}, \, \norm{\cdot}^{-2}_{X/S}, (\, \norm{\cdot}^{0}_{X/S})^{-2} \big) \ch \big(\xi, h^{\xi} \big)  \ch \big(\omega_{X/S}(D)^n, \norm{\cdot}_{X/S}^{2n} \big) \\
						&  \phantom{= \int_{M} 
		 			\Big[ } +			 		 					
			 			\td \big(\omega_{X/S}(D)^{-1}, (\, \norm{\cdot}^{0}_{X/S})^{-2} \big) \ch \big(\xi, h^{\xi} \big) \widetilde{\ch} \big(\omega_{X/S}(D)^n, \norm{\cdot}_{X/S}^{2n}, (\, \norm{\cdot}_{X/S}^{0})^{2n} \big) 									 				
			 		\Big]^{[2]}.
			 \end{aligned}
		\end{equation}
	By Propositions \ref{prop_int_loglog}, \ref{prop_bc_current_sat_ass}, we see that the right-hand-side of (\ref{eq_thmc_aux_anomal1}) is very nice on $U \setminus |\Delta|$ with singularities along $\Delta$ under Assumption \textbf{S1}, it is nice on $U \setminus |\Delta|$ with singularities along $\Delta$ under Assumption \textbf{S2}, and it is continuous on $S$ under Assumption \textbf{S3}.
	By this and (\ref{eq_thmc_aux_anomal1}), we see that it is enough to prove Theorem \ref{thm_cont} for the metrics $\norm{\cdot}_{X/S}^{0}$, $\norm{\cdot}_{X/S}^{\omega, 0}$, $\norm{\cdot}_{X/S}^{W, 0}$ instead $\norm{\cdot}_{X/S}$, $\norm{\cdot}_{X/S}^{\omega}$, $\norm{\cdot}_{X/S}^{W}$. 
	\par We note, however, that the norm $\norm{\cdot}^{W, 0}_{X/S}$ is trivial over $U \setminus |\Delta|$, thus, it's enough to prove Theorem \ref{thm_cont} for the norm $\norm{\cdot}_{Q}  \big(g^{TX_t}_{0},  h^{\xi} \otimes (\, \norm{\cdot}_{X/S}^{0})^{2n}\big)^{12} \otimes (\, \norm{\cdot}^{\rm{div}}_{\Delta})^{\rk{\xi}}$ on the line bundle $\det (R^{\bullet} \pi_* (\xi  \otimes \omega_{X/S}(D)^n))^{-12} \otimes \mathscr{O}_S(\Delta)^{\rk{\xi}}$ in place of the norm $\norm{\cdot}^{\mathscr{L}_n}$ on the line bundle $\mathscr{L}_n$.
	\par 
	\textbf{Step 2.}
	We denote $V_i' = V_{i, 1/2} \subset V_i$, and by $\norm{\cdot}^{\omega, {\rm{cmp}}}_{X/S}$ the norm on $\omega_{X/S}$ over $\pi^{-1}(U \setminus |\Delta|)$ such that $\norm{\cdot}^{\omega, {\rm{cmp}}}_{X/S}$ coincides with $\norm{\cdot}^{\omega, 0}_{X/S}$ away from $\cup_i V_i'$, and over $V_i'$, we have
	\begin{equation}\label{defn_om_cmp}
		\norm{dz_0}^{\omega, {\rm{cmp}}}_{X/S} = |z_0 \ln |z_0||^{1-\nu(2|z_0|)} ,
	\end{equation}
	where $\nu : \real \to [0,1]$ is as in (\ref{defn_nu}).
	We denote by $g^{TX_t}_{{\rm{cmp}}}$ the induced metric on  $X_t$.
	We denote by $\norm{\cdot}^{{\rm{cmp}}}_{X/S}$ the norm on $\omega_{X/S}(D)$ over $\pi^{-1}(U \setminus |\Delta|)$, such that $\norm{\cdot}^{{\rm{cmp}}}_{X/S}$ coincides with $\norm{\cdot}^{0}_{X/S}$ away from $\cup_i V_i'$, and over $V_i'$ we have
	\begin{equation}\label{defn_om_cmp2}
		\lVert dz_0 \otimes s_{D_{X/S}}/z_0 \rVert^{{\rm{cmp}}}_{X/S} = |\ln |z_0||^{1-\nu(2|z_0|)}.
	\end{equation}
	By (\ref{defn_om_cmp}), (\ref{defn_om_cmp2}), we see that if $h^{\xi}$, $\norm{\cdot}_{X/S}^{0}$ satisfy Assumptions \textbf{S1} or \textbf{S2} or \textbf{S3}, then $h^{\xi} \otimes (\, \norm{\cdot}_{X/S}^{{\rm{cmp}}})^{2n}$, $\norm{\cdot}_{X/S}^{\omega, {\rm{cmp}}}$ satisfy Assumptions \textbf{S1} or \textbf{S2} or \textbf{S3} for $D_{X/S} = \emptyset$ correspondingly.
	Now, by Theorem \ref{thm_comp_appr}, we see that the function
	\begin{multline}\label{eq_thmc_aux_anomal22}
		2 \ln \Big(  
			\norm{\cdot}_{Q} \big(g^{TX_t}_{\rm{cmp}},  h^{\xi} \otimes (\, \norm{\cdot}_{X/S}^{{\rm{cmp}}})^{2n}\big) 
				\big/				 
				 \norm{\cdot}_{Q} \big(g^{TX_t}_{0}, h^{\xi} \otimes (\, \norm{\cdot}_{X/S}^{0})^{2n}\big) 
				 \Big) 
				\\
				 - \pi_*
		 			\Big[
		 			c_1(\xi, h^{\xi})
		 			\Big( 
	 					n \widetilde{c_1}(\omega_{X/S}(D), (\, \norm{\cdot}_{X/S}^{0})^2,  (\, \norm{\cdot}_{X/S}^{{\rm{cmp}}})^2)
	 					\\
	 					-
	 					\widetilde{c_1}(\omega_{X/S}, (\,\norm{\cdot}_{X/S}^{\omega, 0})^2,  (\, \norm{\cdot}_{X/S}^{\omega, {{\rm{cmp}}}})^2)
	 				\Big)		 				
			 		\Big]^{[2]}
	\end{multline}
	is constant over $U \setminus |\Delta|$.
	By Propositions \ref{prop_ch_classes_preloglog}, \ref{prop_int_loglog}, \ref{prop_bc_current_sat_ass}, we see that the term under the integration in (\ref{eq_thmc_aux_anomal22}) is very nice on $U \setminus |\Delta|$ with singularities along $\Delta$ under Assumption \textbf{S1}, it is nice on $U \setminus |\Delta|$ with singularities along $\Delta$ under Assumption \textbf{S2}, and it is continuous on $S$ under Assumption \textbf{S3}.
	By this and (\ref{eq_thmc_aux_anomal22}), it is enough to prove Theorem \ref{thm_cont} for the metrics $h^{\xi}$, $\norm{\cdot}_{X/S}^{{\rm{cmp}}}$, $\norm{\cdot}_{X/S}^{\omega, {\rm{cmp}}}$ instead of $h^{\xi}$, $\norm{\cdot}_{X/S}^{0}$, $\norm{\cdot}_{X/S}^{\omega, 0}$.
	\par 	\textbf{Step 3.}
	Let $h^{\xi}_{{\rm{sm}}}$, $\norm{\cdot}_{X/S}^{{\rm{sm}}}$, $\norm{\cdot}_{X/S}^{\omega, {\rm{sm}}}$ be some smooth metrics on $\xi$, $\omega_{X/S}(D)$ and $\omega_{X/S}$ respectively over $X$. We denote by $g^{TX_t}_{\rm{sm}}$ the Riemannian metric on $X_t$, induced by $\norm{\cdot}_{X/S}^{\omega, {\rm{sm}}}$.
	By the anomaly formula of Bismut-Gillet-Soulé \cite[Theorem 1.27]{BGS3} (cf. Theorem \ref{thm_anomaly_cusp} for $m = 0$), for $t \in U \setminus |\Delta|$:
	\begin{equation}\label{eq_comp_an_comp_sm}
		\begin{aligned}
				2 \ln & \Big(  
				\norm{\cdot}_{Q} \big(g^{TX_t}_{\rm{cmp}},  h^{\xi} \otimes (\, \norm{\cdot}_{X/S}^{{\rm{cmp}}})^{2n}\big) 
				\big/				 
				 \norm{\cdot}_{Q} \big(g^{TX_t}_{\rm{sm}},  h^{\xi}_{{\rm{sm}}} \otimes (\, \norm{\cdot}_{X/S}^{{\rm{sm}}})^{2n}\big) 
				 \Big) 
				\\
				&  = 
		 			\pi_*
		 			\Big[ 
	 					\widetilde{\td} \big(\omega_M^{-1}, (\, \norm{\cdot}^{{\rm{sm}}}_{M})^{-2}, (\, \norm{\cdot}^{{\rm{cmp}}}_{M})^{-2} \big) \ch \big(\xi, h^{\xi}_{{\rm{sm}}} \big)  \ch \big(\omega_M(D)^n, (\, \norm{\cdot}_M^{{\rm{sm}}})^{2n} \big)  \\
						&  \phantom{= \int_{M} 
		 			\Big[ } +	
		 			\td \big(\omega_M^{-1}, (\, \norm{\cdot}^{{\rm{cmp}}}_{M})^{-2} \big) 
		 			\widetilde{\ch} \big(\xi, h^{\xi}_{{\rm{sm}}}, h^{\xi} \big)  
		 			\ch \big(\omega_M(D)^n, (\, \norm{\cdot}_M^{{\rm{sm}}})^{2n} \big)  \\
						&  \phantom{= \int_{M} 
		 			\Big[ } +			 		 					
			 			\td \big(\omega_M^{-1}, (\, \norm{\cdot}^{{\rm{cmp}}}_{M})^{-2} \big) \ch \big(\xi, h^{\xi} \big) \widetilde{\ch} \big(\omega_M(D)^n, (\, \norm{\cdot}_{M}^{{\rm{sm}}})^{2n}, (\, \norm{\cdot}_{M}^{{\rm{cmp}}})^{2n} \big)
			 	\Big].
		\end{aligned}
	\end{equation}
	By Theorem \ref{thm_bc_preloglog} and Propositions \ref{prop_ch_classes_preloglog}, \ref{prop_int_loglog}, \ref{prop_bc_current_sat_ass}, the right-hand side of (\ref{eq_comp_an_comp_sm}) is very nice on $U \setminus |\Delta|$ with singularities along $\Delta$.
	By this and (\ref{eq_comp_an_comp_sm}), we see that it is enough to prove Theorem \ref{thm_cont} for the metrics $h^{\xi}_{{\rm{sm}}}$, $\norm{\cdot}_{X/S}^{{\rm{sm}}}$, $\norm{\cdot}_{X/S}^{\omega, {\rm{sm}}}$ instead of $h^{\xi}$, $\norm{\cdot}_{X/S}^{{\rm{cmp}}}$, $\norm{\cdot}_{X/S}^{\omega, {\rm{cmp}}}$. 
	But for $h^{\xi}_{{\rm{sm}}}$, $\norm{\cdot}_{X/S}^{{\rm{sm}}}$, $\norm{\cdot}_{X/S}^{\omega, {\rm{sm}}}$, Theorem \ref{thm_cont} follows directly from Theorem \ref{thm_BBost_continuous} by Remark \ref{rem_norm_cross_s1}. Thus, we conclude Theorem \ref{thm_cont}.

\section{Potential theory for log-log currents, a proof of Theorem \ref{thm_curv}}\label{sect_pf_curv} 	 
In Section 4.1 we introduce the potential theory for currents with log-log growth and in Section 4.2 we use it to prove Theorem \ref{thm_curv}. Then we deduce Corollaries \ref{cor_cont_pot}, \ref{corrol_smooth} from Theorem \ref{thm_curv}.

\subsection{Potential theory for currents with log-log growth}\label{sect_pot_th}
	In this section we denote $U = \{(z_1, \ldots, z_q) \in \comp^q : |z_i|<1, \text{ for all } i=1, \ldots, q \}$, and let $D_i \subset U$ be defined by the equation $\{z_i = 0\}$.
	We denote $D = \cup_{i=1}^{l} D_i$, for some $l \leq q$. 
	Before stating the main result of this section, we need the following lemma.
	\begin{lem}\label{lem_cur_ext}
		Let $T$ be a closed $(1,1)$-current over $U \setminus D$ with log-log growth along $D$. Then the trivial $L^1$-extension $[T]_{L^1}$ of $T$, is a closed current over $U$. Also, in a small neighbourhood of $D$, the current $[T]_{L^1}$ can be represented as a difference of two positive closed currents with log-log growth along $D$.
	\end{lem}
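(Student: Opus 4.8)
The plan is to deduce both assertions from a single construction: an explicit closed positive $(1,1)$-current with log-log growth that dominates the growth bound of $T$. I will treat closedness first, by a cutoff argument adapted to the log-log scale, and then use it together with the dominating current to produce the positive decomposition.

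\textbf{Closedness.} By Proposition \ref{prop_bur_kr_kuhn}a) the form representing $T$ is $L^1_{loc}$, so for a test form $\phi$ of degree $2q-3$ the pairing $[T]_{L^1}(d\phi)=\int_U T\wedge d\phi$ is finite, and it suffices to show it vanishes. I would choose cutoffs $\chi_\epsilon=\prod_{i=1}^l\rho\big(\ln(-\ln|z_i|)/N_\epsilon\big)$, where $\rho$ is a fixed profile, $\rho\equiv 1$ near $-\infty$ and $\rho\equiv 0$ near $+\infty$, and $N_\epsilon\to\infty$; thus $\chi_\epsilon$ vanishes near $D$ and equals $1$ off a shrinking neighbourhood of $D$. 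Splitting $d\phi=d(\chi_\epsilon\phi)+d((1-\chi_\epsilon)\phi)$, the term $\int_U T\wedge d(\chi_\epsilon\phi)$ vanishes because $\chi_\epsilon\phi$ is compactly supported in $U\setminus D$, where $T$ is closed, and $\int_U T\wedge(1-\chi_\epsilon)d\phi\to 0$ by dominated convergence. Hence everything reduces to proving $\int_U T\wedge d\chi_\epsilon\wedge\phi\to 0$. The decisive observation is that in the adapted coframe $d\zeta_i=dz_i/(z_i\ln|z_i|)$ one has $d\chi_\epsilon=O(N_\epsilon^{-1})\sum_i|\rho'|\,(d\zeta_i+\overline{d\zeta_i})$, supported where $\ln(-\ln|z_i|)$ is of size $N_\epsilon$: so $d\chi_\epsilon$ is uniformly log-log bounded and of size $O(N_\epsilon^{-1})$. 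Since every component of $T$ containing $d\zeta_i\wedge\overline{d\zeta_i}$ is annihilated upon wedging with $d\chi_\epsilon$, only the milder mixed components of $T$ (controlled by the growth bound $\alpha$) contribute, so $|T\wedge d\chi_\epsilon\wedge\phi|\leq C N_\epsilon^{-1}\beta$ for a fixed log-log (hence $L^1$ by Proposition \ref{prop_bur_kr_kuhn}a)) form $\beta$; thus the integral is $O(N_\epsilon^{-1})\to 0$. I expect this estimate to be the main obstacle, since it is precisely here that the log-log (rather than merely logarithmic) growth is essential: a coarser growth would leave a nonzero residue current supported on $D$.

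\textbf{The dominating current.} Let $\alpha$ be a log-log form of some order $p$ with $-\alpha\leq T\leq\alpha$. On a small polydisc around $D$ I set
\[
\Phi:=-C\sum_{i=1}^l\big(\ln(-\ln|z_i|^2)\big)^M,\qquad \Theta:=\tfrac{\imun}{\pi}\partial\dbar\Phi,
\]
for constants $M,C$ to be fixed. A one-variable computation gives, with $t_i=-\ln|z_i|^2$, the identity $\tfrac{\imun}{\pi}\partial\dbar\big(\ln(-\ln|z_i|^2)\big)^M=\tfrac{\imun}{\pi}g''_M(t_i)\,|z_i|^{-2}\,dz_i\wedge d\overline{z}_i$ with $g''_M(t)=M(\ln t)^{M-2}[\ln t-(M-1)]t^{-2}$, which is positive once $\ln t_i>M-1$, i.e. after shrinking the polydisc. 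Hence $\Theta$ is a diagonal positive $(1,1)$-form whose $d\zeta_i\wedge\overline{d\zeta_i}$-entry is comparable to $CM(\ln|\ln|z_i||)^{M-1}$. As the entries of $\alpha$ are bounded by $C'\prod_k(\ln|\ln|z_k||)^p$, choosing $M\geq lp+1$ and $C$ large makes $\Theta-\alpha$ strictly diagonally dominant, hence a positive $(1,1)$-form near $D$; in particular $\Theta\geq\alpha\geq\pm T$ pointwise a.e. Note $\Phi$ is pre-log-log and $\Theta$ is a log-log form by construction.

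\textbf{The decomposition.} I then write $[T]_{L^1}=[T+\Theta]_{L^1}-[\Theta]_{L^1}$ near $D$. Both $T+\Theta$ and $\Theta$ are closed on $U\setminus D$ (for $T+\Theta$ as a sum of closed currents, for $\Theta$ since $\Theta=\tfrac{\imun}{\pi}\partial\dbar\Phi$) and have log-log growth, being bounded by $\alpha+\Theta$ and $\Theta$ respectively; therefore the closedness established above applies to each, and $[T+\Theta]_{L^1}$, $[\Theta]_{L^1}$ are closed currents over $U$. Moreover $T+\Theta\geq T+\alpha\geq 0$ and $\Theta\geq 0$ pointwise a.e., so both $L^1$-extensions are positive currents. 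This exhibits $[T]_{L^1}$, in a small neighbourhood of $D$, as a difference of two positive closed currents with log-log growth, and completes the proof. The only remaining care is to shrink the polydisc enough that the positivity of $g''_M$ and the diagonal dominance hold simultaneously, which is immediate from the explicit formulas.
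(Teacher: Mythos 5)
Your overall strategy — dominate $T$ by an explicit $\partial\dbar$-potential and write $[T]_{L^1}$ as a difference of closed positive pieces — is the same as the paper's, but both of your supporting steps contain genuine gaps. First, the closedness argument. The claim that ``every component of $T$ containing $d\zeta_i\wedge\overline{d\zeta_i}$ is annihilated upon wedging with $d\chi_\epsilon$'' is false: such a component is killed only by the $i$-th summand of $d\chi_\epsilon$, not by the summands $N_\epsilon^{-1}\rho'(\cdot)(d\zeta_j+\overline{d\zeta_j})$, $j\neq i$. These surviving cross terms are fatal. Indeed, log-log growth allows a component $f\,\imun\, d\zeta_1\wedge\overline{d\zeta_1}$ of $T$ with $|f|\leq C\prod_{k\leq l}(\ln|\ln|z_k||)^p$; pairing it with the $d\zeta_2$-summand of $d\chi_\epsilon$ and with $\phi$ gives an integral of size $N_\epsilon^{-1}\int(\ln|\ln|z_2||)^p\,\big|\rho'\big(\ln(-\ln|z_2|)/N_\epsilon\big)\big|\,\frac{\imun\,dz_2 d\overline{z}_2}{|z_2\ln|z_2||^2}\,|z_2\ln |z_2||$, and after the substitution $u=\ln(-\ln|z_2|)$ the $z_2$-integral is $\sim\int_{c_1N_\epsilon}^{c_2N_\epsilon}u^p\,du\sim N_\epsilon^{p+1}$. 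The total is $O(N_\epsilon^{p})$, which does not tend to $0$ (and diverges for $p\geq 1$): the $O(N_\epsilon^{-1})$ gain is exactly cancelled by the mass of the cutoff shell against the Poincaré weight. This is precisely why the paper does \emph{not} attempt a direct cutoff for a non-sign-definite current: it first decomposes into closed \emph{positive} currents of finite mass and invokes Skoda--El Mir \cite[Theorem III.2.3]{DemCompl}, where positivity (Cauchy--Schwarz against the finite diagonal mass) is what tames such cross terms.

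Second, the dominating current. Your $\Theta=\tfrac{\imun}{\pi}\partial\dbar\Phi$, with $\Phi$ a sum of \emph{single-variable} potentials, is diagonal, its $i$-th entry $\sim CM(\ln|\ln|z_i||)^{M-1}$ depends only on $z_i$, and it has \emph{zero} entries in the non-divisor directions $dz_j$, $j>l$. But the bound defining log-log growth allows $\alpha$ to have entries of size $C'\prod_{k\leq l}(\ln|\ln|z_k||)^p$ in \emph{all} directions: fixing $z_1$ and letting $z_2\to 0$, the $(1,\bar 1)$-entry of $\Theta-\alpha$ tends to $-\infty$, and in a direction $j>l$ its diagonal entry is $-\alpha_{j\bar j}\leq 0$. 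Hence no choice of $C,M$ (in particular not $M\geq lp+1$) makes $\Theta\geq\alpha$ near $D$; ``diagonal dominance'' fails at the diagonal itself, except in the trivial case $l=q=1$. This is exactly the reason the paper's potential $A_p$ in (\ref{eq_defn_W_curr}) contains the \emph{two-variable} terms $a_p(z_i,z_j)=-(\ln|\ln|z_i|^2|)^p(\ln|\ln|z_j|^2|)^p$, whose $\imun\partial\dbar$ produces diagonal entries with the needed cross products $(\ln|\ln|z_i||)^{p-1}(\ln|\ln|z_j||)^{p}$, and the terms $a_p^0(z_i,z_j)=(\ln|\ln|z_i|^2|)^p|z_j|^2$, which produce positive entries in the non-divisor directions. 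Finally, note that even after repairing the domination step you would still need Skoda--El Mir (or an equivalent, genuinely positivity-based argument) to conclude closedness of the trivial extensions of the two positive pieces, since your cutoff proof — the only closedness argument you offer — is the step that breaks.
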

	\begin{proof}
		For $p \in \nat$, we denote the functions
		\begin{equation}\label{defn_w_forms}
		\begin{aligned}
			& a_p^{0}(z, w) = (\ln |\ln |z|^2|)^p |w|^2, \\
			& a_p(z, w) = - (\ln |\ln |z|^2|)^p (\ln |\ln |w|^2|)^p,
		\end{aligned}
		\end{equation}	
		over $\comp^2 \setminus  \{0\} \times \comp$  and $\comp^2 \setminus ( \comp \times \{0\} \cup  \{0\} \times \comp )$ respectively. 
		Then it's easy to see that the dominating terms of differential forms $\imun \partial \dbar  a_p^{0}(z, w)$, $\imun \partial \dbar  a_p(z, w)$ are given by
		\begin{align}
			& \frac{-\imun p (\ln |\ln |z|^2|)^{p-1} |w|^2 dz d\overline{z}}{|z \ln |z|^2|^2} + \imun (\ln |\ln |z|^2|)^p dw d\overline{w},  \label{der_f_1} \\
			& \imun p (\ln |\ln |z|^2|)^{p-1} (\ln |\ln |w|^2|)^{p-1} \Bigg( \frac{(\ln |\ln |z|^2|) dw d\overline{w}}{|w \ln |w|^2|^2} +	\frac{ (\ln |\ln |w|^2|) dz d\overline{z}}{|z \ln |z|^2|^2} \Bigg), \nonumber
		\end{align}		 
		respectively. From this, we see that for some open neighbourhood $V \subset U$ of $S$, the forms $\imun \partial \dbar  a_p(z, w)$, $\imun \partial \dbar  (a_p(z, w) + a_p^{0}(z, v))$ are positive on $\comp^2 \setminus ( \comp \times \{0\} \cup  \{0\} \times \comp )$ and $\comp^3 \setminus ( \comp^2 \times \{0\} \cup  \comp \times \{0\} \times \comp \cup \{0\} \times \comp^2 )$ respectively.
		Now, any differential form $\alpha$ over $U \setminus D$ with log-log growth along $D$ can be bounded from above and below by a linear combination of
		\begin{equation}
		\begin{aligned}
			& \frac{\imun (\ln |\ln |z_i|^2|)^{p-1}  dz_j d\overline{z}_j}{|z_j \ln |z_j|^2|^2}, \qquad i, j = 1, \ldots, l, \\
			& \imun (\ln |\ln |z_i|^2|)^p dz_j d\overline{z}_j, \qquad i = 1, \ldots, l; j = 1, \ldots, q. \\
		\end{aligned}
		\end{equation}
		So, since $T$ has log-log growth along $D$, by (\ref{der_f_1}), there are $C>0, p \in \nat$ such that for 
		\begin{equation}\label{eq_defn_W_curr}
			A_p := (q+1)\sum_{i, j=1}^{l} a_p(z_i, z_j) + \sum_{i=1}^{l} \sum_{j=1+l}^{q} a_p^{0}(z_i, z_j),
		\end{equation}
		we have the following inequalities over $V$:
		\begin{equation}\label{eq_bounds_T}
			- C \imun \partial \dbar A_p \leq T \leq C \imun \partial \dbar A_p.
		\end{equation}
		Thus, the current $T + C \imun \partial \dbar A_p$ is closed, positive in $V$, and by (\ref{eq_bound_loglog_poinc}), (\ref{eq_defn_W_curr}), (\ref{eq_bounds_T}) it has finite mass. Thus, by Skoda-El Mir's theorem (cf. \cite[Theorem III.2.3]{DemCompl}), $[T + C \imun \partial \dbar A_p]_{L^1}$ is a closed positive current over $V$. Similarly, $[\imun \partial \dbar A_p]_{L^1}$ is a closed positive current over $V$. So
		\begin{equation}\label{eq_dec_positive}
			[T]_{L^1} = [T + C \imun \partial \dbar A_p]_{L^1} - C [\imun \partial \dbar A_p]_{L^1}
		\end{equation}
		is a closed current over $U$. Also (\ref{eq_dec_positive}) gives the needed decomposition of $[T]_{L^1}$ as a difference of positive currents.
	\end{proof}
	\begin{rem}\label{rem_extension}
		Since our current is locally represented as a difference of two positive currents, its Lelong numbers (cf. \cite[Definition III.5.4]{DemCompl}) are well-defined.
	\end{rem}
The main goal of this section is to prove the following
\begin{prop}\label{prop_pot_th}
	Let $\phi : U \setminus D \to \real$ be a continuous function with log-log growth along $D$. 
	Suppose that for the induced current $[\phi]$ over $U \setminus D$, the current
	\begin{equation}\label{prop_eq_an_1}
		T := \frac{\partial \dbar [\phi]}{2 \pi \imun},
	\end{equation}
	over $U \setminus D$ has log-log growth along $D$. 
	Then we have the following identity of currents over $U$
	\begin{equation}\label{prop_eq_an_3}
		\frac{\partial \dbar [\phi]_{L^1}}{2 \pi \imun} = [T]_{L^1}.
	\end{equation}
\end{prop}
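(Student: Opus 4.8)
The plan is to show that the difference $R := \frac{\partial \dbar [\phi]_{L^1}}{2\pi\imun} - [T]_{L^1}$ vanishes. Since $T=\frac{\partial\dbar[\phi]}{2\pi\imun}$ is closed on $U\setminus D$ (because $\partial^2=\dbar^2=0$), Lemma \ref{lem_cur_ext} guarantees that $[T]_{L^1}$ is a closed current on $U$; the term $\frac{\partial\dbar[\phi]_{L^1}}{2\pi\imun}$ is closed for the same formal reason. As both currents agree with the $L^1$-form $T$ on $U\setminus D$, the current $R$ is closed and supported on $D$. The assertion being local, I would fix $x_0\in D$ and argue near it. Moreover it suffices to show that $R$ carries no mass on the smooth locus of $D$: the singular locus $\mathrm{Sing}(D)=\cup_{i\neq j}(D_i\cap D_j)$ has real codimension $4$, which is too small to support a nonzero current of bidimension $(q-1,q-1)$, so once $R$ vanishes on $U\setminus\mathrm{Sing}(D)$ it vanishes identically.

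The key reduction is to pass from the merely continuous $\phi$ to plurisubharmonic potentials, for which one gains control of the first derivatives. From the two-sided bound $-C\imun\partial\dbar A_p\le T\le C\imun\partial\dbar A_p$ of (\ref{eq_bounds_T}) (with $A_p$ the explicit log-log function (\ref{eq_defn_W_curr})), the functions $\rho_{\pm}:=2\pi C A_p\pm\phi$ satisfy $\imun\partial\dbar\rho_{\pm}\ge0$ on $U\setminus D$, i.e. they are plurisubharmonic there, and they have log-log growth along $D$ since both $A_p$ and $\phi$ do. Writing $\phi=\tfrac12(\rho_+-\rho_-)$ and using that the asserted identity (\ref{prop_eq_an_3}) is linear in $\phi$, I reduce to proving the following: \emph{for a plurisubharmonic function $\rho$ on $U\setminus D$ with log-log growth along $D$, one has} $\frac{\partial\dbar[\rho]_{L^1}}{2\pi\imun}=\big[\frac{\partial\dbar\rho}{2\pi\imun}\big]_{L^1}$.

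For this core statement I would test $R_\rho:=\frac{\partial\dbar[\rho]_{L^1}}{2\pi\imun}-\big[\frac{\partial\dbar\rho}{2\pi\imun}\big]_{L^1}$ against a smooth compactly supported $(q-1,q-1)$-form $\psi$. Since $\rho$ and $\partial\dbar\rho$ both lie in $L^1_{loc}$ (Proposition \ref{prop_bur_kr_kuhn}a)), I may replace $\int_U$ by $\lim_{\epsilon\to0}\int_{U_\epsilon}$, where $U_\epsilon=\{|z_i|>\epsilon,\ i\le l\}$, by dominated convergence, and integrate by parts on the smooth region $U_\epsilon$. The boundary integral over $\partial U_\epsilon=\cup_i\{|z_i|=\epsilon\}$ splits into a term carrying $\rho$ itself and a flux term carrying $d\rho$. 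The first is $O\big(\epsilon\,(\ln\ln(1/\epsilon))^p\big)\to0$ by the log-log bound on $\rho$ and the $O(\epsilon)$ length of the transverse circles. The flux term is where plurisubharmonicity is essential: restricting to a complex line transverse to a smooth point of $D_i$, $\rho$ is subharmonic with log-log growth, so by the Riesz representation on the disk its transverse mass equals $\lim_{\epsilon\to0}\frac{1}{2\pi}\int_{|z_i|=\epsilon}d^c\rho=\lim_{\epsilon\to0}\frac{m(\epsilon)}{\ln\epsilon}$, where $m(\epsilon)$ is the circular mean; since $m(\epsilon)=o(\ln(1/\epsilon))$ this limit is $0$. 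Integrating over the tangential directions by Fubini, the flux term tends to $0$ as well, which gives $\langle R_\rho,\psi\rangle=0$.

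The main obstacle is precisely this flux boundary term: a naive Stokes argument applied directly to the continuous $\phi$ produces an integral of $d\phi$ over shrinking tubes, and $\phi$ is assumed neither pre-log-log nor differentiable, so its first derivatives are not controlled (this is exactly the gap between \emph{nice} and \emph{pre-log-log} in Definition \ref{defn_nice}). The reduction to plurisubharmonic $\rho$ via Lemma \ref{lem_cur_ext} circumvents it: for subharmonic transverse slices the flux is governed by the Riesz mass, which the sub-logarithmic (log-log) growth forces to vanish, in accordance with the well-definedness of Lelong numbers noted in Remark \ref{rem_extension}. The remaining work is measure-theoretic bookkeeping—justifying Fubini and dominated convergence over the tangential parameters, and invoking the support/dimension argument of the first paragraph to upgrade the vanishing of the transverse mass on the smooth locus to $R=0$.
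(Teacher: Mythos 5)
Your proposal reaches the right conclusion by a genuinely different route from the paper. The paper's proof has two steps: (i) via Lemma \ref{lem_pot} it constructs a local potential $\psi$ of log-log growth with $\frac{\partial \dbar [\psi]_{L^1}}{2\pi\imun}=[T]_{L^1}$, the key input being Siu's theorem \cite{Siu1973} applied to the closed \emph{positive} current $[T+C\imun\partial\dbar A_p]_{L^1}$ furnished by Lemma \ref{lem_cur_ext}; (ii) it then shows that $\chi=\phi-\psi$, which is pluriharmonic off $D$ with log-log growth, extends pluriharmonically across $D$, by slicing transversally to each $D_i$ and invoking the removable-singularity result of \cite[p.~71--72]{BisBost} for harmonic functions of sub-logarithmic growth on the punctured disc, whence $\partial\dbar[\chi]_{L^1}=0$. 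You instead use the same bound (\ref{eq_bounds_T}) to make $\phi$ itself plurisubharmonic up to the explicit correction $A_p$ of (\ref{eq_defn_W_curr}), reducing to the statement that $\imun\partial\dbar$ of the $L^1$-extension of a psh function of log-log growth puts no mass on $D$; this you prove on the smooth locus by a Stokes/Riesz-flux argument (vanishing Lelong numbers under $o(\log)$ growth, consistent with Remark \ref{rem_extension}), and you dispose of ${\rm Sing}(D)$ by a support theorem. Your route avoids both Siu's theorem and the pluriharmonic extension step, and it makes explicit why log-log growth is exactly the right hypothesis; the paper's route avoids all boundary-flux estimates by outsourcing them to the cited removability results.

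Two steps need repair before your argument is complete. First, the assertion that a set of real codimension $4$ is ``too small to support a nonzero current of bidimension $(q-1,q-1)$'' is false for general currents: distributions of positive order can be supported on arbitrarily small sets (already $\delta_0'$ on $\real$), and closedness alone does not exclude this. What is true is the support theorem for \emph{normal} currents (closed and of order $0$), cf.\ \cite[Ch.~III]{DemCompl}: a normal current of bidimension $(p,p)$ supported on an analytic set of dimension $<p$ vanishes. Normality of $R$ is available only \emph{after} your psh reduction — $\partial\dbar[\rho_\pm]_{L^1}$ are, up to sign and constants, closed positive currents and hence of order $0$, while $[T]_{L^1}$ has $L^1$ coefficients — so the dimension argument of your first paragraph must be run after the reduction and with normality invoked explicitly; as written it is an unjustified (indeed false) general claim. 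Second, the boundary term $\int_{\partial U_\epsilon}\partial\rho\wedge\psi$ contains not only the radial derivative of $\rho$ but also the angular derivative and the tangential derivatives $\partial_{z_j}\rho$, $j\geq 2$, over the tube $\{|z_i|=\epsilon\}$; your Riesz-mass argument sees only the radial flux. These extra terms must first be integrated by parts along the tube (legitimate, since $\psi$ has compact support), after which they involve $\rho$ alone and vanish by the growth bound; only then does the transverse Riesz/Lelong argument, together with a uniform local mass bound on the slices to justify dominated convergence in the tangential Fubini, finish the proof.
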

\begin{rem}
	When $l = 1$, this result implies Yoshikawa \cite[Proposition 3.11]{Yoshi2004}, where he obtained this for $T$ of Poincaré growth. If $T$ extends smoothly over $U$, Proposition \ref{prop_pot_th} is a special case of Bismut-Bost \cite[Proposition 10.2]{BisBost}. We note, however, that in our applications, the condition of being pre-log-log and not smooth is essential, see Section \ref{sect_applications}.
\end{rem}
\par
To prove Proposition \ref{prop_pot_th}, we need the following weak analogue of Poincaré lemma for currents of log-log growth:
\begin{lem}\label{lem_pot}
	Let $T$ be a closed $(1,1)$-current over $U$ with log-log growth along $D$. 
	For any $x \in U$, there is a neighborhood $V$ of $x$ and a function $\psi \in L^1_{loc}(V)$ with log-log growth along $D$, satisfying
	\begin{equation}\label{lem46_main_eq}
		\frac{\partial \dbar [\psi]_{L^1}}{2 \pi \imun} = [T]_{L^1}.
	\end{equation}
\end{lem}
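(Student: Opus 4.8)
The plan is to split the (non-positive) current into positive pieces, solve $\partial\dbar$ for each by a plurisubharmonic potential, and then control the growth of the potential by the explicit barrier from Lemma~\ref{lem_cur_ext}. Since the assertion is local we work on a polydisc $V\ni x$. If $x\notin D$ we may take $V\subset U\setminus D$, where $T$ is a closed $(1,1)$-form and the claim is the classical local $\partial\dbar$-Poincaré lemma, giving a smooth (hence trivially log-log) potential; so assume $x\in D$. By Lemma~\ref{lem_cur_ext}, after shrinking $V$ to a polydisc we may write
\begin{equation*}
	[T]_{L^1}=T_+-T_-,\qquad T_-:=C\big[\tfrac{\imun}{2\pi}\partial\dbar A_p\big]_{L^1},\quad T_+:=T_-+[T]_{L^1},
\end{equation*}
with $A_p$, $C$, $p$ as in (\ref{eq_defn_W_curr}), (\ref{eq_bounds_T}); then $T_\pm$ are closed positive $(1,1)$-currents with log-log growth along $D$ and, as currents on $V$, one has $0\le T_+\le 2T_-$. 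I emphasise that, being of log-log growth in the sense of Definition~\ref{defn_loglog_gr}e), each $T_\pm$ is represented by an $L^1_{loc}$ form and so carries no singular mass on $D$; this fact (a consequence of Skoda--El Mir, \cite{DemCompl}) will be crucial for the growth estimate.

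Since $T_\pm$ are closed positive on a polydisc, the classical solution of the $\partial\dbar$-equation for positive currents (\cite{DemCompl}) provides plurisubharmonic functions $\varphi_\pm$ on $V$, after a further shrinking, with $\tfrac{\imun}{2\pi}\partial\dbar\varphi_\pm=T_\pm$. I then set $\psi:=\varphi_--\varphi_+$, which is an honest function on all of $V$, and compute
\begin{equation*}
	\frac{\partial\dbar[\psi]_{L^1}}{2\pi\imun}=\frac{\imun}{2\pi}\partial\dbar(\varphi_+-\varphi_-)=T_+-T_-=[T]_{L^1}
\end{equation*}
as currents on $V$; there is no extension ambiguity precisely because $\varphi_\pm$ are defined across $D$. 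Thus $\psi$ is a potential, and it remains only to control its growth.

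Here the soft part is the following. Each $\varphi_\pm$ is plurisubharmonic, hence locally bounded above. The comparison principle applied to $0\le T_+\le 2T_-$ shows that $2\varphi_--\varphi_+$ is plurisubharmonic, hence bounded above, so $\varphi_+\ge 2\varphi_--C_1$. Granting the single estimate that the barrier potential $\varphi_-$ is bounded below by $-C(\ln|\ln|z_i||)^{p'}-C$ near $D$, the upper bounds on $\varphi_\pm$ together with $\varphi_+\ge 2\varphi_--C_1$ squeeze $\psi=\varphi_--\varphi_+$ between two functions of log-log growth, so $\psi$ has log-log growth along $D$, as required. Everything therefore reduces to a lower bound of log-log type for the plurisubharmonic potential of the explicit positive current $T_-$.

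The main obstacle is exactly this lower bound, and this is where the explicit functions $a_p,a_p^0$ enter. I would represent $\varphi_-$ on a slightly smaller ball by a Riesz-type formula $\varphi_-=h-G\mu$, where $h$ is the pluriharmonic (locally bounded) part and $G\mu\ge 0$ is the Green potential of the trace measure $\mu$ of $T_-$; the required lower bound on $\varphi_-$ is equivalent to the upper bound $G\mu(z)\le C(\ln|\ln|z_i||)^{p'}+C$. Because $T_-$ carries no singular mass on $D$, the measure $\mu$ is absolutely continuous with log-log density, governed by the transverse singularities $|z_i\ln|z_i||^{-2}$ that appear in the computation (\ref{der_f_1}) of $\partial\dbar A_p$. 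The estimate then amounts to bounding $\int_V G(z,w)\,d\mu(w)$ by splitting the integral according to the product structure of $D$ and integrating the locally integrable singularity of $G$ against the transverse density, using the integrability computation (\ref{eq_bound_loglog_poinc}). This one analytic estimate is the crux of the proof; the remaining steps are formal pluripotential theory and feed directly into Proposition~\ref{prop_pot_th}.
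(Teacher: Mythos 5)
Your skeleton is in fact the same as the paper's: you use the barrier $A_p$ from Lemma \ref{lem_cur_ext} to write $[T]_{L^1}$ as a difference of closed positive currents, solve locally for plurisubharmonic potentials (the paper invokes Siu for exactly this), and control the difference by exploiting that plurisubharmonic functions are locally bounded above. But your proof has a genuine gap at the step you yourself flag as the crux: the log-log lower bound for the potential $\varphi_-$ of the barrier current $T_-$. You only sketch a plan for it (a Riesz representation $\varphi_-=h-G\mu$ plus an estimate of the Green potential of the trace measure), and that plan is both unexecuted and harder than necessary: in several variables the Riesz decomposition with respect to the Laplacian produces a harmonic, not pluriharmonic, part $h$, whose own lower bound requires boundary control, and the kernel estimate against the density (\ref{der_f_1}) is a genuine computation that you never perform. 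An argument whose key analytic input is "granted" is not a proof.

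The gap closes immediately once you notice that $T_-$ needs no abstract potential at all: by construction it is a constant multiple of the trivial extension of $\frac{\imun}{2\pi}\partial\dbar A_p$, and $A_p$ itself is plurisubharmonic near $D$ with explicit log-log growth. Any other local potential $\varphi_-$ of $T_-$ differs from the corresponding multiple of $A_p$ by a function with vanishing $\partial\dbar$, hence pluriharmonic, hence smooth and locally bounded; so the lower bound on $\varphi_-$ is free. (One must check that $\partial\dbar[A_p]_{L^1}=[\partial\dbar A_p]_{L^1}$, i.e. that extending $A_p$ across $D$ creates no mass there; this holds because log-log growth forces the Lelong numbers along $D$ to vanish, a point the paper's proof also uses implicitly.) This is precisely the paper's trick in another guise: there, the potential $R$ of $[T+C\imun\partial\dbar A_p]_{L^1}$ is bounded below by observing that $3CA_p-R$ has positive $\imun\partial\dbar$, hence is plurisubharmonic, hence locally bounded above, which gives $R\geq 3CA_p-C_0$; no Green kernel ever appears. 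With this replacement your argument becomes complete and coincides in substance with the paper's proof.
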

\begin{rem}
	This lemma, implies, that Lelong numbers of $T$ (see Remark \ref{rem_extension}) vanish.
\end{rem}
\begin{proof}
	We recall that the functions $A_p : U \setminus D \to \real$, $p \in \nat$ were defined in (\ref{eq_defn_W_curr}).
	Let $C > 0$, $p \in \nat$ be as in (\ref{eq_bounds_T}). By Siu \cite[Proof of Lemma 5.3]{Siu1973}, since the current $[T + C \imun \partial \dbar A_p]_{L^1}$ is closed and positive, there is an open subset $V' \subset U$ and a plurisubharmonic (cf. \cite[Definition B.2.16]{MaHol}) function $R$ over $V'$, such that
	\begin{equation}\label{siu_r_defn}
		\imun \partial \dbar R = \big[T + C \imun \partial \dbar A_p \big]_{L^1}.
	\end{equation}
	Moreover, since
	\begin{equation}
		0 < [2 C \imun \partial \dbar A_p - T]_{L^1} = \imun \partial \dbar (3 C [A_p]_{L^1} - R).
	\end{equation}
	Thus, by plurisubharmonicity (cf. \cite[Proposition A.15]{DinhSib}), there is $C_0 > 0$, such that almost everywhere, we have
	\begin{equation}\label{eq_bound_r_ap}
		-C_0 + 3 C A_p \leq R \leq C_0,
	\end{equation}
	in particular, since $A_p$ has log-log growth along $D$, we deduce by (\ref{eq_bound_r_ap}) that $R - C [A_p]_{L^1}$ has log-log growth along $D$. By (\ref{siu_r_defn}), we get (\ref{lem46_main_eq}) for $\psi := \frac{R - C A_p}{2 \pi \imun}$.
\end{proof}
\begin{proof}[Proof of Proposition \ref{prop_pot_th}]
	Let $\psi$ be a function on $V \subset \subset U$ as in Lemma \ref{lem_pot}, such that
	\begin{equation}\label{eq_psi_defn}
		\frac{\partial \dbar [\psi]_{L^1}}{2 \pi \imun} = [T]_{L^1}.
	\end{equation}
	We denote
	\begin{equation}\label{eq_chi_defn}
		\chi = \phi - \psi.
	\end{equation}
	Then $\chi$ is pluriharmonic on $V \setminus D$ and has log-log singularities along $D$. We'll prove that $\chi$ is pluriharmonic on $V$. Once it will be done, Proposition  \ref{prop_pot_th} will follow from (\ref{prop_eq_an_1}), (\ref{eq_psi_defn}) and (\ref{eq_chi_defn}).
	Let $z' = (0, z'_2, \ldots, z'_q) \in V$ be such that $z' \notin D_i$ for any $i \geq 2$, i.e. $z_i \neq 0$. Then the function
	\begin{equation}
		\chi_0(z) := \chi(z, z'_2, \ldots, z'_q)
	\end{equation}
	is harmonic over $D^*(\epsilon)$, for some $\epsilon > 0$, and has log-log growth along $0 \in D(\epsilon)$.
	By \cite[p. 71-72]{BisBost}, the function $\chi_0$ extends to a harmonic function over $D(\epsilon)$.
	By repeating this for $z'$ in a small neighbourhood of fixed $z'$ in $D_1$, we see that $\chi$ extends over $V \setminus (\cup_{i=2}^{l} D_i)$, such that it's restriction on discs $\{(z, z'_2, \ldots, z'_n) : |z| \leq \epsilon \}$ are harmonic. By the maximum principle and the fact that $\chi$ is smooth over $V \setminus (\cup_{i=1}^{l} D_i)$, we see that this extension is actually locally bounded in $V \setminus (\cup_{i=2}^{l} D_i)$, so by \cite[Theorem 5.24]{DemCompl}, the function $\chi$ is pluriharmonic over $V \setminus (\cup_{i=2}^{l} D_i)$. By repeating this argument for $i = 2, \ldots, l$, we see that $\chi$ is actually pluriharmonic over $V$.
\end{proof}
	
\subsection{Proof of Theorem \ref{thm_curv} and Corollaries \ref{cor_cont_pot}, \ref{corrol_smooth}}\label{sect_thm_d_pf}
	We use the notation from Theorem \ref{thm_curv}. The main ingredients of the proof are Theorems \ref{thm_comp_appr}, \ref{thm_anomaly_cusp}, Proposition \ref{prop_pot_th} and the curvature theorem of Bismut-Bost \cite[Théorème 2.2]{BisBost}, which we now recall.
	\begin{sloppypar}
		 We borrow the notation from Theorem \ref{thm_BBost_continuous}. By Theorem \ref{thm_BBost_continuous}, the Hermitian norm $\norm{\cdot}_Q(g^{TX_t}_{{\rm{sm}}}, h^{\xi}_{{\rm{sm}}})^{12} \otimes (\, \norm{\cdot}_{\Delta}^{\rm{div}} )^{\rk{\xi}}$ is very nice over $S$ with singularities along $\Delta$. In particular, by Remark \ref{rem_nice_chern}, its first Chern form is well-defined.
	\end{sloppypar}
	\begin{thm}[{\cite[Théorème 2.2]{BisBost}}]\label{thm_bgs_curv}
		The following identity of currents over $S$ holds
		\begin{multline}\label{eq_thm_curv_BGS}
			 c_1 \Big( \lambda(j^* \xi)^{12} \otimes \mathscr{O}_{S}(\Delta)^{\rk{\xi}},  \norm{\cdot}_Q(g^{TX_t}_{{\rm{sm}}}, h^{\xi}_{{\rm{sm}}})^{24} \otimes \big( \norm{\cdot}_{\Delta}^{\rm{div}} \big)^{2\rk{\xi}} \Big) 
			\\
			=
			 - 12 \pi_*\Big[\td \big( \omega_{X/S}^{-1}, ( \, \norm{\cdot}_{X/S}^{\omega, {\rm{sm}}} )^{-2} \big) \ch (\xi, h^{\xi}_{{\rm{sm}}})\Big]^{[4]}.
		\end{multline}
	\end{thm}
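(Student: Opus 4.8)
The statement is the Bismut--Bost curvature formula, realising the first Chern form of the Quillen metric as a closed current over the whole base $S$, including the singular locus. The plan is to first establish the identity away from $\Delta$ by the smooth curvature theorem, then to extend both sides as currents across $\Delta$, and finally to check that passing to the extension introduces no spurious mass along $\Delta$.

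\emph{Reduction to the submersion locus.} Over $S \setminus |\Delta|$ the projection $\pi$ is a submersion and $\omega_{X/S}^{-1}$ restricts to the relative holomorphic tangent bundle, so $(\omega_{X/S}^{-1}, (\,\norm{\cdot}_{X/S}^{\omega, {\rm{sm}}})^{-2})$ is the Hermitian relative tangent bundle and $\pi$ is a Kähler fibration there. The curvature theorem of Bismut--Gillet--Soulé \cite[Theorem 1.9]{BGS3} then gives the pointwise identity of smooth $(1,1)$-forms
\begin{equation*}
	12\, c_1\big(\lambda(j^* \xi), \norm{\cdot}_Q^2\big) = - 12\, \pi_*\Big[\td\big(\omega_{X/S}^{-1}, (\,\norm{\cdot}_{X/S}^{\omega, {\rm{sm}}})^{-2}\big) \ch(\xi, h^{\xi}_{{\rm{sm}}})\Big]^{[4]}
\end{equation*}
over $S \setminus |\Delta|$. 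On this open set $\mathscr{O}_S(\Delta)$ is trivial with flat divisor metric, so $c_1(\mathscr{O}_S(\Delta), (\,\norm{\cdot}_{\Delta}^{\rm{div}})^2)$ vanishes there; by Poincaré--Lelong this class equals $-\delta_{\Delta}$ globally, hence is supported on $\Delta$. Thus both sides of the asserted identity coincide over $S \setminus |\Delta|$, and everything reduces to controlling the two sides across $|\Delta|$.

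\emph{The right-hand side as a closed $L^1$-current.} I would work in the local model of Proposition \ref{prop_coord}: near a node one has coordinates $(z_0, z_1, \ldots)$ with $w_1 = z_0 z_1$, and with the smooth metrics the integrand is a genuine smooth form on the total space, using the nowhere-vanishing frame $dz_0 / z_0 = -dz_1/z_1$ of $\omega_{X/S}$. The only issue is the integrability of the fibre integral near $\Delta$; a direct estimate in these coordinates, of the kind carried out in Proposition \ref{prop_int_loglog}a) with $D = \emptyset$, shows that $\pi_*[\cdots]^{[4]}$ has at worst logarithmic growth along $\Delta$, hence lies in $L^1_{\rm{loc}}(S)$, and its trivial extension is closed because $\Delta$ is Lebesgue-negligible.

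\emph{Potential-theoretic identification (the main obstacle).} Fix a local holomorphic frame of $\lambda(j^* \xi)^{12} \otimes \mathscr{O}_S(\Delta)^{\rk{\xi}}$ and let $\phi$ be the logarithm of its norm for the given metric; over $S \setminus |\Delta|$ its $\frac{\partial \dbar}{2 \pi \imun}$ equals the smooth form of the first step. What remains is to prove that the distributional $\frac{\partial \dbar}{2 \pi \imun}[\phi]_{L^1}$, computed over all of $S$, equals the $L^1$-extension of that form, i.e. that $\phi$ carries no residual Lelong mass along $\Delta$ beyond the $\delta_{\Delta}$ already encoded by the twist. This rests on two ingredients. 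The first, and genuinely hard, one is the precise behaviour of the Quillen metric as the handle pinches ($w_1 \to 0$): one must show that $\norm{\cdot}_Q^{12} \otimes (\,\norm{\cdot}_{\Delta}^{\rm{div}})^{\rk{\xi}}$ is \emph{very nice} with singularities along $\Delta$ in the sense of Definition \ref{defn_nice}, which is the content of Theorem \ref{thm_BBost_continuous} and is precisely where the coefficient $\rk{\xi}$ of the boundary term is pinned down, through the degeneration analysis of the determinant of the Kodaira Laplacian near the node. The second is a potential-theoretic statement of the type of Proposition \ref{prop_pot_th} (see Bismut--Bost \cite[Proposition 10.2]{BisBost}): for a very-nice potential of log-log type one has $\frac{\partial \dbar}{2 \pi \imun}[\phi]_{L^1} = \big[\frac{\partial \dbar}{2 \pi \imun}\phi\big]_{L^1}$, with no extra mass on $\Delta$. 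Combining these with the two previous steps yields the asserted identity of currents over $S$; the whole difficulty is concentrated in the local family-index/degeneration estimate for the Quillen norm near the node, the rest being formal.
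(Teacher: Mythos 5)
You should first note a structural fact: the paper never proves this statement. It is imported wholesale from Bismut--Bost (hence the citation in the theorem's header), and it is recalled in Section \ref{sect_thm_d_pf} precisely so that it can serve as an ingredient in the proof of Theorem \ref{thm_curv}. So there is no internal proof to compare yours against; what you have written is a reconstruction of how the curvature identity follows from the degeneration asymptotics of the Quillen metric. As a reduction, your argument is sound, and it is exactly the mechanism the paper itself uses one level up, in the proof of Theorem \ref{thm_curv} under Assumption \textbf{S2}: apply the smooth-locus curvature theorem of Bismut--Gillet--Soul\'e \cite[Theorem 1.9]{BGS3} over $S \setminus |\Delta|$, invoke the regularity of the renormalized Quillen norm (there Theorem \ref{thm_cont}2, here Theorem \ref{thm_BBost_continuous}), and conclude via potential theory (Proposition \ref{prop_pot_th}) that no residual mass appears along $\Delta$. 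This also matches the logical order of \cite{BisBost} itself, where the pinching asymptotics are established first and the current identity is then deduced (their Proposition 10.2 playing the role of Proposition \ref{prop_pot_th}), so your route is not circular. But be explicit that it is a reduction and not a proof: all of the hard analytic content — the degeneration analysis of the determinant of the Laplacian near the node, which pins down the coefficient $\rk{\xi}$ — sits inside Theorem \ref{thm_BBost_continuous}, which is the very same Bismut--Bost reference; you have derived one of their theorems from the other, not re-proved either.

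Two smaller corrections. First, with the paper's convention (\ref{eq_c_1_log}), Lelong--Poincar\'e gives $c_1\big(\mathscr{O}_S(\Delta), (\,\norm{\cdot}_{\Delta}^{\rm{div}})^2\big) = +\delta_{\Delta}$, not $-\delta_{\Delta}$; this is harmless for your argument, since you only use that this current is supported on $\Delta$, but the sign matters elsewhere (it is what makes the twist by $\mathscr{O}_S(\Delta)^{\rk{\xi}}$ cancel, rather than double, the singularity of the Quillen norm). Second, for a \emph{very nice} metric the full strength of Proposition \ref{prop_pot_th} (Siu's lemma, plurisubharmonic potentials, etc.) is not needed: since each $|h_i|^2 \ln |h_i|$ is $\mathscr{C}^1$ with second derivatives in $L^1_{loc}$, the equality $\partial \dbar [\phi]_{L^1} = [\partial \dbar \phi]_{L^1}$ follows by direct integration by parts. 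The heavier potential theory of Section \ref{sect_pot_th} is designed for the merely \emph{nice}, log-log situation of Assumption \textbf{S2}, where no such explicit local form of the potential is available.
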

	\begin{proof}[Proof of Theorem \ref{thm_curv}.]
		 \textbf{Let's treat Assumption \textbf{S1} first.}
	As in Theorem \ref{thm_cont}, the statement is local over the base. 
	So, for any $t_0 \in S$, it is enough to prove (\ref{eq_thm_curv}) in some neighbourhood $U \subset S$ of $t_0$. The fact that the current (\ref{eq_rhs_rrh}) is $L^1_{loc}(S)$ follows from Lebesgue dominated convergence theorem and \cite[Proposition 5.2]{BisBost}. 
	The fact that its closure is a $d$-closed current follows from the fact that it is obtained as a pushforward of a closed form and (\ref{eq_leb_negl}).
	\par By Proposition \ref{prop_bur_kr_kuhn}, (\ref{eq_bc_two_elem}), (\ref{defn_om_0}) and (\ref{eq_thmc_aux_anomal1}), we deduce that 
	\begin{multline}\label{eq_curv_red1}
		c_1 
		\Big( 
			\mathscr{L}_n
			,
			 \norm{\cdot}_Q (g^{TX_t}_{0}, h^{\xi} \otimes (\, \norm{\cdot}_{X/S}^{0})^{2n})^{24} 
			\otimes 
			(\, \norm{\cdot}^{W, 0}_{X/S})^{-2\rk{\xi}}
			\otimes 
			(\, \norm{\cdot}^{\rm{div}}_{\Delta} )^{\rk{\xi}}
		\Big)
		\\
		-
		c_1 
		\Big( 
			\mathscr{L}_n
			,
			\big(  \norm{\cdot}^{\mathscr{L}_n} \big)^{2} 
		\Big)
		=
		- 12 \pi_*\Big[\ch (\xi, h^{\xi})  \Big( \td \big(\omega_{X/S}(D)^{-1}, (\, \norm{\cdot}^{0}_{X/S})^{-2} \big) 
		\\
		\cdot \ch \big(\omega_{X/S}(D)^{n}, (\, \norm{\cdot}^{0}_{X/S})^{2n} \big)
		- \td \big(\omega_{X/S}(D)^{-1}, \norm{\cdot}^{-2}_{X/S} \big) \ch \big(\omega_{X/S}(D)^{n}, \, \norm{\cdot}^{2n}_{X/S} \big) \Big) \Big]^{[4]}.
	\end{multline}
	From (\ref{eq_curv_red1}), wee see that it is enough to prove (\ref{eq_thm_curv}) for the norms $\norm{\cdot}_{X/S}^{\omega, 0}$, $\norm{\cdot}_{X/S}^{0}$ instead of the norms $\norm{\cdot}_{X/S}^{\omega}$, $\norm{\cdot}_{X/S}$.
	\par By (\ref{defn_om_cmp}), (\ref{defn_om_cmp2}), the fact that (\ref{eq_thmc_aux_anomal22}) is constant and Proposition \ref{prop_bur_kr_kuhn}, we deduce
	\begin{equation}\label{eq_curv_red2}
	\begin{aligned}
		&
		c_1 
		\Big( 
			\lambda \big(j^*(\xi  \otimes \omega_{X/S}(D)^n) \big)
			,
			\norm{\cdot}_Q (g^{TX_t}_{\rm{cmp}}, h^{\xi} \otimes (\, \norm{\cdot}_{X/S}^{\rm{cmp}})^{2n})^2
		\Big)
		\\		
		& \qquad \qquad \qquad \qquad \qquad
		-
		c_1 
		\Big( 
			\lambda \big(j^*(\xi  \otimes \omega_{X/S}(D)^n) \big)
			,
			\norm{\cdot}_Q (g^{TX_t}_{0}, h^{\xi} \otimes (\, \norm{\cdot}_{X/S}^{0})^{2n})^2
		\Big)
		\\
		& \qquad
		=
		  \pi_*\Big[\td \big(\omega_{X/S}^{-1}, (\, \norm{\cdot}^{\omega, 0}_{X/S})^{-2} \big) c_1 (\xi, h^{\xi})  \ch \big(\omega_{X/S}(D)^{n}, (\, \norm{\cdot}^{0}_{X/S})^{2n} \big)
		\\
		& 
		\qquad \qquad \qquad \qquad
		- 
		\td \big(\omega_{X/S}^{-1}, (\, \norm{\cdot}^{\omega, {\rm{cmp}}}_{X/S})^{-2} \big) c_1 (\xi, h^{\xi})  \ch \big(\omega_{X/S}(D)^{n}, (\, \norm{\cdot}^{\rm{cmp}}_{X/S})^{2n} \big) \Big]^{[4]}.
	\end{aligned}
	\end{equation}
	Now, since the norms $\norm{\cdot}^{\omega, 0}_{X/S}$, $\norm{\cdot}^{\omega, {\rm{cmp}}}_{X/S}$ and $\norm{\cdot}^{0}_{X/S}$, $\norm{\cdot}^{\rm{cmp}}_{X/S}$ coincide away from $\cup V_i'$, and over $\cup V_i'$ they vary only in the horizontal direction, we deduce that 
	\begin{multline}\label{eq_curv_red3}
			\Big[ 
			\td \big(\omega_{X/S}^{-1}, (\, \norm{\cdot}^{\omega, 0}_{X/S})^{-2} \big)  \ch \big(\omega_{X/S}(D)^{n}, (\, \norm{\cdot}^{0}_{X/S})^{2n} \big)
			\\
			- 
			\td \big(\omega_{X/S}^{-1}, (\, \norm{\cdot}^{\omega, {\rm{cmp}}}_{X/S})^{-2} \big) \ch \big(\omega_{X/S}(D)^{n}, (\, \norm{\cdot}^{\rm{cmp}}_{X/S})^{2n} \big)  \Big]^{[4]} = 0.
	\end{multline}
	Thus, by (\ref{eq_curv_red3}), we can interpret $c_1(\xi, h^{\xi})$ in the right-hand side of (\ref{eq_curv_red2}) as the Chern form $\ch(\xi, h^{\xi})$.
	By Poincaré-Lelong formula and (\ref{eq_ch_td_def}), we deduce that
	\begin{equation}\label{eq_poinc_lel}
		\td \big(\omega_{X/S}^{-1}, (\, \norm{\cdot}^{\omega, 0}_{X/S})^{-2} \big)^{[2]} = \td \big(\omega_{X/S}(D)^{-1}, (\, \norm{\cdot}^{0}_{X/S})^{-2} \big)^{[2]} + \delta_{D_{X/S}}/2. 
	\end{equation}
	Now, by Theorem \ref{thm_bgs_curv} applied for $\xi := \xi \otimes \omega_{X/S}(D)^n$, $h^{\xi} := h^{\xi} \otimes (\, \norm{\cdot}^{\rm{cmp}}_{X/S})^{2n}$ and the metric $g^{TX_t}_{{\rm{cmp}}}$, $t \in U$ induced by $\norm{\cdot}^{\omega, {\rm{cmp}}}_{X/S}$, (\ref{eq_curv_red2}), (\ref{eq_curv_red3}) and (\ref{eq_poinc_lel}), we deduce Theorem \ref{thm_curv} in Assumption \textbf{S1}.
	\par \textbf{Let's treat Assumption \textbf{S2}.}
	First of all, from Proposition \ref{prop_int_loglog}, the current (\ref{eq_rhs_rrh}) has log-log growth along $\Delta$.
	Also, since (\ref{eq_rhs_rrh}) is a pushforward of a closed current, it is a $d$-closed current over $S \setminus |\Delta|$. 
	By Lemma \ref{lem_cur_ext}, the $L^1$-trivial extension of this current is also $d$-closed. 
	Thus, by Theorem \ref{thm_cont}2 and Proposition \ref{prop_pot_th}, we see that it is enough to prove Theorem \ref{thm_curv} over $S \setminus |\Delta|$ without the boundary term $\delta_{\Delta}$.
	\par 	 Now, as before, the statement is local over the base. So, for any $t_0 \in S \setminus |\Delta|$, it is enough to prove (\ref{eq_thm_curv}) in some neighbourhood $U \subset S \setminus |\Delta|$ of $t_0$.
	\par 
	Trivially, (\ref{eq_curv_red1}) still holds over $U$ under Assumption \textbf{S2} over $U$. Similarly (\ref{eq_curv_red2}) also continues to hold over $U$. 
	Thus, by (\ref{eq_curv_red1})-(\ref{eq_poinc_lel}), we deduce that it is enough to prove Theorem \ref{thm_curv} for $\norm{\cdot}_{X/S}^{\omega, \rm{cmp}}$, $\norm{\cdot}_{X/S}^{\rm{cmp}}$ in place of $\norm{\cdot}_{X/S}^{\omega}$, $\norm{\cdot}_{X/S}$.
	However, by Theorem \ref{thm_bgs_curv} (in the current situation it reduces to the special case of the curvature theorem of Bismut-Gillet-Soulé \cite[Theorem 1.9]{BGS3}), we get
	\begin{multline}\label{eq_bgs_final_iden}
		c_1 
		\Big( 
			\lambda \big(j^*(\xi  \otimes \omega_{X/S}(D)^n) \big)
			,
			\norm{\cdot}_Q (g^{TX_t}_{\rm{cmp}}, h^{\xi} \otimes (\, \norm{\cdot}_{X/S}^{\rm{cmp}})^{2n})^2
		\Big)
		\\	
		= - 
		\pi_* \Big[
		\td \big(\omega_{X/S}^{-1}, (\, \norm{\cdot}^{\omega, {\rm{cmp}}}_{X/S})^{-2} \big) \ch (\xi, h^{\xi})  \ch \big(\omega_{X/S}(D)^{n}, (\, \norm{\cdot}^{\rm{cmp}}_{X/S})^{2n} \big) \Big]^{[4]},
	\end{multline}
	which finishes the proof of Theorem \ref{thm_curv} under Assumption \textbf{S2}.
	\end{proof}
		\begin{proof}[Proof of Corollary \ref{cor_cont_pot}]
		Fix local holomorphic frames $\upsilon$, $\upsilon_1, \ldots, \upsilon_m$ of $\mathscr{L}_n$, $\sigma_1^{*} \xi, \ldots, \sigma_m^{*} \xi$ respectively.
		We denote
		\begin{equation}\label{defn_G}
			G: = \big(\, \norm{\upsilon}^{\mathscr{L}_n} \big)^2 \cdot  
			 h^{\det \xi}(\upsilon_1, \upsilon_1)^{6} \cdot \ldots \cdot h^{\det \xi}(\upsilon_m, \upsilon_m)^{6},
		\end{equation}
		where $h^{\det \xi}$ is the metric on the line bundle $\det \xi$, induced by $h^{\xi}$.
		Then by Theorems \ref{thm_cont}2, \ref{thm_curv} and (\ref{eq_c_1_log}), we have the identity
		\begin{equation}\label{eq_cont_pot_curr}
			\frac{\partial \dbar \ln (G)}{2 \pi \imun}
			 =
			 -12 \pi_*\Big[\td(\omega_{X/S}(D)^{-1}, \norm{\cdot}^{-2}_{X/S}) \ch (\xi, h^{\xi})  \ch(\omega_{X/S}(D)^{n}, \norm{\cdot}_{X/S}^{2n} ) \Big].
		\end{equation}
		However, by Theorem \ref{thm_cont}3, the function $G$ is continuous, which finishes the proof by (\ref{eq_cont_pot_curr}).
	\end{proof}
	\begin{proof}[Proof of Corollary \ref{corrol_smooth}]
		It follows from Theorem \ref{thm_cont}3, (\ref{defn_G}), (\ref{eq_cont_pot_curr}) and the regularity theory of elliptic partial differential equations (cf. \cite[Corollary 8.11]{GilTrudBook}).
	\end{proof}

\section{Applications to the moduli space of stable pointed curves}\label{sect_applications}
	In this section we apply the results of Sections \ref{sect_pf_cont}, \ref{sect_pf_curv} to study the Hodge line bundle on  the moduli space of pointed curves. This section is organized as follows: in Section 5.1, we recall the local description of the moduli space $\modulcomp_{g, m}$ of $m$-pointed stable curves of genus $g$ and of the universal projection map $\Omega: \univcurvcomp_{g, m} \to \modulcomp_{g, m}$. Then we recall the definition of the Weil-Petersson metric with Wolpert theorem, expressing it as a push-out of Chern forms under the universal projection map.
	In Section 5.2 we recall the pinching expansion of the hyperbolic metric. From this, we see that the twisted canonical line bundle $\omega_{g, m}(D)$ over $\univcurvcomp_{g, m}$ (see (\ref{def_rel_can_modul})) satisfies Assumptions \textbf{S2}, \textbf{S3}. Then we prove Corollaries \ref{cor_cont_hodge}, \ref{cor_form_TZ_type}, \ref{cor_wp_cont_pot}, \ref{cor_wp_vol}, \ref{cor_del_iso}.
	
	\subsection{Orbifold structure of $\modulcomp_{g, m}$ and $\univcurvcomp_{g, m}$}\label{sect_orb_str}
	We follow closely the expositions of Wolpert \cite{Wolp90}, and we use the notation from Section \ref{sect_intro}.
	\par We fix $\textbf{M} := (\overline{M}, D_M) \in \mathscr{M}_{g, m}$. 
	Let $\Gamma$ be a Fuchsian group of type $(g, m)$ such that $M := \overline{M} \setminus D_M$ is isomorphic to the quotient $\Gamma \setminus \hh$ of the hyperbolic space.
	Recall that the space of Beltrami differentials $H^1(\overline{M}, T^{1,0} \overline{M} \otimes \mathscr{O}_{\overline{M}}(- D_M))$ with the obvious action by the automorphisms group $\rm{Aut}(\textbf{M})$ gives a local chart for $\modul_{g, m}$ in the following way. 
	We take $[\nu_0] \in H^1(\overline{M}, T^{1,0} \overline{M} \otimes \mathscr{O}_{\overline{M}}(- D_M))$. 
	By locally resolving $\dbar$-equation around the cusps, we may choose a representative $\nu \in  \ccal^{\infty}(\overline{M}, \overline{\omega}_{\overline{M}} \otimes T^{1,0} \overline{M} \otimes \mathscr{O}_{\overline{M}}(- D_M))$ in the class $[\nu_0]$, which has compact support in $M := \overline{M} \setminus D_M$. 
	Denote by $\nu_{\hh}$ the pull-back of $\nu$ on $\hh$.
	By a theorem of Ahlfors \cite[Theorem V.5]{Ahlfors_qc}, if $|\nu|_{\ccal^{0}} < 1$, then the Beltrami equation
	\begin{equation}\label{eq_ahl_thm}
		\begin{cases}
			\dbar f^{\nu}(z) = \nu_{\hh}(z) \partial  f^{\nu}(z), & \text{ for }z \in \hh, 
			\\
			\dbar f^{\nu}(z) = \overline{\nu_{\hh}}(\overline{z}) \partial  f^{\nu}(z), & \text{ for } z \in \comp \setminus \hh, 
		\end{cases}
	\end{equation}
	has a unique solution in the class of diffeomorphisms of $\comp \cup \{ \infty \}$, fixing $0, 1, \infty \in \comp \cup \{ \infty \}$.
	We denote
	\begin{equation}
		\Gamma^{\nu} := f^{\nu} \Gamma (f^{\nu})^{-1},
 	\end{equation}
 	then, classically (cf. \cite[p. 69]{Ahlfors_qc}), $\Gamma^{\nu}$ is the Fuchsian group of type $(g, m)$, and $f^{\nu}$ defines a diffeomorphism 
 	\begin{equation}
 		\tilde{f}^{\nu} : \Gamma \setminus \hh \to \Gamma^{\nu} \setminus \hh,
 	\end{equation}
 	which is holomorphic if and only if $[\nu_0] = 0$.
 	\par Now, we choose $\nu_1, \ldots, \nu_N \in \ccal^{\infty}_{c}(M, \overline{\omega}_{\overline{M}} \otimes T^{1,0} \overline{M} \otimes \mathscr{O}_{\overline{M}}(- D_M))$ such that the associated cohomology classes form a basis in $H^1(\overline{M}, T^{1,0} \overline{M} \otimes \mathscr{O}_{\overline{M}}(- D_M))$.
 	Let $c > 0$ be small enough. For $s = (s_1, \ldots, s_N) \in D(c)^N$, we denote
 	\begin{equation}\label{eqn_nu_s_defn}
 		\nu(s) := \sum s_i \nu_i.
 	\end{equation}
 	\par Now, let $(U_{\alpha}, z_{\alpha})$ be an atlas of $\overline{M}$. Then $W_{\alpha} := (\tilde{f}^{\nu(s)} \circ z_{\alpha}, s)$ is a chart mapping $U_{\alpha} \times D(c)^N \to \comp^{N+1}$.
 	This defines a holomorphic atlas on $X_0 := \cup_{s \in S} (\Gamma^{\nu(s)} \setminus \hh)$, for which the obvious projection $\pi_0 : X_0 \to D(c)^N$ is a holomorphic submersion of codimension 1 (cf. \cite[\S 2.4.C]{Wolp90}).
 	Now, since the sections $\nu_1, \ldots, \nu_N$ have compact support in $M$, by (\ref{eq_ahl_thm}), the local coordinate $z_i^{M}$ of $\overline{M}$, centered at $P_i^{M} \in D_M$ extends to a holomorphic function $z_i : U \subset X_0 \to D^*(\epsilon)$, for some $\epsilon > 0$ and open neighbourhood $U$ of $P_i^{M}$. 
 	Thus, the conformal completion of $D^*(\epsilon)$ induces the compactification $X$ of $X_0$ such that $X \setminus X_0 = \cup_{i = 1}^{m} \Im (\sigma_i)$ for some non-intersecting holomorphic functions $\sigma_i : D(c)^N \to X$.
 	Also, trivially, the action of $\rm{Aut}(\textbf{M})$ over $H^1(\overline{M}, T^{1,0} \overline{M} \otimes \mathscr{O}_{\overline{M}}(- D_M))$ induces the action on $X$, which preserves $\sigma_1, \ldots, \sigma_m$.
 	\par By Serre duality, for $\textbf{M} := (\overline{M}, D_M) \in \mathscr{M}_{g, m}$, we have the isomorphism
 	\begin{equation}
 		 H^1(\overline{M}, T^{1,0} \overline{M} \otimes \mathscr{O}_{\overline{M}}(- D_M)) 
 		 \simeq
 		 H^0(\overline{M}, \omega_{\overline{M}}^{2} \otimes \mathscr{O}_{\overline{M}}(D_M)).
 	\end{equation}
 	By the uniformization theorem, there is the unique hyperbolic metric $g^{TM}_{\rm{hyp}}$ of constant scalar curvature $-1$ over $M$ with cusps at $D_M$.
 	We endow the space $H^0(\overline{M}, \omega_{\overline{M}}^{2} \otimes \mathscr{O}_{\overline{M}}(D_M)) \subset \ccal^{\infty}(\overline{M}, \omega_{\overline{M}}^{2} \otimes \mathscr{O}_{\overline{M}}(D_M))$ with the $L^2$-scalar product from (\ref{defn_l2_scal}). This defines the Kähler metric on $\modulcomp_{g, m}$, which is called the \textit{Weil-Petersson metric}. 
 	The \textit{Weil-Petersson form}, which we denote by $\omega_{WP}$, is the Kähler form associated with the Weil-Petersson metric.
 	\par By the uniformization theorem, the relative canonical line bundle $\omega_{g, m}$ of $\Omega$ can be endowed with the Hermitian metric $\norm{\cdot}_{g, m}^{\omega, \rm{hyp}}$ over $\univcurv_{g, m}$ in such a way that the restriction of this metric over each fiber induces the hyperbolic Kähler metric of constant scalar curvature $-1$ on the fibers. 
 	By Teichmüller theory, this metric is smooth over $\univcurv_{g, m}$.
 	Let $D_{g, m}$ be the divisor in $\univcurvcomp_{g, m}$, which is formed by the fixed points of the fibers.
 	We endow the twisted canonical line bundle $\omega_{g, m}(D)$ (cf. (\ref{def_rel_can_modul})) with the induced norm $\norm{\cdot}_{g, m}^{\rm{hyp}}$ as in Construction \ref{const_norm_div}.
 	The following interpretation of $\omega_{WP}$ lies in the core of our applications.
 	\begin{thm}[{Wolpert \cite[Corollary 5.11]{WolpChForm86}, (cf. \cite[Corollary 5.2.2]{FreixTh})}]\label{thm_wolp_identity}
 		The following identity of smooth forms over $\modul_{g, m}$ holds:
 		\begin{equation}
 			\omega_{WP} = \pi^2  \Omega_* \Big[c_1 \big(\omega_{g, m}(D), \norm{\cdot}_{g, m}^{\rm{hyp}} \big)^2 \Big]^{[4]}.
 		\end{equation}
 	\end{thm}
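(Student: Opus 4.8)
The plan is to verify the identity pointwise, by evaluating both $(1,1)$-forms on a pair of coordinate tangent vectors $\partial_{s_i}, \partial_{\overline{s}_j}$ in the Bers chart centered at an arbitrary point $\mathbf{M} = (\overline{M}, D_M)$ constructed in Section 5.1. On the left-hand side, by the definition of the Weil-Petersson metric, $\omega_{WP}(\partial_{s_i}, \partial_{\overline{s}_j})$ is, up to a fixed constant, the $L^2$-pairing $\scal{\mu_i}{\mu_j}_{L^2}$ of the harmonic Beltrami differentials $\mu_i, \mu_j$ representing the classes $[\nu_i], [\nu_j] \in H^1(\overline{M}, T^{1,0}\overline{M} \otimes \mathscr{O}_{\overline{M}}(-D_M))$. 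The task is therefore to show that the fibre integral on the right reproduces the same pairing, and to pin down the constant so that it is exactly $\pi^2$.

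First I would make the Chern form explicit. Writing $\rho$ for the density of the fibrewise hyperbolic metric $g^{TX_t}_{\rm{hyp}}$ in the vertical holomorphic coordinate $z$, the frame $dz$ of $\omega_{g,m}$ satisfies $\norm{dz}^2 = \rho^{-1}$, so over $\univcurv_{g,m} \setminus D_{g,m}$ one has
\[
	c_1\big(\omega_{g,m}(D), \norm{\cdot}_{g,m}^{\rm{hyp}}\big) = \frac{\imun}{2\pi}\, \partial \dbar \log \rho =: \frac{\imun}{2\pi}\, \partial\dbar\Phi ,
\]
whose restriction to each fibre is a fixed multiple of the hyperbolic area form. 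I would then decompose the complex Hessian of $\Phi$ relative to the coordinates $(z, s_1, \ldots, s_N)$ into its vertical part $\Phi_{z\overline{z}}$, its mixed parts $\Phi_{z\overline{s}_j}, \Phi_{s_i\overline{z}}$, and its horizontal part $\Phi_{s_i\overline{s}_j}$. Squaring $\partial\dbar\Phi$ and retaining only the monomials containing $dz \wedge d\overline{z}$ — the only ones surviving integration over the complex one-dimensional fibre — the fibre integral of $c_1^2$ splits into a \emph{diagonal} contribution proportional to $\int_{\rm{fibre}} \Phi_{z\overline{z}}\,\Phi_{s_i\overline{s}_j}$ and a \emph{mixed} contribution proportional to $-\int_{\rm{fibre}} \Phi_{z\overline{s}_j}\,\Phi_{s_i\overline{z}}$.

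The heart of the argument, and the step I expect to be the main obstacle, is the identification of these two fibre integrals with the Weil-Petersson pairing. The key input is Ahlfors' variational theory for the hyperbolic metric: differentiating the fibrewise curvature equation $\partial_z \partial_{\overline z}\log\rho = c\,\rho$ (with $c$ fixed by the curvature normalization) in the base directions shows that the first variation $\partial_{\overline{s}_i}\Phi$ solves a linear elliptic equation whose source is built from $\nu_i$, and that the mixed Hessian $\Phi_{z\overline{s}_i}$ equals, up to normalization, the harmonic representative $\mu_i = \overline{q_i}/\rho$ of the Kodaira-Spencer class of $\partial_{s_i}$ (equivalently, the fibrewise component of the $\dbar$ of the horizontal lift of $\partial_{s_i}$ with respect to the total-space Kähler-Einstein metric). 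Feeding this into the mixed term, and combining it with the diagonal term through the horizontal-lift identity, the two contributions organize into the pointwise geodesic-curvature norm $|\mu_i|^2$ (resp. $\mu_i\overline{\mu_j}$) integrated against the hyperbolic form, that is, into $\scal{\mu_i}{\mu_j}_{L^2}$ up to an explicit constant. Since the representatives $\nu_i$ are chosen with compact support in $M$, all mixed derivatives vanish near the punctures, so the fibre integrals are supported away from the cusps and converge despite the blow-up of $\rho$; the purely vertical factor integrates to the finite hyperbolic area.

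Finally I would assemble the constants: the two factors $\tfrac{1}{2\pi}$ from the two copies of $c_1$, the hyperbolic curvature normalization, and the combinatorial factor from extracting the $dz\wedge d\overline z$ part combine so that multiplication by $\pi^2$ produces exactly $\scal{\mu_i}{\mu_j}_{L^2} = \omega_{WP}(\partial_{s_i}, \partial_{\overline{s}_j})$. Since $\mathbf{M}$ and the indices $i,j$ were arbitrary, this proves the claimed identity of smooth $(1,1)$-forms over $\modul_{g,m}$. The delicate points throughout are the convergence and regularity of the variational quantities near the cusps and the precise constant bookkeeping; the structural content, however, reduces cleanly to Ahlfors' computation of the second variation of the hyperbolic metric.
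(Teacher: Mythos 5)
The paper itself does not prove Theorem \ref{thm_wolp_identity}: it is imported as a known result, citing Wolpert's computation for closed surfaces and Freixas's thesis (Corollary 5.2.2) for the pointed/cusped case, so there is no internal proof to compare against. That said, your strategy is the classical Wolpert--Schumacher argument that underlies those citations: evaluate both sides on Bers coordinate vectors, write $c_1 = \tfrac{\imun}{2\pi}\partial\dbar\Phi$ with $\Phi = \log\rho$, split the complex Hessian of $\Phi$ into vertical, mixed and horizontal blocks, integrate the determinant-type combination $\Phi_{z\bar z}\Phi_{s_i\bar s_j}-\Phi_{z\bar s_j}\Phi_{s_i\bar z}$ over the fibre, and identify the result with the $L^2$-pairing of harmonic Beltrami differentials via the elliptic equation satisfied by the geodesic-curvature function. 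For $m=0$ this outline is essentially complete (modulo the minor imprecision that the harmonic representative is not the mixed Hessian $\Phi_{z\bar s_i}$ itself but $\partial_{\bar z}$ of the horizontal-lift coefficient $-\Phi_{s_i\bar z}/\Phi_{z\bar z}$; your parenthetical remark states the correct version).

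The genuine gap is your treatment of the cusps, which is the only analytic difficulty that distinguishes the statement actually needed here ($m>0$) from Wolpert's compact computation. Compact support of the chosen Beltrami representatives $\nu_i$ does \emph{not} make the fibre integrands compactly supported: the hyperbolic metric responds globally to a deformation of the complex structure. Near a cusp all fibres are trivialized by one fixed coordinate $z$, but the canonical cusp coordinate of the deformed surface (unique up to rotation, cf. Definition \ref{defn_wolp}) varies with $s$, so $\Phi_{s_i}$, $\Phi_{s_i\bar z}$, $\Phi_{s_i\bar s_j}$ are nonzero arbitrarily close to the punctures; likewise the harmonic representatives $\mu_i=\bar q_i/\rho$, with $q_i$ quadratic differentials having at most simple poles along $D_M$, never have compact support. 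Consequently both the convergence of $\Omega_*[c_1^2]$ and, more seriously, the integration by parts that discards the term $\int_{X_t}(\Delta_{\mathrm{hyp}} c)\, dA_{\mathrm{hyp}}$ when you integrate the elliptic equation over the noncompact, finite-volume fibre, require decay estimates for these variational quantities at the cusps. Such estimates are precisely the content of Wolpert's family cusp analysis and Freixas's extension (compare Theorem \ref{thm_wolp_exp} and Proposition \ref{prop_hyp_metr_assump} in this paper); they are what turns Wolpert's closed-surface computation into the pointed-case statement. As written, your argument is structurally correct but justifies this key step by a claim that is false.
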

	\par Now, to describe the local structure of $\modulcomp_{g, m}$, $\univcurvcomp_{g, m}$ near the boundary, we describe the deformations of a pointed complex curve 
	\begin{equation}\label{eq_rbar_fix}
	(\overline{R}, D_R) \in \partial \modul_{g, m}, \quad D_R \subset \overline{R}, \quad \# D_R < \infty
	\end{equation}
	with double-point singularities $\Sigma_{R} = \{ q_1, \ldots, q_l \} \subset \overline{R}$, $\Sigma_R \cap D_R = \emptyset$.
	\par We write$\overline{R} \setminus \Sigma_{R} = \cup_{i=1}^{k} R_i$, for some open Riemann surfaces  $R_i$. 
	Then $D_R$ induces the marked points $D_{R_i}^{0}$ on $R_i$.
	We compactify each $R_i$ to $\overline{R}_i$ by filling the created punctures, appearing after deletion of the nodes, and denote
	\begin{equation}\label{eq_dri_defn}
		 R^0 = \overline{R} \setminus (\Sigma_R \cup D_R), \quad D_{R_i} := D_{R_i}^{0} \cup (\overline{R}_i \setminus R_i).
	\end{equation}
	Suppose that for any $i = 1, \ldots, k$, the marked surfaces $(\overline{R}_i, D_{R_i})$ are stable. We describe small deformations of $(\overline{R}, D_R)$ in terms of small deformations of $(\overline{R}_i, D_{R_i})$ and so-called \textit{plumbing construction}, which we are going to describe now.
	\par 
	For every $j = 1, \ldots, l$, the complex curve $\overline{R} \setminus \{ q_j \}$, has a couple of punctures $\{a_j, b_j\}$ at the place of $q_j$. For the punctures $q_j$, we consider
	\begin{enumerate}
		\item A neighbourhood $A_j$ of the puncture $a_j$, biholomorphic to a punctured disc. We denote by $U_j$ the conformal completion of $A_j$, obtained by formally adding $a_j$. Then $U_j$ is biholomorphic to an open disc. 
		Let $F_j : U_j \to \comp$ be a holomorphic coordinate mapping with $F_j(a_j) = 0$;
		\item Similarly, a neighbourhood $B_j$ of the puncture $b_j$, its conformal completion $V_j$ and a coordinate mapping $G_j : V_j \to \comp$ satisfying $G_j(b_j) = 0$;
		\item A small complex parameter $t_j \in \comp$.
	\end{enumerate}
	We suppose that the sets $A_j$ and $B_j$ are mutually disjoint for $j = 1, \ldots, l$, and they are disjoint from $D_{R}$. 
	Let $c > 0$ be such that $D(c) \subset \comp$ is contained in $\Im (F_j), \Im (G_j)$, for all $j$. Assume that $|t_j| < c^2$, for all $j$. We denote $t = (t_1, \ldots, t_l) \in D(c^2)^{l}$. 
	For $d = (d_1, \ldots, d_l) \in D(c)^l$, we note
	\begin{equation}
		R^{d, *} = R^0 \setminus \cup_{j = 1}^{l} \big( \{ |F_j| \leq |d_j| \} \cup \{ |G_j| \leq  |d_j| \} \big).
	\end{equation}
	\par Consider the equivalence relation on points of $R^{t/c, *}$ generated by: $p \sim q$ if there exists $j  = 1, \ldots, l$, such that
	$|t_j|/c \leq |F_j(p)| \leq c $, $|t_j|/c \leq |G_j(q)| \leq c$ and $F_j(p)G_j(q) = t_j$.
	 Form the identification space $\overline{R}_t = R^{t/c, *} / \sim$. By the construction, $D_{R}$ induces the set of points $D_{R_t}$ on $\overline{R}_t$. We say that the compact pointed complex curve $(\overline{R}_t, D_{R_t})$ is the \textit{plumbing construction} for $(R, D_{R})$ associated with the \textit{plumbing data } $\{ (U_j, V_j, F_j, G_j, t_j) \}_j$. Trivially, we see that a set $X := \cup_{t \in D(c^2)^l} R_t$ can be endowed with a structure of a complex manifold, for which $\pi : X \to D(c^2)^l$ is a proper holomorphic map of codimension 1.
	 \par 
	 Now let's present a construction which combines the deformations using Beltrami differentials and the plumbing families.
	\begin{const}\label{const_univ_family}
	\begin{sloppypar}
	Let $(\overline{R}, D_R)$,  $\Sigma_{R}$, $(\overline{R}_i, D_{R_i})$, $i = 1, \ldots, k$ be as in (\ref{eq_rbar_fix}), (\ref{eq_dri_defn}).
	 Choose a plumbing data  $\{ (U_j, V_j, F_j, G_j, t_j) \}_j$. 
	 Observe that one can take $U_j, V_j$ so small so that there are Beltrami differentials $\nu_1, \ldots, \nu_N$ such that each of them is compactly supported in exactly one connected component of $\overline{R} \setminus \cup_j (U_j \cup V_j)$ and the associated cohomology classes $[\nu_1], \ldots, [\nu_N]$ form a basis in $\oplus_{i = 1}^{k} H^1(\overline{R}_i, T^{1,0}\overline{R}_i \otimes \mathscr{O}_{\overline{R}_i}(-D_{R_i}))$. 
	To simplify the exposition, we suppose that $k = 1$, i.e. that $R^0$ is connected.
	Let $\nu(s)$, $s \in D(c)^N$ be defined as in (\ref{eqn_nu_s_defn}) for $c$ small enough.
	\par Let $\Gamma$ be a Fuchsian group such that $R^0$ is isomorphic to the quotient $\Gamma \setminus \hh$. 
	We write $\Gamma^s := f^{\nu(s)} \Gamma (f^{\nu(s)})^{-1}$ for $ f^{\nu(s)}$ as in (\ref{eq_ahl_thm}) and define a Riemann surface $R^0_{s} := \Gamma^s \setminus \hh$.
	Observe that since the support of $\nu(s)$ is contained in $\overline{R} \setminus \cup_j (U_j \cup V_j)$, by (\ref{eq_ahl_thm}),the coordinates $F_j, G_j$ induce holomorphic charts on $R^0_{s}$. 
	We can complete $R^0_{s}$ by adding points representing $\{ F_j = 0 \}$ and $\{ G_j = 0 \}$. 
	By identifying those pairs of points, we get a compact complex curve $\overline{R}_s$. 
	The set of singular points of $\overline{R}_s$ is in the obvious bijection with $\Sigma_R$.
	Now, again by the fact that the support of $\nu(s)$ is contained in $\overline{R} \setminus (\cup_j (U_j \cup V_j))$, the plumbing data $\{ (U_j, V_j, F_j, G_j, t_j) \}_j$ on $\overline{R}$ induces the plumbing data on $\overline{R}_s$. Thus, for $t \in D(c)^l$, we form a complex curve $\overline{R}_{s, t}$.
	\end{sloppypar}
	\end{const}
	\begin{prop}[{Wolpert \cite[p. 434]{Wolp90}}]\label{prop_univ_family}
	\begin{sloppypar}
	Construction \ref{const_univ_family} has the following properties:
		\par a) The complex parameters $(s, t)$ in $S := D(c)^{N + l}$ are local coordinates for local manifold covers of $\modulcomp_{g, m}$ in a neighbourhood of a point defined by $(\overline{R}, D_R)$. The divisor of singular curves $\Delta$ is given by $\{ z_{N + 1} = 0 \} + \cdots + \{ z_{N + l} = 0 \}$, thus, has normal crossings.
		\par b) The set $X  := \cup_{(s, t) \in S} \overline{R}_{s, t}$ can be endowed with a structure of a complex manifold such that the projection $\pi : X \to S$ is a f.s.o.
		\par c) The fixed points $D_R$ induce the holomorphic sections $\sigma_1, \ldots, \sigma_m : S \to X$.  Then $(\pi, \sigma_1, \ldots, \sigma_m )$ provides a description for local manifold covers of $\Omega : \univcurvcomp_{g, m} \to \modulcomp_{g, m}$.
	\end{sloppypar}
	\end{prop}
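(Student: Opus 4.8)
The plan is to verify the three assertions by isolating the two deformation mechanisms built into Construction \ref{const_univ_family}: the Beltrami deformations governed by $\nu(s)$ (see (\ref{eqn_nu_s_defn})), which vary the complex structure of the normalized components while keeping all nodes, and the plumbing parameters $t = (t_1, \ldots, t_l)$, which smooth the nodes one at a time. The guiding principle is that these two families together form a \emph{versal} deformation of the stable pointed curve $(\overline{R}, D_R)$, and that the parameter count matches: under the simplifying hypothesis $k=1$ of the construction, our single component has $N = 3g_1 - 3 + m + 2l$ (the $2l$ accounting for the node preimages among the marked points), while $\overline{R}$ has arithmetic genus $g = g_1 + l$, so that $N + l = 3g - 3 + m = \dim \modulcomp_{g,m}$.

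First I would establish part b), i.e. the local structure of $\pi : X \to S$. Away from the nodes $\pi$ is a holomorphic submersion: the charts $W_\alpha = (\tilde{f}^{\nu(s)} \circ z_\alpha, s)$, together with their plumbing analogues, provide holomorphic coordinates in which $\pi$ is a coordinate projection, exactly as recalled just before Construction \ref{const_univ_family}. Near a node $q_j$ the plumbing identification $F_j(p) G_j(q) = t_j$ shows that, in the coordinates $u = F_j$, $v = G_j$, the total space is locally $\{ (u,v) : uv = t_j,\ |u|, |v| < c \}$ with $\pi$ acting by $(u,v) \mapsto uv = t_j$. This is precisely the model (\ref{eq_pr_sing}) of Proposition \ref{prop_coord}; hence every fiber has at most ordinary double points and $\pi$ is a f.s.o. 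In particular $\overline{R}_{s,t}$ acquires a singularity at the $j$-th node exactly when $t_j = 0$, which yields the description of $\Delta = \{ z_{N+1} = 0 \} + \cdots + \{ z_{N+l} = 0 \}$ as a normal crossing divisor claimed in part a).

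The substance of part a) is then that $(s,t)$ serve as coordinates on a local manifold cover of $\modulcomp_{g,m}$, i.e. that the family is versal and effectively parametrized modulo $\aut(\overline{R}, D_R)$. To see this I would compute the Kodaira--Spencer map of the family at the central fiber and check it is an isomorphism onto the tangent space $\mathbb{T}^1 = \mathrm{Ext}^1_{\overline{R}}(\Omega^1_{\overline{R}}(D_R), \mathscr{O}_{\overline{R}})$ of the versal deformation of $(\overline{R}, D_R)$. The local-to-global $\mathrm{Ext}$ sequence presents this tangent space as an extension
\[
0 \to \bigoplus_i H^1\big(\overline{R}_i, T^{1,0}\overline{R}_i \otimes \mathscr{O}_{\overline{R}_i}(-D_{R_i})\big) \to \mathbb{T}^1 \to \bigoplus_{j=1}^{l} \comp \to 0,
\]
where the quotient records the smoothing direction of each node. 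By construction the classes $[\nu_1], \ldots, [\nu_N]$ form a basis of the subspace, so the $s$-directions map isomorphically onto it; and differentiating the relation $uv = t_j$ at $t_j = 0$ shows that $\partial / \partial t_j$ maps to the generator of the $j$-th node-smoothing factor. Hence the Kodaira--Spencer map is an isomorphism, the family is versal, and by the existence and uniqueness of versal deformations of stable pointed curves it descends to a chart for the orbifold $\modulcomp_{g,m}$, the residual finite ambiguity being precisely the action of $\aut(\overline{R}, D_R)$.

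Finally, part c) follows once a) and b) are in hand. Since the supports of the $\nu_i$ and the plumbing annuli are disjoint from $D_R$, each puncture persists holomorphically in $(s,t)$: by (\ref{eq_ahl_thm}) and the discussion preceding Construction \ref{const_univ_family}, the local coordinate $z_i^M$ centered at a puncture extends to a holomorphic function on $X$ whose zero locus defines a holomorphic section $\sigma_i : S \to X$ avoiding $\Sigma_{X/S}$; that $(\pi; \sigma_1, \ldots, \sigma_m)$ describes a local manifold cover of $\Omega : \univcurvcomp_{g,m} \to \modulcomp_{g,m}$ then follows from the universal property of the moduli space together with the versality from a). The main obstacle is this last versality/completeness step: the delicate point is not the injectivity of the Kodaira--Spencer map (which is a dimension count plus the independence of the $[\nu_i]$ and of the node-smoothings), but the assertion that the family genuinely exhausts a full neighbourhood of $(\overline{R}, D_R)$ in $\modulcomp_{g,m}$, which rests on the general theory of versal deformations of nodal curves and on controlling the $\aut(\overline{R}, D_R)$-action.
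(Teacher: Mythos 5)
Your proposal is essentially correct, but it takes a genuinely different route from the paper, which in fact offers no proof at all: Proposition \ref{prop_univ_family} is quoted directly from Wolpert \cite[p. 434]{Wolp90}, whose argument is complex-analytic, working with quasiconformal deformations, Teichm\"uller theory and the plumbing coordinates themselves to show that $(s,t)$ extend the usual Bers-type coordinates across the compactification divisor. You instead argue algebro-geometrically: identify the tangent space to deformations with $\mathrm{Ext}^1_{\overline{R}}(\Omega^1_{\overline{R}}(D_R), \mathscr{O}_{\overline{R}})$, use the local-to-global $\mathrm{Ext}$ sequence to split it into the $H^1$ of the normalized components (hit by the Beltrami directions, since the $[\nu_i]$ are chosen to be a basis there and these deformations are locally trivial at the nodes) plus one smoothing direction per node (hit by $\partial/\partial t_j$ via the standard computation for $uv=t_j$), and conclude versality from the isomorphy of the Kodaira--Spencer map; your dimension count $N + l = 3g-3+m$ is right, as is the observation that the plumbing relation $F_jG_j=t_j$ reproduces exactly the local model (\ref{eq_pr_sing}), which settles part b) and the normal-crossing description of $\Delta$. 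What your route buys is conceptual economy and independence from Teichm\"uller theory, at the cost of outsourcing the decisive step --- that a miniversal family of a stable pointed curve, quotiented by the finite group $\aut(\overline{R}, D_R)$, \emph{is} a local chart of $\modulcomp_{g,m}$ and that the family over it gives $\univcurvcomp_{g,m}$ --- to the Deligne--Mumford construction of the moduli stack; you flag this honestly, and it is indeed the only place where your sketch is not self-contained. Wolpert's approach, by contrast, stays in the analytic category used throughout this paper and produces the explicit coordinate description that the later sections (notably the pinching expansion in Section \ref{sect_pinch}) actually rely on.
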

	
	\subsection{Pinching expansion and proof of Corollaries \ref{cor_cont_hodge}, \ref{cor_form_TZ_type}, \ref{cor_wp_cont_pot}, \ref{cor_wp_vol}, \ref{cor_del_iso}}\label{sect_pinch}
		In this section we will explain why the hyperbolic metric over the universal curve satisfies Assumptions \textbf{S2}, \textbf{S3}.
		For this, we recall the \textit{pinching expansion} of the hyperbolic metric. 
		Then we establish Corollaries \ref{cor_cont_hodge}, \ref{cor_form_TZ_type}, \ref{cor_wp_cont_pot}, \ref{cor_wp_vol}, \ref{cor_del_iso}.
		\par Pinching expansion describes the behaviour of the hyperbolic metric near the boundary of the universal curve. It compares the hyperbolic metric with so-called \textit{grafted metric}, which is more accessible for analysis.
		We follow closely the description of Wolpert \cite{Wolp90}. 
		\par 
		Let $(\overline{R}, D_R)$, $R^0$, $\Sigma_{R}$, $(\overline{R}_i, D_{R_i})$, $i = 1, \ldots, k$ be as in Section \ref{sect_orb_str}.
		Let $\{ (U_j, V_j, F_j, G_j, t_j) \}_j$ be a plumbing data for $\overline{R}$. Consider the plumbing construction $R_{s, t}$, $(s, t) \in S := D(c)^{N + l}$, $c > 0$. The grafted metric is built from the hyperbolic metric on $R^0$ and the \textit{hyperbolic metric on a cylinder}, see (\ref{defn_g_cyl1}), (\ref{defn_g_cyl2}). Let's describe this construction more precisely. 
		\begin{sloppypar} 
	Let $g^{T R^0}_{\rm{hyp}}$	 be the hyperbolic metric of constant scalar curvature $-1$ with cusps on $\cup_{i = 1}^{k} (\overline{R}_i, D_{R_i})$. 
	Let $u_j$, $v_j$, $j = 1, \ldots, l$ be some Poincaré-compatible coordinates around $\cup_{i = 1}^{k} D_{R_i}$ with respect to $g^{T R^0}$. We denote 
	\begin{equation}\label{eqn_alp_beta_defn}
			\alpha_j = \big| (F_j \circ u_j^{-1}) '(0) \big|^{-1}, \qquad \beta_j = \big| (G_j \circ v_j^{-1})'(0) \big|^{-1}.
	\end{equation}		
	We renormalize the coordinates 
	\begin{equation}\label{eq_fg_tau}
		f_j = \alpha_j F_j, g_j = \beta_j G_j, \quad \tau_j := \alpha_j \beta_j t_j.
	\end{equation}
	Trivially from Section \ref{sect_orb_str}, the plumbing construction for $(U_j, V_j, F_j, G_j, t_j)$ coincides with the plumbing construction for $(U_j, V_j, f_j, g_j, \tau_j)$.
	\end{sloppypar}	
	\par Let $\nu_{1, 0} : R^0 \to [0,1]$, $\nu_2 : \real \to [0,1]$ be smooth functions, satisfying
	\begin{align}
		& \nu_{1, 0}(x) = 
		\begin{cases} 
      		\hfill 1, & \text{ for } x \in R^{(1 + \delta)c, *}, \\
      		\hfill 0,  & \text{ for } x \in R^0 \setminus R^{(1 - \delta)c, *}. 
 		\end{cases}
 		\\
		& \nu_2(w) \, \,=  
		\begin{cases} 
      		\hfill 0, & \text{ for } w < 1/2 - 2\delta, \\
      		\hfill 1,  & \text{ for } w > 1/2 + 2\delta. 
 		\end{cases}
	\end{align}
	Since the function $\nu_{1, 0}$ is zero in the pinching collar, it induces the functions $\nu_{1, s, t} : R_{s, t} \to [0,1]$, $(s, t) \in D(c)^{N + l}$ by zero away from $R^{(1 - \delta)c, *}$.
	\par Let $0 < c, \delta < 1$ be some small real constants. Since $f_j'(0), g_j'(0) = 1$, the inner boundary of annuli $\{ c < |f_j| < 2c \}$, $\{ c < |g_j| < 2c \}$ are approximately $\{ u_j = c \}$ and $\{ v_j = c \}$ respectively.
	\par We suppose that $c > 0$ is chosen in such a way that the metric $g^{T R^0}$ is induced by 
	\begin{equation}
	\begin{aligned}
		& \frac{\imun d u_j d \overline{u}_j }{| u_j \log |u_j|^2 |^2},  
      			&& \text{ over } \big\{ |f_j| < 2 c \big\}, \\
      	& \frac{\imun d v_j d \overline{v}_j }{| v_j \log |v_j|^2 |^2}, 
      			&& \text{ over } \big\{ |g_j| < 2 c \big\}.
	\end{aligned}
	\end{equation}
	We denote by $g^{\text{Cyl}}_{j,1}$ the metric over the set (see (\ref{eq_fg_tau}))
	\begin{equation}
		\big\{ |\tau_j|^{1/2 + 2 \delta} < |f_j| < e^{2 \delta} c \big\} = \big\{ e^{- 2 \delta} |\tau_j|/c < |g_j| < |\tau_j|^{1/2 - 2 \delta} \big\},
	\end{equation}		
	induced by the Kähler form
	\begin{equation}\label{defn_g_cyl1}
		\bigg( 
			\frac{\pi}{|u_j| \log |\tau_j|}
			\bigg(
				\sin 
				\frac{\pi \log |u_j|}{\log |\tau_j|}
			\bigg)^{-1}		
		\bigg)^2
		\imun
		d u_j d \overline{u}_j.
	\end{equation}
	Similarly, we denote by $g^{\text{Cyl}}_{j,2}$ the metric  over the set  (see (\ref{eq_fg_tau}))
	\begin{equation}
		\big\{ |\tau_j|^{1/2 + 2 \delta} < |g_j| < e^{2 \delta} c \big\} = \big\{ e^{- 2 \delta} |\tau_j|/c < |f_j| < |\tau_j|^{1/2 - 2 \delta} \big\},
	\end{equation}		
	induced by the Kähler form
	\begin{equation}\label{defn_g_cyl2}
		\bigg( 
			\frac{\pi}{|v_j| \log |\tau_j|}
			\bigg(
				\sin 
				\frac{\pi \log |v_j|}{\log |\tau_j|}
			\bigg)^{-1}	
		\bigg)^2
		\imun
		d v_j d \overline{v}_j.
	\end{equation}
	The \textit{grafted metric} $g^{TR_{s, t}}_{\text{gft}}$ is given by
	\begin{equation}\label{defn_gft}
			g^{TR_{s, t}}_{\text{gft}} = 
      		\big(
      			g^{\text{Cyl}}_{j,1} (g^{\text{Cyl}}_{j,2} / g^{\text{Cyl}}_{j,1})^{\nu_2(\log |f_j| / \log |\tau_j|)}
      		 \big)^{1-\nu_{1, s, t}}
      		 (g^{T R_0}_{\rm{hyp}})^{\nu_{1, s, t}}.
	\end{equation}
	\begin{figure}[h]
		\includegraphics[width=\textwidth]{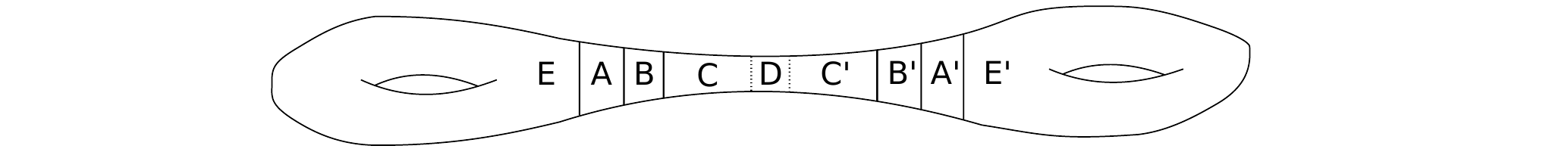}	
		\caption{The grafted metric. Over the regions $E$,$E'$, the metric $g^{TR_{s, t}}_{{\rm{gft}}}$ is isometric to $g^{TR^0}_{\rm{hyp}}$. Over the regions $A$, $A'$ the metric $g^{TR_{s, t}}_{{\rm{gft}}}$ is Poincaré-compatible with $u_j$ and $v_j$ respectively (see (\ref{reqr_poincare})). Over the regions $B$, $B'$ the metric $g^{TR_{s, t}}_{{\rm{gft}}}$ is a geometric interpolation between the Poincaré metric and (\ref{defn_g_cyl1}), (\ref{defn_g_cyl2}) respectively. Over the regions $C$, $C'$ the metric $g^{TR_{s, t}}_{{\rm{gft}}}$ is given by (\ref{defn_g_cyl1}) and (\ref{defn_g_cyl2}) respectively. Finally, over the region $D$,  the metric $g^{TR_{s, t}}_{{\rm{gft}}}$ is a geometric interpolation between (\ref{defn_g_cyl1}) and (\ref{defn_g_cyl2}). The dashed lines represent that the boundary of the region, given by the conditions $\{|f_j| = \rm{const}\}$, $\{|g_j| = \rm{const}\}$, and filled lines represent the boundary given by $\{|u_j| = \rm{const}\}$, $\{|v_j| = \rm{const}\}$.}
		\label{fig_graft}
	\end{figure}
	\begin{rem}
		If $f_j = u_j$ and $g_j = v_j$, then since $u_j v_j = \tau_j$, the metrics $g^{\text{Cyl}}_{j,2}$, $g^{\text{Cyl}}_{j,1}$ coincide over the set $\{ |\tau_j|^{1/2 + 2 \delta} < |f_j| < |\tau_j|^{1/2 - 2 \delta} \}$, and the formula for $g^{TR_{s, t}}_{\text{gft}}$ becomes simpler.
		This corresponds to the \textit{model grafting} in the terminology of Wolpert \cite{Wolp90}.
	\end{rem}
	Let's recall the pinching expansion of the hyperbolic metric. 
	Let's denote by $g^{TR_{s, t}}_{\rm{hyp}}$ the hyperbolic metric with cusps on $(R_{s, t}, D_{R_{s, t}})$.
	The following results was proved in the compact case by Wolpert \cite[Expansion 4.2]{Wolp90} and in the non-compact case by Freixas \cite[Theorem 4.3.1]{FreixTh}:
	\begin{thm}[The pinching expansion]\label{thm_wolp_exp}
		For $(s, t) \in S \setminus |\Delta|$, we have
		\begin{equation}\label{compar_g_gft}
			 g^{TR_{s, t}}_{\rm{hyp}} = g^{TR_{s, t}}_{\rm{gfh}} \Big(1 + \sum O \big( |\log |t_j||^{-2} \big) \Big),
		\end{equation}
		where the $O$-term is for the $\ccal^{\infty}$ norm over $R_{s, t} \setminus D_{R_{s, t}}$ with respect to $g^{TR_{s, t}}_{\rm{hyp}} $.
	\end{thm}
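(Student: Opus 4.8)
The result is essentially due to Wolpert \cite{Wolp90} and to Freixas \cite{FreixTh}, and the natural route is the conformal-factor method. The first step is to exploit that $g^{TR_{s, t}}_{\rm{hyp}}$ is the unique complete metric of constant Gauss curvature $-1$ in the conformal class of the grafted metric $g^{TR_{s, t}}_{\text{gft}}$ of (\ref{defn_gft}). Writing $g^{TR_{s, t}}_{\rm{hyp}} = e^{2\phi} g^{TR_{s, t}}_{\text{gft}}$, the Gauss equation reads $\Delta_{\text{gft}} \phi = K_{\text{gft}} + e^{2\phi}$, where $\Delta_{\text{gft}}$ and $K_{\text{gft}}$ denote the Laplace--Beltrami operator and the Gauss curvature of $g^{TR_{s, t}}_{\text{gft}}$. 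Since the expansion (\ref{compar_g_gft}) is equivalent to the bound $\phi = \sum O(|\log |t_j||^{-2})$ in the $\mathcal{C}^\infty$-norm, the whole statement reduces to producing such an estimate for the solution $\phi$ of this semilinear elliptic equation.

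The second step is to control the curvature defect $\kappa := K_{\text{gft}} + 1$. By construction (\ref{defn_gft}), $g^{TR_{s, t}}_{\text{gft}}$ is exactly hyperbolic on the regions $E, E', A, A', C, C'$ of Figure~\ref{fig_graft} (there it equals $g^{TR^0}_{\rm{hyp}}$, a Poincaré cusp metric, or one of the model cylinder metrics (\ref{defn_g_cyl1}), (\ref{defn_g_cyl2}), all of curvature $-1$), so $\kappa$ is supported on the interpolation collars $B, B', D$. On these collars I would insert the expansion of the cylinder metric: as $|\tau_j| \to 0$ with $|u_j|$ bounded away from the center of the neck, $\sin(\pi \log|u_j|/\log|\tau_j|) = \pi\log|u_j|/\log|\tau_j| + O(|\log|\tau_j||^{-3})$, so (\ref{defn_g_cyl1}) agrees with the Poincaré cusp metric up to a factor $1 + O(|\log|\tau_j||^{-2})$, and likewise the two cylinder metrics agree to the same order on region $D$. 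Differentiating these expansions in the rescaled coordinates in which $g^{TR_{s, t}}_{\text{gft}}$ has uniformly bounded geometry yields $\|\kappa\|_{\mathcal{C}^k} = O(|\log|\tau_j||^{-2})$ uniformly in $(s,t)$; since $\tau_j = \alpha_j \beta_j t_j$ by (\ref{eq_fg_tau}), this is $O(|\log|t_j||^{-2})$.

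The heart of the argument is to solve $\Delta_{\text{gft}}\phi = (e^{2\phi} - 1) + \kappa$ with uniform control. Linearizing at $\phi = 0$ produces the operator $\Delta_{\text{gft}} - 2$, which I would show is invertible with operator norm bounded independently of $(s,t)$. The key observation is that although the surfaces degenerate and $-\Delta_{\text{gft}}$ acquires eigenvalues tending to $0$ as the neck pinches, these small eigenvalues approach $0$ from above, so the spectrum of $\Delta_{\text{gft}} - 2$ stays in $(-\infty, -2 + o(1)]$ and is therefore uniformly bounded away from $0$. A contraction-mapping argument in $L^2$ (or a suitable weighted Hölder space), using the quadratic smallness of the remainder $e^{2\phi} - 1 - 2\phi$, then yields a solution with $\|\phi\|_{L^2} = O(|\log|t_j||^{-2})$, which I would upgrade to the claimed $\mathcal{C}^\infty$-bound by interior elliptic regularity in the bounded-geometry charts of $g^{TR_{s, t}}_{\text{gft}}$, bootstrapping the equation against the $\mathcal{C}^k$-estimates on $\kappa$.

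The main obstacle is precisely the uniformity of all these estimates as $t \to 0$: the injectivity radius collapses along the neck, so one cannot invoke the standard bounded-geometry elliptic theory directly. The resolution is to work in the explicit rotationally symmetric product coordinates on the collar, where the model cylinder operator is analyzed by separation of variables and its resolvent bounded by hand, and then to patch these model estimates with the fixed interior estimates on $E, E'$; this gives both the uniform invertibility of $\Delta_{\text{gft}} - 2$ and the uniform elliptic regularity. This is exactly the technical analysis carried out by Wolpert \cite{Wolp90} in the compact case and by Freixas \cite{FreixTh} in the presence of cusps, and I would follow their treatment.
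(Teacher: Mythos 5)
The paper offers no proof of this theorem: it is recalled from the literature, citing Wolpert \cite[Expansion 4.2]{Wolp90} for the compact case and Freixas \cite[Theorem 4.3.1]{FreixTh} for the case with cusps, which is exactly where your proposal also lands. Your sketch (curvature defect of the grafted metric supported on the interpolation collars $B$, $B'$, $D$, uniform invertibility of $\Delta_{\rm gft}-2$ since $-\Delta_{\rm gft}\geq 0$ forces its spectrum into $(-\infty,-2]$, then elliptic bootstrap in the collar model coordinates) is a faithful account of those cited arguments, so it is correct and takes essentially the same approach as the paper.
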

	The metrics $g^{TR_{s, t}}_{\rm{hyp}}$ induce the Hermitian norm $\norm{\cdot}_{g, m}^{\omega, \rm{hyp}}$ on $\omega_{g, m}$ over $\univcurv_{g, m} \setminus D_{g, m}$ from Section \ref{sect_orb_str}.
	By Teichmüller theory, the norm $\norm{\cdot}_{g, m}^{\omega, \rm{hyp}}$ is smooth over $\univcurv_{g, m} \setminus D_{g, m}$. 
	We denote by $\norm{\cdot}_{g, m}^{\rm{hyp}}$ the induced norm on $\omega_{g, m}(D)$. We denote by $\Sigma_{g, m}$ the set on double points singularities in $\univcurvcomp_{g, m}$. Then 
	\begin{prop}\label{prop_hyp_metr_assump}
		The hyperbolic metric $\norm{\cdot}_{g, m}^{\rm{hyp}}$ satisfies Assumptions \textbf{S2} and \textbf{S3}.
	\end{prop}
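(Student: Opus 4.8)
The plan is to reduce everything, via the pinching expansion, to the explicit grafted metric $g^{TR_{s,t}}_{\rm{gft}}$ of (\ref{defn_gft}), and then to read off the required regularity from the closed formulas (\ref{defn_g_cyl1}), (\ref{defn_g_cyl2}) for the cylinder metric together with the cusp model (\ref{reqr_poincare}). Here $(\xi, h^{\xi})$ plays no role (equivalently, one may take it trivial), so all hypotheses on $h^{\xi}$ in \textbf{S2}, \textbf{S3} are vacuous, while the normal crossing property of $\Delta$ and the local coordinates $(s,t)$ are furnished by Proposition \ref{prop_univ_family}. Since $\omega_{g,m}(D)$ is a line bundle, Proposition \ref{prop_crit_preloglog} reduces the pre-log-log and good conditions to showing that, for a local holomorphic frame $\upsilon$, the function $\ln \norm{\upsilon}_{g,m}^{\rm{hyp}}$ is pre-log-log (resp. P-singular) near the cusp divisor $D_{g,m}$ and near the nodal divisor $\pi^{-1}(\Delta)$; recall these two loci are disjoint, so I may analyse them separately.

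Near $D_{g,m}$ I would use the frame $e = dz \otimes s_{D_{g,m}}/z$, for which the Poincaré model (\ref{reqr_poincare}) gives $\ln \norm{e}^2 = 2\ln\bigl|\log|z|\bigr| + O(1)$. A direct computation yields $\partial \ln\bigl|\log|z|\bigr| = dz/(2z\log|z|)$ and $\partial\dbar\ln\bigl|\log|z|\bigr| = -\tfrac14\, dz\wedge d\bar z/|z\log|z||^2$, i.e. bounded coefficients in the adapted coframe $dz/(z\log|z|)$, so $\ln\norm{e}^2$ is P-singular and $\norm{\cdot}_{g,m}^{\rm{hyp}}$ is good — a fortiori pre-log-log — along $D_{g,m}$ (the standard goodness of the cusp metric, cf. \cite[Lemma 4.26]{BurKrKun}). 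Because the fibre metric has constant scalar curvature $-1$, the fibrewise part of $c_1(\omega_{g,m}(D), (\norm{\cdot}_{g,m}^{\rm{hyp}})^2)$ is a fixed constant multiple of the hyperbolic Kähler form; its coupling against vertical fields of bounded hyperbolic length is therefore constant up to that normalization, hence a bounded, continuous, log-log function along $D_{g,m}$, which is the last clause of \textbf{S3}.

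The core of the argument is the nodal divisor. By Theorem \ref{thm_wolp_exp}, $g^{TR_{s,t}}_{\rm{hyp}} = g^{TR_{s,t}}_{\rm{gft}}(1 + \sum O(|\log|t_j||^{-2}))$ in $\ccal^{\infty}$ norm, so the logarithm of the ratio and its derivatives decay like $|\log|t_j||^{-2}$, which is of Poincaré (hence log-log) growth; thus it suffices to treat the grafted metric. Over the thick part it is smooth and over the cusp collars it is the model just handled, so only the pinching collars remain. There, in plumbing coordinates $(z_0, z_1)$ with $z_0 z_1 = \tau_j = \alpha_j \beta_j t_j$ proportional to the base coordinate $t_j$, and $\sigma = dz_0/z_0 = -dz_1/z_1$ a frame of $\omega_{g,m}(D) = \omega_{g,m}$, formula (\ref{defn_g_cyl1}) gives
\[ \ln\norm{\sigma}^2 = 2\ln\bigl|\log|\tau_j|\bigr| + 2\ln\bigl|\sin(\pi\log|z_0|/\log|\tau_j|)\bigr| + O(1). \]
Setting $a = -\log|z_0|$, $b = -\log|z_1|$, so that $\log|\tau_j| = -(a+b)$ and $\log|z_0|/\log|\tau_j| = a/(a+b)$, one checks that the coefficients of $\partial$, $\dbar$, $\partial\dbar$ of the right-hand side in the adapted coframe of $\pi^{-1}(\Delta) = \{z_0 z_1 = 0\}$ stay bounded — for instance the first term contributes the bounded factor $a/(a+b)$ to $\partial_{z_0}$. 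This gives P-singularity, hence goodness (and pre-log-log), along $\pi^{-1}(\Delta)$, while continuity and log-log growth of $\norm{\cdot}_{g,m}^{\rm{hyp}}$ up to $\Sigma_{g,m}\cup D_{g,m}$ follow from the same explicit expressions together with the bound $\ln\bigl|\log|z_0|\bigr|, \ln\bigl|\log|z_1|\bigr| \le \ln\bigl|\log|\tau_j|\bigr|$ of (\ref{eq_bound_fiber_coord}).

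The main obstacle I expect is the uniform control of these derivatives in the interpolation regions (the sets $B$, $B'$, $D$ of Figure \ref{fig_graft}, and the collar--cusp transitions), where $g^{TR_{s,t}}_{\rm{gft}}$ is a geometric mean of two model metrics modulated by the cut-off $\nu_2(\log|f_j|/\log|\tau_j|)$: differentiating this cut-off produces factors $1/\log|\tau_j|$ and $1/|z_0|$ that must be shown to recombine into Poincaré/log-log bounds uniformly as $t\to 0$, and the horizontal derivatives require the $\ccal^{\infty}$ control of the pinching expansion in the $(s,t)$-directions established by Freixas \cite[Theorem 4.3.1]{FreixTh}. Once these estimates are assembled, Proposition \ref{prop_crit_preloglog} delivers \textbf{S2}, and collecting the continuity, log-log-growth, goodness and coupling statements delivers \textbf{S3}.
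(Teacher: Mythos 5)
Your proposal contains a genuine gap at exactly the point you flag as "the main obstacle," and it is not a technical loose end but the core difficulty. The pinching expansion, Theorem \ref{thm_wolp_exp}, controls the error factor $1 + \sum O(|\log|t_j||^{-2})$ only in the \emph{fiberwise} $\ccal^{\infty}$ norm: derivatives are taken along $R_{s,t}\setminus D_{R_{s,t}}$ and measured with $g^{TR_{s,t}}_{\rm{hyp}}$. It gives no control whatsoever on derivatives in the base directions $(s,t)$. But goodness (P-singularity) and the pre-log-log condition require bounds on $\partial$, $\dbar$, $\partial\dbar$ of $\ln h$ as forms on the \emph{total space} — horizontal and mixed terms included — uniformly as $t_j \to 0$. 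Hence your step "so the logarithm of the ratio and its derivatives decay like $|\log|t_j||^{-2}$ \ldots thus it suffices to treat the grafted metric" is unjustified for the derivative estimates, and your collar computation (which is fiberwise, plus the $\partial_{z_0}$-coefficients of the two leading terms) cannot close it. Your final paragraph defers the missing horizontal estimates to Freixas \cite[Theorem 4.3.1]{FreixTh}, but that theorem \emph{is} the pinching expansion just discussed; the statement that actually carries the uniform all-directions derivative control is the goodness theorem — Wolpert \cite[Theorem 5.8]{Wolp90} in the compact case, Freixas \cite[Theorem 4.0.1]{FreixTh} with cusps — whose proof is a substantial piece of analysis and not a corollary of the expansion plus explicit collar formulas.

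This is precisely where the paper takes a different, and much shorter, route: it never re-derives goodness along $\pi^{-1}(\Delta)$. For Assumption \textbf{S2} it simply cites the Wolpert/Freixas goodness theorems and then invokes \cite[Lemma 4.26]{BurKrKun} (good $\Rightarrow$ pre-log-log). For Assumption \textbf{S3} it uses the pinching expansion only where a fiberwise, zeroth-order statement suffices — namely the log-log growth of the norm itself, since a bounded multiplicative error is harmless there — together with the constant-curvature reduction and the already-cited goodness for the coupling clause. Your zeroth-order estimates (the sine bound, $\log|\tau_j| = -(a+b)$, the resulting $\ln\min(a,b)$ bound) and your coupling argument do essentially reproduce that part of the paper's proof; so the fix is structural: cite goodness rather than attempt to prove it, and keep your explicit computations only for the log-log growth clause of \textbf{S3}.
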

	\begin{proof}
		We recall that the goodness and continuity of the hyperbolic metric was proved for compact surfaces by Wolpert in \cite[Theorem 5.8]{Wolp90} and for non-compact by Freixas in \cite[Theorem 4.0.1]{FreixTh}. By \cite[Lemma 4.26]{BurKrKun}, any good line bundle is pre-log-log. Thus, Assumption \textbf{S2} is satisfied by $\norm{\cdot}_{X/S}^{{\rm{hyp}}}$.
		\par 
		By Proposition \ref{prop_univ_family}c), it is enough to prove that f.s.c. $(\pi: X \to S; \sigma_1, \ldots, \sigma_m; \norm{\cdot}_{X/S}^{\rm{hyp}})$ from Construction \ref{const_univ_family} satisfies Assumptions \textbf{S3}.
		\par 
		First of all, since the metric $\norm{\cdot}_{X/S}^{\rm{hyp}}$ has constant scalar curvature $-1$, we see that the coupling of $c_1(\omega_{X/S}(D), (\, \norm{\cdot}_{X/S}^{\rm{hyp}})^{2})$ with two vertical vector fields is expressed through the coupling of the fiberwise volume form. 
		By the continuity and goodness of $\norm{\cdot}_{X/S}^{\rm{hyp}}$, we deduce that the coupling of $c_1(\omega_{X/S}(D), (\, \norm{\cdot}_{X/S}^{\rm{hyp}})^{2})$ with smooth vertical vector fields is continuous over $X \setminus (\Sigma_{X/S} \cup |D_{X/S}|)$ and  has log-log growth on $X \setminus \Sigma_{X/S}$ with singularities along $|D_{X/S}|$.
		\par 	Now let's prove the fact that $\norm{\cdot}_{X/S}^{\rm{hyp}}$ has log-log growth with singularities along $\Sigma_{X/S} \cup |D_{X/S}|$. 
		First of all, by Theorem \ref{thm_wolp_exp}, it is enough to prove so for the Hermitian norm $\norm{\cdot}_{X/S}^{\rm{gft}}$ induced by $g^{TR_{s, t}}_{\rm{gft}}$ on $\omega_{X/S}(D)$.
		Trivially, we have
		\begin{equation}\label{eq_sin_bound}
			\frac{2}{\pi} x \leq \sin(x) \leq x, \qquad \text{for } x \in [0, \pi/2].
		\end{equation}
		We fix $C > 0$ such that over $D(2c)$, we have the inequalities 
		\begin{equation}\label{bound_f_j_u_j_g_j}
			\| f_j \circ u_j^{-1} \|_{\mathscr{C}^{1}} < C, \qquad \| g_j \circ v_j^{-1} \|_{\mathscr{C}^{1}} < C,
		\end{equation}
		are satisfied. 
		Then by the identity $f_j g_j = \tau_j$ and (\ref{bound_f_j_u_j_g_j}), we deduce that  there is $C_1 > 0$ such that
		\begin{equation}\label{eq_under_sin_bound}
			\Big| \log |\tau_j| -  \log |u_j| -  \log |v_j| \Big| \leq C_1.
		\end{equation}
		By (\ref{defn_g_cyl1}), (\ref{defn_g_cyl2}), (\ref{eq_sin_bound}), (\ref{bound_f_j_u_j_g_j}) and (\ref{eq_under_sin_bound}), we deduce that there is $C_2 > 0$ such that
		\begin{equation}
			\norm{du_j / u_j}_{X/S}^{\rm{gft}} 
			\leq 
			\begin{cases}
				\hfill C_2 |\log |u_j||, & \text{over} \quad \big\{ |\tau_j|^{1/2} < |g_j| < e^{2 \delta} c \big\}, \\
				\hfill C_2 |\log |v_j||, & \text{over} \quad \big\{ |\tau_j|^{1/2} < |f_j| < e^{2 \delta} c \big\},
			\end{cases}
		\end{equation}
		which implies by (\ref{eq_section_omegarel}) that $\norm{\cdot}_{X/S}^{\rm{gft}}$ has log-log growth with singularities along $\Sigma_{X/S} \cup |D_{X/S}|$. 
	\end{proof}
	Now let's explain some applications of Sections \ref{sect_pf_cont}, \ref{sect_pf_curv}. But before we drag the attention of the reader to the fact that $\modulcomp_{g, m}$ is an orbifold. However, since all the theorems of this article are local, they can be applied in an orbifold chart, and the final statements continues to hold for families of complex curves over an orbifold.
	\begin{proof}[Proof of Corollaries \ref{cor_cont_hodge}, \ref{cor_form_TZ_type}.]
		It follows directly from Theorems \ref{thm_cont}, \ref{thm_curv} and Proposition \ref{prop_hyp_metr_assump}.
	\end{proof}
	\begin{proof}[Proof of Corollary \ref{cor_wp_cont_pot}.]
		It follows from Theorem \ref{thm_wolp_identity}, Corollary \ref{cor_cont_pot} and  Proposition \ref{prop_hyp_metr_assump}.
	\end{proof}
	\begin{proof}[Proof of Corollary \ref{cor_wp_vol}.]
		The decomposition (\ref{eq_wolp_decomp}) follows directly from Theorems \ref{thm_cont}2, \ref{thm_wolp_identity}, Proposition \ref{prop_hyp_metr_assump} and (\ref{eq_bc_two_elem}).
		Now let's prove the identity (\ref{eq_wolp_volume}). 
		From (\ref{eq_wolp_decomp}), it is enough to prove that for $N := 3g - 3 + m$ and any $i = 1, \ldots, N$, we have
		\begin{equation}\label{eq_int_vol_0}
			\int_{\modul_{g, m}} \alpha^{N-i} (d \beta)^i = 0.
		\end{equation}
		Since $d \beta$ has log-log growth and $\alpha$ is smooth, we have
		\begin{equation}\label{eq_int_vol_1}
			\int_{\modul_{g, m}} \alpha^{N-i} (d \beta)^i 
			= \lim_{\epsilon \to 0} \int_{\modul_{g, m} \setminus B(\partial \modul_{g, m}, \epsilon)} \alpha^{N-i} (d \beta)^i,
		\end{equation}
		where $B(\partial \modul_{g, m}, \epsilon)$ is an $\epsilon$-tubular neighbourhood of $\partial \modul_{g, m}$ in $\modulcomp_{g, m}$.
		By Stokes theorem
		\begin{multline}\label{eq_int_vol_2}
			 \int_{\modul_{g, m} \setminus B(\partial \modul_{g, m}, \epsilon)} \alpha^{N-i} (d \beta)^i 
			 \\
			 =
			 \int_{\modul_{g, m} \setminus B(\partial \modul_{g, m}, \epsilon)} d \Big( \beta \alpha^{N-i} (d \beta)^{i- 1} \Big)
			 =
			 \int_{\partial B(\partial \modul_{g, m}, \epsilon)} \beta \alpha^{N-i} (d \beta)^{i-1}.
		\end{multline}
		Trivially, for any $k \in \nat$, $p \in \nat$, we have the following identity 
		\begin{equation}\label{eq_int_vol_222}
			\lim_{\epsilon \to 0} \int_{\substack{|z_0| = \epsilon \\ |z_1|, \ldots, |z_k| = \epsilon}} \frac{\log |\log |z_0||^p |dz_0|}{|z_0 \log |z_0||} \prod_{i = 1}^{k} \frac{\imun \log |\log |z_i||^p dz_i d\overline{z}_i}{|z_i \log |z_i||^2} = 0
		\end{equation}
		As $\beta$, $d \beta$ have log-log growth and $\alpha$ is smooth, similarly to \cite[Proposition 1.2]{MumHirz}, by (\ref{eq_int_vol_222}):
		\begin{equation}\label{eq_int_vol_3}
			\lim_{\epsilon \to 0}  \int_{\partial B(\partial \modul_{g, m}, \epsilon)} \beta \alpha^{N-i} (d \beta)^{i-1} = 0
		\end{equation}
		From (\ref{eq_int_vol_1}), (\ref{eq_int_vol_2}) and (\ref{eq_int_vol_3}), we deduce (\ref{eq_int_vol_0}).
	\end{proof}
	\begin{proof}[Proof of Corollary \ref{cor_del_iso}]
		As it was proved by Deligne in \cite[Proposition 8.5]{DeligDet} for $m=0$, and Freixas in \cite[Theorem 5.1.3, Corollary 5.1.4]{FreixTh} for $m \in \nat$, the Hermitian norm $\norm{\cdot}^{\rm{Del}}_{g, m}$ on the Deligne-Weil product $\langle \omega_{g, m}(D), \omega_{g, m}(D) \rangle$ is smooth over $\modul_{g, m}$, and we have
		\begin{equation}\label{eq_ch_dw}
			c_1 \Big( \langle \omega_{g, m}(D), \omega_{g, m}(D) \rangle, \big( \norm{\cdot}^{\rm{Del}}_{g, m} \big)^2 \Big) 
			=
			\pi^{-2} \omega_{WP}.
		\end{equation}
		By another result of Freixas, \cite[Theorem 5.1.3]{FreixTh}, the Hermitian norm $\norm{\cdot}^{\rm{Del}}_{g, m}$ on the Deligne-Weil product $\langle \omega_{g, m}(D), \omega_{g, m}(D) \rangle$ is nice over $\modulcomp_{g, m}$, with singularities along $\partial \modul_{g, m}$. Thus, by Remark \ref{rem_nice_chern}, the first Chern form is well-defined as a current, and from Proposition \ref{prop_pot_th}, (\ref{eq_ch_dw}), we have the following identity of currents over $\modulcomp_{g, m}$
		\begin{equation}\label{eq_ch_dw2}
			c_1 \Big( \langle \omega_{g, m}(D), \omega_{g, m}(D) \rangle, \big( \norm{\cdot}^{\rm{Del}}_{g, m} \big)^2 \Big) 
			=
			\pi^{-2} \big[ \omega_{WP} \big]_{L^1}.
		\end{equation}
		By Corollary \ref{cor_form_TZ_type} and (\ref{eq_ch_dw2}), the norm of the isomorphism (\ref{eq_del_isom}) is a pluriharmonic function over $\modulcomp_{g, m}$. As $\modulcomp_{g, m}$ is compact, we deduce that it is a constant, which finishes the proof.
	\end{proof}
		\bibliographystyle{abbrv}

\Addresses

\end{document}